\tikzset{->-/.style={decoration={
			markings,
			mark=at position #1 with {\arrow{>}}},postaction={decorate}}}
\tikzset{-<-/.style={decoration={
					markings,
					mark=at position #1 with {\arrow{<}}},postaction={decorate}}}
\theoremstyle{plain}
    \newtheorem{thm}{Theorem}[section]
    \newtheorem{lem}[thm]   {Lemma}
    \newtheorem{cor}[thm]   {Corollary}
    \newtheorem{prop}[thm]  {Proposition}
    \newtheorem{que}[thm] {Question}
    \newtheorem{athm}{Theorem}
    \newtheorem{acor}{Corollary}[athm]
\theoremstyle{definition}
    \newtheorem{defn}[thm]  {Definition}
    \newtheorem{ex}[thm]{Example}
    \newtheorem{rem}[thm]{Remark}
    \newtheorem{nota}[thm]{Notation}
\def\paragraph{\@startsection{paragraph}{4}%
  \z@\z@{-\fontdimen2\font}%
  {\normalfont\bfseries}}
\newcommand{\cA}{\mathcal{A}} 
\newcommand{\ua}{\underline{a}} 
\newcommand{\cR}{\mathcal{R}} 
\newcommand{\fb}{\mathfrak{b}} 
\newcommand{\cB}{\mathcal{B}} 
\newcommand{\reB}{\bar{B}} 
\newcommand{\C}{\mathbb{C}} 
\newcommand{\bfc}{\mathbf{c}} 
\newcommand{\bfC}{\mathbf{C}} 
\newcommand{\bu}{\mathbf{u}} 
\newcommand{\ud}{\underline{d}} 
\newcommand{\bd}{\mathbf{d}} 
\newcommand{\be}{\mathbf{e}} 
\newcommand{\fe}{\mathfrak{e}} 
\newcommand{\bF}{\mathbb{F}} 
\newcommand{\cF}{\mathcal{F}} 
\newcommand{\G}{G} 
\newcommand{\cH}{\mathcal{H}} 
\newcommand{\cJ}{\mathcal{K}} 
\newcommand{\bK}{\mathbf{K}} 
\newcommand{\cM}{\mathcal{M}} 
\newcommand{\bM}{\mathbb{M}} 
\newcommand{\uP}{\underline{P}} 
\newcommand{\fp}{\mathfrak{p}} 
\newcommand{\Q}{\mathbb{Q}} 
\newcommand{\R}{\mathbb{R}} 
\newcommand{\bfR}{\mathbf{R}} 
\newcommand{\fS}{\mathfrak{S}} 
\newcommand{\ft}{\mathfrak{t}} 
\newcommand{\cT}{\mathcal{T}} 
\newcommand{\fv}{\mathfrak{v}} 
\newcommand{\fy}{\mathfrak{y}} 
\newcommand{\Z}{\mathbb{Z}} 
\newcommand*{\conf}[2][\bullet]{{C}_{#1}(#2)^{\infty}}
\newcommand*{\confred}[2][\bullet]{\widetilde{C}_{#1}(#2)^{\infty}}
\newcommand*{\redchains}[2][*]{\widetilde{\Ch}_{#1}\pa{#2}}
\newcommand*{\redcochains}[2][*]{\widetilde{\Ch}^{#1}\pa{#2}}
\newcommand{\sgn}{\mathrm{sgn}}
\newcommand{\UMor}{\mathrm{UMor}}
\newcommand{\uzero}{\underline{0}}
\newcommand{\Ss}{\mathbf{ss}}
\newcommand{\ab}{\mathrm{ab}} 
\newcommand{\Aut}{\mathrm{Aut}} 
\newcommand{\id}{\mathrm{id}} 
\newcommand{\sgone}{\Sigma_{g,1}}
\newcommand{\ggone}{\Gamma_{g,1}}
\newcommand{\Homeo}{\mathrm{Homeo}}
\newcommand{\ttau}{\tilde\theta} 
\newcommand{\Fp}{\mathbb{F}_p}
\newcommand{\twoB}{2\! B} 
\newcommand{\del}{\partial}
\newcommand{\sth}{\textsuperscript{th} }
\newcommand{\sort}{\underline{\mathrm{sort}}}
\newcommand{\Shuf}{\mathfrak{Shuf}} 
\newcommand{\hor}{\mathrm{hor}}
\newcommand{\ver}{\mathrm{ver}}
\newcommand{\Ch}{\mathrm{Ch}} 
\newcommand{\Tor}{\mathrm{Tor}}
\newcommand{\Ext}{\mathrm{Ext}}
\newcommand{\Hom}{\mathrm{Hom}}
\newcommand{\Imm}{\mathrm{Im}}
\newcommand{\pa}[1]{\mathopen{}\left(#1\right)\mathclose{}}     
\newcommand{\sca}[1]{\left<#1\right>}
\newcommand{\set}[1]{\mathopen{}\left\{#1\right\}\mathclose{}}  
\newcommand{\floor}[1]{\lfloor #1 \rfloor}
\title{Homology of configuration spaces of surfaces modulo an odd prime}
\author{Andrea Bianchi}
\address{\parbox{.9\linewidth}{Max Planck Institute for Mathematics,\\
Vivatsgasse 7, Bonn, Germany}
}
\email{bianchi@mpim-bonn.mpg.de}
\author{Andreas Stavrou}
\address{\parbox{.9\linewidth}{Department of Mathematics,
University of Chicago,\\
5734 S. University Avenue,
Chicago, IL, 60637, United States of America}}
\email{andreasstavrou@uchicago.edu}
\keywords{Configuration space, mapping class group, Ext groups}
\subjclass[2020]{
16E30, 
20F36, 
55R35, 
55R80, 
55U25 
}
\begin{document}

\date{\today}
\begin{abstract}
For a compact orientable surface $\Sigma_{g,1}$ of genus $g$ with one boundary component and for an odd prime number $p$, we study the homology of the unordered configuration spaces $C_\bullet(\Sigma_{g,1}):=\coprod_{n\ge0}C_n(\Sigma_{g,1})$ with coefficients in $\mathbb{F}_p$.
We describe $H_*(C_\bullet(\Sigma_{g,1});\mathbb{F}_p)$ as a bigraded module over the Pontryagin ring $H_*(C_\bullet(D);\mathbb{F}_p)$, where $D$ is a disc, and compute in particular the bigraded dimension over $\mathbb{F}_p$.
We also consider the action of the mapping class group $\Gamma_{g,1}$,
and prove that the mod-$p$ Johnson kernel $\mathcal{K}_{g,1}(p)\subseteq\Gamma_{g,1}$ is the kernel of the action on $H_*(C_\bullet(\Sigma_{g,1};\mathbb{F}_p))$.
\end{abstract}

\maketitle

\section{Introduction and statement of results}
We are interested in the homology of configuration spaces of surfaces, as plain abelian groups and as representations of the mapping class group.
For a topological space $X$ and $n\ge0$, the $n$\sth \emph{unordered} configuration space of $X$, denoted $C_n(X)$, is defined as the quotient
\[
 C_n(X)=\set{(p_1,\dots,p_n)\in X^n\,|\,p_i\neq p_j\mbox{ forall }  i\neq j}\,/\,\fS_n,
\]
where $\fS_n$ acts on the space of $n$-tuples of distinct points by relabeling them.

We denote by $\sgone$ a compact, connected, orientable surface of genus $g\ge0$ with one boundary curve. The group $\Homeo(\sgone,\del\sgone)$ 
of homeomorphisms of the surface $\sgone$ fixing the boundary pointwise acts on $C_n(\sgone)$, and hence on the homology $H_*(C_n(\sgone);R)$ with coefficients in a commutative ring $R$; the latter action descends to an action of the mapping class group $\ggone:=\pi_0\Homeo(\sgone,\del\sgone)$ on $H_*(C_n(\sgone);R)$.
We are interested in the following two questions.
\begin{que}
 What $R$-module is $H_i(C_n(\sgone);R)$? Specifically, if $R$ is a field, what is the dimension of $H_i(C_n(\sgone);R)$ over $R$?
\end{que}
\begin{que}
 How does the mapping class group $\ggone$ act on $H_i(C_n(\sgone);R)$? What is the kernel of this action?
\end{que}
For $R$ being $\bF_2$ or $\Q$, the first question is answered in \cite{BCT} and \cite{BC,DCK}, respectively, and an answer to both questions can be found in \cite{Bianchi} and \cite{Stavrou}, respectively.
In this paper we will mainly focus on the cases $R=\Z$ and $R=\Fp$, for $p$ an odd prime.

\subsection{Mapping class group action}
Our first main result is the following.
\begin{athm}\label{thm:A}
Let $n\ge2$ and let $R$ be a commutative ring.
\begin{enumerate}
    \item The Johnson kernel $\cJ_{g,1}\subseteq\Gamma_{g,1}$, i.e. the subgroup generated by separating Dehn twists, acts trivially on $H_*(C_n(\sgone);R)$.
    \item For $R=\Z$ and $R=\Q$, the kernel of the action of $\ggone$ on $H_*(C_n(\sgone);R)$ is the Johnson kernel.
    \item For $p$ an odd prime, the kernel of the action of $\ggone$ on $H_*(C_n(\sgone);\Fp)$ is the mod-$p$ Johnson kernel $\cJ_{g,1}(p)\subseteq\Gamma_{g,1}$, i.e. the subgroup generated by separating Dehn twists and $p$\sth powers of all Dehn twists.
\end{enumerate}
\end{athm}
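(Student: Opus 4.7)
\emph{Part (1): $\cJ_{g,1}\subseteq\ker$.} A separating Dehn twist $T_\gamma$ has support in an annular neighborhood $A$ of $\gamma$; writing $\sgone\setminus A = \Sigma'\sqcup\Sigma''$, I would prove that the tensor-product map
\[
\bigoplus_{k+l=n} H_*(C_k(\Sigma');R)\otimes H_*(C_l(\Sigma'');R)\;\longrightarrow\; H_*(C_n(\sgone);R)
\]
is surjective, by stratifying $C_n(\sgone)$ according to how points distribute across $\Sigma'$ and $\Sigma''$ and exploiting the $E_2$-module structure over the Pontryagin ring of the disc. Since $T_\gamma$ acts as the identity on both $\Sigma'$ and $\Sigma''$, it fixes the source, and therefore fixes everything in $H_*(C_n(\sgone);R)$.

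\emph{Part (2): $\ker\subseteq\cJ_{g,1}$ for $R=\Z$ or $\Q$.} By (1) the action factors through $\ggone/\cJ_{g,1}$, which sits in the Johnson exact sequence
\[
1\to\Lambda^3 H\to \ggone/\cJ_{g,1}\to \mathrm{Sp}_{2g}(\Z)\to 1,\qquad H:= H_1(\sgone;\Z).
\]
Elements with nontrivial image in $\mathrm{Sp}_{2g}(\Z)$ are detected by the faithful symplectic action on $H_1(C_1;R) = H_1(\sgone;R)$; for elements in the $\Lambda^3 H$ subgroup, I would construct classes in $H_*(C_n;R)$ for $n\ge 2$, assembled as Pontryagin products of classes in $H_*(\sgone;R)$, on which the Torelli group acts via the Johnson homomorphism, and verify faithfulness by a direct computation on bounding pair maps.

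\emph{Part (3): $\ker = \cJ_{g,1}(p)$ for odd $p$.} By (1), separating twists act trivially. For the $p$-th power $T_\delta^p$ of a non-separating twist, on $H_1(\sgone;\Fp)$ the transvection $T_\delta = 1+N$ satisfies $N^2 = 0$, so $T_\delta^p = 1$ modulo $p$; to extend this to $H_*(C_\bullet(\sgone);\Fp)$, I would use the Pontryagin module description established elsewhere in this paper and argue that the full $\ggone$-action is determined functorially by the action on $H_1(\sgone;\Fp)$ together with the mod-$p$ reduction of the Johnson image. Conversely, any $\phi\notin\cJ_{g,1}(p)$ is detected either on $H_1$ (via a nontrivial image in $\mathrm{Sp}_{2g}(\Fp)$) or on $H_*(C_2;\Fp)$ (via the mod-$p$ analogues of the Johnson-detection classes from (2)).

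\emph{Main obstacle.} The hardest step is showing that $T_\delta^p$ acts trivially on \emph{all} of $H_*(C_\bullet(\sgone);\Fp)$ rather than merely on $H_1$. This requires precise control of how Dehn twists interact with the mod-$p$ Pontryagin module structure, in particular with the divided powers and Dyer--Lashof-type operations that appear in odd characteristic, together with a careful construction of a mod-$p$ Johnson detection class on $C_2$ for the faithfulness half of (3).
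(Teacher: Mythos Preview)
Your approach diverges substantially from the paper's, and the central step in Part~(1) contains a genuine gap.

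\textbf{The surjectivity claim is unjustified.} You assert that
\[
\bigoplus_{k+l=n} H_*(C_k(\Sigma');R)\otimes H_*(C_l(\Sigma'');R)\longrightarrow H_*(C_n(\sgone);R)
\]
is surjective, but the justification (``stratifying\dots\ and exploiting the $E_2$-module structure'') is only a gesture. Removing an open annulus around a separating curve leaves a codimension-$1$ hole in the configuration space, and there is no general reason the open inclusion $C_n(\Sigma'\sqcup\Sigma'')\hookrightarrow C_n(\sgone)$ should be surjective on homology: classes can be carried by cycles that genuinely cross~$\gamma$. In the language of Theorem~\ref{thm:B}, the $\Gamma_R(y)$-module $\bM_g^R=\Lambda_R(x_1,\dots,x_{2g})$ does not split as a tensor product of $\Gamma_R(y)$-modules along a genus decomposition (the element~$y$ acts by $2\omega$, which mixes the two halves), so $\Ext_{\Gamma_R(y)}(\bM_g^R,R)$ is not a tensor product of the corresponding Ext groups for the pieces. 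Making the surjectivity precise, if it is even true in the required generality, would be a substantial project in its own right.

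\textbf{What the paper actually does.} The paper avoids this issue entirely by working at the chain level. It builds an explicit cellular chain complex for $\conf{\cM}$, proves a tensor decomposition $\redchains{\conf{\cM}}\cong\widetilde{\Ch}^B_*(\cM)\otimes\UMor_\bullet(2g)$ (Proposition~\ref{prop:decompositionintobarandUMor}), and shows that $\Gamma_{g,1}$ acts trivially on the first tensor factor and via the functorial ring homomorphism $\UMor_\bullet(\phi)$ on the second (Proposition~\ref{prop:UMorissubrepandCHbistrivial}). Theorem~\ref{thm:UMormaps} computes this ring action in closed form: on the generators $x_i,y_i$ it is determined by the abelianisation and by the degree-$2$ content $c_2$. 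One then checks the identity $\xi_\tau=2\tau_{g,1}$ (Proposition~\ref{prop:xiequalsJohnson}), from which triviality of the Johnson kernel on $\UMor_\bullet(2g)$, and hence on the whole chain complex, is immediate.

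\textbf{The obstacle you flag in Part~(3) is real, and your sketch does not resolve it.} Knowing that $T_\delta^p$ acts trivially on $H_1(\sgone;\Fp)$ and that its mod-$p$ Johnson image vanishes controls the action on $\UMor_{-1}(2g)\otimes\Fp$ and $\UMor_{-2}(2g)\otimes\Fp$, but $\UMor_\bullet(2g)\otimes\Fp$ is \emph{not} generated as an $\Fp$-algebra in weights $-1$ and $-2$: the divided powers $y_i^{[m]}$ are new generators in each weight. The paper's key step (Proposition~\ref{prop:FpactiononUMor}) is the observation that $\UMor_\bullet(\phi)$ is a ring homomorphism of the \emph{torsion-free} ring $\UMor_\bullet(2g)$, so from $\UMor(\phi)(y_i)=y_i+pa$ one obtains $\UMor(\phi)(y_i^{[m]})=(y_i+pa)^m/m!\equiv y_i^{[m]}\pmod p$ for all~$m$. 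Your plan does not supply an analogue of this divided-power argument.
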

For $R=\bF_2$, the kernel of the action of $\ggone$ on $H_*(C_n(\sgone);\bF_2)$ is $\cT_{g,1}(2)$ \cite{Bianchi}. The fact that $\cJ_{g,1}$ acts trivially on $H_*(C_n(\sgone);\Q)$ already appears in \cite{Stavrou}.

\subsection{Homology of configurations as Ext groups}
Let $R$ be a commutative ring.
Our second, main result, Theorem \ref{thm:B}, describes the direct sum
\[
H_*(C_\bullet(\sgone);R)\cong\bigoplus_{n\ge0}H_*(C_n(\sgone);R)
\]
as a tensor product of simpler terms, and in particular as a module over the Pontryagin ring
\[
H_*(C_\bullet(D);R)\cong\bigoplus_{n\ge0}H_*(C_n(D);R),
\]
where $D\cong\Sigma_{0,1}$ is a disc. Here we use that $C_\bullet(D)=\coprod_{n\ge0}C_n(D)$ has an $E_2$-algebra structure induced by embedding several copies of $D$ inside $D$, and that $C_\bullet(\sgone)=\coprod_{n\ge0}C_n(\sgone)$ has a structure of $E_1$-module over $C_\bullet(D)$ induced by identifying $\sgone\cong\sgone\cup_{\frac12\partial}D$ and by embedding several copies of $D$ in $D\subset\sgone$.

In Theorem \ref{thm:B} we make use of the bigrading $(\bullet,\star)$ given by the two gradings $\bullet$ (called \emph{weight}) and $\star$ (called \emph{bar-degree}); the grading $*$ can be recovered as $\bullet+\star$.

We work in the abelian category $\mathrm{Mod}_R^{\Z}$ of weighted $R$-modules, i.e. of $\Z$-indexed families of $R$-modules, endowed with the tensor product given by Day convolution with respect to sum in $\Z$ and tensor product of $R$-modules.
We let $\Gamma_R(y)$ denote the free divided power $R$-algebra generated by a variable $y$ of weight $-2$ (the negative weight of $y$ ensures that the terms in Theorem \ref{thm:B} are in non-negative weight).
We let $\cH_g^R$ denote a free $R$-module of rank $2g$ concentrated in bigrading $(\bullet,\star)=(2,0)$, and we let $R[\cH_g^R]$ denote the free symmetric $R$-algebra generated by $\cH_g^R$; we consider $R[\cH_g^R]$ just as a bigraded $R$-module.
Finally, in Definition \ref{defn:bMg} we introduce, for $g\ge0$, an explicit $\Gamma_R(y)$-module $\bM_g^R$: we remark here that $\bM_g^R$ is concentrated in non-positive weights and is finitely generated and free over $R$.

\begin{athm}\label{thm:B}
Let $R$ be a commutative ring, and set $*=\bullet+\star$.
\begin{enumerate}
 \item There is an isomorphism of bigraded rings
 \[
 H_*(C_\bullet(D);R)\cong R[\epsilon]\otimes_R \Ext^\star_{\Gamma_R(y)}(R,R).
 \]
Here $R[\epsilon]$ is the polynomial ring in one variable $\epsilon$ in bigrading $(1,-1)$, and we endow $\Ext^\star_{\Gamma_R(y)}(R,R)$ with the Yoneda product.
\item There is an isomorphism of bigraded
$R[\epsilon]\otimes_R\Ext^\star_{\Gamma_R(y)}(R,R)$-modules
\[
 H_*(C_\bullet(\sgone);R)\cong R[\epsilon]\otimes_R\Ext^\star_{\Gamma_R(y)}(\bM_g^R,R)\otimes_R R[\cH_g^R].
\]
Here $R[\epsilon]$ acts on itself by ring multiplication, whereas $\Ext^\star_{\Gamma_R(y)}(R,R)$ acts by the Yoneda product on $\Ext^\star_{\Gamma_R(y)}(\bM_g^R,R)$.
\end{enumerate}
\end{athm}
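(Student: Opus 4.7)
The plan is to work with an explicit bigraded chain-level model for $H_*(C_\bullet(\sgone);R)$, compatible with the Pontryagin product and the $E_1$-module structure, and then to identify this model, up to quasi-isomorphism, with a bar complex computing $\Ext$ over $\Gamma_R(y)$. First I would set up such a chain complex, for instance via a Fox--Neuwirth / Fuks-type stratification of $C_\bullet(\sgone)$, or via a cellular decomposition coming from a handle structure on $\sgone$; this is the kind of model Bianchi used in prior work on mod-$2$ configuration space homology. Under it, the weight $\bullet$ tracks the number of configured points and the bar-degree $\star$ tracks homological degree, and the $E_2$-algebra structure on $C_\bullet(D)$ together with the $E_1$-module structure of $C_\bullet(\sgone)$ over $C_\bullet(D)$ lifts to a differential bigraded algebra/module structure on these chains.

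The second, and most crucial, step is to identify the chain complex for $\sgone$, compatibly with the Pontryagin/Yoneda actions, with $R[\epsilon]\otimes_R\reB(R,\Gamma_R(y),\bM_g^R)\otimes_R R[\cH_g^R]$, where $\reB$ denotes the two-sided bar resolution; similarly the disc complex should match $R[\epsilon]\otimes_R\reB(R,\Gamma_R(y),R)$. Divided powers appear because at odd primes the natural algebraic closure, under the $E_2$-operations (Dyer--Lashof operations and the Browder bracket), of a single weight $-2$ class is precisely the divided power algebra $\Gamma_R(y)$; the factor $R[\epsilon]$ comes from the class of a single configured point accounting for the parity splitting of weights; and $R[\cH_g^R]$ arises from the $2g$ handle-directions of $\sgone$, each contributing a polynomial generator in bidegree $(2,0)$ via the inclusion of a configured point near the corresponding handle. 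Once this identification is in place, part (2) follows by taking homology of the bar resolution, which by definition yields $\Ext^\star_{\Gamma_R(y)}(\bM_g^R,R)$; part (1) is then the specialization $g=0$, with $\bM_0^R\cong R$ and $\cH_0^R=0$.

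The main obstacle will be the identification in the second step. One must match the geometric Pontryagin product on the chain model against the Yoneda product on $\Ext$, tracking signs, weights, and divided-power operations correctly at odd primes, and one must construct $\bM_g^R$ as an explicit $\Gamma_R(y)$-module from the handle decomposition of $\sgone$ and verify that its Ext-dual agrees with the geometric chain model. A second, more technical issue is separating off the $R[\epsilon]$ and $R[\cH_g^R]$ factors: the former requires analyzing an even/odd splitting reflecting the parity of the number of configured points, while the latter amounts to a twisted K\"unneth-style decomposition isolating the handle contributions from the bar part. Once these compatibilities are verified, the naturality of $\reB$ with respect to the augmentation $\bM_g^R \to R$ (or its dual) gives the equivariance of the isomorphism under the $R[\epsilon]\otimes_R\Ext^\star_{\Gamma_R(y)}(R,R)$-action, completing the identification as bigraded modules.
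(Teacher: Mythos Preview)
Your overall architecture---cellular model, identification with a bar complex, passage to $\Ext$, then a K\"unneth decomposition---is exactly the paper's route. But several steps as you have stated them would not go through, and the paper differs in ways that matter.

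First, the paper does not build a chain model for $C_\bullet(\sgone)$ directly. It works on the Poincar\'e-dual side: it gives a cell decomposition of the one-point compactifications $\conf[n]{\cM}$ and shows (Proposition~\ref{prop:redchainsasbarcomplex}) that the reduced cellular chain complex of $\conf{\cM}$ \emph{is} the bar complex $\reB_\star(\Z,\UMor_\bullet(1),\UMor_\bullet(2g))$, where $\UMor_\bullet(1)=\Lambda_\Z(x)\otimes\Gamma_\Z(y)$ and $\UMor_\bullet(2g)=\Lambda_\Z(x_1,\dots,x_{2g})\otimes\Gamma_\Z(y_1,\dots,y_{2g})$. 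One then \emph{dualises} this chain complex and invokes Poincar\'e--Lefschetz duality (Proposition~\ref{prop:equivariantPD}) to identify its cohomology with $H_*(C_\bullet(\sgone);R)$. Your assertion that ``taking homology of the bar resolution yields $\Ext$'' is not correct as written: homology of $\reB(R,\Gamma_R(y),\bM_g^R)$ is a $\Tor$, not an $\Ext$; the dualisation is essential, and it is precisely the Poincar\'e-duality step that makes it geometrically meaningful.

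Second, the tensor factorisation is carried out at the $\Ext$ level via K\"unneth, not at the chain level. The algebra $\UMor^R_\bullet(1)$ splits as $\Lambda_R(x)\otimes_R\Gamma_R(y)$, and the module $\UMor^R_\bullet(2g)$ splits compatibly as $R\otimes_R\bM_g^R\otimes_R\Gamma_R(y_1,\dots,y_{2g})$, with $\bM_g^R=\Lambda_R(x_1,\dots,x_{2g})$ carrying the $\Gamma_R(y)$-action $y^{[m]}\mapsto\Omega_{2m}$ determined by the differential on the bar complex (see Definition~\ref{defn:bMg}). The three $\Ext$ factors are then $\Ext^\star_{\Lambda_R(x)}(R,R)\cong R[\epsilon]$, $\Ext^\star_{\Gamma_R(y)}(\bM_g^R,R)$, and $\Hom_R(\Gamma_R(y_1,\dots,y_{2g}),R)\cong R[\cH_g^R]$. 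In particular $R[\epsilon]$ does \emph{not} come from an even/odd parity splitting of the number of points: it is $\Ext$ over the exterior factor $\Lambda_R(x)$, with $\epsilon$ in bidegree $(1,-1)$. Likewise, the divided powers arise not from Dyer--Lashof considerations but from the direct computation $\UMor_\bullet(1)\cong\Lambda_\Z(x)\otimes\Gamma_\Z(y)$ (Proposition~\ref{prop:UMoralgebra}).

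Finally, your plan defers both the construction of $\bM_g^R$ and the matching of the Pontryagin product with the Yoneda product. In the paper this matching is the content of Proposition~\ref{prop:isoYonedamodule}: one writes down the collapse map $\conf{\ttau}$ dual to the $E_1$-module structure, checks it is cellular, and compares the induced cochain map against the Yoneda pairing on the cobar complex. This is where the signs and bigradings are actually pinned down, and your proposal does not yet indicate how you would execute it.
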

Part (1) of Theorem \ref{thm:B}, in the stated form, was first proved by Markaryan \cite{Markaryan}, and is preceded by classical computations of $H_*(C_\bullet(D);R)$ as bigraded vector spaces for $R$ being a field of characteristic zero \cite{Arnold}, two \cite{Fuchs} or an odd prime $p$ \cite{Weinstein}, and in all characteristics a description of the ring structure as well as of the action of the dual Steenrod algebra is given in \cite[Chapter III]{CLM}. Similar results comparing twisted homology of braid groups and cohomology of augmented algebras can be found in \cite{Callegaro, ETW, Hoang}.
For $R=\bF_2$ or $\Q$, Theorem \ref{thm:B} recovers computations of $H_*(C_\bullet(\sgone);R)$ already present in the literature \cite{BCT,Bianchi,BC,DCK,Stavrou}.

\begin{rem}
\label{rem:Extnegative}
 In Theorem \ref{thm:B}, and later in Theorem \ref{thm:C}, $\Ext^\star_{\Gamma_R(y)}(-,R)$ is the collection of the right derived functors of $\Hom_{\Gamma_R(y)}(-,R)$. The latter is
 part of the internal hom-functor $\Hom_{\Gamma_R(y)}(-,-)$ of the abelian category $\mathrm{Mod}_{\Gamma_R(y)}(\mathrm{Mod}_R^\Z)$ of weighted $\Gamma_R(y)$-modules in $\mathrm{Mod}_R^\Z$, endowed with the monoidal structure of weighted tensor product over $\Gamma_R(y)$.
 We adopt the convention that $\Ext$-groups are concentrated in bar-degree $\star\le0$. So we denote by $\Ext^0$ the functor $\Hom$, by $\Ext^{-1}$ the first right derived functor of $\Hom$, and so on.
\end{rem}

Theorem \ref{thm:B} recovers the following results due to \cite{McDuff, Segal} (see also \cite{ORW}), and due to \cite{KMT} (also implicit in \cite{BC,DCK,Stavrou}).
\begin{acor}[Split homological stability and rational extremal stability]
\label{cor:splitstabextremalstab}
The following statements hold.
\begin{enumerate}
 \item The map $\epsilon\cdot -\colon H_i(C_n(\sgone);R)\to H_i(C_{n+1}(\sgone);R)$ is split injective and is an isomorphism for $i\le \frac 12n$.
 \item For $i\ge0$, there are two polynomials $P_{g,i}^1(s),P_{g,i}^{-1}(s)\in\Q[s]$ of degree $2g-1$ such that for all $n\ge 2g+i+1$ there is an equality
 \[
 \dim_{\Q}H_{n-i}(C_n(\sgone);\Q)= P_{g,i}^{(-1)^n}(n).
 \]
\end{enumerate}
\end{acor}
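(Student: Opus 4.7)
The plan is to derive both statements directly from the structural isomorphism of Theorem \ref{thm:B}(2),
\[
H_*(C_\bullet(\sgone); R) \cong R[\epsilon] \otimes_R N_g^R, \qquad N_g^R := \Ext^\star_{\Gamma_R(y)}(\bM_g^R, R) \otimes_R R[\cH_g^R].
\]
Since $R[\epsilon]$ is a polynomial ring, the right hand side is a \emph{free} $R[\epsilon]$-module on the bigraded $R$-module $N_g^R$; this single observation will drive both parts.

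For part (1), freeness immediately yields that $\epsilon \cdot -$ is split injective in each bidegree, with canonical $R$-linear splitting given by projection onto the summands $\epsilon^k \cdot N_g^R$ with $k \geq 1$. The cokernel of $\epsilon \cdot -\colon H_i(C_n(\sgone); R) \to H_i(C_{n+1}(\sgone); R)$ is identified with $(N_g^R)^{(n+1,\, i-n-1)}$; and because $R[\cH_g^R]$ is concentrated in bar-degree zero, vanishing of this cokernel reduces to the pointwise vanishing $\Ext^{i-n-1}_{\Gamma_R(y)}(\bM_g^R, R)^{(n+1-2l)} = 0$ for every $l \geq 0$. I would establish this by constructing a free $\Gamma_R(y)$-resolution of $\bM_g^R$ whose $k$-th syzygy module has generators concentrated in weights within a bounded window around $-2k$; this is feasible because $\bM_g^R$ is finitely generated and concentrated in non-positive weights while $y$ has weight $-2$. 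Dualizing then yields an upper bound of the form $-2s + O(g)$ on the weights of $\Ext^s_{\Gamma_R(y)}(\bM_g^R, R)$, from which a direct numerical check would confirm the claimed stability range $i \leq n/2$.

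For part (2), I would specialize to $R = \Q$: since $\Gamma_\Q(y) = \Q[y]$ has global dimension one, $\Ext^\star_{\Q[y]}(\bM_g^\Q, \Q)$ is finite-dimensional and supported in bar-degrees $\{-1, 0\}$ at finitely many weights. Because $\Q[\cH_g^\Q]$ is a polynomial algebra on $2g$ generators of weight $2$, $\dim \Q[\cH_g^\Q]^{(2l, 0)} = \binom{l + 2g - 1}{2g - 1}$ is a polynomial of degree $2g - 1$ in $l \geq 0$. The dimension of $H_{n - i}(C_n(\sgone); \Q)$, which occupies bigrading $(n, -i)$, therefore unwinds to a finite sum
\[
\sum_{(w, s)} \dim \Ext^{s}_{\Q[y]}(\bM_g^\Q, \Q)^{(w)} \cdot \binom{(n - w - s - i)/2 + 2g - 1}{2g - 1},
\]
subject to the parity constraint $n \equiv w + s + i \pmod{2}$. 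For $n \geq 2g + i + 1$, every contributing $l = (n - w - s - i)/2$ is non-negative and the binomial formula is exact; partitioning contributions by the parity of $n$ then yields the two polynomials $P_{g, i}^{\pm 1}(n)$ of degree $2g - 1$.

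The principal obstacle is the weight-bounded resolution of $\bM_g^R$ underpinning the vanishing in part (1), especially over $\Z$ or $\Fp$ where divided-power phenomena in $\Gamma_R(y)$ complicate minimal resolutions; once that is in place, both corollaries reduce to Hilbert-series arithmetic applied to Theorem \ref{thm:B}.
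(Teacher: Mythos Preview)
Your approach is essentially the paper's, but you have made heavy weather of what is in fact the easiest step, and in doing so you have blurred the one bound that matters.

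For the stability range in part~(1), what you call the ``principal obstacle'' is not an obstacle at all. No special resolution needs to be constructed: the \emph{bar} resolution of $\bM_g^R$ over $\Gamma_R(y)$ already has the required weight control, uniformly in $R$ (divided powers are irrelevant here). Since the augmentation ideal $\Gamma_R(y)_+$ is concentrated in weights $\le -2$, the $b$-th bar term $R\otimes(\Gamma_R(y)_+)^{\otimes b}\otimes\bM_g^R$ is concentrated in weights $\le -2b$; dualising, $\Ext^{\star}_{\Gamma_R(y)}(\bM_g^R,R)$ is supported in bidegrees satisfying $\bullet \ge -2\star$, with \emph{no} $O(g)$ correction. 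This is exactly the bound the paper records.

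This matters because it is the \emph{lower} bound on the weight of $\Ext^\star$ that controls the cokernel of $\epsilon\cdot-$, not an upper bound as you wrote. Your own reduction is correct: the cokernel at $(n+1,\,i-n-1)$ is a sum over $l\ge 0$ of $\Ext^{i-n-1}$ in weight $n+1-2l$, and the sharp inequality $\bullet\ge -2\star$ forces any nonzero contribution to have weight at least $2(n+1-i)$; combined with $n+1-2l\le n+1$ this gives vanishing precisely when $2(n+1-i)>n+1$, i.e.\ $i\le n/2$. Had you carried an $O(g)$ term on the wrong side, you would have obtained a range depending on $g$, contrary to the statement.

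Your argument for part~(2) is correct and is the paper's argument made explicit: over $\Q$ one has $\Gamma_\Q(y)\cong\Q[y]$, $\bM_g^\Q$ is torsion, $\Ext^\star$ is finite-dimensional and sits in bar-degrees $\{0,-1\}$, and the rest is Hilbert-function arithmetic for finitely generated modules over $\Q[\cH_g^\Q]$. Your formula implicitly (and correctly) absorbs the power of $\epsilon$, since for fixed bar-degree $-i$ the exponent of $\epsilon$ is determined by the bar-degree of the $\Ext$-class.
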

For an original corollary of Theorem \ref{thm:B}, note that for $g,g'\ge0$ we can embed the disjoint union $\sgone\sqcup\Sigma_{g',1}$ inside $\Sigma_{g+g',1}$ as the complement of a pair of pants, and for all $n,n'\ge0$ we obtain an inclusion 
\[
C_n(\sgone)\times C_{n'}(\Sigma_{g',1})\hookrightarrow C_{n+n'}(\Sigma_{g+g',1}).
\]
We now set $g'=n'=1$.
\begin{acor}[Filled genus stabilisation]\label{cor:filledgenus}
Let $g,n\ge0$. Then the embedding construction induces a split injective map
\[
 H_*(C_n(\sgone);R)\otimes_R H_1(\Sigma_{1,1};R)\hookrightarrow H_{*+1}(C_{n+1}(\Sigma_{g+1,1});R).
\]
\end{acor}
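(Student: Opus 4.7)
\emph{Plan.} The plan is to deduce the corollary directly from Theorem \ref{thm:B} by tracking the $H_1(\Sigma_{1,1};R)$-class through the tensor product decomposition, and then producing an explicit splitting at the level of the $\Ext$ factor.

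First, I would apply Theorem \ref{thm:B} to each of $\sgone$, $\Sigma_{1,1}$ and $\Sigma_{g+1,1}$ to rewrite all three homologies as bigraded tensor products. I then locate the copy of $H_1(\Sigma_{1,1};R) = R^{2}$ inside $H_*(C_\bullet(\Sigma_{1,1});R)$: since this summand lies in bigrading $(\bullet,\star) = (1,0)$, and since $R[\epsilon]$ contributes only in bar-degree $-1$ per unit weight while $R[\cH_1^R]$ is concentrated in even weight, $H_1(\Sigma_{1,1};R)$ must correspond precisely to the weight-$1$ part of $\Hom_{\Gamma_R(y)}(\bM_1^R, R)$.

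Next, I would invoke the naturality of Theorem \ref{thm:B} with respect to the embedding $\sgone \sqcup \Sigma_{1,1} \hookrightarrow \Sigma_{g+1,1}$. This embedding is compatible with the $E_1$-module structure over $C_\bullet(D)$, hence it induces natural maps on all three tensor factors. On the $\cH^R$-factor, the natural inclusions give $\cH_g^R \oplus \cH_1^R \hookrightarrow \cH_{g+1}^R$, a map of free $R$-modules of equal rank $2(g+1)$, hence an isomorphism; the $R[\epsilon]$-factors combine by ring multiplication. Consequently, the corollary reduces to showing that left-multiplication by the image $\bar y \in \Ext^0_{\Gamma_R(y)}(\bM_{g+1}^R, R)$ of the weight-$1$ summand of $\Hom_{\Gamma_R(y)}(\bM_1^R, R)$ is split injective on $\Ext^\star_{\Gamma_R(y)}(\bM_g^R, R)$.

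Finally, I would produce the splitting using the explicit form of $\bM_g^R$ from Definition \ref{defn:bMg}. I expect this definition, constructed from the pair-of-pants decomposition of $\Sigma_{g+1,1}$, to yield a natural map of $\Gamma_R(y)$-modules $\bM_{g+1}^R \to \bM_g^R \otimes \bM_1^R$ (or an analogous Künneth-type relation) that induces a direct summand inclusion
\[
\Ext^\star_{\Gamma_R(y)}(\bM_g^R, R) \otimes \Hom_{\Gamma_R(y)}(\bM_1^R, R) \hookrightarrow \Ext^\star_{\Gamma_R(y)}(\bM_{g+1}^R, R)
\]
under which multiplication by $\bar y$ is visibly split. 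The main obstacle is precisely this last verification: matching the algebraic structure of $\bM_{g+1}^R$ relative to $\bM_g^R$ and $\bM_1^R$ against the pair-of-pants embedding so as to extract the desired splitting of $\Ext$ modules. Once that compatibility is in place, the split injectivity is formal.
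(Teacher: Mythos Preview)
Your overall strategy matches the paper's: reduce to the tensor factors from Theorem~\ref{thm:B}, and show split injectivity factor by factor. Two points deserve correction or sharpening.

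\textbf{The ``left-multiplication by $\bar y$'' step is not well-posed.} There is no Yoneda-style multiplication of $\Ext^0_{\Gamma_R(y)}(\bM_{g+1}^R,R)$ on $\Ext^\star_{\Gamma_R(y)}(\bM_g^R,R)$: these are Ext groups of two \emph{different} modules over $\Gamma_R(y)$, and neither is a ring acting on the other. What you actually need is a map
\[
\Ext^\star_{\Gamma_R(y)}(\bM_g^R,R)\otimes_R\bigl(Rx_{2g+1}\oplus Rx_{2g+2}\bigr)^\vee\longrightarrow \Ext^\star_{\Gamma_R(y)}(\bM_{g+1}^R,R)
\]
coming contravariantly from a $\Gamma_R(y)$-linear map $\bM_{g+1}^R\to \bM_g^R\otimes_R(Rx_{2g+1}\oplus Rx_{2g+2})$, with the second tensor factor carrying the trivial $\Gamma_R(y)$-action. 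The K\"unneth-type relation you gesture at does exist, but the correct formulation is this projection onto the part of exterior degree one in $x_{2g+1},x_{2g+2}$, not a product structure on Ext.

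\textbf{The splitting is the whole content, and you have not supplied it.} Once the map above is written down, the paper obtains the $\Gamma_R(y)$-split surjectivity directly from Proposition~\ref{prop:bMgsplitting}: the direct-sum decomposition of $\bM_{g+1}^R$ over functions $\xi\colon\{1,\dots,g+1\}\to\bK$ exhibits $\bM_g^R\otimes_R(Rx_{2g+1}\oplus Rx_{2g+2})$ as exactly the sub-direct-sum of those $\xi$ with $\xi(g+1)\in\{\bd,\bu\}$. This is precisely the ``main obstacle'' you name; without it the argument is incomplete. A K\"unneth decomposition $\bM_{g+1}^R\cong\bM_g^R\otimes_R\bM_1^R$ as modules over the Hopf algebra $\Gamma_R(y)$ is true but does not by itself give a $\Gamma_R(y)$-module splitting of the relevant projection; Proposition~\ref{prop:bMgsplitting} is the efficient route.

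A minor point: your claim that the $\cH$-factor map is an isomorphism applies to the full product $R[\cH_g^R]\otimes R[\cH_1^R]\to R[\cH_{g+1}^R]$, but once you restrict to the class in $H_1(\Sigma_{1,1};R)$, that class contributes the unit $1\in R[\cH_1^R]$, so the relevant map on this factor is the split inclusion $R[\cH_g^R]\hookrightarrow R[\cH_{g+1}^R]$, just as in the paper.
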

We remark that, instead, the plain genus stabilisation map 
\[
H_*(C_n(\sgone);R)\to H_{*}(C_n(\Sigma_{g+1,1});R)
\]
is not injective in general; for instance one can identify the map
\[
H_1(C_2(\Sigma_{0,1});\Z)\to H_1(C_2(\Sigma_{1,1});\Z)
\]
with the composite map $\Z\twoheadrightarrow\bF_2\hookrightarrow\bF_2\oplus\Z^2$.

\subsection{Computation of the Ext groups}
We next move to our third, main result of the paper. Theorem \ref{thm:B}
reduces the computation of $H_*(C_\bullet(\sgone);R)$ to the computation of
the ring $\Ext^\star_{\Gamma_R(y)}(R,R)$ and of its modules 
$\Ext^\star_{\Gamma_R(y)}(\bM_g^R,R)$.

Let now $R=\Fp$ with $p$ an odd prime.
Setting $\fy_i=y^{[i]}=y^i/i!$, we have that $\Gamma_{\Fp}(y)$ is isomorphic as a weighted algebra to the truncated polynomial algebra
\[
\Fp[\fy_0,\fy_1,\fy_2,\dots]/(\fy_0^p,\fy_1^p,\fy_2^p\dots),
\]
and consequently we have an isomorphism of bigraded algebras
\[
 \Ext^\star_{\Gamma_{\Fp}(y)}(\Fp,\Fp)\cong \bigotimes_{i=0}^\infty \Ext^\star_{\Fp[\fy_i]/(\fy_i^p)}(\Fp,\Fp)\cong \bigotimes_{i=0}^\infty\Fp[\alpha_i,\beta_i],
\]
where
\[
\alpha_i\in \Ext^1_{\Fp[\fy_i]/(\fy_i^p)}(\Fp,\Fp)\ \mbox{ and }\ \beta_i\in \Ext^2_{\Fp[\fy_i]/(\fy_i^p)}(\Fp,\Fp)
\]
are generators, and $\bigotimes_{i=0}^\infty\Fp[\alpha_i,\beta_i]=\Fp[\alpha_0,\beta_0,\alpha_1,\beta_1,\dots]$
denotes the free graded-commutative algebra generated by the classes $\alpha_0,\beta_0,\alpha_1,\beta_1,\dots$.

We postpone the statement of Theorem \ref{thm:C} to the body of the paper, due to its technical nature, but we briefly discuss its content here.
In Definition \ref{defn:twoB} we introduce for all $u\ge0$ a certain weighted $\Gamma_{\Fp}(y)$-module $B^{\Fp}_u$, and in Proposition \ref{prop:bMgsplitting} we decompose the $\Gamma_{\Fp}(y)$-module $\bM_g^{\Fp}$ as a finite direct sum of weighted shifts of $\Gamma_{\Fp}(y)$-modules of the form $B^{\Fp}_u$. We thus reduce the computation of $\Ext^\star_{\Gamma_{\Fp}(y)}(\bM_g^{\Fp},\Fp)$ to the computation of $\Ext^\star_{\Gamma_{\Fp}(y)}(B^{\Fp}_u,\Fp)$, and the latter is achieved in Theorem \ref{thm:C}.

The following corollaries follow from Theorem \ref{thm:C} and its argument of proof.

\addtocounter{athm}{1}
\setcounter{acor}{0}
\begin{acor}[Qualitative module structure]
\label{acor:qualitative}
Let $p$ be an odd prime, and let $\Fp[\cH_g^{\Fp}]$ be as in Theorem \ref{thm:B}; then $H_*(C_\bullet(\sgone);\Fp)$ is isomorphic, as a module over $H_*(C_\bullet(D);\Fp)\cong\Fp[\epsilon,\alpha_0,\beta_0,\dots]$, to a finite direct sum of modules of the following forms, where $i\ge0$ (tensor products are taken here over $\Fp$):
\[
 \Fp[\epsilon,\alpha_0,\beta_0,\dots]/(\alpha_0,\dots,\beta_{i-1})\otimes\Fp[\cH_g^{\Fp}];\quad
 \Fp[\epsilon,\alpha_0,\beta_0,\dots]/(\alpha_0,\dots,\alpha_{i})\otimes\Fp[\cH_g^{\Fp}].
\]
\end{acor}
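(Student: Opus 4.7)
The plan is to assemble three previously stated results into a linear chain: Theorem \ref{thm:B}(2), Proposition \ref{prop:bMgsplitting}, and Theorem \ref{thm:C}.

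First I would specialise Theorem \ref{thm:B}(2) to $R=\Fp$ to identify $H_*(C_\bullet(\sgone);\Fp)$ with
\[
\Fp[\epsilon]\otimes_{\Fp}\Ext^\star_{\Gamma_{\Fp}(y)}(\bM_g^{\Fp},\Fp)\otimes_{\Fp}\Fp[\cH_g^{\Fp}],
\]
as a module over $H_*(C_\bullet(D);\Fp)\cong\Fp[\epsilon,\alpha_0,\beta_0,\alpha_1,\beta_1,\dots]$, where the $\alpha_i,\beta_i\in\Ext^\star_{\Gamma_{\Fp}(y)}(\Fp,\Fp)$ act on the middle factor only, via the Yoneda product, $\epsilon$ acts on the first factor by polynomial multiplication, and the rightmost $\Fp[\cH_g^{\Fp}]$ is a trivial (free) scalar extension for the module structure.

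Next I would invoke Proposition \ref{prop:bMgsplitting}, which splits $\bM_g^{\Fp}$ as a finite direct sum of weighted shifts of modules of the form $B_u^{\Fp}$. Additivity of $\Ext^\star_{\Gamma_{\Fp}(y)}(-,\Fp)$ in the first variable then decomposes the middle tensor factor, as a module over $\Fp[\alpha_0,\beta_0,\dots]$, into a finite direct sum of Ext groups $\Ext^\star_{\Gamma_{\Fp}(y)}(B_u^{\Fp},\Fp)$, up to weight shifts which do not change the isomorphism type as a module. Finally Theorem \ref{thm:C} identifies each such summand with a cyclic $\Fp[\alpha_0,\beta_0,\dots]$-module whose annihilator is generated either by $\alpha_0,\beta_0,\dots,\alpha_{i-1},\beta_{i-1}$ or by $\alpha_0,\beta_0,\dots,\alpha_i$, for some integer $i=i(u)$ determined by $u$. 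Tensoring these cyclic quotients with $\Fp[\epsilon]$ and $\Fp[\cH_g^{\Fp}]$, on both of which every $\alpha_j,\beta_j$ acts as zero, produces exactly the two shapes of modules listed in the corollary.

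The main obstacle is of course Theorem \ref{thm:C} itself, where the real computation takes place: one must build explicit projective resolutions of each $B_u^{\Fp}$ over the truncated polynomial algebra $\Gamma_{\Fp}(y)\cong\bigotimes_{i\ge0}\Fp[\fy_i]/(\fy_i^p)$ and track the Yoneda action of $\alpha_i,\beta_i$ on the resulting Ext groups to extract the cyclic structure and the precise annihilator ideal. Once Theorem \ref{thm:C} is granted, the corollary reduces to the formal steps outlined above: additivity of $\Ext$, invariance of the module-isomorphism type under weight shift, and trivial scalar extension along the flanking tensor factors $\Fp[\epsilon]$ and $\Fp[\cH_g^{\Fp}]$.
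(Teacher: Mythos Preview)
Your overall plan---assemble Theorem \ref{thm:B}(2), Proposition \ref{prop:bMgsplitting}, and Theorem \ref{thm:C}---is exactly the paper's route. But your final step misreads Theorem \ref{thm:C} and thereby skips the one non-formal ingredient the paper actually invokes.

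You write that ``Theorem \ref{thm:C} identifies each such summand with a cyclic $\Fp[\alpha_0,\beta_0,\dots]$-module whose annihilator is generated either by $\alpha_0,\beta_0,\dots,\alpha_{i-1},\beta_{i-1}$ or by $\alpha_0,\beta_0,\dots,\alpha_i$, for some integer $i=i(u)$ determined by $u$.'' That is not what Theorem \ref{thm:C} says. For a fixed $u$ it gives a direct sum over \emph{all} $0\le i\le h$ (not a single $i(u)$) of terms
\[
\Fp\;\otimes_{\Fp}\;\Ext^\star_{\Fp[\fy_i]/(\fy_i^p)}(N_{u,i},\Fp)\;\otimes_{\Fp}\;\bigotimes_{j>i}\Ext^\star_{\Fp[\fy_j]/(\fy_j^p)}(\Fp,\Fp),
\]
where each $N_{u,i}$ is some finitely generated $\Fp[\fy_i]/(\fy_i^p)$-module. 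Nothing in Theorem \ref{thm:C} asserts that these pieces are cyclic over $\Fp[\alpha_0,\beta_0,\dots]$; the middle factor is an a priori unspecified $\Fp[\alpha_i,\beta_i]$-module, and one still has to decompose $N_{u,i}$ into cyclic $\Fp[\fy_i]/(\fy_i^p)$-modules and analyse the $\Ext$ of each.

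The paper's proof supplies precisely this missing step: it cites, in addition to the three results you list, the elementary fact that for any finitely generated $\Fp[\fy_i]/(\fy_i^p)$-module $N$ the module $\Ext^\star_{\Fp[\fy_i]/(\fy_i^p)}(N,\Fp)$ is finitely generated and \emph{free over} $\Fp[\beta_i]$. It is this freeness, tensored with the outer factors, that yields the direct sum of quotients of the two displayed shapes. Two smaller omissions: Proposition \ref{prop:bMgsplitting} produces shifts of $\twoB^{\Fp}_u$, not $B^{\Fp}_u$, and one must invoke the isomorphism $\twoB^{\Fp}_u\cong B^{\Fp}_u$ valid for odd $p$; and weight shifts are harmless for the module-isomorphism type but should be mentioned, since the summands in the corollary are only specified up to bigraded shift.
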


\begin{acor}[Near top degree generators]\label{acor:neartop}
Let $g\ge0$ and let $p$ be an odd prime; then $H_*(C_\bullet(\sgone);\Fp)$ is generated as a $H_*(C_\bullet(D);\Fp)$-module by the direct sum
\[
\bigoplus_{n\ge0}H_{n-1}(C_n(\sgone);\Fp)\oplus\bigoplus_{n\ge0}H_{n}(C_n(\sgone);\Fp).
\]
\end{acor}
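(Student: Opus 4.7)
The plan is to establish the stronger statement that $H_*(C_\bullet(\sgone);\Fp)$ is already generated, as an $H_*(C_\bullet(D);\Fp)$-module, by its bar-degree $0$ part $\bigoplus_{n\ge0}H_n(C_n(\sgone);\Fp)$, which is a subset of the generating set claimed in the corollary. The strategy is a bigraded Nakayama argument, applied to the cyclic decomposition of $H_*(C_\bullet(\sgone);\Fp)$ supplied by Corollary \ref{acor:qualitative}.

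To set up Nakayama, I would first observe that the augmentation ideal $H_*(C_\bullet(D);\Fp)^+$ is concentrated in strictly positive weights: its algebra generators $\epsilon,\alpha_i,\beta_i$ have weights $1,2p^i,2p^{i+1}$, respectively. Since $H_*(C_\bullet(\sgone);\Fp)$ vanishes in negative weights, a standard bigraded Nakayama lemma implies that any lift of an $\Fp$-basis of the indecomposables
\[
\Fp\otimes_{H_*(C_\bullet(D);\Fp)}H_*(C_\bullet(\sgone);\Fp)
\]
generates $H_*(C_\bullet(\sgone);\Fp)$ as a module. It therefore suffices to show that these indecomposables are concentrated in bar-degree $0$.

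By Corollary \ref{acor:qualitative}, $H_*(C_\bullet(\sgone);\Fp)$ decomposes as a finite direct sum of cyclic-type modules of the form $\Fp[\epsilon,\alpha_0,\beta_0,\dots]/I\otimes_{\Fp}\Fp[\cH_g^{\Fp}]$, with $I$ one of the two explicit ideals listed there. Tensoring any such summand with $\Fp$ over $\Fp[\epsilon,\alpha_0,\beta_0,\dots]$ collapses the first factor to $\Fp$, leaving only the factor $\Fp[\cH_g^{\Fp}]$, which is concentrated in bar-degree $0$. Summing over summands, the indecomposables of $H_*(C_\bullet(\sgone);\Fp)$ are concentrated in bar-degree $0$; under the identification $*=\bullet+\star$ this is precisely $\bigoplus_{n\ge0}H_n(C_n(\sgone);\Fp)$, as required. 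The real work is therefore done by Corollary \ref{acor:qualitative} (in turn by Theorem \ref{thm:C}); the bigraded Nakayama step is a routine check, valid precisely because every generator of $H_*(C_\bullet(D);\Fp)^+$ carries strictly positive weight.
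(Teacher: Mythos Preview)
Your stronger claim---that $H_*(C_\bullet(\sgone);\Fp)$ is already generated in bar-degree $0$---is false for $g\ge 1$, so the argument has a genuine gap. The gap is in your reading of Corollary~\ref{acor:qualitative}: that corollary describes the isomorphism types of the cyclic summands, but it does not assert that each summand sits with its generator in bar-degree $0$. Bigraded shifts are implicit in ``of the following forms'', and some of them shift the bar-degree by $-1$.

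Concretely, take a cyclic $\Fp[\fy_i]/(\fy_i^p)$-module $N=\Fp[\fy_i]/(\fy_i^c)$ with $2\le c\le p-1$. In its minimal free resolution the first differential is multiplication by $\fy_i^{\,c}$, and the standard chain-level computation of the Yoneda product shows that $\alpha_i\in\Ext^{-1}_{\Fp[\fy_i]/(\fy_i^p)}(\Fp,\Fp)$ annihilates the generator of $\Ext^{0}_{\Fp[\fy_i]/(\fy_i^p)}(N,\Fp)$: the lift of the augmentation to degree $1$ is multiplication by $\fy_i^{\,c-1}$, which augments to $0$ because $c\ge 2$. Hence the $\Ext^{\star}_{\Fp[\fy_i]/(\fy_i^p)}(\Fp,\Fp)$-submodule generated by $\Ext^{0}$ misses $\Ext^{-1}$, and a generator in bar-degree $-1$ is genuinely needed. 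Such bars with $c\ge 2$ already occur for $u=1$: the module $B^{\Fp}_1=\Fp[z_1]/(z_1^2)$ is a single $\fy_0$-bar of size $2$, and it is a summand of $\bM_g^{\Fp}$ whenever $g\ge 1$ by Proposition~\ref{prop:bMgsplitting}. Thus the indecomposables of $H_*(C_\bullet(\sgone);\Fp)$ have a nonzero component in bar-degree $-1$, and your Nakayama step, while formally correct, does not yield generation in bar-degree $0$ alone. The paper's proof therefore stops at the sharp statement that generators lie in bar-degrees $\{0,-1\}$; the remark following the proof of Corollary~\ref{acor:lowdimbetti} makes explicit that a nontrivial part of $\bigoplus_n H_{n-1}(C_n(\sgone);\Fp)$ is still required.
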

The same result holds with coefficients in $\bF_2$ or $\Q$; in fact $H_*(C_\bullet(\sgone);\bF_2)$ is a free $H_*(C_\bullet(D);\bF_2)$-module generated by the vector space $\bigoplus_{n\ge0}H_{n}(C_n(\sgone);\bF_2)$, see \cite{Bianchi}.
The terminology ``near top'' alludes to the vanishing of $H_i(C_n(\sgone);\Fp)$ for $i>n$, as $C_n(\sgone)$ is homotopy equivalent to a complex of dimension at most $n$.

\begin{acor}[Genus versus prime]\label{acor:genusvsprime}
 Let $g\ge0$, $\lambda\ge0$, and let $p$ be an odd prime.
 \begin{enumerate}
  \item if $g<p^\lambda$, then $H_*(C_\bullet(\sgone);\Fp)$ is free as a module over the subring
  \[
  \Fp[\epsilon,\alpha_\lambda,\beta_\lambda,\alpha_{\lambda+1},\beta_{\lambda+1},\dots]\otimes \Fp[\cH_g]
  \]
  of the ring $H_*(C_\bullet(D);\Fp)\otimes_{\Fp}\Fp[\cH_g]$;
  \item if $g<(p-1)p^{\lambda-1}$, then $H_*(C_\bullet(\sgone);\Fp)$ is free as a module over the subring
  \[
  \Fp[\epsilon,\beta_{\lambda-1},\alpha_\lambda,\beta_\lambda,\alpha_{\lambda+1},\dots]\otimes_{\Fp} \Fp[\cH_g].
  \]
 \end{enumerate}
\end{acor}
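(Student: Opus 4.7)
The starting point is Theorem \ref{thm:B}(2), which gives the isomorphism
\[
H_*(C_\bullet(\sgone);\Fp) \cong \Fp[\epsilon] \otimes_{\Fp} \Ext^\star_{\Gamma_{\Fp}(y)}(\bM_g^{\Fp},\Fp) \otimes_{\Fp} \Fp[\cH_g^{\Fp}]
\]
of modules over $\Fp[\epsilon,\alpha_0,\beta_0,\dots]$. The factor $\Fp[\cH_g^{\Fp}]$ is inert for this module structure, so freeness of $\Ext^\star_{\Gamma_{\Fp}(y)}(\bM_g^{\Fp},\Fp)$ over a subring $S\subseteq\Fp[\epsilon,\alpha_0,\beta_0,\dots]$ immediately upgrades to freeness of the whole tensor product over $S\otimes_{\Fp}\Fp[\cH_g^{\Fp}]$. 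The task therefore reduces to analysing the module $\Ext^\star_{\Gamma_{\Fp}(y)}(\bM_g^{\Fp},\Fp)$ over the appropriate subring.

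The plan is to invoke Proposition \ref{prop:bMgsplitting} to split $\bM_g^{\Fp}$ as a finite direct sum of weighted shifts of building blocks $B_u^{\Fp}$, and then apply Theorem \ref{thm:C} to identify each $\Ext^\star_{\Gamma_{\Fp}(y)}(B_u^{\Fp},\Fp)$ as one of the two types of cyclic quotient listed in Corollary \ref{acor:qualitative}, with a parameter $i=i(u)$. The question then becomes a purely algebraic check: in a quotient of type $(\alpha_0,\dots,\beta_{i-1})$ (respectively $(\alpha_0,\dots,\alpha_i)$) the variables $\alpha_j,\beta_j$ with $j<i$ (respectively, together with $\alpha_i$) act as zero, so freeness over a chosen subring is equivalent to every generator of that subring surviving in the quotient. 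Over $\Fp[\epsilon,\alpha_\lambda,\beta_\lambda,\dots]$ this translates into $i\le\lambda$ for the first type and $i\le\lambda-1$ for the second; over the larger subring $\Fp[\epsilon,\beta_{\lambda-1},\alpha_\lambda,\beta_\lambda,\dots]$, the requirement that $\beta_{\lambda-1}$ also survive forces $i\le\lambda-1$ in both cases.

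It then remains to verify that the parameter $i(u)$ attached to each summand $B_u^{\Fp}$ of $\bM_g^{\Fp}$ is controlled by $g$ in the required manner. The hypothesis $g<p^\lambda$ should force every contributing $i$ arising from a type-1 quotient to satisfy $i\le\lambda$ and every $i$ from a type-2 quotient to satisfy $i\le\lambda-1$, while the sharper hypothesis $g<(p-1)p^{\lambda-1}$ should force $i\le\lambda-1$ uniformly in both types. This is the $p$-adic heart of the argument: it relies on the explicit Definition \ref{defn:bMg} of $\bM_g^{\Fp}$ together with the combinatorial output of Theorem \ref{thm:C}, which identifies $i(u)$ in terms of the base-$p$ expansion of $u$, and the asymmetry between the two hypotheses reflects the special role played by the leading $p$-adic digit $p-1$. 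This $p$-adic translation is the main obstacle; once it is established, both parts of the corollary follow directly from the freeness analysis of the preceding paragraph.
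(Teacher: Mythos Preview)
Your overall architecture (reduce via Theorem \ref{thm:B}, split $\bM_g^{\Fp}$ via Proposition \ref{prop:bMgsplitting}, then control each $B_u^{\Fp}$ with $u\le g$) matches the paper's. But two things go wrong in the execution.

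First, you misread the output of Theorem \ref{thm:C}: $\Ext^\star_{\Gamma_{\Fp}(y)}(B_u^{\Fp},\Fp)$ is not a single cyclic quotient with ``a parameter $i=i(u)$''. It is a direct sum over $i=0,\dots,h$, and each $N_{u,i}$ in turn splits into several cyclic $\Fp[\fy_i]/(\fy_i^p)$-pieces, so many parameters $i$ and both types from Corollary \ref{acor:qualitative} appear for a fixed $u$. Your freeness translation (``type~1 needs $i\le\lambda$, type~2 needs $i\le\lambda-1$'') is correct, but it has to be checked for every summand, not a single one.

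Second, you correctly flag the bound on $i$ as ``the main obstacle'' and then do not prove it; the appeal to ``the base-$p$ expansion of $u$'' is a guess, and it is not how the paper proceeds. The paper's argument is a direct weight-support estimate, and it bypasses Corollary \ref{acor:qualitative} entirely. Since $B_u^{\Fp}$ is supported in weights $[-2u,0]$, so is every $N_{u,i}$. The variable $\fy_\lambda$ has weight $-2p^\lambda$; hence if $u<p^\lambda$ then $\fy_\lambda$ acts by zero on $N_{u,\lambda}$, so $\Ext^\star_{\Fp[\fy_\lambda]/(\fy_\lambda^p)}(N_{u,\lambda},\Fp)$ is free over $\Fp[\alpha_\lambda,\beta_\lambda]$, and $N_{u,i}=0$ for $i>\lambda$ by the range in Theorem \ref{thm:C}. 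Plugging into the tensor decomposition of Theorem \ref{thm:C} gives freeness over $\Fp[\alpha_\lambda,\beta_\lambda,\alpha_{\lambda+1},\dots]$, hence part (1). For part (2), the hypothesis $u<(p-1)p^{\lambda-1}$ means the support interval has width $<2(p-1)p^{\lambda-1}$, so $N_{u,\lambda-1}$ can contain no free cyclic summand $\Fp[\fy_{\lambda-1}]/(\fy_{\lambda-1}^p)$; for such modules $\Ext^\star_{\Fp[\fy_{\lambda-1}]/(\fy_{\lambda-1}^p)}(-,\Fp)$ is free over $\Fp[\beta_{\lambda-1}]$, and again $N_{u,i}=0$ for $i\ge\lambda$. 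That is the whole argument; no $p$-adic digit analysis is needed.
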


\begin{acor}[Top and top minus 1 homology]\label{acor:topminusone}
 Let $g\ge0$ and $n\ge0$; then $H_n(C_n(\sgone);\Z)$ is a free abelian group, whereas $H_{n-1}(C_n(\sgone);\Z)$ has only $2$-power torsion.
As a consequence, for $p$ an odd prime we have
 \[
  \dim_\Q(H_n(C_n(\sgone);\Q)) = 
  \dim_{\Fp}(H_n(C_n(\sgone);\Fp)).
 \]
\end{acor}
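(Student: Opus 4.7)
The plan is to deduce the third statement from the first two via the universal coefficient theorem: for $p$ odd,
\[
H_n(C_n(\sgone);\Fp) \;\cong\; H_n(C_n(\sgone);\Z)\otimes_\Z \Fp \;\oplus\; \Tor_1^\Z(H_{n-1}(C_n(\sgone);\Z),\Fp),
\]
and statement (1) gives $\dim_\Fp \bigl(H_n(C_n(\sgone);\Z)\otimes_\Z \Fp\bigr) = \dim_\Q H_n(C_n(\sgone);\Q)$, while statement (2) kills the $\Tor$ term, which equals $H_{n-1}(C_n(\sgone);\Z)[p]$ and so vanishes when $p$ is odd.

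For statement (1), I would apply Theorem \ref{thm:B} with $R=\Z$. The group $H_n(C_n(\sgone);\Z)$ lives in weight $\bullet=n$ and total degree $*=n$, hence in bar-degree $\star=0$; since $\epsilon$ has bar-degree $-1$, no positive power of $\epsilon$ contributes here, and what remains is the weight-$n$ piece of $\Hom_{\Gamma_\Z(y)}(\bM_g^\Z,\Z) \otimes_\Z \Z[\cH_g^\Z]$. Because $\bM_g^\Z$ is finitely generated and free over $\Z$, the subgroup $\Hom_{\Gamma_\Z(y)}(\bM_g^\Z,\Z) \subseteq \Hom_\Z(\bM_g^\Z,\Z)$ is free as a subgroup of a free abelian group, and tensoring with the polynomial ring $\Z[\cH_g^\Z]$ preserves freeness.

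For statement (2), the same analysis identifies $H_{n-1}(C_n(\sgone);\Z)$, which has bar-degree $\star=-1$ and weight $\bullet=n$, with a direct sum of two summands: the free summand $\epsilon \cdot \Hom_{\Gamma_\Z(y)}(\bM_g^\Z,\Z) \otimes_\Z \Z[\cH_g^\Z]$ at weight $n-1$, and $\Ext^{-1}_{\Gamma_\Z(y)}(\bM_g^\Z,\Z) \otimes_\Z \Z[\cH_g^\Z]$ at weight $n$, in which all potential torsion is concentrated. It therefore suffices to show that $\Ext^{-1}_{\Gamma_\Z(y)}(\bM_g^\Z,\Z)$ has no $p$-torsion for any odd prime $p$. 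After localizing at $p$ and applying the long exact $\Ext$-sequence coming from $0 \to \Z_{(p)} \xrightarrow{p} \Z_{(p)} \to \Fp \to 0$, this vanishing becomes equivalent to the natural map
\[
\Hom_{\Gamma_{\Z_{(p)}}(y)}(\bM_g^{\Z_{(p)}},\Z_{(p)})\otimes_{\Z_{(p)}}\Fp \;\longrightarrow\; \Hom_{\Gamma_\Fp(y)}(\bM_g^\Fp,\Fp)
\]
being an isomorphism in every weight, and hence to the $\Z_{(p)}$-module $\bM_g^{\Z_{(p)}}/I\bM_g^{\Z_{(p)}}$ being free, where $I$ denotes the augmentation ideal of $\Gamma_{\Z_{(p)}}(y)$.

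The main obstacle will be precisely this last claim about $\bM_g^{\Z_{(p)}}/I\bM_g^{\Z_{(p)}}$. I would establish it by invoking the $p$-local analogue of the splitting of Proposition \ref{prop:bMgsplitting}: namely, $\bM_g^{\Z_{(p)}}$ decomposes as a direct sum of weight-shifted copies of integral building blocks $B^{\Z_{(p)}}_u$, for which the augmentation quotient $B^{\Z_{(p)}}_u/IB^{\Z_{(p)}}_u$ can be inspected directly from the definition of the modules $B_u$ in Definition \ref{defn:twoB}. The key arithmetic input is that, for $p$ odd, the binomial coefficients governing the relations in $B_u$ remain $p$-local units in the range that matters, forcing the quotient to be $\Z_{(p)}$-free; this arithmetic fails at $p=2$, which is consistent with the corollary allowing genuine $2$-torsion in $H_{n-1}(C_n(\sgone);\Z)$.
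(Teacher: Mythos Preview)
Your reduction for statement (2) is correct and matches the paper's approach: both arguments come down to showing that $\Tor_0^{\Gamma_R(y)}(B^R_u,R)=B^R_u/IB^R_u$ is free (over $R=\Z_{(p)}$ in your version, over $R=\Z$ in the paper's). The paper reaches this via the dual statement that $\Imm(d_1)$ is a summand in the bar complex, but the content is identical.

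The gap is your final paragraph. There are no ``binomial coefficients governing the relations in $B_u$'': the action of $y^{[m]}$ on $B_u$ sends $z_S$ to $\sum_{|T|=m,\,T\cap S=\emptyset}z_{S\cup T}$, a sum with all coefficients equal to $1$. In fact $B_u^\Z/IB_u^\Z$ is free over $\Z$ for every prime, so no $p$-local arithmetic is needed at this step. The only place where $p=2$ is genuinely special is the passage from $\twoB$ to $B$: the isomorphism $\twoB_u^R\cong B_u^R$ requires $2\in R^\times$, and over $\Z$ the quotient $\twoB_u^\Z/I\twoB_u^\Z$ does acquire $2$-torsion (e.g.\ already for $u=2$ in weight $-2$). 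So your diagnosis of \emph{where} the $2$-torsion enters is off by one step.

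What actually proves that $B_u^\Z/IB_u^\Z$ is free is Proposition~\ref{prop:Psiukinj} together with Lemma~\ref{lem:sparsegenerate}. The surjection $\Theta_u\colon F\to B_u^\Z$ from the free $\Gamma_\Z(y)$-module on sparse subsets induces a surjection $F/IF\to B_u^\Z/IB_u^\Z$; you need its kernel to vanish, i.e.\ $\ker\Theta_u\subseteq IF$. Now $F/IF$ is concentrated in weights $-2m$ with $m\le\lfloor u/2\rfloor$ (sparse sets have size at most $\lfloor u/2\rfloor$), while Proposition~\ref{prop:Psiukinj} gives $\ker\Theta_u=0$ in weights $-2m$ with $m\le\lceil u/2\rceil$. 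These ranges cover everything, so $\ker\Theta_u\subseteq IF$ and $B_u^\Z/IB_u^\Z$ is free on the sparse subsets. This is the substantive input you are missing; it is a ``hard Lefschetz''-type fact about $B_u$, not an arithmetic one.
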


\begin{acor}[$p$-power torsion]\label{acor:ppowertorsion}
 Let $p$ be an odd prime and $n\ge0$; then the $p$-power torsion part of $H_*(C_n(\sgone);\Z)$ coincides with the image of the Bockstein homomorphism
 \[
 H_{*+1}(C_n(\sgone);\Fp)\to H_*(C_n(\sgone);\Z)
 \]
 induced by the short exact sequence $\Z\overset{\cdot p}{\to}\Z\twoheadrightarrow\Fp$; in particular all $p$-power torsion classes are in fact $p$-torsion.
\end{acor}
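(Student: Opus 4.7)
The plan is to reduce the statement to the claim that $H_*(C_n(\sgone);\Z)$ contains no $p^{\ge 2}$-torsion, and then to establish this via a Bockstein spectral sequence argument driven by the mod-$p$ calculation of Theorem \ref{thm:C}.

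From the long exact sequence in homology associated with $0\to\Z\xrightarrow{\cdot p}\Z\to\Fp\to 0$, the image of the Bockstein always equals the $p$-torsion subgroup $H_*(C_n(\sgone);\Z)[p]$; hence the stated identification is equivalent to the assertion that every $p$-primary torsion class of $H_*(C_n(\sgone);\Z)$ is annihilated by $p$. By Theorem \ref{thm:B} applied with $R=\Z$, and since $\Z[\epsilon]$ and $\Z[\cH_g^\Z]$ are torsion-free, all torsion of $H_*(C_\bullet(\sgone);\Z)$ is carried by $\Ext^\star_{\Gamma_\Z(y)}(\bM_g^\Z,\Z)$; so it suffices to prove that this integral Ext group has no $p^{\ge 2}$-torsion.

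I would fix a $\Z$-flat $\Gamma_\Z(y)$-projective resolution $P_\bullet \to \bM_g^\Z$. The cochain complex $C^\bullet = \Hom_{\Gamma_\Z(y)}(P_\bullet,\Z)$ consists of free abelian groups, and admits a Bockstein spectral sequence converging from $E_1 = H^*(C^\bullet\otimes_\Z\Fp)$ to $H^*(C^\bullet)/\mathrm{tors}\otimes\Fp$; its higher differentials $d_r$ (for $r\ge 2$) detect $p^r$-torsion classes, so collapse at $E_2$ is equivalent to the absence of $p^{\ge 2}$-torsion. Since $P_\bullet$ is $\Z$-flat, $P_\bullet\otimes_\Z\Fp\to\bM_g^{\Fp}$ is a free $\Gamma_{\Fp}(y)$-resolution, so $E_1 = \Ext^\star_{\Gamma_{\Fp}(y)}(\bM_g^{\Fp},\Fp)$ is the Ext group described by Theorem \ref{thm:C}. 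The Bockstein on $\Ext^\star_{\Gamma_{\Fp}(y)}(\Fp,\Fp)\cong\bigotimes_i\Fp[\alpha_i,\beta_i]$ is the derivation sending $\alpha_i\mapsto\beta_i$ and $\beta_i\mapsto 0$ — a standard chain-level identification reflecting that $\beta_i$ is the mod-$p$ Bockstein of $\alpha_i$ in $\Ext^*_{\Fp[\fy_i]/(\fy_i^p)}(\Fp,\Fp)$ — and by Yoneda-linearity this determines the Bockstein on $\Ext^\star_{\Gamma_{\Fp}(y)}(\bM_g^{\Fp},\Fp)$.

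The main obstacle is a summand-by-summand computation of $E_2$ via Corollary \ref{acor:qualitative}: for a first-type summand $\Fp[\epsilon,\alpha_i,\beta_i,\ldots]\otimes\Fp[\cH_g^{\Fp}]$ the Koszul acyclicity of the $(\alpha_j,\beta_j)_{j\ge i}$-subalgebra yields $E_2 = \Fp[\epsilon]\otimes\Fp[\cH_g^{\Fp}]$ concentrated in the lowest bar-degree, while for a second-type summand $\Fp[\epsilon,\beta_i,\alpha_{i+1},\ldots]\otimes\Fp[\cH_g^{\Fp}]$ the analogous calculation yields $\Fp[\epsilon,\beta_i]\otimes\Fp[\cH_g^{\Fp}]$. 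The remaining step is to match the total Poincaré series of $E_2$, at each fixed weight, with the $\Q$-dimension of $H_*(C_n(\sgone);\Q)$ given by Theorem \ref{thm:B} with $R=\Q$ — using that $\Ext^\star_{\Gamma_\Z(y)}(\bM_g^\Z,\Z)\otimes\Q \cong \Ext^\star_{\Gamma_\Q(y)}(\bM_g^\Q,\Q)$, and that $\Gamma_\Q(y)=\Q[y]$ has global dimension one so the rational Ext group is immediately read off from a rational analogue of Proposition \ref{prop:bMgsplitting}. Once this numerical matching is verified, the Bockstein spectral sequence collapses at $E_2$, the integral Ext group has no $p^{\ge 2}$-torsion, and the corollary follows.
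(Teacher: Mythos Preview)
Your overall strategy---reduce to a Bockstein spectral sequence on the cochain complex computing $\Ext^\star_{\Gamma_{\Z}(y)}(\bM_g^{\Z},\Z)$ and argue collapse at $E_2$---is the same as the paper's, but two steps are genuinely wrong.

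\textbf{The Bockstein formula.} You assert $\fb(\alpha_i)=\beta_i$, citing the ``standard'' Bockstein on $\Ext^*_{\Fp[\fy_i]/(\fy_i^p)}(\Fp,\Fp)$. That standard formula comes from the integral lift $\Z[\fy_i]/(\fy_i^p)$, but the lift here is $\Gamma_{\Z}(y)$, in which a valuation count on multinomial coefficients gives $\fy_i^{\,p}=p\cdot(\text{unit})\cdot\fy_{i+1}$; the resulting Bockstein is $\fb(\alpha_{i+1})=\beta_i$, $\fb(\beta_i)=0$. Your formula does not even preserve the weight (the weight of $\alpha_i$ is $2p^i$, that of $\beta_i$ is $2p^{i+1}$), so it cannot be right. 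With your formula the $E_2$ of a second-type summand would be $\Fp[\epsilon,\beta_i]\otimes\Fp[\cH_g]$, supported in all even $\star\le 0$; this cannot match the rational Ext, which is concentrated in $\star\in\{0,-1\}$ since $\Gamma_{\Q}(y)\cong\Q[y]$ has global dimension one. Your numerical matching step would therefore fail, not succeed.

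\textbf{The summand-by-summand computation.} The decomposition of Corollary~\ref{acor:qualitative} is only a module splitting; it depends on non-canonical choices of cyclic decompositions of the $N_{u,i}$, which do not lift to $\Z$, so there is no reason for $\fb$ to preserve it. The paper handles this by observing that $\fb$ is merely upper-triangular for the filtration by $\bigoplus_{j\ge i}S_j$ (because $\beta_j$ kills $S_i$ for $j<i$, while $\beta_j$ is injective on $S_i$ for $j\ge i$), and then filters each filtration quotient $\bar S_i$ by monomial length in $\beta_i,\alpha_{i+1},\beta_{i+1},\ldots$; on the doubly associated graded, the Bockstein becomes exactly the Koszul differential $\alpha_{j+1}\mapsto\beta_j$ on $\Fp[\beta_i,\alpha_{i+1},\beta_{i+1},\ldots]$ tensored with a trivial factor, and its cohomology is visibly concentrated in $\star\in\{0,-1\}$. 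Combined with Corollary~\ref{acor:topminusone} (no $p$-torsion in those two bar-degrees), this excludes $\Z/p^{\ge 2}$ summands. With the correct Bockstein and this filtration machinery in place, your dimension-matching endgame can be made to work as well.
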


\begin{acor}[Fake basechange]\label{acor:basechange}
 For a bigraded $\Q$-vector space $V$ and for an odd prime $p$ let $V^\Q_{\Fp}$ be a bigraded $\Fp$-vector space having in each bigrading the same dimension (over $\Fp$) as $V$ (over $\Q$). Then for $g\le p-2$ there is an isomorphism of bigraded $\Fp$-vector spaces
 \[
  H_*(C_\bullet(\sgone);\Fp)\cong \pa{H_*(C_\bullet(\sgone);\Q)}^\Q_{\Fp}\otimes_{\Fp}\Fp[\beta_0,\alpha_1,\beta_1,\alpha_2,\beta_2\dots].
 \]
\end{acor}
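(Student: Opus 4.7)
The plan is to combine Corollary \ref{acor:genusvsprime}(2) with Theorem \ref{thm:B} and a per-summand Ext calculation.

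First, I apply Corollary \ref{acor:genusvsprime}(2) with $\lambda=1$ (valid since $g\le p-2=(p-1)p^0-1$): it gives that $H_*(C_\bullet(\sgone);\Fp)$ is free over the subring $\Fp[\epsilon,\beta_0,\alpha_1,\beta_1,\alpha_2,\dots]\otimes_{\Fp}\Fp[\cH_g^{\Fp}]$. Splitting off the tensor factor $\Fp[\beta_0,\alpha_1,\beta_1,\alpha_2,\dots]$, I obtain an isomorphism of bigraded $\Fp$-vector spaces
\[ H_*(C_\bullet(\sgone);\Fp) \cong Q \otimes_{\Fp} \Fp[\beta_0,\alpha_1,\beta_1,\alpha_2,\dots], \]
where $Q:=H_*(C_\bullet(\sgone);\Fp)\otimes_{\Fp[\beta_0,\alpha_1,\beta_1,\dots]}\Fp$. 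It then remains to show that $Q$ has the same bigraded dimensions as $H_*(C_\bullet(\sgone);\Q)$.

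By Theorem \ref{thm:B}(2) applied in both characteristics, and cancelling the common factors $R[\epsilon]$ and $R[\cH_g^R]$ (whose bigraded dimensions are plainly independent of $R$), this reduces further to matching the bigraded dimensions of $\Ext^\star_{\Gamma_{\Fp}(y)}(\bM_g^{\Fp},\Fp)\otimes_{\Fp[\beta_0,\alpha_1,\beta_1,\dots]}\Fp$ (over $\Fp$) with those of $\Ext^\star_{\Q[y]}(\bM_g^{\Q},\Q)$ (over $\Q$). The key structural input, which is also the mechanism behind Corollary \ref{acor:genusvsprime}(2), is that $\bM_g^{\Fp}$, being a finite direct sum of weighted shifts of modules $B^{\Fp}_u$ with $u\le g\le p-2$ via Proposition \ref{prop:bMgsplitting}, is supported in a range of weights strictly greater than $-2p$. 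Hence the divided-power variables $\fy_i$ for $i\ge 1$, which have weight $-2p^i\le -2p$, must act as zero on $\bM_g^{\Fp}$, so that $\bM_g^{\Fp}$ is genuinely a module over the subalgebra $\Fp[\fy_0]/(\fy_0^p)\subseteq\Gamma_{\Fp}(y)$, and a Künneth decomposition yields
\[ \Ext^\star_{\Gamma_{\Fp}(y)}(\bM_g^{\Fp},\Fp) \cong \Ext^\star_{\Fp[\fy_0]/(\fy_0^p)}(\bM_g^{\Fp},\Fp)\otimes_{\Fp}\Fp[\alpha_1,\beta_1,\alpha_2,\beta_2,\dots]. \]
Quotienting by the ideal $(\beta_0,\alpha_1,\beta_1,\dots)$ kills the right-hand tensor factor and reduces the left-hand factor modulo $\beta_0$, so the task becomes to compare $\Ext^\star_{\Fp[\fy_0]/(\fy_0^p)}(\bM_g^{\Fp},\Fp)/\beta_0$ with $\Ext^\star_{\Q[y]}(\bM_g^{\Q},\Q)$ dimensionally.

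Using Proposition \ref{prop:bMgsplitting} for $\bM_g^{\Fp}$ and the analogous rational splitting of $\bM_g^{\Q}$, which I expect to agree in combinatorial data by virtue of the uniform $R$-free construction in Definition \ref{defn:bMg}, the problem reduces to a single-summand computation for each $u\le g\le p-2$. I would carry this out via Theorem \ref{thm:C}, or directly by constructing a two-periodic resolution of $B^{\Fp}_u$ over $\Fp[\fy_0]/(\fy_0^p)$: the resulting $\Ext^\star_{\Fp[\fy_0]/(\fy_0^p)}(B^{\Fp}_u,\Fp)$ is free of rank $2$ over $\Fp[\beta_0]$, generated in two specific bigradings (one on the diagonal $(\bullet,\star)=(0,0)$ and a second determined by $u$). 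After modding out $\beta_0$, what remains is a $2$-dimensional bigraded $\Fp$-vector space supported in those two bigradings, which match the bigradings of the rank-$2$ rational $\Ext^\star_{\Q[y]}(B^{\Q}_u,\Q)$. Summing over all summands yields the desired bigraded dimension equality. The main obstacle I anticipate is verifying that the decompositions of $\bM_g^{\Fp}$ and $\bM_g^{\Q}$ use the same multiplicities of each summand type $u$, so that per-summand identities assemble into the global one; this should follow from the base-ring independence of Definition \ref{defn:bMg}.
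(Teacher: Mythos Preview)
Your overall strategy is the same as the paper's: invoke Corollary \ref{acor:genusvsprime}(2) with $\lambda=1$ to strip off the factor $\Fp[\beta_0,\alpha_1,\beta_1,\dots]$, apply Theorem \ref{thm:B} in both characteristics to cancel $R[\epsilon]\otimes R[\cH_g^R]$, and reduce via Proposition \ref{prop:bMgsplitting} to a per-summand comparison of $B^{\Fp}_u$ with $B^{\Q}_u$ for $u\le g\le p-2$. Your observation that $\fy_i$ acts trivially on $\bM_g^{\Fp}$ for $i\ge 1$ is correct and is exactly the mechanism behind Corollary \ref{acor:genusvsprime}(2) in this range.

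The gap is in your per-summand claim. The module $B^R_u$ is \emph{not} cyclic over $\Gamma_R(y)$ once $u\ge 2$, so it does not admit a single two-periodic resolution, and $\Ext^\star_{\Fp[\fy_0]/(\fy_0^p)}(B^{\Fp}_u,\Fp)$ is not free of rank $2$ over $\Fp[\beta_0]$. Rather, $B^R_u$ decomposes as a direct sum of $\binom{u}{\lfloor u/2\rfloor}$ shifted cyclic modules: by Remark \ref{rem:2andQ} (for $R=\Q$) and by the barcode analysis of Section \ref{sec:Kaehler} (for $R=\Fp$, using $u\le p-2$ so that all bars have size at most $u+1\le p-1$ and none is free), one has
\[
B^R_u\cong\bigoplus_{k=0}^{\lfloor u/2\rfloor}\bigl(R[y]/(y^{u+1-2k})[-2k]\bigr)^{\oplus\ell_{u,k}}.
\]
The crucial point, which the paper states explicitly, is that this barcode is the \emph{same} over $\Q[y]$ and over $\Fp[\fy_0]/(\fy_0^p)$ when $u\le p-2$. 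Only then can you invoke, for each cyclic piece $R[y]/(y^c)$ with $1\le c\le p-1$, the elementary identification
\[
\Ext^\star_{\Fp[\fy_0]/(\fy_0^p)}(\Fp[\fy_0]/(\fy_0^c),\Fp)\cong\bigl(\Ext^\star_{\Q[y]}(\Q[y]/(y^c),\Q)\bigr)^\Q_{\Fp}\otimes_{\Fp}\Fp[\beta_0],
\]
and sum over all cyclic summands. Your ``rank $2$'' statement is the special case of a single cyclic summand; to make the argument go through you must first establish that $B^{\Fp}_u$ and $B^{\Q}_u$ have identical barcodes, which is where the hypothesis $u\le p-2$ is genuinely used.
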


\begin{acor}[Low dimensional Betti numbers]\label{acor:lowdimbetti}
 Let $p$ be an odd prime and $g,n\ge0$. Then for $i< \max\set{2p-2,g+p}$, $H_i(C_n(\sgone);\Z)$ has no $p$-torsion. Equivalently, for $i<\max\set{2p-2,g+p}$ we have
 \[
  \dim_\Q(H_i(C_n(\sgone);\Q) = 
  \dim_{\Fp}(H_i(C_n(\sgone);\Fp)).
 \]
\end{acor}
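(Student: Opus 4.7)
My plan is to convert the torsion statement into a comparison of rational and mod-$p$ Betti numbers, and then bound the lowest total degree at which they can differ by invoking Theorem \ref{thm:B}, Proposition \ref{prop:bMgsplitting}, and Theorem \ref{thm:C}.

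First, I would establish the ``equivalently'' clause of the statement. The universal coefficient theorem applied to $\Z\xrightarrow{\cdot p}\Z\twoheadrightarrow\Fp$, combined with Corollary \ref{acor:ppowertorsion} (which upgrades all $p$-power torsion in $H_*(C_n(\sgone);\Z)$ to $p$-torsion), yields
\[
\dim_{\Fp}H_i(C_n(\sgone);\Fp)-\dim_{\Q}H_i(C_n(\sgone);\Q)=t_i(n)+t_{i-1}(n),
\]
where $t_k(n)$ is the $\Fp$-dimension of the $p$-torsion subgroup of $H_k(C_n(\sgone);\Z)$. An induction on $i$ starting from $t_{-1}(n)=0$ then shows that matching the $\Fp$- and $\Q$-dimensions in every degree $i<M$ is equivalent to the vanishing of $t_i(n)$ for every such $i$. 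Hence I am reduced to proving $\dim_{\Fp}H_i=\dim_{\Q}H_i$ for all $i<\max\{2p-2,\,g+p\}$.

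In the regime $g\le p-2$ the maximum is $2p-2$ and the dimension matching follows at once from Corollary \ref{acor:basechange}: the extra tensor factor $\Fp[\beta_0,\alpha_1,\beta_1,\dots]$ in the Fake basechange isomorphism has its lowest positive-degree generator $\beta_0$ in total degree $2p-2$, so only its unit contributes in total degrees below $2p-2$. In the regime $g\ge p-1$ the maximum is $g+p$ and Corollary \ref{acor:basechange} no longer applies; here I would use Theorem \ref{thm:B} and cancel the tensor factors $R[\epsilon]$ and $R[\cH_g^R]$ (whose bigraded dimensions are the same for $R=\Fp$ and $R=\Q$), reducing to a bigraded comparison of $\Ext^\star_{\Gamma_{\Fp}(y)}(\bM_g^{\Fp},\Fp)$ with $\Ext^\star_{\Gamma_{\Q}(y)}(\bM_g^{\Q},\Q)$ in total degree less than $g+p$. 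Proposition \ref{prop:bMgsplitting} decomposes $\bM_g^{\Fp}$ into weight-shifted copies of the building blocks $B^{\Fp}_u$, with shifts and indices bounded by $g$; I would then feed the explicit $\Ext$-computation of Theorem \ref{thm:C} into a bigrading count to check that no excess $\Fp$-class exists below total degree $g+p$.

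The main obstacle is precisely this last bookkeeping. The $\beta_0$-type classes alone already produce an excess $\Fp$-dimension at total degree $2p-2$ over the disc (this is exactly the $p$-torsion in $H_{2p-2}(C_{2p}(D);\Z)$), so when $g\ge p-1$ one must verify that each such excess is compensated by a matching rational class arising from the richer bigrading of $\Ext^\star_{\Gamma_{\Q}(y)}(\bM_g^{\Q},\Q)$ in higher weights. Quantifying how far this compensation extends, and showing that the first genuinely unmatched $\Fp$-class in $\Ext^\star_{\Gamma_{\Fp}(y)}(\bM_g^{\Fp},\Fp)$ appears precisely at total degree $g+p$, is where the detailed structure of the $\Ext$-groups described in Theorem \ref{thm:C} must be used decisively.
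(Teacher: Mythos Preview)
Your reduction to comparing $\Fp$- and $\Q$-dimensions is sound, and in the range $g\le p-2$ your appeal to Corollary~\ref{acor:basechange} is a clean shortcut. But for $g\ge p-1$ you have an outline rather than a proof: you identify what must be shown (that the first unmatched $\Fp$-class in $\Ext^\star_{\Gamma_{\Fp}(y)}(\bM_g^{\Fp},\Fp)$ sits in total degree at least $g+p$) but offer no mechanism for carrying it out. The ``compensation'' picture you sketch---matching excess $\Fp$-classes from $\beta_0$-type generators against extra rational classes coming from higher-weight pieces of $\bM_g^\Q$---is not how the argument works, and it is not clear such a bookkeeping can be made rigorous without essentially recovering the paper's approach.

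The paper avoids dimension comparison entirely and works directly with the Bockstein $\fb$ on $\Ext^\star_{\Gamma_{\Fp}(y)}(B^{\Fp}_u,\Fp)$: by Corollary~\ref{acor:ppowertorsion} the $p$-torsion is exactly the image of $\fb$, so it suffices to bound from below the homological degree of any nonzero $\fb$-image. The crucial step is a decomposition $\Ext^{-1}_{\Gamma_{\Fp}(y)}(B^{\Fp}_u,\Fp)=E'\oplus\ker(\fb_{-1})$, obtained by a dimension count of $\ker(\fb_{-1})$ against $\Q$ and of $E$ via Theorem~\ref{thm:C}; this lets every ``second-type'' generator in $E\subset\Ext^{-1}$ be rewritten, modulo $\ker(\fb)$, as an element of $E'$, i.e.\ as a product of some $\alpha_j$ with a class in $\Ext^0$. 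Then $\fb$ applied to any module generator reduces to a term of the form $\fb(s)\cdot x$ with $x\in\Ext^0_{\Fp[\fy_i]/(\fy_i^p)}(N_{u,i},\Fp)$, and two weight estimates for $x$---namely $\bullet_x>u-p^{i+1}+1$ and $\bullet_x>-1$, coming from the support of $N_{u,i}$---yield the two bounds $u+p$ and $2p-2$ in a single argument. This Bockstein analysis, and in particular the $E'\oplus\ker(\fb_{-1})$ splitting, is the missing idea in your proposal.
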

Corollaries \ref{acor:topminusone} and \ref{acor:lowdimbetti} imply that $H_*(C_n(\sgone);\Z)$ has no $p$-torsion for $n<\max\set{2p,g+p+2}$. This partially confirms the prediction of \cite[Remark 3.7]{ChenZhang}.

\begin{acor}[Extremal asymptotic growth]\label{acor:extremalquasistab}
Let $p$ be an odd prime, and let $g\ge1$ and $i\ge0$. Then there exist constants $0<\bfc_{g,i}<\bfC_{g,i}$ such that, for $n\ge i$,
\[
 \bfc_{g,i} \log_p(n) ^i n^{2g-1}< \dim_{\Fp}H_{n-i}(C_n(\sgone);\Fp) < \bfC_{g,i} \log_p(n) ^i n^{2g-1}.
\]
\end{acor}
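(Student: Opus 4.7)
The plan is to read off the asymptotic from the bigraded decomposition of Theorem \ref{thm:B}(2), using the qualitative module structure of Corollary \ref{acor:qualitative}. Specializing to bigrading $(\bullet,\star)=(n,-i)$ and splitting it among the three tensor factors --- namely $\epsilon^a$ of bigrading $(a,-a)$, an Ext monomial of bar-degree $-j$ and weight $n-a-2k$, and a weight-$2k$ monomial in $\Fp[\cH_g^{\Fp}]$ --- one obtains
\[
\dim_{\Fp} H_{n-i}(C_n(\sgone);\Fp) = \sum_{\substack{a,j\ge0\\ a+j=i}}\sum_{k\ge 0} \dim_{\Fp}\left(\Ext^{-j}_{\Gamma_{\Fp}(y)}(\bM_g^{\Fp},\Fp)\right)_{n-a-2k}\cdot \binom{k+2g-1}{2g-1}.
\]

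The main input is the asymptotic count of bar-degree $-j$ monomials of weight at most $N$ in $\Ext^\star_{\Gamma_{\Fp}(y)}(\bM_g^{\Fp},\Fp)$. By Corollary \ref{acor:qualitative}, this Ext module is a finite direct sum of shifted polynomial subrings of $\Fp[\alpha_0,\beta_0,\alpha_1,\beta_1,\dots]$ obtained by killing some initial segment of generators, where $\alpha_l$ has bar-degree $-1$ and weight of order $p^l$, and $\beta_l$ has bar-degree $-2$ and weight of order $p^{l+1}$. A bar-degree $-j$ monomial $\prod_l \alpha_l^{u_l}\prod_l \beta_l^{v_l}$ with $\sum_l u_l + 2\sum_l v_l = j$ has weight at most $N$ only if its indices lie in $\{0,1,\dots,\lfloor\log_p N\rfloor + O(1)\}$, giving $O((\log_p N)^j)$ such monomials. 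Conversely, any summand with cofinitely many surviving $\alpha_l$ yields $\Theta((\log_p N)^j)$ such monomials: pick $j$ distinct $\alpha_l$'s with indices in a window of size $\Theta(\log_p N)$, giving $\binom{\Theta(\log_p N)}{j} = \Theta((\log_p N)^j)$ options.

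Combining, for each pair $(a,j)$ with $a+j=i$ the inner sum is $O((\log_p n)^j n^{2g-1})$: the $O((\log_p n)^j)$ admissible weights $w=n-a-2k$ for bar-degree $-j$ Ext classes each pair with a binomial factor $\binom{(n-a-w)/2+2g-1}{2g-1}=O(n^{2g-1})$. Summing over $a\in\{0,\dots,i\}$, the $a=0$ term dominates, giving the upper bound $\bfC_{g,i}(\log_p n)^i n^{2g-1}$. For the matching lower bound one keeps only the $a=0$ term: the $\Theta((\log_p n)^i)$ monomials of bar-degree $-i$ and weight at most $n/2$ each pair with a binomial factor $\binom{(n-w)/2+2g-1}{2g-1}=\Theta(n^{2g-1})$, where the assumption $g\ge1$ ensures $2g-1\ge1$, so this factor is at least linear in $n$.

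The main obstacle is establishing the lower bound $\Theta((\log_p n)^i)$ on the bar-degree $-i$ Ext count: one must verify that at least one summand in the decomposition of $\bM_g^{\Fp}$ provided by Proposition \ref{prop:bMgsplitting} and Theorem \ref{thm:C} yields an Ext factor that is a polynomial ring over cofinitely many $\alpha_l$'s. For $g\ge1$ the splitting of $\bM_g^{\Fp}$ into shifted $B_u^{\Fp}$-pieces always contains at least one such summand, so the required monomials are available and the lower bound follows with a strictly positive constant $\bfc_{g,i}$.
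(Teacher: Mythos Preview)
Your proposal is correct and follows essentially the same approach as the paper: split according to Theorem \ref{thm:B}, count bar-degree $-j$ monomials in the Ext factor as $\Theta((\log_p n)^j)$, and pair with the $\Theta(n^{2g-1})$ monomials in $\Fp[\cH_g^{\Fp}]$. The paper is slightly more explicit on two points you gloss over: it first reduces to large $n$ by noting $H_{n-i}(C_n(\sgone);\Fp)\neq 0$ for $n\ge i$ (so the constants may be adjusted freely on any finite range), and it handles the parity of $n$ explicitly by writing $n=2m+\delta$ and choosing the summand indexed by a $\xi$ with $|\xi|=g-\delta$, so that the weight shift matches. Your ``main obstacle'' is in fact a non-issue---Corollary \ref{acor:qualitative} already guarantees every summand is polynomial on cofinitely many $\alpha_l$; the genuine care needed is the parity matching so that the residual weight $n-w$ is even and can be filled by $\Fp[\cH_g^{\Fp}]$.
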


\subsection{Related work}
Configuration spaces of manifolds $M$ and their homology are a classical topic in algebraic topology. Arnold \cite{Arnold}, Fuchs \cite{Fuchs} and Weinstein \cite{Weinstein} computed $H_*(C_n(\R^2);R)$ for $R$ being $\Q$, $\bF_2$ and $\bF_p$ respectively. The computation of $H_*(C_n(\R^d);R)$ for any ring $R$ is due to Cohen \cite[Chapter III]{CLM}.

B\"odigheimer--Cohen--Taylor \cite{BCT} compute explicitly $H_*(C_n(M);R)$, as a bigraded vector space, for $d\ge3$ odd and any field $R$, and for $d\ge2$ and  $R=\bF_2$: in the case $M=\sgone$, their result only settles the case $R=\bF_2$. The first computation of $H_*(C_n(\sgone);\Q)$ as a graded vector space is due to B\"odigheimer--Cohen \cite{BC}; a computation of the rational cohomology of $C_n(M)$, for $M$ any compact, connected, possibly non-orientable surface, is given in \cite{DCK}. A description of $H_*(C_n(M);\Q)$ for a generic even-dimensional manifold $M$ was given by F\'elix--Thomas \cite{FelixThomas} and by Knudsen \cite{Knudsen:BettiNumbers}, and a detailed computation of rational Betti numbers in the case of surfaces was done by Drummond-Cole--Knudsen \cite{DCK}.

For a generic even-dimensional manifold $M$ and an odd prime $p$, very little is known about $H_*(C_n(M);\Fp)$. To the best of our knowledge, this article contains the first complete computation for an orientable, even-dimensional manifold $M$ which is not an open subset of another manifold of the form $M'\times \R$.

We mention that Brantner--Hahn--Knudsen \cite[Corollary 1.11]{BHK} have computed $H_*(C_p(\sgone);\Fp)$ as a bigraded vector space, obtaining the special case of Corollary \ref{acor:lowdimbetti} for $n=p$. Their strategy is to first consider the Morava $E$-theory of $p$ at height $h\ge0$, denoted $E_h$, and compute $E^*_h(C_p(\sgone))$; then deduce a computation of the Morava $K$-theory $K(h)^*(C_p(\sgone))$; and finally deduce a computation of $H^*(C_p(\sgone);\Fp)$ by observing that, for $h$ large enough, the $K(h)$-based Atiyah--Hirzebruch spectral sequence collapses. A simplified argument was then given by Chen--Zhang \cite{ChenZhang}, building on \cite{Zhang, Knudsen:HEA}.
The methods used in this paper are more elementary, and whether they can be applied to the computation of other cohomology theories on configuration spaces of surfaces seems to be a difficult question.

The action of the mapping class group on the homology of \emph{ordered} configuration spaces was treated in \cite{BMW}, building on work of Moriyama \cite{Moriyama}; in the unordered case, the problem has been studied with $\bF_2$-coefficients by the first author \cite{Bianchi}, and with $\Q$-coefficients by Looijenga \cite{Looijenga} and by the second author \cite{Stavrou}.
\subsection{Acknowledgments}
The first author thanks Lorenzo Guerra for a discussion about divided power algebras, and Jeremy Miller for a discussions about homology operations on configuration spaces. Both authors thank Oscar Randal-Williams for several discussions on the topic and for his supervision of the second author. Both authors thank Zachary Himes, Jeremy Miller, Jan Steinebrunner, Nathalie Wahl, Adela Zhang and the referee for comments on a first draft of the article.

The article was prepared while A.B. was affiliated with the University of Copenhagen and A.S. with the University of Cambridge.
A. Bianchi was supported by the European Research Council under the European Union’s Horizon2020 research and innovation programme (grant agreement No. 772960), and by the Danish National Research Foundation through the Copenhagen Centre for Geometry and Topology (DNRF151).
A. Stavrou was funded by a studentship of the Engineering and Physical Sciences Research Council(project reference: 2261124).

\tableofcontents
\part{The action of the Johnson filtration}\label{part:1}
\section{Partially compactified configurations}
In this paper, we work in the category of compactly generated Hausdorff spaces.
Let $(X,X')$ be a pair of spaces with $X'\subset X$ closed. For $n\ge0$ we define the following two closed subspaces of $X^n$:
\begin{align*}
    \Delta_n(X)&:=\{(x_1,...,x_n)\in X^n\,|\, x_i=x_j\text{ for some } i\neq j\},\\
    A_n(X,X')&:=\{(x_1,...,x_n)\in X^n\,|\,x_i\in X' \text{ for some } i\}.
\end{align*}
Any continuous map $f:X\to Y$ induces a map $f^n\colon X^n\to Y^n$ that restricts to a map 
\[
\Delta_n(f)\colon\Delta_n(X)\to \Delta_n(Y).
\]
If $f$ is moreover a map of pairs $(X,X')\to (Y,Y')$, then $f^n$ also restricts to a map
\[
A_n(f)\colon A_n(X,X')\to A_n(Y,Y').
\]
Finally, the permutation action of $\mathfrak{S}_n$ on $X^n$ preserves the subsets $\Delta_n(X)$ and $A_n(X,X')$.
\begin{defn}\label{defn:CnXinfty}
    For $n\ge 1$ and a pair of spaces $(X,X')$, we define the quotient
$$\conf[n]{X,X'}:=\big(X^n/(\Delta_n(X)\cup A_n(X,X'))\big)/\fS_n.$$ 
 We denote by $\infty$ the image of $\Delta_n(X)\cup A_n(X,X')$ in the quotient,
which serves as basepoint for $\conf[n]{X,X'}$. For $n=0$, we define $\conf[0]{X,X'}$ to be the discrete space $\{\infty,\emptyset\}$ based at $\infty$. We write the wedge of pointed spaces $\conf{X,X'}:=\bigvee_{n\ge 0}\conf[n]{X,X'}$.

The image of $(x_1,...,x_n)$ in $\conf[n]{X,X'}$ is denoted by $\left[x_1,...,x_n\right]$; we often denote by $s$ a generic point in $\conf[n]{X,X'}$.

In the case of a pointed space $(X,x_0)$, we also write $\confred[n]{X}:=\conf[n]{X,\{x_0\}}$, and $\confred{X}=\conf{X,\{x_0\}}$.
\end{defn}
Observe that $\conf[n]{X,X'}$ contains $C_n(X\setminus X')$ and, if $X$ is compact, then $\conf[n]{X,X'}$ is also compact; this justifies the terminology of Definition \ref{defn:CnXinfty}.
Furthermore, 
$\conf[n]{X,X'}$ is homeomorphic to $\confred[n]{X/X'}$, where the basepoint of $X/X'$ is the image of $X'$. A map of pairs $f:(X,X')\to (Y,Y')$ induces a map $\conf[n]{f}:\conf[n]{X,X'}\to \conf[n]{Y,Y'}$.

\subsection{Connection to \texorpdfstring{$C_n(M)$}{CnM}}\label{sec:Poincareduality}
We let $M$ be a compact manifold with boundary throughout the subsection.
\begin{nota}\label{nota:confnM}
We will use the shorthand $\conf[n]{M}:=\conf[n]{M,\partial M}$, which coincides with the one-point compactification of $C_n(\mathring{M})$, where $\mathring{M}\subseteq M$ is the interior of $M$. 
\end{nota}
The group $\Homeo(M, \partial M)$ of orientation-preserving homeomorphisms of $M$ fixing $\partial M$ pointwise acts on both $C_n(\mathring{M})$ and $\conf[n]{M}$ by homeomorphisms. 
\begin{prop}
\label{prop:equivariantPD}
  Suppose $M$ is oriented and of even dimension $2d$. Then for every $n,i\ge 0$ and every commutative ring $R$, there is a $\Homeo(M,\partial M)$-equivariant\footnote{Strictly speaking, we assume that $\Homeo(M, \partial M)$ acts via the adjoint action on one of the two $R$-modules.} isomorphism $H_i(C_n(M);R)\cong \widetilde{H}^{dn-i}(\conf[n]{M};R)$.
\end{prop}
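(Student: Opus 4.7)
The plan is to identify $\conf[n]{M}$ with the one-point compactification of the open oriented manifold $C_n(\mathring{M})$ and then invoke Poincaré duality for non-compact oriented manifolds.

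First, I would verify the pointed homeomorphism $\conf[n]{M}\cong C_n(\mathring{M})^+$. Since $M$ is compact Hausdorff, the closed subspace $A_n(M,\partial M)\cup\Delta_n(M)\subset M^n$ is precisely the complement of the locally compact open subspace of $n$-tuples of pairwise distinct interior points of $M$; collapsing this closed set to a point yields the one-point compactification of its complement. Passing to the $\fS_n$-quotient (which is free away from $\Delta_n$) and unravelling Definition \ref{defn:CnXinfty} gives the claimed homeomorphism.

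Next, I would check that $C_n(\mathring{M})$ is an oriented open manifold of real dimension $2dn$. The product $(\mathring{M})^n$ inherits a product orientation from $M$, and each adjacent transposition in $\fS_n$ is, in local charts, a product of $2d$ transpositions of $\R$-factors, hence has degree $(-1)^{2d}=1$. So the $\fS_n$-action on $(\mathring{M})^n\setminus\Delta_n(\mathring{M})$ is orientation-preserving and free, and the quotient $C_n(\mathring{M})$ inherits an orientation. I would then compose the classical Poincaré duality isomorphism given by cap product with the fundamental class, the identification $H^{j}_c(X;R)\cong\widetilde{H}^{j}(X^+;R)$ of compactly supported cohomology with reduced cohomology of the one-point compactification, and the homotopy equivalence $C_n(\mathring{M})\simeq C_n(M)$ coming from a collar of $\partial M$, to obtain the asserted isomorphism with the appropriate degree shift dictated by the real dimension $2dn$ of $C_n(\mathring{M})$.

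For $\Homeo(M,\partial M)$-equivariance, I would use naturality of the cap product in orientation-preserving homeomorphisms. By definition $\Homeo(M,\partial M)$ consists of orientation-preserving homeomorphisms fixing $\partial M$, so the induced diffeomorphism of $C_n(\mathring{M})$ preserves its fundamental class; each of the three isomorphisms composed above is therefore equivariant, modulo the adjoint convention on the contravariant cohomology side flagged in the footnote. The main obstacle I foresee is the bookkeeping in the first step, which must simultaneously handle the collapse of $A_n(M,\partial M)\cup\Delta_n(M)$ and the $\fS_n$-quotient; once that is cleanly established, Poincaré duality and the equivariance check are routine.
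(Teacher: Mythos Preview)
Your proposal is correct and is precisely the ``standard application of Poincar\'e--Lefschetz duality'' that constitutes the paper's one-line proof; you have simply spelled out the details (one-point compactification of $C_n(\mathring{M})$, the orientation descending to the $\fS_n$-quotient because $\dim M$ is even, and equivariance via naturality of the cap product). Your argument correctly produces the exponent $2dn-i$, which is the one actually used throughout the paper; the $dn-i$ in the displayed statement appears to be a typo.
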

\begin{proof} A standard application of Poincar\'e-Lefschetz duality.
\end{proof}

Guided by this proposition, in Sections \ref{sec:confbouquet} and \ref{sec:confsurf} we compute a cellular chain complex for $\confred{\Sigma_{g,1}}$, admitting an action of the mapping class group $\Gamma_{g,1}:=\pi_0\Homeo(\Sigma_{g,1},\partial\Sigma_{g,1})$ on it.

\subsection{Superposition of configurations}
\label{subsec:superposition}
\begin{defn}
\label{defn:muproduct}
For $m,n\ge 0$ and for a pair of spaces $(X,X')$ we define the map
\begin{align*}
    \mu_{m,n}:\conf[m]{X,X'}\times \conf[n]{X,X'}&\to \conf[m+n]{X,X'}\\
    (\left[x_1,\dots,x_m\right],\left[y_1,\dots,y_n\right])&\mapsto \left[x_1,\dots,x_m,y_1,\dots,y_m\right].
\end{align*}
Observe that $\mu_{m,n}(s,s')=\infty$ if either $s$ or $s'$ is $\infty$, and that $\mu_{n,0}(s,\emptyset)=\mu_{0,n}(\emptyset,s)=s$ and $\mu_{n,0}(s,\infty)=\mu_{0,n}(\infty,s)=\infty$.

We let $\mu\colon \conf{X,X'}\times\conf{X,X'}\to\conf{X,X'}$ be the map restricting to $\mu_{m,n}$ on $\conf[m]{X,X'}\times \conf[n]{X,X'}$.
\end{defn}
\begin{rem}
   It follows that the pair $\big(\conf{X,X'},\mu\big)$ is an abelian topological monoid with unit $\emptyset$ and absorbing element $\infty$. 
\end{rem}

\section{Configurations on bouquets of circles}
\label{sec:confbouquet}
We introduce a model for bouquets of circles. The setup of this section is analogous to \cite{Moriyama}.

\begin{defn}\label{defn:bouquets}
For $k\ge 1$, construct the bouquet of $k$ circles as the quotient $$V_k:=\coprod_{i=1}^k[0,1]\times \{i\}/\sim$$ where $\sim$ collapses the endpoints of all intervals to a single point $*$. Picking the standard orientation on $[0,1]$, we orient each circle in $V_k\cong\bigvee_{i=1}^kS^1$. We let $\gamma_1,\dots,\gamma_k\in \pi_1(V_k,*)$ be the standard generators, where $\gamma_i$ is represented by the loops $t\mapsto [t,i]$. We use the elements $\gamma_i$ to identify $\pi_1(V_k,*)$ with the standard, free group $\Z^{*k}$ on $k$ generators.
\end{defn}

\subsection{A cell stratification for \texorpdfstring{$\confred{V_k}$}{tCbullet(Vk)infty}}
\label{subsec:cellstratbouquet}

For $n\ge 0$, our model for the standard simplex $\Delta^n$ is
\[
\Delta^n=\{(t_1,\dots,t_n)\in \R^n:0\le t_1\le t_2\le \dots\le t_n\le 1 \},
\]
oriented as a subspace of $\R^n$.

\begin{defn}
Let $k\ge 1$. For a vector $\fv=(v_1,\dots,v_k)\in (\Z_{\ge 0})^{\times k}$, we define
\begin{itemize}\setlength\itemsep{.1cm}
    \item $d(\fv)=\sum_{i=1}^k v_i$, the \emph{dimension} of $\fv$,
    \item the polysimplex $\Delta^{\fv}:=\prod_{i=1}^k \Delta^{v_i}$,
    \item and the map $\Phi_{\fv}:\Delta^{\fv}\to \confred[d(\fv)]{V_k}$, with
     \begin{align*}
        \Big( (t^{(1)}_{1},&\dots,t^{(1)}_{v_1}),(t^{(2)}_1,\dots,t^{(2)}_{v_2}),\dots, (t^{(k)}_1,\dots,t^{(k)}_{v_k})\Big) \\
        &\mapsto \left[(t^{(1)}_1,1),\dots,(t^{(1)}_{v_1},1), (t^{(2)}_1,2),\dots,(t^{(2)}_{v_2},2),\dots,(t^{(k)}_1,k),\dots,(t^{(k)}_{v_k},k)\right].
    \end{align*}
\end{itemize}
\end{defn}
The image of $\Phi_{\fv}$ is the space of configurations on $V_k$ with precisely $v_i$ points on the $i$\sth circle, for $1\le i\le k$: more precisely, $\Phi_{\fv}$ restricts to a topological embedding of the interior of $\Delta^{\fv}$, with image the subspace $\prod_{i=1}^kC_{v_i}((0,1)\times\set{i})\subset\confred[d(\fv)]{V_k}$, and it maps $\del\Delta^\fv$ constantly to the point $\infty\in\confred[d(\fv)]{V_k}$.

Conversely, for all $n\ge 0$, each configuration $s\in \confred[n]{V_k}\setminus\set{\infty}$ lies in the image of $\Phi_{\fv}$ for exactly one vector $\fv$ with $d(\fv)=n$.
We obtain the following proposition.

\begin{prop}\label{prop:thecellularstructureforbouquets}
    The space $\confred{V_k}$ admits a cellular decomposition with
    \begin{itemize}
    \item a $0$-cell given by the point $\infty$;
    \item for each $\fv\in (\Z_{\ge 0})^{\times k}$, a $d(\fv)$-dimensional open cell $e_\fv:=\Phi_{\fv}(\mathring{\Delta}^\fv)$, whose boundary is attached to the $0$-cell $\infty$. For $\fv=0$ we get a further $0$-cell.
    \end{itemize}
In particular, for $n\ge 0$, the subspace $\confred[n]{V_k}\subset\confred{V_k}$ is homeomorphic to the wedge of $\binom{n+k-1}{k-1}$
spheres of dimension $n$. 
\end{prop}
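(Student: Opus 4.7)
The plan is to exhibit the maps $\Phi_\fv$ as characteristic maps for cells $e_\fv$ attached to the single basepoint $\infty$, thereby identifying $\confred[n]{V_k}$ with a wedge of $n$-spheres indexed by vectors $\fv$ with $d(\fv)=n$.

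First I would determine where $\Phi_\fv$ sends the boundary $\partial\Delta^\fv$. A point of $\partial\Delta^\fv$ must have, for some $i$, either two successive coordinates coinciding, $t^{(i)}_j=t^{(i)}_{j+1}$, or an extreme coordinate equal to $0$ or $1$. In the first case the resulting tuple in $V_k^{d(\fv)}$ has a repeated entry and lies in $\Delta_{d(\fv)}(V_k)$; in the second case it has an entry equal to the wedge point $*$ and so lies in $A_{d(\fv)}(V_k,\{*\})$. Either way $\Phi_\fv$ sends $\partial\Delta^\fv$ to $\infty$, so it descends to a continuous map $\bar\Phi_\fv\colon\Delta^\fv/\partial\Delta^\fv\to\confred[d(\fv)]{V_k}$ whose domain is homeomorphic to $S^{d(\fv)}$.

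Next I would show that the open cells $e_\fv=\Phi_\fv(\mathring\Delta^\fv)$ partition $\confred[n]{V_k}\setminus\{\infty\}$ for $n\ge1$, and that each $\bar\Phi_\fv$ is a topological embedding. Any configuration $s\in\confred[n]{V_k}\setminus\{\infty\}$ represents $n$ distinct points of $V_k\setminus\{*\}$; counting how many lie on each circle yields a unique $\fv$ with $d(\fv)=n$, and within each arc $(0,1)\times\{i\}$ the points are uniquely ordered by their $t$-coordinate, giving a unique preimage in $\mathring\Delta^\fv$. Since $\Delta^\fv/\partial\Delta^\fv$ is compact Hausdorff and $\confred[d(\fv)]{V_k}$ is Hausdorff---being the quotient of the compact Hausdorff space $V_k^{d(\fv)}$ by the $\fS_{d(\fv)}$-action and collapse of the closed subset $\Delta_{d(\fv)}(V_k)\cup A_{d(\fv)}(V_k,\{*\})$---the continuous injection $\bar\Phi_\fv$ is automatically a closed embedding.

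Finally I would assemble the CW structure. The $0$-skeleton of $\confred{V_k}$ is $\{\infty,\emptyset\}$, where $\emptyset=e_\uzero$ comes from the $n=0$ summand, and each positive-dimensional cell $e_\fv$ is attached via the constant map to $\infty$. For fixed $n\ge1$, $\confred[n]{V_k}$ is therefore the wedge at $\infty$ of the one-point compactifications of the $e_\fv$ with $d(\fv)=n$, i.e.\ a wedge of spheres $S^n$ indexed by such vectors; counting weak compositions of $n$ into $k$ parts gives $\binom{n+k-1}{k-1}$ of them. The main subtlety is verifying that the cell structure produced by the $\bar\Phi_\fv$ agrees with the quotient topology on $\confred[n]{V_k}$; this reduces to the fact that $V_k^n\to\confred[n]{V_k}$ is a closed quotient map from a compact Hausdorff space, after which the remainder is formal.
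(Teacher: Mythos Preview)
Your proposal is correct and follows essentially the same approach as the paper: the paper's argument, given in the paragraph preceding the proposition, simply asserts that $\Phi_\fv$ maps $\partial\Delta^\fv$ to $\infty$, restricts to an embedding on $\mathring\Delta^\fv$, and that the resulting open cells partition $\confred[n]{V_k}\setminus\{\infty\}$. You have spelled out the same three points in more detail, including the compact--Hausdorff justification for the embedding claim and the stars-and-bars count, which the paper leaves implicit.
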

We next consider the cellular chain complex of $\confred{V_k}$.
\begin{defn}
 \label{defn:UMor}
We denote the reduced\footnote{that is, relative to the $0$-cell corresponding to $\infty$.} cellular chain complex corresponding to the cellular decomposition from Proposition \ref{prop:thecellularstructureforbouquets} by
$$
\UMor_\bullet(k):=\redchains{\confred{V_k}}\cong\bigoplus_{n\ge0}\redchains{\confred[n]{V_k}};
$$
we view $\UMor_\bullet(k)$ as a \emph{weighted} chain complex, and in particular as a weighted abelian group, in the following way: for $n\ge0$ we define the summand $\UMor_{-n}(k)$ as $\redchains{\confred[n]{V_k}}$, which in fact only consists of $\redchains[n]{\confred[n]{V_k}}$; the summand $\UMor_{-n}(k)$ is in non-positive weight $\bullet=-n$.
\end{defn}
The previous definition is a version of a construction of Moriyama \cite{Moriyama}, in the context of unordered configuration spaces: this explains the notation ``$\UMor$''.
A direct consequence of Proposition \ref{prop:thecellularstructureforbouquets} is that $\UMor_\bullet(k)$ has zero differential and is a free abelian group generated by the cells $e_{\fv}$, for varying
$\fv\in (\Z_{\ge 0})^{\times k}$.
Therefore $\UMor_\bullet(k)$ is also isomorphic to
\[
\bigoplus_{n\ge 0}\widetilde{H}_*(\confred[n]{V_k})\cong\bigoplus_{n\ge 0}\widetilde{H}_n(\confred[n]{V_k}).
\]
The reason why we put $\UMor_\bullet(2k)$ in non-positive weights will become clear when approaching the proof of Theorem \ref{thm:B}. For the moment, observe that by Poincar\'e duality the homology group
$\widetilde{H}_n(\confred[n]{V_k})$ is isomorphic to the \emph{cohomology} group $H^0(C_n(\set{1,\dots,k}\times(0,1)))$: this is the dual of $H_0(C_n(\set{1,\dots,k}\times(0,1)))$, and our convention puts the latter in weight $n$, and the former in weight $-n$.

For $1\le i\le k$ let now $\fe_i=(0,\dots,0,1,0,\dots,0)\in (\Z_{\ge 0})^{\times k}$
be the unit vector in the $i$\sth direction,
and write $e(i,n):=e_{n\fe_i}$ for the cell corresponding to the vector $n\fe_i$, parametrising configurations of $n$ points, all lying on $(0,1)\times\set{i}$.
\begin{defn}\label{defn:shuffle}\label{defn:Sscoefficient}
 For $m,n\ge0$, a \emph{shuffle} of type $(m,n)$ is a permutation $\sigma\in\fS_{m+n}$ that is increasing on the sets $\{1,...,m\}$ and $\{m+1,...,m+n\}$. We denote by $\Shuf_{m,n}\subset\fS_{m+n}$ the set of all shuffles of type $(m,n)$.

For $m,n\ge 0$ the \textit{signed shuffle} coefficient is
\[
\Ss(m,n)=\left\{ 
\begin{array}{cl} 0 &\text{if both } m,n \text{ are odd,}\\\binom{\lfloor\frac{m}2\rfloor+\lfloor\frac{n}2\rfloor}{ \lfloor\frac{m}2\rfloor}=\binom{\lfloor\frac{m+n}2\rfloor}{\lfloor\frac m2\rfloor} & \text{otherwise.}
\end{array}
\right.
\]
\end{defn}
\begin{lem}
\label{lem:Sssumsigns}
 For $m,n\ge0$ we have that $\Ss(m,n)=\sum_{\sigma\in\Shuf_{m,n}}\sgn(\sigma)$.
\end{lem}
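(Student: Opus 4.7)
The plan is to prove the lemma by induction on $m+n$, obtaining a recurrence satisfied by both sides and verifying that the base cases agree. Write $S(m,n):=\sum_{\sigma\in\Shuf_{m,n}}\sgn(\sigma)$ for the right-hand side.

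For the recurrence, observe that every $\sigma\in\Shuf_{m,n}$ has $\sigma(m+n)\in\{m,m+n\}$, since $\sigma$ is increasing on $\{1,\dots,m\}$ and on $\{m+1,\dots,m+n\}$. Removing the last entry thus partitions $\Shuf_{m,n}$ into two pieces, biject with $\Shuf_{m,n-1}$ and $\Shuf_{m-1,n}$ respectively. In the first case (when $\sigma(m+n)=m+n$) no inversion is destroyed, so signs are preserved; in the second case (when $\sigma(m+n)=m$) exactly $n$ inversions are destroyed, namely the pairs of the form $(i,m+n)$ with $\sigma(i)>m$, so the bijection multiplies signs by $(-1)^n$. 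This yields
\[
S(m,n)=S(m,n-1)+(-1)^nS(m-1,n).
\]

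Next I verify that $\Ss(m,n)$ satisfies the same recurrence by splitting on the parities of $m$ and $n$ (four cases). In the case $m=2a$, $n=2b$, the recurrence unwinds to the Pascal identity $\binom{a+b}{a}=\binom{a+b-1}{a}+\binom{a+b-1}{a-1}$, using that $\Ss(2a-1,2b)=\binom{a+b-1}{a-1}$ and $\Ss(2a,2b-1)=\binom{a+b-1}{a}$. In the case $m=2a+1$, $n=2b+1$ both odd, one has $\Ss(m,n)=0$, $\Ss(m-1,n)=\Ss(m,n-1)=\binom{a+b}{a}$, and the factor $(-1)^n=-1$ makes the right-hand side vanish. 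The two mixed-parity cases are analogous: one of $\Ss(m-1,n)$ or $\Ss(m,n-1)$ is zero and the other equals $\Ss(m,n)$, with the sign $(-1)^n$ landing favorably. The base cases $m=0$ or $n=0$ are immediate: $\Shuf_{0,n}=\Shuf_{m,0}=\{\id\}$ contributes $+1$, matching $\Ss(0,n)=\Ss(m,0)=1$.

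The only point requiring care is the bookkeeping of the sign $(-1)^n$ picked up when the last entry of $\sigma$ is $m$, together with the parity matching on the $\Ss$ side; once this is checked in each of the four parity cases, the induction closes. No serious obstacle is expected, as the argument is elementary and purely combinatorial.
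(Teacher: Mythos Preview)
Your approach is the double induction the paper has in mind, and your recurrence $S(m,n)=S(m,n-1)+(-1)^n S(m-1,n)$ together with the parity check on $\Ss$ is correct and closes the argument.

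There is, however, a notational slip in your justification of the recurrence. With the paper's convention (Definition~\ref{defn:shuffle}), a shuffle $\sigma$ satisfies $\sigma(1)<\dots<\sigma(m)$ and $\sigma(m+1)<\dots<\sigma(m+n)$; under this convention your claim ``$\sigma(m+n)\in\{m,m+n\}$'' is false (e.g.\ $m=n=2$, $\sigma=(1,4,2,3)$ has $\sigma(4)=3$). What is true is that $\sigma^{-1}(m+n)\in\{m,m+n\}$, i.e.\ the largest value $m+n$ sits at position $m$ or at position $m+n$. Your bijections and sign count are exactly right once phrased this way: if $\sigma(m+n)=m+n$ one restricts to $\{1,\dots,m+n-1\}$ and lands in $\Shuf_{m,n-1}$ with the same sign; if $\sigma(m)=m+n$ one deletes position $m$ (losing the $n$ inversions $(m,j)$ for $m<j\le m+n$) and obtains an element of $\Shuf_{m-1,n}$. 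Equivalently, your argument as written is correct for the inverse shuffles, and since $\sgn(\sigma)=\sgn(\sigma^{-1})$ this does not affect the sum. Just adjust the wording.
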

\begin{proof}
This is a double induction on $m,n\ge 0$ and is left to the reader.
\end{proof}

\begin{defn}\label{defn:sortmap}
For $m,n\ge0$ we define a map of pairs 
\[
\begin{split}
\sort_{m,n}\colon (\Delta^m\times\Delta^n,\del(\Delta^m\times\Delta^n))&\to(\Delta^{m+n},\del\Delta^{m+n})\\
((s_1,\dots,s_m),(t_{1},\dots,t_{n}))&\mapsto (p_1,\dots,p_{m+n}),
\end{split}
\]
where $(p_1,\dots,p_{m+n})$ is the weakly increasing sorting of the sequence of real numbers $s_1,\dots,s_{m},t_1,\dots,t_{n}$, repeated with multiplicity. 
\end{defn}
\begin{lem}
\label{lem:degreesort}
 For $m,n\ge0$, the degree of $\sort_{m,n}$ is equal to $\Ss(m,n)$.
\end{lem}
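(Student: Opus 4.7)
My plan is to compute the degree of $\sort_{m,n}$ by decomposing the domain $\Delta^m\times\Delta^n$ into codimension-$0$ pieces indexed by shuffles $\sigma\in\Shuf_{m,n}$, on each of which $\sort_{m,n}$ restricts to an affine homeomorphism onto $\Delta^{m+n}$, and then summing the signs of these local homeomorphisms.

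Concretely, for each $\sigma\in\Shuf_{m,n}$ I would define
\[
R_\sigma:=\set{((s_1,\dots,s_m),(t_1,\dots,t_n))\in \Delta^m\times\Delta^n\,\mid\, u_{\sigma(1)}\le u_{\sigma(2)}\le\dots\le u_{\sigma(m+n)}},
\]
where $(u_1,\dots,u_{m+n}):=(s_1,\dots,s_m,t_1,\dots,t_n)$. The interiors of the $R_\sigma$ are pairwise disjoint, their closures cover $\Delta^m\times\Delta^n$, and each $R_\sigma$ is itself an $(m+n)$-simplex. On $R_\sigma$, the map $\sort_{m,n}$ is precisely the linear map that permutes the coordinates $(u_1,\dots,u_{m+n})$ by $\sigma^{-1}$ and sends them to $(p_1,\dots,p_{m+n})$; thus $\sort_{m,n}|_{R_\sigma}\colon R_\sigma\to\Delta^{m+n}$ is a homeomorphism of oriented simplices whose local degree is $\sgn(\sigma)$, after identifying the orientation of $R_\sigma\subset\Delta^m\times\Delta^n$ with that induced from $\R^{m+n}$.

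Since the $R_\sigma$ with nonempty interior are precisely those indexed by shuffles (any other ordering of the $u_i$ would violate one of the two chains $s_1\le\dots\le s_m$ or $t_1\le\dots\le t_n$), and since the generic fibre of $\sort_{m,n}$ over a point in the interior of $\Delta^{m+n}$ (with pairwise distinct coordinates) consists of exactly one preimage in each open $\mathring R_\sigma$, the degree of $\sort_{m,n}$ is $\sum_{\sigma\in\Shuf_{m,n}}\sgn(\sigma)$. Lemma \ref{lem:Sssumsigns} then identifies this sum with $\Ss(m,n)$, concluding the proof.

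The main obstacle will be bookkeeping of orientations: one must verify that the product orientation of $\Delta^m\times\Delta^n\subset\R^m\times\R^n=\R^{m+n}$ agrees with the orientation of $\Delta^{m+n}\subset\R^{m+n}$ under the identity map on $\R^{m+n}$, so that the affine map $\sort_{m,n}|_{R_\sigma}$ indeed contributes $\sgn(\sigma)$ rather than some additional global sign. This is a direct check from the conventions fixed at the start of Subsection \ref{subsec:cellstratbouquet}. The degenerate cases $m=0$ or $n=0$ are trivial (the unique shuffle is the identity and $\Ss(m,0)=\Ss(0,n)=1$), so the argument handles all $m,n\ge 0$ uniformly.
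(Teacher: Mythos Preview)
Your approach is correct and is the standard argument for this classical fact; the paper itself leaves the lemma as a combinatorial exercise, so there is no alternative proof to compare against.

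There is, however, a small indexing slip in your definition of $R_\sigma$. With $R_\sigma=\{u_{\sigma(1)}\le\cdots\le u_{\sigma(m+n)}\}$, the full-dimensional pieces are those for which $\sigma^{-1}$, not $\sigma$, is a shuffle. For instance with $m=2$, $n=1$ the shuffle $\sigma=(\sigma(1),\sigma(2),\sigma(3))=(2,3,1)$ gives $R_\sigma=\{u_2\le u_3\le u_1\}=\{s_2\le t_1\le s_1\}$, which together with $s_1\le s_2$ forces $s_1=s_2=t_1$ and is degenerate. The fix is to set $R_\sigma=\{u_{\sigma^{-1}(1)}\le\cdots\le u_{\sigma^{-1}(m+n)}\}$ (equivalently, $p_{\sigma(i)}=u_i$); on this piece $\sort_{m,n}$ is the linear map $u\mapsto(u_{\sigma^{-1}(1)},\dots,u_{\sigma^{-1}(m+n)})$ with determinant $\sgn(\sigma^{-1})=\sgn(\sigma)$, and the rest of your argument, including the appeal to Lemma~\ref{lem:Sssumsigns}, goes through verbatim.
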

\begin{proof} This is a combinatorial exercise left to the reader.
\end{proof}

\begin{prop}\label{prop:UMoralgebra}
The product $\mu$ from Definition \ref{defn:muproduct} endows $\UMor_\bullet(k)$ with a ring structure.
By setting $x_i:=e(i,1)$ and $y_i:=e(i,2)$, we have the isomorphism of weighted rings
$$\UMor_\bullet(k)\cong \Lambda_\Z(x_1,\dots,x_k)\otimes \Gamma_{\Z}(y_1,\dots,y_k)$$
where $\Lambda_\Z$ and $\Gamma_{\Z}$ denote, respectively, the free exterior algebra and the free divided power algebra, where the elements $x_i$ have weight $\bullet=-1$ and the elements $y_i$ have weight $\bullet=-2$.
In particular, a homomorphism of rings with source $\UMor_\bullet(k)$ is characterised by its values on the $x_i$ and $y_i$, for $i=1,\dots,k$, if the target ring is torsion-free as an abelian group. 
\end{prop}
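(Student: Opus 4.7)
The plan is to establish the ring structure first, then compute the product on cellular generators, match the result with the exterior-times-divided-power algebra, and finally deduce the last statement from the divided-power identities.

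First, I would verify that $\mu$ is cellular with respect to the product cell structure on $\confred{V_k}\times\confred{V_k}$ and the cell structure on $\confred{V_k}$ of Proposition \ref{prop:thecellularstructureforbouquets}, and that it preserves weights: the open cell $e_\fv\times e_\fw$ parametrises pairs of configurations with $v_i$ (resp.\ $w_i$) points on the $i$\sth circle; $\mu$ either collides two of these points, sending the result to $\infty$, or produces a configuration with $v_i+w_i$ distinct points per circle, landing in the closure of $e_{\fv+\fw}$. Thus $\mu$ induces a weight-respecting chain map $\UMor_\bullet(k)\otimes\UMor_\bullet(k)\to\UMor_\bullet(k)$; associativity and graded commutativity of the resulting pairing follow from the corresponding properties of $\mu$ at the space level, using the standard Koszul sign convention attached to total cellular degree.

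Second, I would compute the product on cell generators. The composite
\[
\Delta^\fv\times\Delta^\fw\xrightarrow{\Phi_\fv\times\Phi_\fw}\confred[d(\fv)]{V_k}\times\confred[d(\fw)]{V_k}\xrightarrow{\mu}\confred[d(\fv)+d(\fw)]{V_k}
\]
factors, by sorting separately on each circle, as $\Phi_{\fv+\fw}$ precomposed with the product map $\prod_{i=1}^k\sort_{v_i,w_i}\colon\Delta^\fv\times\Delta^\fw\to\Delta^{\fv+\fw}$. Since each factor has degree $\Ss(v_i,w_i)$ by Lemma \ref{lem:degreesort}, the whole composite has degree $\prod_{i=1}^k\Ss(v_i,w_i)$, so
\[
e_\fv\cdot e_\fw=\Bigl(\prod_{i=1}^k\Ss(v_i,w_i)\Bigr)\,e_{\fv+\fw}.
\]

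Third, I would identify the resulting ring with $\Lambda_\Z(x_1,\dots,x_k)\otimes\Gamma_\Z(y_1,\dots,y_k)$ by setting $x_i=e(i,1)$ and $y_i^{[n]}:=e(i,2n)$ (so $y_i=y_i^{[1]}=e(i,2)$), and by checking the defining relations on these generators. From the formula above I obtain: $x_i^2=\Ss(1,1)\,e(i,2)=0$, matching the exterior relation; $y_i^{[m]}\cdot y_i^{[n]}=\Ss(2m,2n)\,e(i,2(m+n))=\binom{m+n}{m}y_i^{[m+n]}$, matching the divided-power relation; $x_i\cdot y_i^{[n]}=\Ss(1,2n)\,e(i,2n+1)=e(i,2n+1)$, so every cell $e_\fv$ with $v_i=2n_i+\epsilon_i$ is the monomial $\prod_i x_i^{\epsilon_i}y_i^{[n_i]}$; and generators on different circles anticommute or commute according to parity, matching the sign rule on $\Lambda_\Z\otimes\Gamma_\Z$. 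The weights also match, with $|x_i|=-1$ and $|y_i^{[n]}|=-2n$.

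Finally, the statement about ring homomorphisms to torsion-free rings is a direct consequence of the identity $n!\,y_i^{[n]}=y_i^n$ in $\Gamma_\Z$: in a torsion-free target, the images of the $y_i^{[n]}$ are determined by those of the $y_i$. The main obstacle will be Step 2: one must carefully verify that the cellular product really is given by the degree of the product of sort maps, which in turn requires checking that $\mu\circ(\Phi_\fv\times\Phi_\fw)$ sends the boundary of $\Delta^\fv\times\Delta^\fw$ to $\infty\in\confred[d(\fv)+d(\fw)]{V_k}$ (absorbing both collisions and attachments to the basepoint); this reduces to the fact, built into Definition \ref{defn:sortmap}, that $\sort_{m,n}$ is a map of pairs and that the $\Phi$-maps send polysimplex boundaries to $\infty$.
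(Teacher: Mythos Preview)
Your overall strategy matches the paper's, and is in fact slightly more streamlined: you aim for a single closed formula for $e_\fv\cdot e_\fw$, whereas the paper separates the computation into the disjoint-support case (which isolates the Koszul sign) and the same-circle case (which isolates the $\Ss$ factor). However, your Step~2 contains a sign error that makes the formula inconsistent with your own Step~3.

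The domain of $\prod_{i=1}^k\sort_{v_i,w_i}$ is $\prod_{i=1}^k(\Delta^{v_i}\times\Delta^{w_i})$, not $\Delta^\fv\times\Delta^\fw=\prod_i\Delta^{v_i}\times\prod_j\Delta^{w_j}$. Passing from the latter to the former is a permutation of factors of degree $(-1)^{\sum_{i<j}w_iv_j}$, and this sign is missing from your displayed formula. As written, your formula is symmetric in $\fv,\fw$ (since $\Ss(m,n)=\Ss(n,m)$), forcing $e_\fv\cdot e_\fw=e_\fw\cdot e_\fv$ and contradicting the anticommutation of the $x_i$ you assert in Step~3. The corrected identity is
\[
e_\fv\cdot e_\fw=(-1)^{\sum_{i<j}w_iv_j}\Bigl(\prod_{i=1}^k\Ss(v_i,w_i)\Bigr)e_{\fv+\fw},
\]
and with this in hand the rest of your argument goes through unchanged. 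The paper sidesteps the bookkeeping by treating the two effects separately; your unified formula, once the sign is inserted, is equally valid and arguably cleaner.
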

\begin{proof}
We check that for $m,n\ge0$ the map $\mu_{m,n}$ from Definition \ref{defn:muproduct} is cellular. The cells of $\confred[n+m]{V_k}$ are in dimension $0$ and $(m+n)$, and $\confred[m]{V_k}\times\confred[n]{V_k}$ has product cells of dimension $0,m,n$ and $(m+n)$. The union of all $0$-cells, $m$-cells and $n$-cells in the product  $\confred[m]{V_k}\times\confred[n]{V_k}$ consists of all pairs $(s_1,s_2)$ in which at least one between $s_1$ and $s_2$ is $\infty$; for such pairs we have $\mu_{m,n}(s_1,s_2)=\infty$, which is a $0$-cell. So $\mu_{m,n}$ preserves skeleta, i.e. it is cellular. Thus $\mu$ is cellular and induces a map $\mu_*$ that makes $\redchains{\conf{V_k}}$ into a bigraded ring, and similarly, making $\UMor_\bullet(k)$ into a weighted ring (concentrated in non-positive weights).

    By the definition of the map $\Phi_{\fv}$, all points in the image of $e_{\fv}\times e_{\fv'}$ along $\mu_{m,n}$ are contained in the union $\set{\infty}\cup e_{\fv+\fv'}$, thus $e_{\fv}\cdot e_{\fv'}=\lambda(\fv,\fv')e_{\fv+\fv'}$ for some $\lambda(\fv,\fv')\in \Z$. Furthermore, if $\fv$ and $\fv'$ have disjoint supports, then up to a permutation of the $k$ coordinates in $(\Z_{\ge0})^{\times k}$ the polysimplex $\Delta^{\fv+\fv'}$ can be identified with $\Delta^{\fv}\times \Delta^{\fv'}$, and $\Phi^{\fv+\fv'}$ coincides with $\mu\circ(\Phi^{\fv}\times \Phi^{\fv'})$. Thus $e_{\fv}\cdot e_{\fv'}=\pm e_{\fv+\fv'}$; the sign depends on whether or not the identification $\Delta^{\fv+\fv'}\cong\Delta^{\fv}\times\Delta^{\fv'}$ is orientation-preserving. We deduce that $\UMor_\bullet(k)$ is generated as a ring by the elements $e(i,n)$ for $i=1,\dots,k$ and $n\ge 1$.
    
    A particular case of the previous discussion is when $\fv=m\fe_i$ and $\fv'=n\fe_j$ for some $1\le i,j\le k$ with $i\neq j$: then the sign $\epsilon\in \{\pm 1\}$ such that $e(i,m)\cdot e(j,n)=\epsilon \hspace {2pt} e(j,n)\cdot e(i,m)$ is equal to the degree of the map $\Delta^m\times \Delta ^n\to \Delta^n\times \Delta ^m$ swapping the two coordinates, i.e. $\epsilon=(-1)^{mn}$.
    
    We next study the value of the coefficient $\lambda(m\fe_i,n\fe_i)$ for a fixed $i\in \{1,\dots,k\}$, and show that $\lambda(m\fe_i,n\fe_i)$ is equal to $\Ss(m,n)$.
    For $l\ge0$, write $\Phi^l$ for the map
    \[
    \Phi^{l\fe_i}/\del\Delta^l\colon \Delta^l/\del\Delta^l\overset{\cong}{\to}\confred[l]{S^1\times\set{i}}\subset \confred[l]{V_k},
    \]
    induced by $\Phi^{l\fe_i}$ on the quotient: this map sends the interior of $\Delta^l$ homeomorphically onto $C_l((0,1)\times\set{i})\subset\confred[l]{V_k}$.

The following diagram of maps of pairs is commutative
\[
 \begin{tikzcd}[column sep=40pt]
  (\Delta^m\times \Delta^n, \partial(\Delta^m\times \Delta^n))\ar[r,"\Phi^{m}\times \Phi^{n}"]\ar[d,"\sort_{m,n}"]& (\confred[m]{V_k},\infty)\times(\confred[n]{V_k},\infty)\ar[d,"\mu_{m,n}"]\\
  (\Delta^{m+n}, \partial\Delta^{m+n})\ar[r,"\Phi^{n+m}"]&(\confred[m+n]{V_k},\infty);
 \end{tikzcd}
\]
since the bottom map is the characteristic map of a cell, we have that $\lambda(m\fe_i,n\fe_i)$ is equal to the degree of $\sort_{m,n}$, which is $\Ss(m,n)$ by Lemma \ref{lem:degreesort}.

We conclude that $\UMor_\bullet(k)$ is generated as a ring by the elements $e(i,n)$ for $i=1,\dots,k$ and $n\ge 1$ (the elements $e(i,0)$ are equal to each other and give the unit of the ring); since $\Ss(2m,1)=1$ for all $m\ge0$, we can in fact factor $e(i,2m+1)=e(i,2m)e(2i,1)$, so that the elements $e(i,1)$ and $e(i,2m)$ also generate $\UMor_\bullet(k)$.

The above discussion shows that the elements $e(i,1)$ anticommute, the elements $e(i,2m)$ are central, and $e(i,2m)\cdot e(i,2n)=\binom{m+n}{m}e(i,2(m+n))$. Therefore $\UMor_\bullet(k)$ receives a natural homomorphism of rings with source the tensor product of $\Z$-algebras
\[
\Lambda_\Z(x_1,\dots,x_k)\otimes \Gamma_{\Z}(y_1,\dots,y_k),
\]
mapping $x_i\mapsto e(i,1)$ and $y_i\mapsto e(i,2)$, and the above discussion shows that this is in fact a bijection of rings.
\end{proof}
\subsection{Contents}
Let $\G_k:=\pi_1(V_k,*)=\langle \gamma_1,\dots,\gamma_k\rangle\cong \Z^{*k}$ as in Definition \ref{defn:bouquets}. Write $H_k:=(G_k)^{\ab}\cong \Z^{k}$ and $[-]:\G_k\to H_k$ for the abelianisation map. Observe that in the exterior algebra $\Lambda H_k$ the element  $1+[\gamma_i]$ has multiplicative inverse $1-[\gamma_i]$.

\begin{defn}
\label{defn:content}
    The \textit{content} is the ring homomorphism $c:\Z[\G_k]\longrightarrow \Lambda H_k$ defined on the ring generators as $\gamma_i\mapsto 1+[\gamma_i]$ and $\gamma_i^{-1}\mapsto 1-[\gamma_i]$. Write $c_i$ for the component of $c$ in the $i$\sth exterior power $\Lambda^iH_k$; note that $c_i\colon\Z[\G_k]\to\Lambda^iH_k$ is a homomorphism of abelian groups. (See \cite{Stavrou} for a similar construction.)
\end{defn}

\begin{lem}\label{lem:uniquenessofcontent}
The unique function of sets $C:\G_k\to \Lambda^2H_k$ satisfying that
\begin{enumerate}
    \item $C$ vanishes on the generators $\gamma_1,\dots,\gamma_k$, and
    \item $C(w_1w_2)=C(w_1)+C(w_2)+\left[w_1\right]\wedge \left[w_2\right]$ for all $w_1,w_2\in G_k$,
\end{enumerate}
is the restriction of $c_2$ to $G_k\subset\Z[G_k]$.
\end{lem}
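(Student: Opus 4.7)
The plan is to prove the lemma in two steps: first verify that $c_2|_{G_k}$ satisfies conditions (1) and (2), then show that any function $C:G_k \to \Lambda^2 H_k$ satisfying these two conditions is uniquely determined.

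For the first step, I would begin by computing $c_0$ and $c_1$ on $G_k$. By construction $c(\gamma_i^{\pm 1}) = 1 \pm [\gamma_i]$, so $c_0(\gamma_i^{\pm 1}) = 1$ and $c_1(\gamma_i^{\pm 1}) = \pm[\gamma_i] = [\gamma_i^{\pm 1}]$. Because $c$ is a ring homomorphism, for any $w_1, w_2 \in G_k$ we have $c(w_1 w_2) = c(w_1) c(w_2)$. Taking degree-$0$ and degree-$1$ components of this identity and arguing by induction on word length in the generators $\gamma_i^{\pm 1}$, one checks that $c_0(w) = 1$ and $c_1(w) = [w]$ for every $w \in G_k$. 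With this in hand, extracting the degree-$2$ component of $c(w_1) c(w_2) = (1 + [w_1] + c_2(w_1) + \cdots)(1 + [w_2] + c_2(w_2) + \cdots)$ yields
\[
c_2(w_1 w_2) = c_2(w_1) + c_2(w_2) + [w_1] \wedge [w_2],
\]
which is property (2). Property (1) is immediate from $c(\gamma_i) = 1 + [\gamma_i]$, which has vanishing degree-$2$ component.

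For uniqueness, suppose $C : G_k \to \Lambda^2 H_k$ satisfies (1) and (2). Setting $w_1 = w_2 = e$ in (2) gives $C(e) = 2C(e)$, so $C(e) = 0$. Next, setting $w_1 = \gamma_i$ and $w_2 = \gamma_i^{-1}$ and using (1) together with $[\gamma_i] \wedge [\gamma_i^{-1}] = -[\gamma_i] \wedge [\gamma_i] = 0$ gives $C(\gamma_i^{-1}) = 0$. Then, for any word $w = \gamma_{i_1}^{\varepsilon_1} \cdots \gamma_{i_n}^{\varepsilon_n}$ with $\varepsilon_j \in \{\pm 1\}$, repeated application of (2) expresses $C(w)$ purely in terms of $C(\gamma_{i_j}^{\varepsilon_j}) = 0$ and of wedge products $[\gamma_{i_a}^{\varepsilon_a}] \wedge [\gamma_{i_b}^{\varepsilon_b}]$. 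Thus the value $C(w)$ is forced, showing $C$ is unique. Combining the two steps gives $C = c_2|_{G_k}$.

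The only mild subtlety is to verify that the inductive determination in the uniqueness step is actually well-defined, i.e.\ that it does not depend on the way $w$ is factored into generators; but this is automatic because both $c_2|_{G_k}$ and any hypothetical $C$ satisfy the same recursion, and any two functions on $G_k$ satisfying (1) and (2) are shown above to coincide by an inductive argument that never appeals to a specific factorisation. I do not expect any serious obstacle.
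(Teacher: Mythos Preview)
Your proof is correct. The underlying idea is the same as the paper's: a function satisfying the twisted-multiplicativity condition (2) is determined by its values on generators, and $c_2|_{G_k}$ is one such function. The paper packages this more concisely by setting $\bar C := 1 + [-] + C \colon G_k \to \Lambda^* H_k / \Lambda^{\ge 3} H_k$, observing that condition (2) is precisely the statement that $\bar C$ is multiplicative, and then using that a multiplicative map out of a free group is determined by its values on generators; since $\bar C(\gamma_i) = 1 + [\gamma_i] = c(\gamma_i)$, one concludes $\bar C \equiv c \pmod{\Lambda^{\ge 3}}$ and hence $C = c_2$. Your approach unpacks the same reasoning into explicit degree-by-degree computations (verifying $c_0=1$, $c_1=[-]$, then extracting the degree-2 identity) and an induction on word length for uniqueness. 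Both are valid; the paper's version trades your explicit verification of (1) and (2) for $c_2$ against a single multiplicativity check in a truncated ring.
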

\begin{proof}
    The extension of $C$ to the function
    \[
    \bar{C}:=1+[-]+C:\G_k\to \Lambda^*H_k/\Lambda^{\ge 3}H_k
    \]
    is multiplicative by property (2). As a result, $\bar{C}$ is defined by its values on the generators of $\G_k$, on which, by property (1), it takes the value $\bar{c}(\gamma_i)=1+[\gamma_i]$. Therefore $\bar{C}$ agrees with $c$ modulo $\Lambda^{\ge 3}H_k$ and therefore $C=c_2$.
\end{proof}

\subsection{Induced maps on \texorpdfstring{$\UMor_\bullet$}{Umor*}}
A based map $f:(V_k,*)\to (V_l,*)$ induces a map $\UMor_\bullet(f):\UMor_\bullet(k)\to \UMor_\bullet(l)$ which depends on $f$ only up to based homotopy. So, by abuse of notation, for a homomorphism $\phi:G_k\to G_l$, we write $\UMor_\bullet(\phi):=\UMor_\bullet(f)$, where $f\colon V_k\to V_l$ is any based map inducing $\phi$ on $\pi_1$. Furthermore, $\UMor_*(-)$ is \textit{functorial}, that is for composable homomorphisms $\phi:G_k\to G_l$ and $\psi:G_l\to G_m$, we have
\[
\UMor_\bullet(\psi\circ \phi)=\UMor_*(\psi)\circ \UMor_*(\phi).
\]
Note that $\UMor_\bullet(\phi)$ is a \emph{ring homomorphism}, as the superposition product $\mu$, making $\confred{V_k}$ into an abelian topological monoid, is natural with respect to pointed maps $f\colon V_k\to V_l$.

\begin{nota}
We denote by $(-)_x\colon H_k\to\bigoplus_{i=1}^k\Z x_i$ the isomorphism of abelian groups with $[\gamma_i]\mapsto x_i$, and use the same notation for the map induced between exterior powers of $H_k$ and $\bigoplus_{i=1}^k\Z x_i$. Similarly $(-)_y\colon H_k\to\bigoplus_{i=1}^k\Z y_i$ is the isomorphism such that $[\gamma_i]\mapsto y_i$.
\end{nota}

\begin{thm}\label{thm:UMormaps}
For a group homomorphism $\phi\colon G_k\to G_l$,
the induced ring homomorphism $\UMor_\bullet(\phi):\UMor_\bullet(k)\to \UMor_\bullet(l)$ is given on the ring generators from Proposition \ref{prop:UMoralgebra} by
\begin{align}
    x_i&\mapsto \left[\phi(\gamma_i)\right]_x \label{eq:assertedactionofUMor1}\\
y_i&\mapsto \left[\phi(\gamma_i)\right]_y+\left[c_2(\phi(\gamma_i))\right]_x \label{eq:assertedactionofUMor2}.
\end{align}
\end{thm}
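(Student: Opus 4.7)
Since $\UMor_\bullet(\phi)$ is a ring homomorphism between the torsion-free rings of Proposition \ref{prop:UMoralgebra}, it is determined by the images of the generators $x_i, y_i$. Writing $\iota_j\colon G_1\to G_k$ for the inclusion $\gamma\mapsto\gamma_j$ and $\phi_w\colon G_1\to G_l$ for $\gamma\mapsto w$, naturality of $\UMor_\bullet$ together with the observation that $\UMor_\bullet(\iota_i)$ sends $x\mapsto x_i$ and $y\mapsto y_i$ (these being induced by wedge summand inclusions) reduces the theorem to establishing the $k=1$ formulas: for all $w\in G_l$,
\[
\UMor_\bullet(\phi_w)(x)=[w]_x,\qquad \UMor_\bullet(\phi_w)(y)=[w]_y+[c_2(w)]_x.
\]
The first identity is standard: $\confred[1]{V_l}\cong V_l$ as pointed spaces, and on $\widetilde H_1\cong H_l$ a based self-map of a wedge of circles acts by abelianisation of its induced $\pi_1$-map.

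For the second identity, decompose the weight $-2$ summand of $\UMor_\bullet(l)$ as $H_l\oplus \Lambda^2 H_l$ (via $y_j\leftrightarrow[\gamma_j]$ and $x_jx_k\leftrightarrow[\gamma_j]\wedge[\gamma_k]$), and write $\UMor_\bullet(\phi_w)(y)=Y(w)+X(w)$ accordingly. On a generator $w=\gamma_j$ we have $\phi_{\gamma_j}=\iota_j$, so $y\mapsto y_j$, giving $Y(\gamma_j)=[\gamma_j]$ and $X(\gamma_j)=0=c_2(\gamma_j)$. The plan is to establish the multiplicative relations
\[
Y(w_1 w_2)=Y(w_1)+Y(w_2),\qquad X(w_1 w_2)=X(w_1)+X(w_2)+[w_1]\wedge[w_2]
\]
for all $w_1,w_2\in G_l$. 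Granted these, Lemma \ref{lem:uniquenessofcontent} forces $X=c_2$, while $Y$ becomes a homomorphism $G_l\to H_l$ agreeing with abelianisation on generators, hence $Y=[\,\cdot\,]$; the case of words involving inverses is absorbed into the multiplicativity, via $\gamma_j\gamma_j^{-1}=e$ together with $\UMor_\bullet(\phi_e)=0$ on classes of negative weight (as the constant map collapses all nonempty configurations to $\infty$).

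It then remains to prove the multiplicative relations. The strategy is to factor $\phi_{w_1 w_2}$ as $V_1\xrightarrow{p}V_2\xrightarrow{\phi_{w_1}\vee\phi_{w_2}}V_l$, where $p$ is the pinch map inducing $\gamma\mapsto\gamma_1\gamma_2$ on $\pi_1$; applying the ring homomorphism $\UMor_\bullet(\phi_{w_1}\vee\phi_{w_2})$ to the single cellular identity
\[
\UMor_\bullet(p)(y)=y_1+y_2+x_1 x_2\in\UMor_\bullet(2)
\]
will then yield the desired relations. To prove this identity, I will pull the cell structure on $\confred[2]{V_2}$ back through the induced map on $\confred[2]{V_1}$: the characteristic $2$-cell $\Phi^2\colon\Delta^2\to\confred[2]{V_1}$ of $y$ decomposes along the preimage of the midpoint of $V_1$ into three subregions---the triangles $\{0\le t_1\le t_2\le 1/2\}$ and $\{1/2\le t_1\le t_2\le 1\}$, and the square $\{0\le t_1\le 1/2\le t_2\le 1\}$---which map to the cells $y_1$, $y_2$, and $x_1 x_2$ respectively. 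The main technical obstacle is the sign analysis: one must verify that each of the three pieces contributes with coefficient $+1$, which I will do by a direct Jacobian/orientation check, parallel in spirit to the shuffle computation in Lemma \ref{lem:degreesort} which yields $\Ss(1,1)=1$.
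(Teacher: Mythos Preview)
Your proof is correct and follows essentially the same strategy as the paper: reduce to $k=1$, compute the pinch map $\UMor_\bullet(p)(y)=y_1+y_2+x_1x_2$ by subdividing $\Delta^2$ into three regions, derive the multiplicative relations for $Y$ and $X$, and then invoke Lemma~\ref{lem:uniquenessofcontent}. The only cosmetic difference is your factorisation $\phi_{w_1w_2}=(\phi_{w_1}\vee\phi_{w_2})\circ p$ through $G_2\to G_l$ directly, whereas the paper routes through $G_2\cong G_1*G_1\to G_l*G_l\to G_l$ using an explicit ``simple maps'' step; your shortcut works because you have already verified $\UMor_\bullet(\iota_j)(y)=y_j$ for wedge-summand inclusions, which together with functoriality gives $\UMor_\bullet(\phi_{w_1}\vee\phi_{w_2})(y_j)=Y(w_j)+X(w_j)$.
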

\begin{proof}
We observe that there is a natural inclusion $V_k\cong \confred[1]{V_k}\hookrightarrow\confred{V_k}$, inducing on first homology the map $(-)_x\colon H_k\to\UMor_\bullet(k)$ with $[\gamma_i]\mapsto x_i$. This justifies \eqref{eq:assertedactionofUMor1}.
We prove \eqref{eq:assertedactionofUMor2} in four steps, dealing with different types of maps $\phi$.
\vspace{.5em}
\paragraph{Step 1: Simple maps}
Given a function $g:\{1,\dots,k\}\to \{0\}\sqcup \{1,\dots,l\}$, consider the group homomorphism $\phi_g:G_k\to G_l$ with $\gamma_i\mapsto\gamma_{g(i)}$ if $g(i)\ge1$ and $\gamma_i\mapsto 1$ if $g(i)=0$. We can realise $\phi_g$ by the based map of bouquets $f_g:V_k\to V_l$ with $*\mapsto *$ and
\[
f_g:(t,i)\in (0,1)\times \{i\}\subset V_k\mapsto \left\{
\begin{array}{cl}
(t,g(i)) & \text{if } g(i)\ge1, \\
* &  \text{otherwise.}
\end{array}
\right.
\]
We observe that $\confred[n]{f_g}$ maps the cell $e(i,n)$ of $\confred[n]{V_k}$ to the cell $e(g(i),n)$ of $\confred[n]{V_l}$ via an oriented homeomorphism, and collapses it to the $0$-cell $\infty$ otherwise. In particular, $\UMor_\bullet(f_g)$ sends
$y_i\mapsto y_{g(i)}$ if $g(i)\ge1$, and it sends
$y_i\mapsto 0$ otherwise.
This agrees with assertion \eqref{eq:assertedactionofUMor2}, since $c_2(\gamma_i)=0$.
\vspace{.5em}
\paragraph{Step 2: Pinch map} The pinch map $P:V_1\to V_2$, given by
\[
P (t,1)= \left\{
\begin{array}{cl} (2t,1) & \text{if } 0\le t\le \frac{1}{2}\\
(2t-1,2) & \text{if } \frac{1}{2} \le t \le 1,
\end{array}\right.
\]
realises the group homomorphism $p:G_1\to G_2$ with $\gamma_1\mapsto \gamma_1\gamma_2$. According to \eqref{eq:assertedactionofUMor2}, we must show that $\UMor_\bullet(P) $ sends 
$y_1\mapsto y_1+y_2+x_1\wedge x_2$. We compute $\UMor_\bullet(P)(y_1)$. The composition of the characteristic map $\Phi^{(2)}$ relative to the cell $e(1,2)\subset\confred[2]{V_1}$ with the map $\confred[2]{P}$ is given explicitly as the map $\confred[2]{P}\circ \Phi^{(2)}:\Delta^2\to\confred[2]{V_2}$ with
\[
(t_1,t_2)\mapsto\left\{\begin{array}{cl}\left[(2t_1,1), (2t_2,1)\right]&\text{if } t_1,t_2\le \frac{1}{2},\\[.1cm]
\left[(2t_1,1),(2t_2-1,2)\right]& \text{if } t_1\le \frac{1}{2}\le t_2,\\[.1cm]
\left[(2t_1-1,2), (2t_2-1,2)\right]& \text{if }\frac{1}{2}\le t_1,t_2.\end{array}\right.
\]
These three cases partition the domain $\Delta^2$ into three regions $A^{(2,0)}$, $A^{(1,1)}$, $A^{(0,2)}$ which are parametrised by $\Delta^2$, $\Delta^1\times \Delta^1$, $\Delta^2$, respectively, via the oriented affine transformations $a_{(2,0)}: (t_1,t_2)\mapsto (2t_1,2t_2)$, $a_{(1,1)}:(t_1,t_2)\mapsto (2t_1,2t_2-1)$, $a_{(0,2)}:(t_1,t_2)\mapsto (2t_1,2t_2-1)$, as in Figure \ref{fig:splitofcell}. For each $\fv\in\{(2,0),(1,1),(0,2)\}$, we have an equality of maps
\[
\Phi^{\fv}\circ a_{\fv}^{-1}=\confred[2]{P}\circ \Phi^{(2)}: A^{\fv}\to \confred[2]{V_2}.
\]
We conclude that $\UMor_\bullet(P):y_1=e_{(2)}\mapsto e_{(2,0)}+e_{(0,2)}+e_{(1,1)}=y_1+y_2+x_1\wedge x_2$.
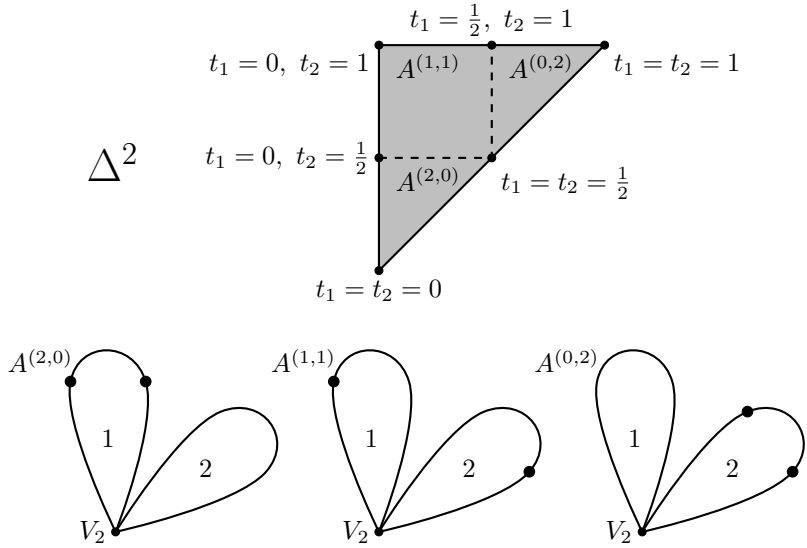
\begin{figure}[ht]
    \centering
    \begin{tikzpicture} 
        \begin{huge}
        \draw (-3,1) node[anchor=south east]{$\Delta^2$};
        \end{huge}
        \filldraw[thick, color=black, fill=lightgray] (0,0) -- (3,3) -- (0,3) -- cycle;
        \draw[dashed, thick] (0,1.5) -- (1.5,1.5) -- (1.5,3);
        \filldraw[black] (3,3) circle (1.5pt) node[anchor=north west]{$t_1=t_2=1$};
        \filldraw[black] (1.5,1.5) circle (1.5pt) node[anchor=north west]{$t_1=t_2=\frac12$};
        \filldraw[black] (0,0) circle (1.5pt) node[anchor=north]{$t_1=t_2=0$};
        \filldraw[black] (0,3) circle (1.5pt) node[anchor=north east]{$t_1=0,\ t_2=1$};
        \filldraw[black] (1.5,3) circle (1.5pt) node[anchor=south]{$t_1=\frac12,\ t_2=1$};
        \filldraw[black] (0,1.5) circle (1.5pt) node[anchor=east]{$t_1=0,\ t_2=\frac 12$};
        \draw (1.6,3) node[anchor=north west]{$A^{(0,2)}$};
        \draw (.1,1.5) node[anchor=north west]{$A^{(2,0)}$};
        \draw (.1,3) node[anchor=north west]{$A^{(1,1)}$};
    \end{tikzpicture}

    \vspace{10pt}
    \begin{tikzpicture}
        
    \begin{scope}[shift={(-3.5,0)}]
            \draw (-1,2) node[anchor=south]{$A^{(2,0)}$};
           \draw[thick] plot [smooth, tension=1] coordinates {(0,0) (-0.6,2) (0.4,2) (0,0)};
            \draw (-0.1,1) node[anchor=south]{1};
            
            \draw[thick] plot [smooth, tension=1] coordinates {(0,0) (1.4,1.6) (2,0.8) (0,0)};
            \draw (1.2,0.6) node[anchor=south]{2};
            
            \filldraw[black] (-0.6,2) circle (2pt);
            \filldraw[black] (0.4,2) circle (2pt);
            \filldraw[black] (0,0) circle (1.5pt) node[anchor=east]{$V_2$};
    \end{scope}
    \begin{scope}[shift={(0,0)}]
            \draw (-1,2) node[anchor=south]{$A^{(1,1)}$};
            \draw[thick] plot [smooth, tension=1] coordinates {(0,0) (-0.6,2) (0.4,2) (0,0)};
            \draw (-0.1,1) node[anchor=south]{1};
            
            \draw[thick] plot [smooth, tension=1] coordinates {(0,0) (1.4,1.6) (2,0.8) (0,0)};
            \draw (1.2,0.6) node[anchor=south]{2};
            
            \filldraw[black] (-0.6,2) circle (2pt);
            \filldraw[black] (2,0.8) circle (2pt);
            \filldraw[black] (0,0) circle (1.5pt) node[anchor=east]{$V_2$};
    \end{scope}
    \begin{scope}[shift={(3.5,0)}]
            \draw (-1,2) node[anchor=south]{$A^{(0,2)}$};
           \draw[thick] plot [smooth, tension=1] coordinates {(0,0) (-0.6,2) (0.4,2) (0,0)};
            \draw (-0.1,1) node[anchor=south]{1};
            
            \draw[thick] plot [smooth, tension=1] coordinates {(0,0) (1.4,1.6) (2,0.8) (0,0)};
            \draw (1.2,0.6) node[anchor=south]{2};
            
            \filldraw[black] (1.4,1.6) circle (2pt);
            \filldraw[black] (2,0.8) circle (2pt);
            \filldraw[black] (0,0) circle (1.5pt) node[anchor=east]{$V_2$};
    \end{scope}
    \end{tikzpicture}
    \caption{The 2-simplex $\Delta^2$ (above) maps under $\confred[2]{P}\circ \Phi^{(2)}$ to configurations of $2$ points in $V_2$. Region $A^{(2,0)}$ corresponds (below) to configurations of two points on the first loop; region $A^{(1,1)}$ to configurations of one point on each loop; region $A^{(0,2)}$ to configurations of two points on the second loop.}
    \label{fig:splitofcell}
\end{figure}

\vspace{.5em}
\paragraph{Step 3: Maps from $G_1$} For $w\in G_l$, let $\psi_w:G_1\to G_l$ be the group homomorphism with $\gamma_1\mapsto w\in G_l$. Given $w,u\in G_l$, the homomorphism $\psi_{wu}$ factors through
\begin{equation}\label{eq:composition}
\begin{tikzcd}[column sep=40pt]
G_1\rar["P"] & G_2\cong G_1 *G_1 \rar["\psi_w * \psi_u"] & \rar G_l* G_l\rar["\phi_g"] & G_l
\end{tikzcd}
\end{equation}
where $g:\{1,\dots,l,1',\dots,l'\}\to \{0,1,\dots,l\}$ sends $i,i'\mapsto i$.

We can now apply $\UMor_\bullet$ to composition \eqref{eq:composition}, and evaluate at $y_1$ leveraging Steps 1 and 2, to obtain
\begin{equation}\label{eq:umory}
\UMor_\bullet(\psi_{wu})(y_1)=\UMor_\bullet(\psi_w)(y_1)+\UMor_\bullet(\psi_u)(y_1)+[w]_x\wedge [u]_x. 
\end{equation}
By decomposing $\UMor_\bullet(\psi_w)(y_2)\in\UMor_{-2}(l)$ as the sum of components $Y(w)\in \bigoplus_{i=1}^l\Z y_i$ and $C(w)\in \Lambda^2\pa{\bigoplus_{i=1}^l\Z x_i}$, equality \eqref{eq:umory} gives
\begin{equation}\label{eq:propertyofc}
\begin{split}
Y(wu)&=Y(w)+Y(u);\\
C(wu)&=C(w)+C(u)+[w]_x\wedge [u]_x.
\end{split}
\end{equation}
Thus $Y:G_l\to \bigoplus_{i=1}^l\Z y_i$ is a group homomorphism, and it coincides with $[-]_y$ on the generators $\gamma_1,\dots,\gamma_l$ by Step 1, and so $Y(-)=[-]_y$. Finally, $C$ vanishes on words of length $1$ by Step 1 and satisfies \eqref{eq:propertyofc}, so $C=c_2$ by Lemma \ref{lem:uniquenessofcontent}.
\vspace{.5em}
\paragraph{Step 4: Any map $\phi:G_k\to G_l$}
If $\phi\colon G_k\to G_l$ is any group homomorphism, by Step 1 we have that the generator $y_i\in \UMor_{-2}(k)$ is equal to the image of $y_1\in \UMor_{-2}(1)$ along the map $\phi_{g_i}\colon G_1\to G_k$, where $g_i: \{1\}\to \{0,...,k\}$ takes $1$ to $i$; we therefore compute
\[
\UMor_\bullet(\phi)(y_i)=\UMor_\bullet(\phi\circ\phi_{g_i})(y_1)
\]
which, by Step 3, agrees with assertion \eqref{eq:assertedactionofUMor2} .
\end{proof}

\section{Cellular chains for configurations of surfaces}
\label{sec:confsurf}
The setup of this section is analogous to \cite{Bianchi,BMW}.
\subsection{A model for the surface \texorpdfstring{$\Sigma_{g,1}$}{Sigmag1}}
Let $g\ge 0$ be a fixed integer throughout the section.
In the rectangle $\bfR=[0,2]\times [0,1]$, we decompose the side $\{2\}\times [0,1]$ into $4g$ consecutive closed intervals of equal length called $J_1,\dots,J_{4g}$ ordered and oriented with increasing second coordinate. Denote by $\tilde p_0\subset\{2\}\times [0,1]$ the set of their endpoints. 
\begin{defn}
\label{defn:cM}
We let $\cM$ be the quotient of $\bfR$ obtained by identifying $J_{4i+1}$ and $J_{4i+2}$ with $J_{4i+3}$ and $J_{4i+4}$, respectively, via their unique \emph{orientation-reversing} isometry, for all $0\le i\le g-1$. We denote by $\fp\colon \bfR\to\cM$ the quotient map. See Figure \ref{fig:Mopen}.
\end{defn}

The space $\cM$ is homeomorphic to $\Sigma_{g,1}$ and admits a cell decomposition consisting of the following open cells: 
\begin{itemize}
    \item one $0$-cell $p_0=\fp(\tilde p_0)$;
    \item $(2g+1)$-many $1$-cells attached to $p_0$:
    \begin{itemize}
        \item the images $I_{2i+j}=\fp(\mathring{J}_{4i+j})$, for $0\le i\le g-1$ and $j=1,2$;
        \item the image $I=\fp(\partial \bfR-\{2\}\times [0,1])$, coinciding with $\partial \cM\setminus\set{p_0}$; 
    \end{itemize}
    \item one $2$-cell, the image $\fp(\mathring{\bfR})$.
\end{itemize}

\begin{figure}[ht]
 \centering
 \begin{overpic}[width=12cm]{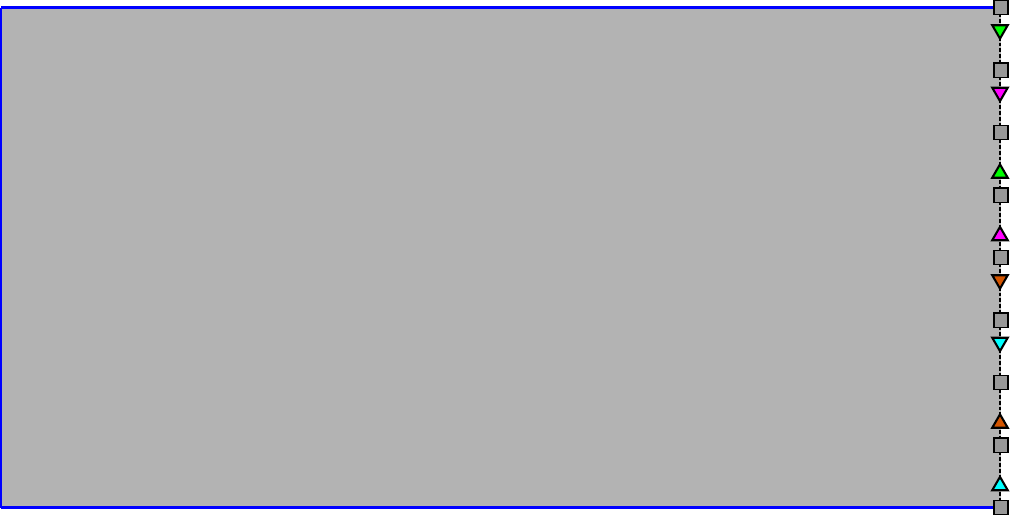}
 \put(50,25){$\bfR$} \put(84,3){$I_1=J_1$}  \put(84,9){$I_2=J_2$}  \put(81.75,15){$-I_1=J_3$} \put(81.75,21){$-I_2=J_4$} \put(84,28){$I_3=J_5$} \put(84,34){$I_4=J_6$} \put(81.75,40){$-I_3=J_7$} \put(81.75,46){$-I_4=J_8$}
 \put(100.5,0){$p_0$}\put(100.5,6){$p_0$}\put(100.5,12){$p_0$}\put(100.5,19){$p_0$}\put(100.5,25){$p_0$}\put(100.5,31){$p_0$}\put(100.5,37){$p_0$}\put(100.5,44){$p_0$}\put(100.5,50){$p_0$}
 \put(30,1.5){$\textcolor{blue}{I}$}
 \put(32,0){$\textcolor{blue}{>}$}
 \put(32,49.6){$\textcolor{blue}{<}$}
 \end{overpic}
 \caption{The cell decomposition of $\cM$ in the case $g=2$.}
 \label{fig:Mopen}
\end{figure}

\begin{defn}\label{defn:iotaembedding}
We denote by $\iota: (V_{2g},*)\hookrightarrow (\cM, p_0)$ the embedding induced by the map $\coprod_{i=1}^{2g} [0,1]\times\set{i}\to\bfR$ sending $[0,1]\times\set{2j+\epsilon}$ onto $J_{4j+\epsilon}$ via the unique linear and orientation-preserving homeomorphism, for $0\le j\le g-1$ and $\epsilon=1,2$.
\end{defn}
The map $\iota$ identifies the $i$\sth circle of $V_{2g}$ with the closure of $I_i$, for $1\le i\le 2g$. In particular, $\iota$ is a homeomorphism between $V_{2g}$ and $\fp(\set{2}\times[0,1])\subset\cM$.

\subsection{Cellular decomposition of \texorpdfstring{$\conf[n]{\cM}$}{confn(M)}}
Recall from Notation \ref{nota:confnM} the shorthand $\conf[n]{\cM}:=\conf[n]{\cM,\partial \cM}$. 

\begin{defn}\label{defn:record}
For $n\ge 0$, an $n$-\textit{record}, denoted generically $\ft$, is a choice of the following set of data:
\begin{itemize}
    \item an integer $b\ge 0$;
    \item a sequence $\uP=(P_1,\dots,P_b)$ of positive integers $P_i\in \Z_{\ge 1}$;
    \item a sequence $\fv=(v_1,\dots,v_{2g})$ of non-negative integers $v_i\in \Z_{\ge 0}$,
\end{itemize}
satisfying the equality $P_1+\dots+P_b+v_1+\dots+v_{2g}=n$. We generically write $\ft=(b,\uP,\fv)$, $n(\ft):=n$ and $b(\ft):=b$. The \emph{dimension} of $\ft$ is $d(\ft):=n+b$. When $g=0$, we usually write a record as $(b,\uP)$.
\end{defn}

\begin{figure}[ht]
 \centering
 \begin{overpic}[width=12cm]{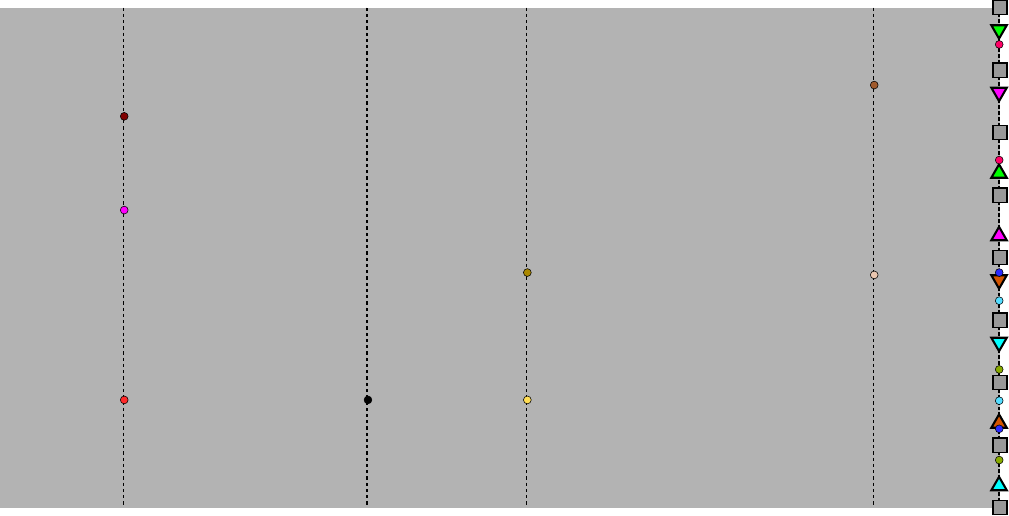}
 \put(58,25){$\bfR$} \put(94,3){$I_1$} \put(94,9){$I_2$} \put(91.75,15){$-I_1$} \put(91.75,21){$-I_2$} \put(94,27){$I_3$} \put(94,33){$I_4$} \put(91.75,39){$-I_3$} \put(91.75,45){$-I_4$}
 \end{overpic}
 \caption{A configuration lying in the open cell $e_\ft\subset C_{12}(\cM)$, where $\ft=(4,(3,1,2,2), (1,2,0,1))$. Observe that on the right side, each point is drawn twice.}
 \label{fig:cell}
\end{figure}

For each $n$-record $\ft$, let $e_{\ft}$ be the subspace of configurations $s\in C_n(\mathring{\cM})\subset \conf[n]{\cM}$ that satisfy the following conditions:
\begin{enumerate}
    \item for all $1\le i\le 2g$, exactly $v_i$ points of the configuration $s$ lie on $I_i$;
    \item there are exactly $b$ real numbers $0<x_1<\dots<x_b<2$ such that $s$ admits at least one point lying in $\mathring{\bfR}\cong\fp(\mathring{\bfR})\subset\mathring{\cM}$ having $x_i$ as first coordinate;
    \item for all $1\le i\le b$, exactly $P_i$ points of $s\cap \mathring{\bfR}$ have first coordinate equal to $x_i$. In other words, $s\cap \mathring{\bfR}$ consists of $b$ vertical ``bars'' with $P_i$ points lying on the $i$\sth bar from the left.
\end{enumerate}
Each $s\in C_n(\mathring{\cM})\subset\conf[n]{\cM}$ lies in a unique subspace $e_{\ft}$. For a given $n$-record $\ft$, the space $e_{\ft}$ is homeomorphic to an open disc of dimension $d(\ft)$. Specifically, consider the polysimplex
$$\Delta^{\ft}:=\Delta^{b}\times \prod_{i=1}^{b}\Delta^{P_i} \times \prod_{i=1}^{2g}\Delta^{v_i}$$
and the map $\Phi^{\ft}:\Delta^{\ft}\to \conf[d(\ft)]{\cM}$ sending the sequence of simplicial coordinates
\begin{equation*}
   \big((x_1,\dots,x_b), (s^{(1)}_1,\dots,s^{(1)}_{P_1}),\dots, (s^{(b)}_1,\dots,s^{(b)}_{P_b}),(t_1^{(1)},\dots, t_{v_1}^{(1)}),\dots, (t_{1}^{(2g)},\dots, t_{v_{2g}}^{(2g)})\big)
\end{equation*}
to the following configuration, where $\cdot$ denotes the superposition product:
\begin{equation*}
    \left[ \fp(2x_i,s^{(i)}_{j})\,|\,1\le i\le b, 1\le j\le P_i\right]\cdot\left[
    \iota((t_{j}^{(i)},i))\,|\,1\le i\le 2g, 1\le j\le v_i\right].
\end{equation*}
Each map $\Phi^{\ft}$ restricts to a homeomorphism $\mathring{\Delta}^{\ft}\to e_{\ft}$, and maps the boundary $\partial\Delta^{\ft}$ to the union of $\set{\infty}$ and the subspaces $e^{\ft'}$ for varying $\ft'$ among the $n$-records satisfying $d(\ft')<d(\ft)$. This leads to the following, which we state as a proposition for future reference.
\begin{prop}\label{prop:celldecompsurface}
    The space $\conf{\cM}$ admits a cell decomposition with a $0$-cell $\infty$, and  a $d(\ft)$-cell $e_{\ft}$, for each $n$-record $\ft$, $n\ge 0$. The orientations of the cells and the attaching maps are $\Phi^{\ft}$.
\end{prop}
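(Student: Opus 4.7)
The setup preceding the statement already does almost all of the work: given an $n$-record $\ft$, we have an explicit candidate cell $e_\ft \subset C_n(\mathring{\cM})$ and a candidate characteristic map $\Phi^\ft \colon \Delta^\ft \to \conf[n]{\cM}$. My plan is to verify the three things required of a CW decomposition: (i) the cells partition the space, (ii) each $\Phi^\ft$ restricts to a homeomorphism on the interior of $\Delta^\ft$, and (iii) each $\Phi^\ft$ maps $\partial \Delta^\ft$ into the union of $\{\infty\}$ with lower-dimensional cells.

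\textbf{Step 1: Partition.} I would first observe that every configuration $s \in \conf[n]{\cM}$ either equals $\infty$ or lies in $C_n(\mathring{\cM})$. For $s \in C_n(\mathring{\cM})$, there is a unique decomposition of $s$ into its subconfiguration on each open edge $I_i$ (for $1 \le i \le 2g$) and its subconfiguration in $\mathring{\bfR} \cong \mathring\cM \setminus \iota(V_{2g} \setminus\{*\})$; the latter, being a finite subset of $\mathring\bfR$, projects onto a unique ordered set of first-coordinate values $0 < x_1 < \dots < x_b < 2$ with prescribed multiplicities $P_1, \dots, P_b$. This recovers the unique $n$-record $\ft$ with $s \in e_\ft$. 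Together with the observation in the excerpt that $\conf[n]{\cM}$ is the one-point compactification of $C_n(\mathring{\cM})$, this gives the desired partition into $\{\infty\}$ and the open cells $e_\ft$.

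\textbf{Step 2: Interior homeomorphism.} On the interior $\mathring{\Delta}^\ft$, the simplicial coordinates $(x_1,\dots,x_b)$ are strict inequalities $0 < x_1 < \dots < x_b < 2$, the vertical coordinates $s^{(i)}_1 < \dots < s^{(i)}_{P_i}$ are pairwise distinct for each bar, and similarly the edge coordinates are strictly ordered and lie in $(0,1)$. Then $\Phi^\ft$ is evidently injective (the first coordinates recover $b$ and $(P_1,\dots,P_b)$ and $(x_1,\dots,x_b)$; then the second coordinates recover each bar; and $\iota$ is an embedding on the open edges). Continuity and openness follow from the fact that on each coordinate patch $\Phi^\ft$ is essentially the quotient of a continuous injective map from an open subset of Euclidean space by a product of symmetric group actions that is \emph{free} on the interior; a standard invariance of domain argument then makes $\mathring\Phi^\ft$ a homeomorphism onto $e_\ft$.

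\textbf{Step 3: Boundary behavior.} This is the main step, but it is essentially combinatorial bookkeeping. A point of $\partial \Delta^\ft$ has at least one of the following degeneracies: (a) two consecutive simplicial coordinates in one of the factors $\Delta^b$, $\Delta^{P_i}$, or $\Delta^{v_i}$ become equal; (b) a coordinate hits $0$ or $1$ in a factor $\Delta^{P_i}$ or $\Delta^{v_i}$; (c) a coordinate in $\Delta^b$ hits $0$ or $2$. In case (a) within $\Delta^{P_i}$ or $\Delta^{v_i}$, two points of the resulting configuration coincide in $\cM$, so $\Phi^\ft$ sends the point to $\infty$; same for case (b) in $\Delta^{P_i}$, since the vertical bar reaches the boundary identifications of $\bfR$ which get collapsed to $\partial \cM$ via $\fp$. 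In case (a) within $\Delta^b$, two bars merge and we land in the cell $e_{\ft'}$ where $\ft'$ is obtained by merging $P_i + P_{i+1}$ and has $d(\ft') = d(\ft) - 1$. In case (b) for $\Delta^{v_i}$, an edge configuration hits the endpoint $* = p_0 \in \partial \cM$ and again gives $\infty$. Case (c) gives either $\infty$ or merges a bar into the edge, landing in a strictly smaller-dimensional cell. In every case the image lies in $\{\infty\} \cup \bigcup_{d(\ft') < d(\ft)} e_{\ft'}$.

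\textbf{Step 4: CW verification.} Since $\cM$ is compact, $\conf[n]{\cM}$ is compact, so the cellular structure built from finitely many cells in each weight $n$ automatically satisfies the closure-finiteness and weak topology conditions. The orientations are those induced by the product orientation on $\Delta^\ft$ from $\Delta^b$, the $\Delta^{P_i}$, and the $\Delta^{v_i}$ in the order listed. The hardest part will be a careful accounting in Step 3 of exactly which face of $\Delta^\ft$ produces $\infty$ versus which produces a cell of codimension one; this bookkeeping will later be essential when computing the cellular differential, but for the present statement only the inclusion into the lower skeleta is needed.
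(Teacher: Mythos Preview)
Your proposal is correct and follows exactly the approach the paper takes: the paper does not give a separate proof but simply records as a proposition the facts established in the preceding paragraph (partition of $C_n(\mathring\cM)$ into the $e_\ft$, interior homeomorphism via $\Phi^\ft$, and boundary sent into $\{\infty\}\cup\bigcup_{d(\ft')<d(\ft)}e_{\ft'}$), and your Steps 1--4 are a careful elaboration of precisely these points. The only slip is cosmetic: in Step~3(c) the simplicial coordinate in $\Delta^b$ hits $0$ or $1$ (it is $2x_i$ that hits $0$ or $2$ in $\bfR$).
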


\subsection{Cellular chain ring}
In this subsection we will put a bigraded differential ring structure on the reduced cellular chain complex $$\redchains{\conf{\cM}}\cong\bigoplus_{\ft}\Z e_{\ft}$$
(with the differential on the right hand side of the isomorphism determined below.) The most natural choice of a bigrading would put the generator $e_\ft$ in weight $n(\ft)$ and homological degree $d(\ft)$; we will use the following alternative, which will become convenient in the proof of Theorem \ref{thm:B} later.
\begin{nota}
 \label{nota:star}\label{nota:bulletstarofft}
We introduce a new grading with values in $\Z$, called the \emph{bar-degree} and denoted $\star$. It is the difference between the \emph{homological degree}, denoted $*$, and the \emph{weight}, denoted $\bullet$. A bigraded abelian group will usually be endowed with the two gradings $(\bullet,\star)$; the \emph{homological degree}, denoted $*$, can be recovered as $*:=\bullet+\star$.

For a record $\ft$, we put the generator $e_\ft\in\redchains{\conf{\cM}}$ in weight $\bullet=-n(\ft)$ and in homological degree $*=b(\ft)-n(\ft)$; as a result, the bar-degree $\star$ of $e_\ft$ will be
\[
*-\bullet=b(\ft)-n(\ft)-(-n(\ft))=b(\ft).
\]
\end{nota}
We justify this convention in a way similar to what we did in the discussion after Definition \ref{defn:UMor}: the generator $e_\ft$ will contribute to the chain group $\redchains[d(\ft)]{\conf[n(\ft)]{\cM}}$, and hence to the homology group $\widetilde{H}_{d(\ft)}(\conf[n(\ft)]{\cM})$, which is isomorphic to the \emph{cohomology group} $H^{2n(\ft)-d(\ft)}(C_{n(\ft)}(\cM))$; regarding cohomology as the bigraded dual of homology, and willing to consider $H_i(C_n(\cM))$ as living in homological degree $*=i$ and in weight $\bullet=n$, we are thus led to put $e_{\ft}$ in weight $-n(\ft)$ and in homological degree $-(2n(\ft)-d(\ft))=b(\ft)-n(\ft)$.
Note that the differential $\del$ on $\redchains{\conf{\cM}}$ has bidegree $(\bullet,\star)=(0,-1)$.

\begin{prop}\label{prop:muiscellular}
The superposition product $\mu\colon\conf{\cM}\times\conf{\cM}\to\conf{\cM}$ is a cellular map, and thus it makes $\redchains{\conf{\cM}}$ into a commutative differential bigraded ring. Graded commutativity and the Leibniz rule hold with respect to the homological degree $*=\bullet+\star$.     
In particular, for records $\ft,\ft'$, we have $e_{\ft}\cdot e_{\ft'}=(-1)^{d(\ft)d(\ft')}e_{\ft'}\cdot e_{\ft}$. Furthermore, for every record $(b,\uP,\fv)$ we have a factorisation $e_{(b,\uP,\fv)}=e_{(b,\uP,\uzero)}\cdot e_{(0,0,\fv)}$.
\end{prop}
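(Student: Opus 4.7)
The plan is to break the proof into four steps: first, verify that $\mu$ preserves the cell filtration; second, extract the ring structure formally from the induced chain map; third, derive graded commutativity from the symmetry of $\mu$ together with the Koszul sign of the cellular swap; fourth, verify the factorization $e_{(b,\uP,\fv)} = e_{(b,\uP,\uzero)} \cdot e_{(0,0,\fv)}$ by direct inspection of the polysimplex parametrisations.

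For cellularity, I would fix records $\ft = (b,\uP,\fv)$ and $\ft' = (b',\uP',\fv')$ and analyse what happens to a pair $(s_1,s_2) \in e_\ft \times e_{\ft'}$ under $\mu$. In the generic case (all bar $x$-coordinates of $s_1$ and $s_2$ distinct, and no two points of the two configurations coincide) the superposition $\mu(s_1,s_2)$ lies in the open cell $e_{\ft''}$ where $\ft''$ has $b+b'$ bars, obtained by sorting the bars of $s_1$ and $s_2$ by $x$-coordinate, with bar-size vector an appropriate shuffle of $\uP$ and $\uP'$, and with boundary vector $\fv + \fv'$; thus $d(\ft'') = d(\ft) + d(\ft')$. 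In degenerate cases (coinciding bar $x$-coordinates, or two boundary points on the same $I_i$ approaching one another) we land in cells $e_{\ft''}$ with $b(\ft'') < b + b'$ or with fewer boundary points, hence strictly smaller dimension; and if any two points of the merged configuration coincide, the image is the basepoint $\infty$. So $\mu$ preserves skeleta and is cellular.

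Cellularity then gives a chain-level product on $\redchains{\conf{\cM}}$ via composition of $\mu_*$ with the Eilenberg–Zilber shuffle map. Associativity and unitality follow from the topological monoid structure on $\conf{\cM}$ (with unit $\emptyset$, cf.\ the remark after Definition \ref{defn:muproduct}), and the Leibniz rule holds because $\mu_*$ is a chain map. The bigradings add correctly under the product: the generator $e_\ft \cdot e_{\ft'}$ is supported on cells with $n(\ft)+n(\ft')$ points and $b(\ft)+b(\ft')$ bars, matching additivity of weight $\bullet$ and bar-degree $\star$ as prescribed by Notation \ref{nota:bulletstarofft}. Graded commutativity now follows from the identity $\mu \circ \tau = \mu$, where $\tau$ swaps the factors: on cellular chains $\tau_*$ introduces the Koszul sign $(-1)^{d(\ft)d(\ft')}$, yielding $e_\ft \cdot e_{\ft'} = (-1)^{d(\ft)d(\ft')} e_{\ft'} \cdot e_\ft$. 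Since $d(\ft) - *(e_\ft) = 2n(\ft)$ is even, this sign is equal to $(-1)^{*(e_\ft)*(e_{\ft'})}$, so graded commutativity holds in the homological degree $* = \bullet + \star$ as claimed.

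For the factorisation, I observe that the polysimplex $\Delta^{(b,\uP,\fv)} = \Delta^b \times \prod_i \Delta^{P_i} \times \prod_j \Delta^{v_j}$ is \emph{literally} the Cartesian product $\Delta^{(b,\uP,\uzero)} \times \Delta^{(0,0,\fv)}$ with the factors appearing in the same order, so no reordering of factors (and thus no sign) is needed. The bar points produced by $\Phi^{(b,\uP,\uzero)}$ lie inside $\fp(\mathring{\bfR})$, while the boundary points produced by $\Phi^{(0,0,\fv)}$ lie in $\iota(V_{2g}) = \fp(\{2\} \times [0,1])$; these subsets of $\cM$ are disjoint, so the superposition $\mu \circ (\Phi^{(b,\uP,\uzero)} \times \Phi^{(0,0,\fv)})$ is everywhere defined, free of collisions, and coincides with $\Phi^{(b,\uP,\fv)}$ as an oriented characteristic map. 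Hence $e_{(b,\uP,\uzero)} \cdot e_{(0,0,\fv)} = +\,e_{(b,\uP,\fv)}$. The main obstacle I anticipate is the careful bookkeeping in the cellularity step: I must confirm that whenever several bars of $s_1$ align with several bars of $s_2$ the corresponding stratum in the product polysimplex truly has codimension at least the drop in bar count, so that the image genuinely sits in a lower-dimensional skeleton rather than filling out something of the expected dimension.
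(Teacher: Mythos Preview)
Your proof is correct and follows essentially the same approach as the paper. The one noteworthy difference is in the cellularity step: rather than analysing generic and degenerate positions case by case, the paper simply observes that for any $s,s'\in\conf{\cM}$ one has $n(\mu(s,s'))=n(s)+n(s')$ and $b(\mu(s,s'))\le b(s)+b(s')$, whence $d(\mu(s,s'))\le d(s)+d(s')$; this one-line inequality dissolves the bookkeeping worry you flag at the end.
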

\begin{proof}
For a configuration $s\in \conf{\cM}\setminus\set{\infty}$, denote by $n(s)$, $d(s)$ and $b(s)$ the values $n(\ft)$, $d(\ft)$ and $b(\ft)$, respectively, where $e_{\ft}$ is the unique cell containing $s$ in its interior, and we use the notation from Definition \ref{defn:record}; we also set $d(\infty)=0$, as $\infty$ is a 0-cell in $\conf{\cM}$. To show that $\mu$ is cellular with the product cellular structure on $\conf{\cM}\times\conf{\cM}$ it suffices that 
\[
d(\mu(s,s'))\le d(s)+d(s')\ \mbox{ for all }\ s,s'\in\conf{\cM}.
\]
If $\mu(s,s')=\infty$, this is obvious, as we have $d(\mu(s,s'))=0$. Otherwise both $s,s'\neq\infty$, we have the inequality $b(\mu(s, s'))\le b(s)+b(s')$ and the equality $n(\mu(s,s'))=n(s)+n(s')$, and by using $d(s)=b(s)+n(s)$ we are done.

The graded Leibniz rule for the differential $\del$ on $\redchains{\conf{\cM}}$ holds with respect to $d(\ft)$, and similarly the commutativity of $\mu$ implies that the induced product on $\redchains{\conf{\cM}}$ satisfies $e_{\ft}\cdot e_{\ft'}=(-1)^{d(\ft)d(\ft')}e_{\ft'}\cdot e_{\ft}$. But the homological degree $*=b(\ft)-n(\ft)$ from Notation \ref{nota:bulletstarofft} and $d(\ft)=b(\ft)+n(\ft)$ have the same parity. Therefore the Leibniz rule and graded commutativity also hold with respect to $*$.
    
Finally, for a given record $(b,\uP,\fv)$ we have a canonical identification
\[
\Delta^{(b,\uP,\fv)}=\Delta^{(b,\uP,\uzero)}\times \Delta^{(0,\uzero,\fv)},
\]
and the characteristic map $\Phi^{(b,\uP,\fv)}\colon\Delta^{(b,\uP,\fv)}\to\conf{\cM}$ coincides with the composition
    \[
     \begin{tikzcd}[column sep=60pt]
      \Delta^{(b,\uP,\uzero)}\times \Delta^{(0,\uzero,\fv)}\ar[r,"\Phi^{(b,\uP,\uzero)}\times\Phi^{(0,\uzero,\fv)}"]&\conf{\cM}\times\conf{\cM}\ar[r,"\mu"]&\conf{\cM}
     \end{tikzcd}
    \]
proving the equality $e_{(b,\uP,\fv)}=e_{(b,\uP,\uzero)}\cdot e_{(0,0,\fv)}$.
\end{proof}
\begin{defn}
 Let $n\ge0$, $\sigma\in\fS_n$, $\uP=(P_1,\dots,P_n)\in\Z^n$ and $N=\sum (1+P_iP_j)$, where the sum is extended over all pairs of indices $1\le i<j\le n$ satisfying $\sigma(i)>\sigma(j)$. The \emph{$\uP$-twisted sign} of $\sigma$, denoted $\sgn_{\uP}(\sigma)$, is $(-1)^N\in\set{\pm1}$. (Note that if $2$ divides all $P_i$, then $\sgn_{\uP}(\sigma)=\sgn(\sigma)$.)
\end{defn}
The following is immediate.
\begin{lem}
\label{lem:productChB}
Let $(b,\uP,\uzero)$ and $(b,\uP',\uzero)$ be records with vanishing third vector. Write $\uP=(P_1,\dots,P_b)$ and $\uP'=(P_{b+1},\dots,P_{b+b'})$; denote by
\[
\uP*\uP'=(P_1,\dots,P_{b+b'})
\]
the concatenation of $\uP$ and $\uP'$, and for a shuffle $\sigma$ of type $(b,b')$ (see Definition \ref{defn:shuffle}) let $\uP*_{\sigma}\uP'=(P_{\sigma(1)},\dots,P_{\sigma(b+b')})$.
Then we have
\[
 e_{(b,\uP,\uzero)}\cdot e_{(b',\uP',\uzero)}=(-1)^{b'(P_1+\dots+P_b)}\sum_{\sigma} \sgn_{\uP*\uP'}(\sigma) e_{(b+b',\uP*_\sigma\uP',\uzero)}
 \]
where the sum is taken over all shuffles $\sigma$ of type $(b,b')$.
\end{lem}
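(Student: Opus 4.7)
The strategy mirrors Step 2 of the proof of Theorem \ref{thm:UMormaps}: decompose the domain polysimplex of $\mu\circ(\Phi^{\ft}\times\Phi^{\ft'})$ into top-dimensional regions indexed by shuffles, identify the restriction of the map on each region with a characteristic map of a single cell composed with an affine transformation, and collect the resulting orientation signs.

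First, I would observe that for a generic pair $(s,s')\in e_{(b,\uP,\uzero)}\times e_{(b',\uP',\uzero)}$ — meaning the $b+b'$ bar-positions are pairwise distinct — the superposition $\mu(s,s')$ lies in the interior of a unique cell of the form $e_{(b+b',\uP*_\sigma\uP',\uzero)}$, where $\sigma\in\Shuf_{b,b'}$ is the unique shuffle sending the concatenated bar-positions to their sorted order. Pairs with coinciding bar-positions form a closed subset of positive codimension, hence are irrelevant for the cellular chain computation. This yields a decomposition
\[
\Delta^{(b,\uP,\uzero)}\times\Delta^{(b',\uP',\uzero)}=\bigcup_{\sigma\in\Shuf_{b,b'}}A^\sigma
\]
where $A^\sigma$ is the closed region on which the sorting is governed by $\sigma$, and on the interior of each $A^\sigma$ the composition $\mu\circ(\Phi^{\ft}\times\Phi^{\ft'})$ lands inside the cell $e_{(b+b',\uP*_\sigma\uP',\uzero)}$.

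The technical heart is then to identify, on each region $A^\sigma$, the restricted map with $\Phi^{(b+b',\uP*_\sigma\uP',\uzero)}$ precomposed with a piecewise-affine homeomorphism $\Psi^\sigma\colon A^\sigma\to\Delta^{(b+b',\uP*_\sigma\uP',\uzero)}$, and to compute the degree of $\Psi^\sigma$. I would decompose $\Psi^\sigma$ as a composition of three elementary pieces and accumulate their signs:
\begin{enumerate}
\item Reorder the factors of the product polysimplex so that the bar-position simplex $\Delta^{b'}$ of $\ft'$ is moved past the $b$ internal simplices $\Delta^{P_1},\dots,\Delta^{P_b}$ of $\ft$. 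Since $\Delta^{b'}$ has dimension $b'$ and $\prod_{i=1}^b\Delta^{P_i}$ has total dimension $P_1+\dots+P_b$, this contributes the sign $(-1)^{b'(P_1+\dots+P_b)}$, which is exactly the global prefactor in the claimed formula.
\item Sort the two bar-position simplices: on $A^\sigma$, the restriction of $\sort_{b,b'}\colon\Delta^b\times\Delta^{b'}\to\Delta^{b+b'}$ is an oriented affine homeomorphism whose sign is $\sgn(\sigma)$, by the same elementary check behind Lemma \ref{lem:degreesort}.
\item Permute the $b+b'$ internal polysimplices $\Delta^{P_1},\dots,\Delta^{P_b},\Delta^{P'_1},\dots,\Delta^{P'_{b'}}$ into the order $\Delta^{(P*_\sigma P')_1},\dots,\Delta^{(P*_\sigma P')_{b+b'}}$; this is a permutation of factors according to $\sigma$, producing the sign $\prod (-1)^{P_iP_j}$ where the product runs over inversions of $\sigma$ across the two blocks.
\end{enumerate}
Multiplying the signs from (2) and (3) gives exactly $(-1)^{\sum_{i<j,\,\sigma(i)>\sigma(j)}(1+P_iP_j)}=\sgn_{\uP*\uP'}(\sigma)$, by the very definition of the twisted sign. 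Combined with the prefactor from (1) and summed over all $\sigma\in\Shuf_{b,b'}$, this yields the stated formula.

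I expect the only real obstacle to be bookkeeping the three orientation contributions in Step (1)–(3) without conflating them; once the polysimplex is written out explicitly as an ordered product and the three pieces of $\Psi^\sigma$ are separated, each sign is an elementary computation analogous to those in Lemma \ref{lem:degreesort} and Proposition \ref{prop:UMoralgebra}.
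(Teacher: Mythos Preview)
The paper declares this lemma immediate and gives no proof. Your plan is correct and is exactly the routine verification the authors have in mind: subdivide the product of characteristic polysimplices into shuffle-indexed regions and track the three orientation contributions, just as in Lemma~\ref{lem:degreesort} and Step~2 of Theorem~\ref{thm:UMormaps}.
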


Recall now the embedding $\iota$ from Definition \ref{defn:iotaembedding}. The induced map
\[
\conf{\iota}:\confred{V_{2g}}\to \conf{\cM}
\]
is cellular, and in fact it exhibits $\confred{V_{2g}}$ as a subcomplex of $\conf{\cM}$, spanned by $\infty$ and all cells $e_{0,\uzero,\fv}$. Moreover, since superposition of configurations is natural in the space, we have that $\conf{\iota}$ is a map of commutative monoids. Passing to reduced cellular chain complexes, the map $\redchains{\conf{\iota}}:\UMor_\bullet(2g)\to \bigoplus_{\ft}\Z e_{\ft}$ is the inclusion of differential algebras given by $e_{\fv}\mapsto e_{(0,\uzero,\fv)}$. Notice that the inclusion $\redchains{\conf{\iota}}$ respects the weight $\bullet$ (which by our conventions attains non-positive values on both sides), and if we put $\UMor_\bullet(2g)$ in bar-degree $\star=0$, then 
$\redchains{\conf{\iota}}$ also respects the bar-degree. Combining this and Proposition \ref{prop:thecellularstructureforbouquets}, we obtain the following proposition.
\begin{prop}
\label{prop:decompositionintobarandUMor}
There is a tensor product decomposition of bigraded rings
\[
\redchains{\conf{\cM}}\cong\widetilde{\Ch}^{B}_*(\cM)\otimes \UMor_\bullet(2g)
\]
where $\widetilde{\Ch}^{B}_*(\cM)$ is generated as a free abelian group by the cells of the form $e_{(b,\uP,\uzero)}$.
\end{prop}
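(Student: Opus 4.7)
The plan is to exhibit the isomorphism as the multiplication map, using the factorization already proven in Proposition \ref{prop:muiscellular} together with the multiplicative structure from Lemma \ref{lem:productChB}.

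\textbf{Step 1: $\widetilde{\Ch}^{B}_*(\cM)$ is a bigraded subring.} I would define $\widetilde{\Ch}^{B}_*(\cM) \subset \redchains{\conf{\cM}}$ as the free abelian subgroup generated by the cells $e_{(b,\uP,\uzero)}$, with bigrading inherited from $\redchains{\conf{\cM}}$. By Lemma \ref{lem:productChB}, the product $e_{(b,\uP,\uzero)} \cdot e_{(b',\uP',\uzero)}$ is a signed sum of cells of the form $e_{(b+b',\uP*_\sigma\uP',\uzero)}$, so this subgroup is closed under multiplication and therefore a bigraded subring. (It is not, in general, a differential subring, which is why the statement is about bigraded rings rather than DG rings.)

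\textbf{Step 2: Construct the comparison map.} I would define
\[
\Psi: \widetilde{\Ch}^{B}_*(\cM) \otimes \UMor_\bullet(2g) \to \redchains{\conf{\cM}}, \qquad a \otimes b \mapsto a \cdot \redchains{\conf{\iota}}(b),
\]
where $\redchains{\conf{\iota}}\colon\UMor_\bullet(2g)\hookrightarrow\redchains{\conf{\cM}}$ is the cellular inclusion discussed immediately before the proposition, sending $e_{\fv}\mapsto e_{(0,\uzero,\fv)}$. On basis elements, Proposition \ref{prop:muiscellular} gives $\Psi(e_{(b,\uP,\uzero)} \otimes e_{\fv}) = e_{(b,\uP,\uzero)} \cdot e_{(0,\uzero,\fv)} = e_{(b,\uP,\fv)}$. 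Since every $n$-record $\ft = (b,\uP,\fv)$ decomposes uniquely as the sum of a record with trivial $\fv$-part and one with trivial $(b,\uP)$-part, $\Psi$ is a bijection between the two bases, hence an isomorphism of bigraded abelian groups. Compatibility with the bigrading is immediate, as weight and bar-degree are both additive under the factorization.

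\textbf{Step 3: $\Psi$ is a ring homomorphism.} For $a,a' \in \widetilde{\Ch}^{B}_*(\cM)$ and $b,b' \in \UMor_\bullet(2g)$, using the graded commutativity of $\redchains{\conf{\cM}}$ with respect to the homological degree $*$ (Proposition \ref{prop:muiscellular}), I would compute
\[
\Psi(a\otimes b)\cdot\Psi(a'\otimes b') = a\cdot\iota_*(b)\cdot a'\cdot\iota_*(b') = (-1)^{|b|_*|a'|_*}\, a a'\cdot\iota_*(b)\iota_*(b'),
\]
which agrees with $\Psi\big((a\otimes b)(a'\otimes b')\big) = (-1)^{|b|_*|a'|_*}\Psi(aa'\otimes bb')$, since $\iota_*$ is itself a ring map.

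\textbf{Anticipated obstacle.} The proof is essentially a packaging of the preceding work; the only place to be careful is the sign bookkeeping in Step 3, which requires that graded commutativity in $\redchains{\conf{\cM}}$ uses the homological degree $*=\bullet+\star$ — this is precisely the content of the last sentence of Proposition \ref{prop:muiscellular}, where one observes that $d(\ft)$ and $*(\ft) = b(\ft)-n(\ft)$ have the same parity. Beyond this sign check, the argument is immediate from the uniqueness of the record decomposition and the multiplicative factorization $e_{(b,\uP,\fv)} = e_{(b,\uP,\uzero)}\cdot e_{(0,\uzero,\fv)}$.
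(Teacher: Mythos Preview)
Your proposal is correct and is exactly the approach the paper takes: the paper states the proposition as an immediate consequence of the preceding discussion (the factorisation $e_{(b,\uP,\fv)}=e_{(b,\uP,\uzero)}\cdot e_{(0,\uzero,\fv)}$ from Proposition~\ref{prop:muiscellular}, the fact that $\widetilde{\Ch}^B_*(\cM)$ is a subring by Lemma~\ref{lem:productChB}, and that $\UMor_\bullet(2g)$ is the image of the ring map $\conf{\iota}_*$), and you have simply written out those steps explicitly, including the sign check for multiplicativity.
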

Both factors are subrings of $\redchains{\conf{\cM}}$: the first by Lemma \ref{lem:productChB}, the second since it is the image of the ring homomorphism induced by $\conf{\iota}$. Only $\UMor_\bullet(2g)$, on which the differential is trivial, is a differential subring.

\begin{rem}
We used the superscript $B$ for the subring $\widetilde{\Ch}^{B}_*(\cM)$ because we think of it as bar resolution for the homology of configurations on a disc. In fact, the factor $\widetilde{\Ch}^{B}_*(\cM)$ is responsible for the bar-degree of the elements in 
$\redchains{\conf{\cM}}$.
\end{rem}

\subsection{The differential on \texorpdfstring{$\redchains{\conf{\cM}}$}{tildeCh(Cbullet(M)infty}}
Since $\redchains{\conf{\cM}}$ is a differential bigraded ring, its differential $\del$ is uniquely determined via the graded Leibniz rule by its behaviour on a set of ring generators, such as the elements $e_{(b,\uP,\uzero)}$ and $e_{(0,\uzero,\fv)}$ for varying $(b,\uP)$ and $\fv$; this uses Proposition \ref{prop:muiscellular}. On generators of the second kind the differential $\del$ vanishes, so we will focus on computing $\del(e_{(b,\uP,\uzero)})$.
\begin{defn}\label{defn:omega}
Let $\omega$ denote the element
\[
\omega:=x_1\wedge x_2+x_3\wedge x_4+\dots +x_{2g-1}\wedge x_{2g}
\in\Lambda^2_\Z(x_1,\dots,x_{2g})\subset \UMor_{-2}(2g),
\]
where we use the notation from Proposition \ref{prop:UMoralgebra}.
For $k\ge 1$ define $\Omega_{2k-1}:=0$ and define $\Omega_{2k}\in \Lambda^{2k}_\Z(x_1,\dots,x_{2g})\subset\UMor_{-2k}(2g)$ as the element
\[
\Omega_{2k}:=\frac{(2\omega)^k}{k!}=2^k\sum_{0\le i_1<\dots<i_k\le g-1}x_{2i_1+1}\wedge x_{2i_1+2}\wedge\dots\wedge x_{2i_k+1}\wedge x_{2i_k+2}.
\]
\end{defn}
\begin{defn}
 \label{defn:zetag}
We denote by $\zeta_g\in\pi_1(V_{2g},*)\cong\Z^{*2g}$ the element
$$\zeta_g= \gamma_1\gamma_2\gamma_1^{-1}\gamma_2^{-1}\dots\gamma_{2g-1}\gamma_{2g}\gamma_{2g-1}^{-1}\gamma_{2g}^{-1}.$$ 
\end{defn}

\begin{lem}\label{lem:contentomega}
 Recall Definition \ref{defn:content}. Then $c_2(\zeta_g)=2\omega\in \Lambda^2 H_{2g}$.
\end{lem}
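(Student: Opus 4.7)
The plan is to compute $c_2(\zeta_g)$ directly using the inductive characterization of $c_2$ on $G_{2g}$ provided by Lemma \ref{lem:uniquenessofcontent}, namely that $c_2$ vanishes on each generator $\gamma_i$ and satisfies the cocycle-type identity $c_2(w_1w_2)=c_2(w_1)+c_2(w_2)+[w_1]\wedge[w_2]$.

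First, I would dispose of the inverses: applying the cocycle identity to $1=\gamma_i\gamma_i^{-1}$ and using $[\gamma_i]\wedge[\gamma_i]=0$ together with $c_2(1)=0$ (which follows from $c_2(1\cdot 1)=2c_2(1)$), one gets $c_2(\gamma_i^{-1})=0$ for every generator. Next, I would compute $c_2$ on a single commutator $K_{i,j}:=\gamma_i\gamma_j\gamma_i^{-1}\gamma_j^{-1}$ by expanding step by step: starting from $c_2(\gamma_i\gamma_j)=[\gamma_i]\wedge[\gamma_j]$, then multiplying by $\gamma_i^{-1}$ (whose abelianisation contributes $-[\gamma_i]$), and finally multiplying by $\gamma_j^{-1}$. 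The terms $[\gamma_i]\wedge[\gamma_i]$ and $[\gamma_j]\wedge[\gamma_j]$ vanish, and a short bookkeeping gives $c_2(K_{i,j})=2\,[\gamma_i]\wedge[\gamma_j]$.

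The last step is assembly. Writing $\zeta_g=K_1K_2\cdots K_g$ with $K_m:=K_{2m-1,2m}$, I would observe that each $K_m$ lies in the commutator subgroup of $G_{2g}$, so its class in $H_{2g}$ is zero. Hence the mixed term $[K_{m_1}\cdots K_{m_r}]\wedge[K_{m_{r+1}}]$ arising from iterated application of the cocycle identity vanishes, and $c_2$ is simply additive on the product $K_1\cdots K_g$. Summing the individual contributions yields
\[
c_2(\zeta_g)=\sum_{m=1}^g 2\,[\gamma_{2m-1}]\wedge[\gamma_{2m}]=2\omega,
\]
which is the claim.

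There is essentially no obstacle: the only place that requires minor care is the sign bookkeeping in the commutator computation, and the key simplification that makes the product over $m$ collapse additively is the observation that each $K_m$ is abelianisation-trivial. Everything else is a direct application of Lemma \ref{lem:uniquenessofcontent}.
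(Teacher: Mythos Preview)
Your proof is correct and follows essentially the same approach as the paper: both use the cocycle identity from Lemma~\ref{lem:uniquenessofcontent}, observe that each commutator has trivial abelianisation so that $c_2$ is additive over the product of commutators, and then compute $c_2$ on a single commutator to get $2[\gamma_{2m-1}]\wedge[\gamma_{2m}]$. The only cosmetic difference is that you derive $c_2(\gamma_i^{-1})=0$ from the cocycle identity, whereas the paper reads it off directly from the definition of the content map.
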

\begin{proof}
 By property (2) from Lemma \ref{lem:uniquenessofcontent}, and using that
 \[
 [\gamma_{2i}\gamma_{2i+1}\gamma_{2i}^{-1}\gamma_{2i+1}^{-1}]=0\in H_{2g}\ \mbox{ for all }\ 0\le i\le g-1,
 \]
 we have $c_2(\zeta_g)=\sum_{i=0}^{g-1}c_2(\gamma_{2i}\gamma_{2i+1}\gamma_{2i}^{-1}\gamma_{2i+1}^{-1})$.
 Since $c_2$ vanishes on each generator $\gamma_j$ and $\gamma_j^{-1}$, applying property (2) we compute $c_2(\gamma_{2i}\gamma_{2i+1}\gamma_{2i}^{-1}\gamma_{2i+1}^{-1})=2[\gamma_{2i}]\wedge[\gamma_{2i+1}]$.
 \end{proof}

\begin{prop}\label{prop:differential}
The differential $\del$ restricted to $\widetilde{\Ch}_*^{B}(\cM)$ is the sum of two components 
$\del_B+\del_M$
given on a generator $e_{(b,\uP,\uzero)}$ by
 \[
  \begin{split}
\del_B(e_{(b,\uP,\uzero)})&=\sum_{i=1}^{b-1} (-1)^{i} \Ss(P_i,P_{i+1})  \  e_{(b-1, (P_1,\dots,P_{i-1},P_i+P_{i+1},P_{i+2},\dots, P_b),\uzero)};\\
\del_M(e_{(b,\uP,\uzero)})&=(-1)^b e_{(b-1,(P_1,\dots,P_{b-1})),\uzero)}\cdot \Omega_{P_b}.
  \end{split}
 \]
\end{prop}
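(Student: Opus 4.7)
The plan is to compute the cellular boundary of $e_{(b,\uP,\uzero)}$ directly, by analysing the boundary of the polysimplex $\Delta^{(b,\uP,\uzero)}=\Delta^b\times\prod_{i=1}^b\Delta^{P_i}$ under the characteristic map $\Phi^{(b,\uP,\uzero)}$. Since $\del$ is a derivation for the superposition product (Proposition \ref{prop:muiscellular}) and vanishes on $\UMor_\bullet(2g)\subset\redchains{\conf{\cM}}$ (whose cells sit in bar-degree $\star=0$), it suffices to evaluate $\del$ on the generators $e_{(b,\uP,\uzero)}$ of $\widetilde{\Ch}^B_*(\cM)$. First I check that most codimension-1 faces contribute nothing: faces of any factor $\Delta^{P_i}$ either collide two points on a bar or push a point to the top or bottom edge of $\bfR$ (part of $\partial\cM$), and the face $x_1=0$ of $\Delta^b$ places the leftmost bar onto the left edge of $\bfR$ (also part of $\partial\cM$); all map to $\infty$. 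The remaining codimension-1 faces of $\Delta^b$ are $x_i=x_{i+1}$ for $1\le i\le b-1$ and $x_b=1$, carrying simplicial boundary signs $(-1)^i$ and $(-1)^b$ respectively.

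On a face $x_i=x_{i+1}$ the attaching map factors through the cell $e_{(b-1,(P_1,\dots,P_i+P_{i+1},\dots,P_b),\uzero)}$: the only non-trivial ingredient is that the two sorted sets of $P_i$ and $P_{i+1}$ points on the merged bar must be combined into a single weakly increasing sequence of $P_i+P_{i+1}$ points, which is exactly $\sort_{P_i,P_{i+1}}$ from Definition \ref{defn:sortmap} acting on the two relevant simplex factors. Lemma \ref{lem:degreesort} identifies its degree as $\Ss(P_i,P_{i+1})$, and summing these contributions with signs $(-1)^i$ yields $\del_B$.

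On the face $x_b=1$ the restriction of $\Phi^{(b,\uP,\uzero)}$ factors as a superposition product $\Phi^{(b-1,(P_1,\dots,P_{b-1}),\uzero)}\cdot\tilde\phi$, where $\tilde\phi\colon\Delta^{P_b}\to\conf{\cM}$ sends $(s_1,\dots,s_{P_b})$ to $[\fp(2,s_1),\dots,\fp(2,s_{P_b})]$. Reading the identifications of $\set{2}\times[0,1]$ under $\fp$ from bottom to top, the map $\set{2}\times[0,1]\to\iota(V_{2g})\subset\cM$ traces out the loop $\gamma_1\gamma_2\gamma_1^{-1}\gamma_2^{-1}\cdots\gamma_{2g-1}\gamma_{2g}\gamma_{2g-1}^{-1}\gamma_{2g}^{-1}=\zeta_g$ from Definition \ref{defn:zetag}, so $\tilde\phi$ factors as $\iota_*\circ\confred[P_b]{\alpha}\circ\Phi_{(P_b)}$, where $\alpha\colon V_1\to V_{2g}$ realises $\zeta_g$ on $\pi_1$. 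On chains $\Phi_{(P_b)}$ represents $e(1,P_b)\in\UMor_{-P_b}(1)$ and $\confred{\alpha}$ induces $\UMor_\bullet(\psi_{\zeta_g})$. By Theorem \ref{thm:UMormaps}, using $[\zeta_g]=0$ in $H_{2g}$ (as $\zeta_g$ is a product of commutators) together with $c_2(\zeta_g)=2\omega$ (Lemma \ref{lem:contentomega}), I get $\UMor_\bullet(\psi_{\zeta_g})(x_1)=0$ and $\UMor_\bullet(\psi_{\zeta_g})(y_1)=2\omega$. Writing $e(1,2k)=y_1^{[k]}$ and $e(1,2k+1)=x_1y_1^{[k]}$, I extend multiplicatively via torsion-freeness of $\UMor_\bullet(2g)$ and the identity $k!\,y_1^{[k]}=y_1^k$ to obtain $\UMor_\bullet(\psi_{\zeta_g})(e(1,P_b))=\Omega_{P_b}$, using that the summands $x_{2i-1}\wedge x_{2i}$ of $\omega$ pairwise commute and square to zero, so that $(2\omega)^k/k!=\Omega_{2k}$. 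Combined with the simplicial sign $(-1)^b$, this produces $\del_M$. The main obstacle is this last identification: it requires recognising the right-edge identifications of $\bfR$ as encoding the surface relator $\zeta_g$, reducing the remaining computation to $\UMor_\bullet$-operations governed by Theorem \ref{thm:UMormaps}, and finally extracting $\Omega_{2k}$ integrally from $(2\omega)^k/k!$.
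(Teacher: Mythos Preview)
Your proposal is correct and follows essentially the same approach as the paper's own proof: you analyse the boundary of the polysimplex $\Delta^b\times\prod_i\Delta^{P_i}$ face by face, observe that all faces of the $\Delta^{P_i}$ factors and the face $x_1=0$ map to $\infty$, identify the $x_i=x_{i+1}$ faces with bar-merges via $\sort_{P_i,P_{i+1}}$ (degree $\Ss(P_i,P_{i+1})$ by Lemma~\ref{lem:degreesort}), and recognise the $x_b=1$ face as pushing the last bar onto the right edge, which traces out $\zeta_g$ and so yields $\Omega_{P_b}$ via Theorem~\ref{thm:UMormaps} and Lemma~\ref{lem:contentomega}. The only notable difference is that the paper spends some effort verifying, by induction on $b$, that the restriction of $\Phi^\ft$ to each face is cellular, whereas you treat this as evident; but the substance of the argument is the same.
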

\begin{proof}
Fix a record $\ft=(b,\uP,\uzero)$, and endow the polysimplex
\[
\Delta^\ft=\Delta^b\times \Delta^{P_1}\times\dots\times \Delta^{P_b}
\]
with the product cell structure of the simplicial cell structures on the factors.

We claim that the map $\Phi^\ft\colon\Delta^\ft\to\conf{\cM}$ is cellular, i.e. each open $k$-cell of $\Delta^\ft$ is sent along $\Phi^\ft$ inside the $k$-skeleton of $\conf{\cM}$; we will prove this claim by induction on $b\ge0$. For the top cell this is tautological, as $\Phi^\ft$ is a characteristic map of a cell. We write $\del\Delta^\ft$ as a union of subcomplexes
\[
\begin{array}{cl}
\del_j^{\hor}\Delta^\ft:=\del_i\Delta^b\times\Delta^{P_1}\times\dots\times \Delta^{P_b},&\mbox{for }0\le j\le b,\\ \del_j^{\ver,i}\Delta^\ft:=\Delta^b\times\Delta^{P_1}\times\dots\times\del_j\Delta^{P_i}\times\dots\times \Delta^{P_b},&\mbox{for }1\le i\le b\mbox{ and }0\le j\le P_b,
\end{array}
\]
and check that $\Phi^\ft$ is cellular on each of these subcomplexes. 

The map $\Phi^\ft$ restricts to the constant map to $\infty$ on each subcomplex $\del_j^{\ver,i}\Delta^\ft$. Geometrically, this corresponds to the situation in which
either two points of the $i$\sth bar collide (for $1\le j\le P_i-1$), or one point of the $i$\sth bar goes to $\partial M$ (for $j=0,P_i$). For $b=0$, this concludes the proof that $\Phi^\ft$ is cellular.
    
We now assume $b\ge1$ and analyse the restriction of $\Phi^\ft$ on the subcomplexes $\del_j^\hor\Delta^\ft$.
For $0\le j\le b$ let $\iota_j\colon\Delta^{b-1}\to\Delta^b$ be the standard inclusion of a face,
and denote by $\hat\iota_j\colon\Delta^{b-1}\times\prod_{i=1}^b\Delta^{P_i}\to\Delta^\ft$ the product map of $\iota_j$ with the identity of $\prod_{i=1}^b\Delta^{P_i}$. Notice that $\hat\iota_j$ is a cellular isomorphism between $\Delta^{b-1}\times\prod_{i=1}^b\Delta^{P_i}$ and $\del_j^\hor\Delta^\ft$.

The restriction of $\Phi^\ft$ on $\del_0^\hor\Delta^\ft$ is again the constant map to $\infty$. Geometrically, this corresponds to the first bar going all the way to left and colliding onto $\partial \cM$.

For $1\le j\le b-1$, the restriction of $\Phi^\ft$ on $\del_j^{\hor}\Delta^\ft$ corresponds geometrically to letting the $j$\sth and $(j+1)$\textsuperscript{st} bars collide;
the map $\Phi^\ft\circ\hat\iota_j$ is equal to the composition
\[
\begin{tikzcd}[column sep=50pt]
\Delta^{b-1}\times \Delta^{P_1}\times\dots\times \Delta^{P_j}\times \Delta^{P_{j+1}}\times \dots \times \Delta^{P_b}\dar["\id\times\sort_{P_j,P_{j+1}}\times \id"']& \\
\Delta^{b-1}
\times \Delta^{P_1}\times\dots\times \Delta^{P_j+P_{j+1}}\times \dots \times 
\Delta^{P_b}\ar[r,"\Phi^{(b-1,\del_j\uP,0)}"] &
\conf{\cM},
\end{tikzcd}
\]
where $\del_j\uP=(P_1,\dots,P_{j-1},P_j+P_{j+1},P_{j+2},\dots,P_b)$ and $\sort_{P_j,P_{j+1}}$ is the map from Definition \ref{defn:sortmap}.
Thus the restriction of $\Phi^\ft$ on $\del_j^\hor\Delta^\ft$ is cellular, as it is the composite of cellular maps: by inductive hypothesis we know that $\Phi^{(b-1,\del_j\uP,0)}$ is cellular.

Finally, we have to treat the case $j=b$, which corresponds geometrically to the right-most bar colliding with $\iota(V_{2g})$. 
Consider the element $\zeta_g\in\pi_1(V_{2g},*)=\Z^{*2g}$ from Definition \ref{defn:zetag},
and let $f_{\zeta_g}:(V_1,*)\to (V_{2g},*)$ be the based map realising the group homomorphism $\gamma_1\mapsto \zeta_g$ ``with constant speed'': concretely, recalling that $V_1=[0,1]/\set{0,1}$ and that $\cM$ is a quotient of the rectangle $\bfR=[0,2]\times[0,1]$, then the map $[0,1]\to\bfR$ with $t\mapsto (2,t)$ descends to a map $f_{\zeta_g}\colon V_1\to \cM$ whose image is contained in the subspace $\iota(V_{2g})\subset\cM$.
Then $\Phi^\ft\circ\hat\iota_b$ is equal to the composition
\[
\begin{tikzcd}[column sep=40pt]
\Delta^{b-1}\times \Delta^{P_1}\times\dots\times\Delta^{P_{b-1}}\times\Delta^{P_b}\dar["\id\times\Phi_{P_b}"]& \\
\Delta^{b-1}
\times \Delta^{P_1}\times\dots\times\Delta^{P_{b-1}}\times
\confred[P_b]{V_1}\ar[d,"{\Phi^{(b-1,\del_b\uP,0)}\times\conf[P_b]{f_{\zeta_g}}}"] &\\
\conf{\cM}\times\conf{\cM}\ar[r,"\mu"'] &\conf{\cM},
\end{tikzcd}
\]
where we use the following:
\begin{itemize}
 \item the characteristic map $\Phi_{P_b}\colon \Delta^{P_{b-1}}\to\confred[P_b]{V_1}$ in the cell decomposition of $\confred[P_b]{V_1}$ corresponds to the vector $\fv=(P_b)\in\Z_{\ge0}^1$, i.e. to the cell $e(1,P_b)$, as in Subsection \ref{subsec:cellstratbouquet};
 \item $\del_b\uP$ denotes the sequence $(P_1,\dots,P_{b-1})$;
 \item the map 
 \[
 \conf[P_b]{f_{\zeta_g}}\colon\confred[P_b]{V_1}=\conf[P_b]{V_1,*}\to\conf[P_b]{\cM}=\conf[P_b]{\cM,\del\cM}
 \]
 is induced by the pointed map $f_{\zeta_g}$, and we include $\conf[P_b]{\cM,\del\cM}$ inside $\conf{\cM,\del\cM}$;
\end{itemize}
In particular $\Phi^\ft\circ\hat\iota_b$ is exhibited as a composition of cellular maps, again using induction on $b$. This concludes the proof that $\Phi^\ft$ is cellular.

We can in fact use the previous analysis to compute the differential $\del e_\ft\in\widetilde{\Ch}_*^{B}(\cM)$ as a sum of contributions coming from the different faces of $\Delta^\ft$. All faces $\del^{\ver,i}_j\Delta^\ft$ and the face $\del_0^\hor\Delta^\ft$ will contribute trivially. For $1\le j\le b-1$, the face $\del_j^\hor\Delta$ will contribute by a multiple of $e_{(b-1,\del_j\uP,\uzero)}$, with coefficient being $(-1)^j\cdot\deg(\sort_{P_j,P_{j+1}})$, which by Lemma \ref{lem:degreesort} gives $(-1)^j\cdot\Ss(P_j,P_{j+1})$. These contributions give the component $\del_B(e_\ft)$.
Finally, the face $\del_b^\hor\Delta^\ft$ contributes by the product
\[
e_{(b-1,\del_b\uP,\uzero)}\cdot \UMor(f_{\zeta_g})(e(1,P_b))\in  \redchains{\conf{\cM}},
\]
using the tensor product decomposition from Proposition \ref{prop:decompositionintobarandUMor}.

By Theorem \ref{thm:UMormaps}, if $P_b=2k+1$, then 
\[
\UMor(f_{\zeta_g})(e(1,P_b))=\UMor(f_{\zeta_g})(x\frac{y^k}{k!})=[\zeta_g]_x\frac{([c_2(\zeta_g)]_x)^k}{k!}=0,
\]
since $\zeta_g$ is a commutator and hence $[\zeta_g]_x=0$; similarly, if $P_b=2k$, then
\[
\UMor(f_{\zeta_g})(e(1,P_b))=\UMor(f_{\zeta_g})(\frac{y^k}{k!})=\frac{([c_2(\zeta_g)]_x)^k}{k!}=\frac{(2\omega)^k}{k!};
\]
in the second to last equality we use that $\UMor_\bullet(2g)$ is torsion-free as an abelian group, and in the last equality we use Lemma \ref{lem:contentomega}.
In both cases
\[
\UMor(f_{\zeta_g})(e(1,P_b))=\Omega_{P_b}.
\]
\end{proof}
\subsection{Cellular approximation of diffeomorphisms}
We let $D:=[0,1]\times [0,1]\subset \cM$ and identify the mapping class group $\Gamma_{g,1}$ with $\pi_0(\Homeo(\cM,D\cup \partial\cM))$.
\begin{defn}
 \label{defn:tau}
 For $1\le t\le 2$ we define the self-map $\theta_t$ of the pair $(\cM,\partial M)$ as the map with
\[ 
\fp(x,y)\mapsto \left\{
\begin{array}{cl}
\fp(tx,y) &\text{if } x\le \frac2t,\\[.2cm]
\fp(2,y) &\text{if } x\ge \frac2t.
\end{array}
\right.
\]
For $t=2$, we also write $\theta=\theta_2$. See Figure \ref{fig:tau}.
\end{defn}
The maps $\theta_t$ assemble into a homotopy of self-maps of the pair $(\cM,\del\cM)$ starting from the identity and ending with $\theta$.
\begin{defn}
 We say that a self-map $f$ of the pair $(\cM,\partial\cM)$ is \emph{vertical} if for all $0\le x\le 2$ there is $0\le x'\le 2$ such that $f(\fp(\set{x}\times[0,1]))\subseteq\fp(\set{x'}\times[0,1])$.
\end{defn}
Each map $\theta_t$ is vertical, and for any homeomorphism $f\in\Homeo(\cM,D\cup \partial\cM)$ the composition $\theta\circ f$  (which is no longer a homeomorphism) is vertical. Compositions of vertical maps are vertical.

\begin{figure}[ht]
 \centering
 \begin{overpic}[width=12cm]{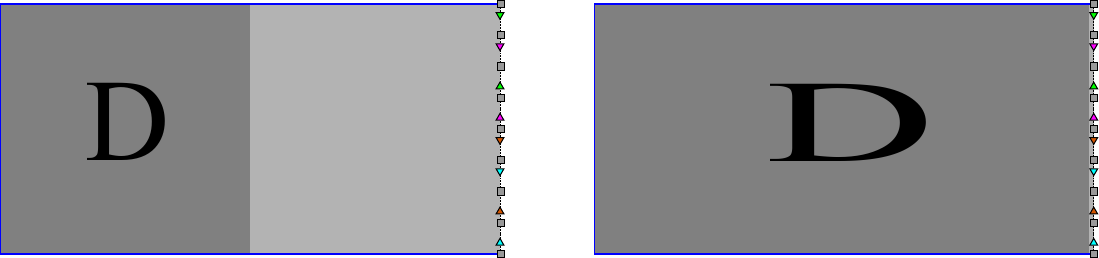}
 \put(49,10){$\overset{\theta}{\to}$}
 \end{overpic}
 \caption{The map $\theta$ expands $D$ and contracts horizontally the rest of $\cM$.}
 \label{fig:tau}
\end{figure}

\begin{prop}\label{prop:tauiscellular}
Let $f$ be a vertical self-map of the pair $(\cM,\del\cM)$; then the induced map $\conf{f}\colon\conf{\cM}\to\conf{\cM}$ is cellular.
\end{prop}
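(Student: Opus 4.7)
The strategy is to verify that $\conf f$ respects the CW structure from Proposition \ref{prop:celldecompsurface}: for each configuration $s$ lying in an open cell $e_{\ft}$, I would show that $\conf f(s)$ lies in the $d(\ft)$-skeleton. Equivalently, writing $d(s) := d(\ft)$ when $s \in e_\ft$, and $d(\infty) := 0$, it suffices to establish the inequality $d(\conf f(s)) \le d(s)$ for every $s \in \conf\cM$.

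I would first dispose of the trivial cases: the basepoint $\infty$ is fixed by $\conf f$ since $f$ is a map of pairs, and the case $\conf f(s) = \infty$ (which occurs when $f$ either sends two points of the support of $s$ to the same image or sends some point of $s$ into $\del\cM$) automatically lands in the $0$-skeleton. Otherwise, $f$ restricts to an injection on the support of $s$ whose image lies in $\mathring\cM$, so $\conf f(s)$ has precisely $n(\ft)$ distinct points in $\mathring\cM$ and therefore lies in a unique open cell $e_{\ft'}$ with $n(\ft') = n(\ft)$. The problem then reduces to the bar-count inequality $b(\ft') \le b(\ft)$.

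The key geometric input is the verticality of $f$, which produces a well-defined function $\bar f \colon [0,2] \to [0,2]$ with $f(\fp(\{x\} \times [0,1])) \subseteq \fp(\{\bar f(x)\} \times [0,1])$. Consequently, points of $s$ sharing a common $x$-coordinate in $\bfR$ are sent by $f$ to points sharing a common $x$-coordinate, so the distinct $x$-coordinates in $(0,2)$ of points of $\conf f(s)$ form a subset of $\bar f(\{x_1,\dots,x_b,2\}) \cap (0,2)$, bounded by $b(\ft)$ once one accounts for how $f$ interacts with the right edge $\iota(V_{2g}) = \fp(\{2\}\times[0,1])$ under the boundary-preservation hypothesis. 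The main obstacle is precisely this careful verification for the right edge: configurations on $\iota(V_{2g})$ contribute to $\fv$ rather than to $\uP$, and one must rule out that a vertical $f$ creates a genuinely new interior column out of the right-edge contribution. Once $b(\ft') \le b(\ft)$ is established, the chain of equalities $d(\ft') = n(\ft') + b(\ft') = n(\ft) + b(\ft') \le n(\ft) + b(\ft) = d(\ft)$ gives cellularity.
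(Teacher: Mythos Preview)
Your strategy is exactly the paper's: reduce cellularity to $d(\conf{f}(s)) \le d(s)$, dismiss the $\infty$ case, note that $n$ is preserved, and derive $b' \le b$ from verticality via $d=n+b$. The paper simply asserts ``since $f$ is vertical, $b(\conf{f}(s)) \le b(s)$'' without further comment; you go further and correctly isolate the right-edge contribution as the delicate point.

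That caution is warranted, because for a general vertical self-map the step cannot actually be completed: nothing in the definition forces $\bar f(2)=2$, and for $g\ge 1$ one can build a vertical $f$ whose image lies entirely in the arc $\fp(\{1\}\times[0,1])$ (set $f(\fp(x,y))=\fp(1,h(x,y))$ for a continuous $h\colon\bfR\to[0,1]$ compatible with the right-edge identifications, vanishing on the three boundary sides of $\bfR$ and on $\tilde p_0$, but not identically zero on some $I_i$). Such an $f$ sends a single configuration point on $I_1$---lying in a $1$-cell of $\conf[1]{\cM}$---into the open $2$-cell $\fp(\mathring\bfR)$, so $\conf{f}$ is not cellular and the proposition fails as literally stated. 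What makes the argument work in the paper's applications is that every vertical map actually used (each $\theta_t$, each $\theta\circ g$ with $g$ fixing $D\cup\del\cM$, and their composites) additionally satisfies $f(\iota(V_{2g}))\subseteq\iota(V_{2g})$; under that extra hypothesis right-edge points stay on the right edge, contribute only to $\fv'$, and $b'\le |\{\bar f(x_1),\dots,\bar f(x_b)\}\cap(0,2)|\le b$ is immediate. So your outline (and the paper's) becomes correct once one adds the condition $\bar f(2)=2$, but the ``main obstacle'' you identify is otherwise genuinely insurmountable.
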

\begin{proof}
Let $n\ge0$ be fixed; in the notation of the proof of Proposition \ref{prop:muiscellular}, it suffices to show the inequality $d(\conf[n]{f}(s))\le d(s)$, for all $s\in \conf[n]{\cM}$. If $\conf[n]{f}(s)=\infty$, this holds immediately. Otherwise, we have $n(\conf[n]{f}(s))=n(s)=n$; moreover, since $f$ is vertical, we have $b(\conf[n]{f}(s))\le b(s)$. Using the equality $d(-)=n(-)+b(-)$, the conclusion follows.
\end{proof}
In particular every vertical map of $(\cM,\del\cM)$ induces an endomorphism of the chain complex $\redchains{\conf{\cM}}$. The homotopy of vertical maps $\theta_t$ induces a homotopy of cellular self-maps of $\conf{\cM}$, which in turn induces a \emph{homotopy through self-chain maps} of the chain complex $\redchains{\conf{\cM}}$; clearly the space of self-chain maps of a chain complex is discrete and, furthermore, the mapping degree for maps between spheres, the quantity which determines chain maps, is locally constant; we conclude that $\id_{\cM}$ induces \emph{the same} self-chain map on $\redchains{\conf{\cM}}$ as $\theta$; in other words, $\theta$ acts as the identity on $\redchains{\conf{\cM}}$.
More generally, if $f\simeq g$ are isotopic homeomorphisms in $\Homeo(\cM,D\cup \partial \cM)$, then $f\circ\theta$ is homotopic to $g\circ\theta$ through vertical maps, hence
$\conf{\theta\circ f}$ is homotopic to $C_n(\theta\circ g)$ through cellular maps, and hence $\theta\circ f$ and $\theta\circ g$ induce the same self-chain map of $\redchains{\conf{\cM}}$.

Finally, if $f,g\in\Homeo(\cM,D\cup \partial \cM)$, then $\theta\circ f\circ\theta\circ g$ is homotopic to $\theta\circ f\circ g$ through vertical maps, so that again $\theta\circ f\circ\theta\circ g$ and $\theta\circ f\circ g$ induce the same 
self-chain map of $\redchains{\conf{\cM}}$. This leads to the following proposition.

\begin{prop}
 The cellular chain complex $\redchains{\conf{\cM}}$ admits an action of $\Gamma_{g,1}$ by self-chain maps, defined by letting the class of a homeomorphism $f\in \Homeo(\cM,D\cup \cM)$ act by the chain map induced by the cellular map $\conf{\theta\circ f}$.
\end{prop}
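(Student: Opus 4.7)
The plan is to extract the proposition as a formal consequence of the three observations already made in the discussion just before its statement, and to organize them as a check of (i) well-definedness on isotopy classes and (ii) compatibility with composition.

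First, I would fix a homeomorphism $f\in\Homeo(\cM,D\cup\partial\cM)$ and note that the composite $\theta\circ f$ is vertical: $f$ restricts to the identity on $D$, and $\theta$ sends every vertical line $\fp(\{x\}\times[0,1])$ into a vertical line (possibly the degenerate one $\fp(\{2\}\times[0,1])$). By Proposition \ref{prop:tauiscellular}, $\conf{\theta\circ f}$ is a cellular self-map of $\conf{\cM}$ and hence induces a self-chain map $(\conf{\theta\circ f})_*$ on $\redchains{\conf{\cM}}$.

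Next, I would verify that $(\conf{\theta\circ f})_*$ depends only on the mapping class of $f$. Given $f_0\simeq f_1$ in $\Homeo(\cM,D\cup\partial\cM)$ via an isotopy $\{f_s\}_{s\in[0,1]}$, the family $\{\theta\circ f_s\}$ is a homotopy through vertical self-maps of $(\cM,\partial\cM)$, hence induces a homotopy of cellular maps $\{\conf{\theta\circ f_s}\}$ on $\conf{\cM}$. For each cell $e$ of $\conf{\cM}$, the sequence of integers $\big((\conf{\theta\circ f_s})_*(e)\big)_{s\in[0,1]}$ takes values in the free abelian group on cells; since the coefficients are determined by local mapping degrees which vary continuously in $s$ but lie in the discrete set $\Z$, they are constant in $s$. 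So $(\conf{\theta\circ f_0})_*=(\conf{\theta\circ f_1})_*$ and the assignment descends to $\Gamma_{g,1}$.

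Then I would check multiplicativity. For $f,g\in\Homeo(\cM,D\cup\partial\cM)$, the straight-line homotopy $\{\theta_t\}_{1\le t\le 2}$ of Definition \ref{defn:tau} gives a homotopy through vertical self-maps between $\theta\circ f\circ \theta_1\circ g=\theta\circ f\circ g$ and $\theta\circ f\circ\theta\circ g$ (here we use that verticality is preserved by pre- and post-composition with vertical maps, and that each $\theta_t\circ g$ is vertical because $g$ fixes $D$ pointwise). By the same locally-constant-degree argument, these two vertical self-maps induce the same chain map on $\redchains{\conf{\cM}}$. Therefore
\[
(\conf{\theta\circ f})_*\circ(\conf{\theta\circ g})_*=(\conf{\theta\circ f\circ\theta\circ g})_*=(\conf{\theta\circ(f\circ g)})_*,
\]
so the assignment is a homomorphism $\Gamma_{g,1}\to\Aut(\redchains{\conf{\cM}})$. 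Finally, applying the same homotopy argument to $f=\id_{\cM}$ shows that $\conf{\theta}$ and $\conf{\id_{\cM}}$ induce the same chain map, namely the identity, so the homomorphism sends the identity class to the identity.

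The only non-routine point is the locally-constant-degree observation: one must justify that a homotopy through cellular maps of the specific CW complex $\conf{\cM}$ yields a constant chain map. This is where having the explicit cell structure from Proposition \ref{prop:celldecompsurface}, in which every cell has boundary collapsed to $\infty$, is essential, since then the induced chain map on each generator is read off by a single mapping degree between spheres, which depends continuously, hence constantly, on the parameter.
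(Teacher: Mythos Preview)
Your approach is the paper's approach: the proposition is packaged from the three observations in the discussion immediately preceding it, and you have organized them correctly into well-definedness, multiplicativity, and identity. Two small corrections, however.

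In the multiplicativity step, your parenthetical justification is wrong: the map $\theta_t\circ g$ is \emph{not} vertical for $t<2$ (already at $t=1$ one has $\theta_1\circ g=g$, a general homeomorphism). What you need, and what is true, is that the full composite $\theta\circ f\circ\theta_t\circ g$ is vertical. For $x_0\le 1$ the line $\{x_0\}\times[0,1]$ is fixed by $g$, sent by $\theta_t$ to the vertical line $\{tx_0\}\times[0,1]$, and then $\theta\circ f$ is vertical; for $x_0>1$ the maps $g$ and $\theta_t$ together only keep the line inside $\{x>1\}$ (not on a single vertical), but then $\theta\circ f$ collapses all of $\{x\ge 1\}$ to the single line $\{x=2\}$. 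So it is the outermost $\theta$, not verticality of $\theta_t\circ g$, that saves the day.

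Your final remark that in the cell structure of Proposition~\ref{prop:celldecompsurface} ``every cell has boundary collapsed to $\infty$'' is false: that holds for $\confred{V_k}$ (Proposition~\ref{prop:thecellularstructureforbouquets}), but Proposition~\ref{prop:differential} exhibits nontrivial attaching maps for the cells $e_\ft$ of $\conf{\cM}$. This does not matter for the argument: for any CW complex, the entries of a cellular chain map are degrees of maps between the quotient spheres $X^n/X^{n-1}$, and these integer degrees vary continuously, hence constantly, in a homotopy through cellular maps. So the locally-constant-degree argument stands, but it is not special to this cell structure.
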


\begin{prop}\label{prop:UMorissubrepandCHbistrivial}
Under the decomposition of Proposition \ref{prop:decompositionintobarandUMor}, the subrings
\[
\widetilde{\Ch}^{B}_*(\cM)\ \mbox{ and }\ \UMor_\bullet(2g)
\]
of $\redchains{\conf{\cM}}$ are $\Gamma_{g,1}$-subrepresentations. More precisely, the $\Gamma_{g,1}$-action on $\widetilde{\Ch}^{B}_*(\cM)$ is trivial, and $f\in \Homeo(\cM,D\cup \cM)$ acts on $\UMor_\bullet(2g)$ via
\[
C_{\bullet}(\theta\circ f|_{\iota(V_{2g})})_*=\UMor(\theta\circ f|_{\iota(V_{2g})})=\UMor(\phi),
\]
where $\phi\in \Aut(\Z^{*2g})$ is the automorphism of $\pi_1(\Sigma_{g,1})\cong \pi_1(V_{2g},*)$ induced by $f$.
\end{prop}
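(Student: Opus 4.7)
The key geometric observation is that $f$ fixes $D\cup\partial\cM$ pointwise while $\theta$ sends $\cM\setminus D$ onto $\iota(V_{2g})$; combined with $\iota(V_{2g})\cap D=\emptyset$, this immediately gives the inclusion $\theta\circ f(\iota(V_{2g}))\subseteq\iota(V_{2g})$. A parallel analysis shows that $\theta\circ f$ is vertical: on points of $D$, $f$ is the identity and $\theta$ rescales the first coordinate (so each vertical line goes to a vertical line), while points of $\cM\setminus D$ land under $\theta\circ f$ in the single vertical line $\iota(V_{2g})$. Hence $\theta\circ f$ is cellular by Proposition~\ref{prop:tauiscellular}, and induces a bigraded chain map on $\redchains{\conf{\cM}}$.

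For the subring $\UMor_\bullet(2g)$, I would exploit the factorisation $\theta\circ f\circ\iota=\iota\circ\psi$, where $\psi:=\iota^{-1}\circ\theta\circ f\circ\iota$ is a well-defined based self-map of $V_{2g}$ by the observation above. By functoriality, the restriction of $\conf{\theta\circ f}_*$ to the image of $\redchains{\conf{\iota}}$ equals $\redchains{\conf{\iota}}\circ\UMor(\psi)$, so this subring is preserved and the action is $\UMor(\psi)$. To identify $\psi_*$ with $\phi$ on $\pi_1$, I would use that $\theta\simeq\mathrm{id}$ rel the basepoint $p_0$ via the homotopy $\theta_t$, which gives $\iota_*\circ\psi_*=(\theta\circ f\circ\iota)_*=f_*\circ\iota_*$; then by Theorem~\ref{thm:UMormaps}, $\UMor(\psi)=\UMor(\phi)$.

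For the subring $\widetilde{\Ch}_*^B(\cM)$, I would compute $\conf{\theta\circ f}_*(e_{(b,\uP,\uzero)})$ directly from the characteristic map $\Phi^{(b,\uP,\uzero)}$, whose $\Delta^b$-factor parametrises bar positions $2\xi_i\in[0,2]$ with $(\xi_1,\dots,\xi_b)\in\Delta^b$. If some $2\xi_i\ge 1$, then the $i$-th bar is crushed by $\theta\circ f$ entirely onto $\iota(V_{2g})$, producing a configuration with strictly fewer bars and hence lying in a cell of strictly lower dimension. Thus the only region contributing to a top-dimensional cell of bar-degree $b$ is $\xi_i<1/2$ for all $i$, on which $\theta\circ f$ acts as the identity on the simplicial $y$-coordinates and as the orientation-preserving reparametrisation $\xi_i\mapsto 2\xi_i$ of the bar positions (Jacobian $2^b>0$, local degree $+1$). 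Since each bar is either preserved whole (keeping its $P_i$ points) or crushed whole, no cell $(b,\uP',\fv')$ with $\fv'\ne\uzero$ can arise; hence $\conf{\theta\circ f}_*(e_{(b,\uP,\uzero)})=e_{(b,\uP,\uzero)}$.

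The main obstacle is the cellular bookkeeping in the third paragraph: one must combine the crushing and rescaling of bars with the $y$-coordinate behaviour of $f$ on $D$ to rule out contributions to unintended cells of the same bigrading. Once the geometric picture that ``$f$ only moves points outside $D$, and $\theta$ then pushes those onto $\iota(V_{2g})$'' is firmly in place, both halves of the proof reduce to straightforward cellular calculations.
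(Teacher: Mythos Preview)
Your proof is correct and follows essentially the same approach as the paper's: the paper likewise splits the open cell $e_\ft$ into the region $e^{1/2}_\ft$ where all bars lie in $D$ (mapped by $\conf{\theta\circ f}$ homeomorphically onto $e_\ft$) and its complement (mapped to configurations with strictly fewer bars, hence to lower-dimensional cells), and handles $\UMor_\bullet(2g)$ by observing it is the image of the ring map $\conf{\iota}_*$. Your treatment is in fact slightly more explicit than the paper's about why the local degree is $+1$ and about the identification $\psi_*=\phi$ via the homotopy $\theta_t$ rel $p_0$.
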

\begin{proof}
For any $f\in \Homeo(\cM,D\cup \partial \cM)$, 
the self-map $\conf{\theta\circ f}$ of $\conf{\cM}$ is induced by a self-map of $(\cM,\del\cM)$, so it respects the product $\mu$; it follows that $\UMor(\theta\circ f|_{\iota(V_{2g})})$ is a ring automorphism.
Let now $e_\ft$ be a generator in $\widetilde{\Ch}^{B}_*(\cM)$, for some record $\ft$ of the form $(b,\uP,\uzero)$ with $b\ge1$. We want to study the behaviour of $\conf{\theta\circ f}$ on the open cell $e_\ft\subseteq\conf{\cM}$. Let $e^{1/2}_\ft$ be the subspace of $e_\ft$ containing configurations $s$ which are supported in the left, open half of $\mathring{\bfR}$, i.e. for each point $\fp(x,y)\in s$ we have $0<x<1$. Then $e^{1/2}_\ft$ is open in $e_\ft$; moreover $\theta\circ f$ maps $e^{1/2}_\ft$ homeomorphically onto $e_\ft$, and maps the complement of $e^{1/2}_\ft$ to configurations $s'$ satisfying $b(s')< b(s)=b$. It follows that the generator $e_\ft\in\redchains{\conf{\cM}}$ is fixed by the chain map $\conf{\theta\circ f}_*$; this proves that $\widetilde{\Ch}^{B}_*(\cM)$ is invariant under the action of $\Gamma_{g,1}$, and the restricted action is trivial.

The fact that $\UMor_\bullet(2g)$ is also invariant under the action of $\Gamma_{g,1}$, and that the restricted action coincides with the one described in Theorem \ref{thm:UMormaps}, follows from the fact that $\UMor_\bullet(2g)$ is the image of the ring homomorphism $\conf{\iota}_*$.
\end{proof}

This proposition combined with Proposition \ref{prop:equivariantPD} reduces the problem of determining the $\Gamma_{g,1}$-representation $H_*(C_n(\Sigma_{g,1}))$ to studying the action of $\Gamma_{g,1}$ $\UMor_{\bullet}(2g)$. This is done in the next section.

\section{The \texorpdfstring{$\Gamma_{g,1}$}{Gammag1}-action on the homology of configurations}
We fix a prime number $p$ and $g\ge0$ throughout the section. We abbreviate $G=G_{2g}=\Z^{*2g}\cong\pi_1(\Sigma_{g,1})$, with the isomororphism specified in Section \ref{sec:confsurf}, and $H=G^{\ab}\cong H_1(\Sigma_{g,1})$.

\begin{figure}[h]
    \centering
    \begin{tikzpicture}
    
    \draw
    (0,0) ellipse (0.3 and 1) ;
    \draw (-0.3,0.5) node[left]{\tiny$\partial\Sigma_{g,1}$};
    \draw (0,1) -- (8,1)  .. controls (9,1) and (10,1)  .. (10,0)  .. controls (10,-1) and (9,-1) .. (8,-1)  -- (0,-1);
    \draw (10,-0.7) node[below]{$\Sigma_{g,1}$};
    \draw ({0.3*cos(30)},-0.5) to (8.3,-0.47);
    
    \filldraw[black] ({0.3*cos(30)},-0.5) circle (1pt) node[anchor=east]{\tiny$p_0$};

    \begin{scope}[shift={(2,-0.5)}, scale=0.5]
        \draw[thick, looseness=0.8, postaction={decoration={markings,mark=at position 0.4 with {\arrow{>}}},decorate}] (0,0) to[out=80,in=-90]  (1.7,1) to [out=90,in=0] node[above]{\small$\gamma_1$} (0,2) to[out=180, in=90] (-2.3,1)  to[out=-90,in=100] (0,0);
        \draw[thick, looseness=0.8, postaction={decoration={markings,mark=at position 0.4 with {\arrow{>}}},decorate}] (-0.3,-1) to[out=30,in=-90] node[right]{\small$\gamma_2$} (0,0) to[out=90,in=-30] (-0.3,1);
        
        \draw[thick, looseness=0.8,dashed] (-0.3,-1) to[out=150,in=-90] (-0.6,0) to[out=90,in=-150] (-0.3,1);
        
        \begin{scope}[shift={(-0.3,1)}]
        \draw (-1,0.5) to[out=-60,in=180] (0,0) to[out=0,in=-120] (1,0.5);
        \draw (-0.6,0.12) to[out=60,in=180] (0,0.4) to[out=0,in=120] (0.6,0.12);
        \end{scope}
    \end{scope}
    
    \begin{scope}[shift={(4.6,-0.5)}, scale=0.5]

        \draw[thick, looseness=0.8, postaction={decoration={markings,mark=at position 0.4 with {\arrow{>}}},decorate}] (0,0) to[out=80,in=-90]  (1.7,1) to [out=90,in=0] node[above]{\small$\gamma_3$} (0,2) to[out=180, in=90] (-2.3,1)  to[out=-90,in=100] (0,0);
        \draw[thick, looseness=0.8, postaction={decoration={markings,mark=at position 0.4 with {\arrow{>}}},decorate}] (-0.3,-1) to[out=30,in=-90] node[right]{\small$\gamma_4$} (0,0) to[out=90,in=-30] (-0.3,1);
        \draw[thick, looseness=0.8,dashed] (-0.3,-1) to[out=150,in=-90] (-0.6,0) to[out=90,in=-150] (-0.3,1);
        
        \begin{scope}[shift={(-0.3,1)}]
        \draw (-1,0.5) to[out=-60,in=180] (0,0) to[out=0,in=-120] (1,0.5);
        \draw (-0.6,0.12) to[out=60,in=180] (0,0.4) to[out=0,in=120] (0.6,0.12);
        \end{scope}
    \end{scope}
    
    \draw (6,0) node[anchor=west]{$\dots$};
    \begin{scope}[shift={(8.3,-0.5)}, scale=0.5]
        \draw[red, thick, looseness=0.8] (0.1,-1) to[out=30,in=-90] node[right]{\small$a$}  (0.4,0) to[out=90,in=-30] (0.1,1.05);
        \draw[red, looseness=0.8,dashed] (0.1,-1) to[out=150,in=-90] (-0.2,0) to[out=90,in=-150] (0.1,1.05);
        
        \draw[thick, looseness=0.8, postaction={decoration={markings,mark=at position 0.45 with {\arrow{>}}},decorate}] (0,0) to[out=80,in=-90]  (1.7,1) to [out=90,in=0] node[above]{\small$\gamma_{2g-1}$} (0,2) to[out=180, in=90] (-2.3,1)  to[out=-90,in=100] (0,0);
        
        \draw[thick, looseness=0.8, postaction={decoration={markings,mark=at position 0.4 with {\arrow{>}}},decorate}] (-0.3,-1) to[out=30,in=-90] (0,0) to[out=90,in=-30] (-0.3,1);
        
        \draw[thick, looseness=0.8,dashed] (-0.3,-1) to[out=150,in=-90] node[left]{\small$\gamma_{2g}$}(-0.6,0) to[out=90,in=-150] (-0.3,1);
        
        \begin{scope}[shift={(-0.3,1)}]
        \draw (-1,0.5) to[out=-60,in=180] (0,0) to[out=0,in=-120] (1,0.5);
        \draw (-0.6,0.12) to[out=60,in=180] (0,0.4) to[out=0,in=120] (0.6,0.12);
        \end{scope}
    \end{scope}

    \end{tikzpicture}
    \caption{The standard generators of $G=\pi_1(\Sigma_{g,1},p_0)$ and the curve $a$ from the proof of Lemma \ref{lem:pthpowersarekilled}.}
    \label{fig:SymplecticBasis}
\end{figure}
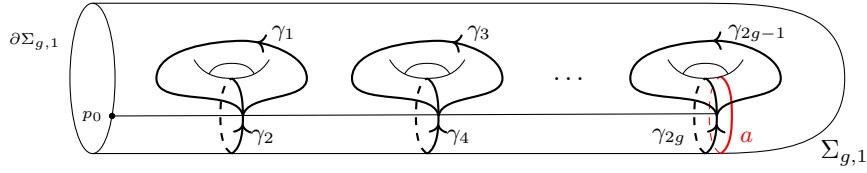

\subsection{Recollections on Torelli groups and Johnson homomorphisms}

\begin{defn}
The \emph{Torelli group}, denoted $\cT_{g,1}$, is the kernel of the $\Gamma_{g,1}$-action on $H_1(\Sigma_{g,1};\Z)$. The \emph{Torelli group modulo $p$}, denoted $\cT_{g,1}(p)$, is the kernel of the $\Gamma_{g,1}$ action on $H_1(\Sigma_{g,1};\Fp)$.
\end{defn}

\begin{thm}[Cooper \cite{Cooper}, Perron \cite{Perron}]\label{thm:torellimoduloPgeneration}
For a prime $p$, the group $\cT_{g,1}(p)$ is generated by $\cT_{g,1}$ and the $p$\sth powers of all Dehn twists.
\end{thm}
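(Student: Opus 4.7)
My plan is to prove the two inclusions separately. The easy direction, $\langle \cT_{g,1}, T_\gamma^p \rangle \subseteq \cT_{g,1}(p)$, is an immediate consequence of the transvection formula for Dehn twists: a Dehn twist $T_\gamma$ acts on $H_1(\sgone;\Z)$ by $x \mapsto x + \sca{x,[\gamma]}[\gamma]$, where $\sca{-,-}$ is the intersection form, so $T_\gamma^p$ acts by $x \mapsto x + p\sca{x,[\gamma]}[\gamma]$, which is trivial modulo $p$; combined with the defining inclusion $\cT_{g,1} \subseteq \cT_{g,1}(p)$, the containment follows.

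For the reverse inclusion, the strategy is to reduce the problem to a purely group-theoretic statement about symplectic groups. The action of $\ggone$ on $H_1(\sgone;\Z) \cong \Z^{2g}$ preserves the intersection form, giving a surjection $\ggone \twoheadrightarrow \mathrm{Sp}(2g,\Z)$: surjectivity holds because every symplectic transvection $t_v \colon x \mapsto x + \sca{x,v}v$ (for primitive $v \in \Z^{2g}$) is the image of a Dehn twist along some simple closed curve $\gamma$ representing $v$ (classically, every primitive homology class on $\sgone$ has such a representative), and because such transvections generate $\mathrm{Sp}(2g,\Z)$ by a theorem of Klingenberg. The kernel of this surjection is $\cT_{g,1}$, and the image of $\cT_{g,1}(p)$ is exactly the principal congruence subgroup $\mathrm{Sp}(2g,\Z)[p] := \ker(\mathrm{Sp}(2g,\Z) \twoheadrightarrow \mathrm{Sp}(2g,\Fp))$. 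After quotienting by $\cT_{g,1}$, the theorem reduces to the claim that $\mathrm{Sp}(2g,\Z)[p]$ is generated by the $p$-th powers $t_v^p$ of symplectic transvections, for $v \in \Z^{2g}$ primitive.

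The main obstacle is this last claim, which is a Mennicke-type statement about the generators of principal congruence subgroups of arithmetic symplectic groups. My approach would be to take $A \in \mathrm{Sp}(2g,\Z)[p]$, write it as a word in transvections $A = \prod_j t_{v_j}^{\epsilon_j}$ (possible since the $t_v$ generate $\mathrm{Sp}(2g,\Z)$), and then use the symplectic Steinberg relations, together with the conjugation identity $g\, t_v^p\, g^{-1} = t_{gv}^p$ (which shows that the set of $p$-th power transvections is closed under conjugation by $\mathrm{Sp}(2g,\Z)$), to rewrite the word modulo $p$-th power transvections so that the remaining exponents reduce the transvection word to the identity. For $g \geq 2$, the relations required to run this reduction are available in the classical presentations of $\mathrm{Sp}(2g,\Z)$ worked out in the program of Bass--Milnor--Serre (and by Behr and others). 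The case $g = 0$ is vacuous since $\ggone$ is trivial, while $g = 1$ reduces to the analogous (classical) statement for $\mathrm{SL}_2(\Z)$, which can be handled by direct inspection of the presentation of $\mathrm{SL}_2(\Z)$ and its principal congruence subgroups.
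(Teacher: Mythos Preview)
The paper does not prove this statement: it is quoted as a theorem of Cooper and Perron and used as a black box (for instance in the proof of Proposition~\ref{prop:Fpimageofxi}). There is therefore no argument in the paper to compare yours against.

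Your reduction is the standard one and is correct: the easy inclusion follows from the transvection formula, and quotienting by $\cT_{g,1}$ reduces the hard inclusion to the claim that $\mathrm{Sp}(2g,\Z)[p]$ is generated by $p$\sth powers of symplectic transvections. For $g\ge2$ this does follow from the Mennicke/Bass--Milnor--Serre theory you invoke (ultimately from the congruence subgroup property for $\mathrm{Sp}_{2g}(\Z)$). Your ``rewrite the word using Steinberg relations'' paragraph is too vague to count as a proof, but as a pointer to the right literature it is fine.

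There is, however, a genuine error in your treatment of $g=1$. You write that this case ``can be handled by direct inspection,'' but in fact the corresponding statement for $\mathrm{SL}_2(\Z)$ is \emph{false} for primes $p\ge7$. All transvections in $\mathrm{SL}_2(\Z)$ are conjugate to $T=\left(\begin{smallmatrix}1&1\\0&1\end{smallmatrix}\right)^{\pm1}$, so the subgroup generated by their $p$\sth powers is the normal closure of $T^p$. Passing to $\mathrm{PSL}_2(\Z)\cong\Z/2*\Z/3$, the quotient by this normal closure is the $(2,3,p)$ triangle group, which is infinite for $p\ge7$; but $\mathrm{PSL}_2(\Fp)$ is finite, so the normal closure of $T^p$ is strictly smaller than the congruence kernel. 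Thus the subgroup generated by $\cT_{1,1}$ and $p$\sth powers of Dehn twists is a proper subgroup of $\cT_{1,1}(p)$ for $p\ge7$. The cited references presumably carry a hypothesis $g\ge2$ (or $g\ge3$); you should check their actual statements rather than asserting the $g=1$ case.
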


In \cite{JohnsonI},  Johnson proved that the quotient $[G,G]/[G,[G,G]]$ of lower central subgroups of $G$ is a free abelian group, isomorphic to $\Lambda^2H$ as $\Gamma_{g,1}$-representation. The explicit isomorphism $j\colon[G,G]/[G,[G,G]]\to\Lambda^2H$ is given by sending the class of $w_1w_2w_1^{-1}w_2^{-1}$ in $[G,G]/[G,[G,G]]$ to $[w_1]\wedge [w_2]\in \Lambda^2H$, for $w_1,w_2\in G$. 
\begin{defn}[Johnson \cite{JohnsonI}]
The \emph{Johnson homomorphism} is the group homomorphism
$\tau_{g,1}:\cT_{g,1}\to \Hom(H,\Lambda^2H)$
sending $\phi\in \cT_{g,1}$ to the homomorphism of abelian groups $[w]\mapsto j([\phi(w)w^{-1}]_{[G,[G,G]]})$.
\end{defn}
Johnson showed that $\tau_{g,1}$ is a well defined group homomorphism, and computed its image using the identification $\Hom(H,\Lambda^2H)\cong H^{\vee}\otimes \Lambda^2H\cong H\otimes \Lambda^2H$, where the self duality $H\cong H^{\vee}$ comes from the symplectic form on $H=H_1(\Sigma_{g,1})$. 
\begin{thm}[Johnson \cite{JohnsonI}]\label{thm:Johnsonimage}
    The image of $\tau_{g,1}$ is the subgroup $\Lambda^3H\subset H\otimes \Lambda^2H$, where the inclusion is given by sending the triple wedge $a\wedge b\wedge c\in \Lambda^3H$ to the sum
    \[
    a\otimes b\wedge c+b\otimes c\wedge a+c\otimes a\wedge b\in H\otimes \Lambda^2H.
    \]
\end{thm}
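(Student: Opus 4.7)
The plan is to prove the two inclusions $\Imm(\tau_{g,1})\subseteq\Lambda^3H$ and $\Lambda^3H\subseteq\Imm(\tau_{g,1})$ separately.

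For the first inclusion, I would exploit the fact that every $\phi\in\cT_{g,1}$ fixes $\partial\Sigma_{g,1}$ pointwise, so fixes the boundary element $\zeta_g\in G$ from Definition \ref{defn:zetag} on the nose, not just up to conjugation. Writing $\phi(w)=w\cdot K_\phi(w)$ for a map $K_\phi\colon G\to [G,G]/[G,[G,G]]\cong\Lambda^2H$, one checks that $K_\phi$ is a crossed homomorphism which, since $\phi$ acts trivially on $H$, descends to a genuine homomorphism $H\to\Lambda^2H$; this is precisely $\tau_{g,1}(\phi)$. The condition $\phi(\zeta_g)=\zeta_g$ gives $K_\phi(\zeta_g)=0$, and expanding this via the derivation-like property of Lemma \ref{lem:uniquenessofcontent} and the computation $c_2(\zeta_g)=2\omega$ of Lemma \ref{lem:contentomega} yields a single linear constraint on $\tau_{g,1}(\phi)$. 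Under the identification $\Hom(H,\Lambda^2H)\cong H\otimes\Lambda^2H$ using the symplectic form $\omega$, this constraint translates into the full antisymmetry of $\tau_{g,1}(\phi)$ in all three tensor slots, i.e.\ membership in $\Lambda^3H\subset H\otimes\Lambda^2H$.

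For the reverse inclusion, the strategy is to produce enough elements of $\cT_{g,1}$ whose Johnson images span $\Lambda^3H$. The classical tool is Johnson's \emph{bounding pair maps}: given a pair of disjoint, non-separating, homologous simple closed curves $C_1,C_2\subset\Sigma_{g,1}$, the composition $T_{C_1}T_{C_2}^{-1}$ of Dehn twists lies in $\cT_{g,1}$. Using the standard formula for the action of $T_{C_i}$ on $\pi_1$ modulo $[G,[G,G]]$, namely $w\mapsto w\cdot[C_i]^{\sca{[C_i],[w]}}$ up to higher commutators, one shows directly that $\tau_{g,1}(T_{C_1}T_{C_2}^{-1})=[C_1]\wedge\omega_S$, where $\omega_S\in\Lambda^2H$ is the symplectic form of one of the subsurfaces $S$ bounded by $C_1\cup C_2$, viewed inside $\Lambda^3H$ via the inclusion in the theorem. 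By a careful topological construction choosing $C_1,C_2$ and $S$ appropriately with respect to a symplectic basis $\set{[\gamma_i]}_{i=1}^{2g}$ of $H$, one realises every pure wedge of three distinct basis vectors, and these span $\Lambda^3H$.

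The main obstacle is the surjectivity step: one must both carry out the word-level calculation of $\tau_{g,1}$ on a BP-map (which is a finite but nontrivial computation in $G/[G,[G,G]]$), and then verify combinatorially that BP-maps can be chosen to realise a spanning set of $\Lambda^3H$. The first step is a direct application of the derivation property of $c_2$, while the second can be reduced to a small number of model cases using the $\mathrm{Sp}(2g,\Z)$-equivariance of $\tau_{g,1}$, since $\Lambda^3H$ is a small number of $\mathrm{Sp}(2g,\Z)$-orbits and each BP-map construction is flexible under change of symplectic basis.
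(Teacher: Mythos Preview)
The paper does not prove this theorem; it is quoted from Johnson \cite{JohnsonI} and used as a black box. So there is no in-paper argument to compare against, but your sketch is worth examining since it is essentially Johnson's original strategy.

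There is a genuine gap in your argument for $\Imm(\tau_{g,1})\subseteq\Lambda^3H$. With $K_\phi$ taking values in $G_2/G_3\cong\Lambda^2H$ and, as you note, factoring through $H$, the equation $K_\phi(\zeta_g)=0$ is a tautology: $\zeta_g\in[G,G]$, so $[\zeta_g]_H=0$ and any homomorphism out of $H$ sends it to zero. No constraint on $\tau_{g,1}(\phi)$ can be extracted at this level. The references to Lemmas~\ref{lem:uniquenessofcontent} and~\ref{lem:contentomega} are also misplaced: those concern the content map $c_2$, whose law $c_2(w_1w_2)=c_2(w_1)+c_2(w_2)+[w_1]\wedge[w_2]$ is a different formula from the (plainly additive) law for $K_\phi$, and applying $c_2$ to $\phi(\zeta_g)=\zeta_g$ again yields only the automatic equality $2\omega=2\omega$ for $\phi\in\cT_{g,1}$. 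The correct argument works one step deeper in the lower central series: since $\zeta_g\in G_2$ and every $\phi\in\cT_{g,1}$ acts trivially on $G_2/G_3\cong\Lambda^2H$, the element $\phi(\zeta_g)\zeta_g^{-1}$ lies in $G_3$, and the condition $\phi(\zeta_g)=\zeta_g$ forces it to vanish already in $G_3/G_4\cong(H\otimes\Lambda^2H)/\Lambda^3H$. Expanding each commutator $[\gamma_{2i-1}c_{2i-1},\gamma_{2i}c_{2i}]$ modulo $G_4$ (with $c_j=\gamma_j^{-1}\phi(\gamma_j)\in G_2$) one finds that this element is, after the symplectic identification $\Hom(H,\Lambda^2H)\cong H\otimes\Lambda^2H$, exactly the class of $\tau_{g,1}(\phi)$ modulo $\Lambda^3H$; its vanishing is the desired inclusion.

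Your surjectivity argument via bounding-pair maps is the standard one and is correct in outline.
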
 
We observe that $H\otimes \Lambda^2H$ is a free abelian group of rank $2g\binom{2g}{2}$, and $\Lambda^3H$ is a split free abelian subgroup of rank $\binom{2g}{3}$.

The kernel of $\tau_{g,1}$ is the so-called \emph{Johnson kernel} $\cJ_{g,1}\subseteq \cT_{g,1}$.
\begin{thm}[Johnson \cite{JohnsonI}]\label{thm:septwistgenerateJ}
    If $g\ge 0$, the group $\cJ_{g,1}$ is generated by all Dehn twists around separating simple closed curves in $\Sigma_{g,1}$.
\end{thm}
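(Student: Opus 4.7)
The plan is to establish the two opposite inclusions separately, the first of which is a direct computation and the second of which is the hard direction carried out by Johnson in \cite{JohnsonI}.

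For the inclusion $\langle T_\alpha : \alpha \text{ separating}\rangle \subseteq \cJ_{g,1}$, let $\alpha$ be a separating simple closed curve. Being null-homologous, the class $[\alpha]\in G$ lies in $[G,G]$. The key preliminary observation is that conjugation by any $c\in [G,G]$ is trivial on the quotient $G/[G,[G,G]]$: indeed, for any $a\in G$ we have $cac^{-1}=a\cdot [a^{-1},c]$ with $[a^{-1},c]\in [G,[G,G]]$. Using the Picard--Lefschetz-type formula that describes $T_\alpha$ on $\pi_1$ by inserting $[\alpha]^{\pm 1}$ at each intersection of a loop with $\alpha$, and pairing positive and negative intersections (whose count must balance because $[\alpha]=0\in H$), a direct computation on each generator $\gamma_i$ of $G$ yields $T_\alpha(\gamma_i)\gamma_i^{-1}\in [G,[G,G]]$. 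Hence $\tau_{g,1}(T_\alpha)=0$, so $T_\alpha\in\cJ_{g,1}$.

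For the reverse inclusion $\cJ_{g,1}\subseteq\langle T_\alpha : \alpha \text{ separating}\rangle$, I would follow Johnson's strategy in three steps. First, invoke Johnson's companion result that for $g\ge 3$ the Torelli group $\cT_{g,1}$ is generated by \emph{bounding pair maps} $T_{\beta_1}T_{\beta_2}^{-1}$, where $\beta_1,\beta_2$ are disjoint homologous non-separating simple closed curves; in low genus ($g\le 2$) this is supplemented by separating twists, and for $g\le 1$ the statement is vacuous since $\cT_{g,1}=1$. Second, compute $\tau_{g,1}$ on a bounding pair map: if $\beta_1,\beta_2$ cobound a subsurface $S$ of genus $h$ with symplectic basis $\{\gamma_i,\delta_i\}_{i=1}^h$ for $H_1(S)$, a careful analysis using Lemma \ref{lem:uniquenessofcontent}-type identities gives
\[
 \tau_{g,1}(T_{\beta_1}T_{\beta_2}^{-1})=[\beta]\wedge\sum_{i=1}^h [\gamma_i]\wedge [\delta_i]\in\Lambda^3 H,
\]
which also recovers the surjectivity of $\tau_{g,1}$ onto $\Lambda^3 H$ asserted in Theorem \ref{thm:Johnsonimage}. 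Third, given $\phi\in\ker\tau_{g,1}$ written as a word in bounding pair maps, the vanishing of the image translates to an algebraic identity among sums of such triple wedges in $\Lambda^3 H$.

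The main obstacle is the final step: converting the algebraic vanishing in $\Lambda^3 H$ into a geometric factorisation of $\phi$ as a product of separating twists. The critical input is the \emph{lantern relation} on a four-holed sphere, which expresses a product of three separating Dehn twists as a product of three twists about non-separating curves that can be regrouped into bounding pair maps; combined with its conjugates and the disjointness commutation relations, the lantern relation generates all relations among bounding pair maps modulo separating twists. A careful combinatorial analysis, matching explicit configurations of bounding pairs on $\Sigma_{g,1}$ against the generators of $\Lambda^3 H$ and tracking the resulting relations, then allows every element of $\ker\tau_{g,1}$ written in bounding pair generators to be reassembled as a product of separating twists, finishing the proof.
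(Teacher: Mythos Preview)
The paper does not prove this theorem at all: it is stated with attribution to Johnson and used as a black box, with no argument given. So there is nothing in the paper to compare your proposal against.

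That said, a brief remark on your sketch. The easy inclusion is fine in spirit: a separating twist acts on $\pi_1$ by conjugating the generators coming from one complementary subsurface by an element of $[G,G]$ (namely the class of $\alpha$, which is a product of commutators), and conjugation by $[G,G]$ is trivial on $G/[G,[G,G]]$, so $\tau_{g,1}$ vanishes on it. Your Picard--Lefschetz description with ``pairing intersections'' is a less clean way to say the same thing.

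For the hard inclusion, your outline names the right ingredients (Powell/Johnson generation of $\cT_{g,1}$ by bounding pairs and separating twists, the computation of $\tau_{g,1}$ on a BP map, and the lantern relation), but the crucial third step is where essentially all of Johnson's work lies, and your description of it is not a proof. The lantern relation alone does not obviously generate all relations among BP maps modulo separating twists; Johnson's actual argument proceeds by an intricate induction on genus and a careful normal-generation analysis, showing that the subgroup generated by separating twists is normal in $\cT_{g,1}$ and that the quotient maps isomorphically onto $\Lambda^3 H$. Saying ``a careful combinatorial analysis \dots\ then allows every element of $\ker\tau_{g,1}$ to be reassembled'' is precisely the content of Johnson's paper, not a step one can wave through. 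If you want to include a proof here rather than a citation, you would need to reproduce that argument in full.
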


\subsection{The maps \texorpdfstring{$\xi$}{xi} and \texorpdfstring{$\xi^p$}{xip}}
\begin{nota}
\label{nota:UMor2splitting}
Recall Proposition \ref{prop:UMoralgebra} and Theorem \ref{thm:UMormaps}; we identify the abelian group $\UMor_{-2}(2g)$ with
\[
 \UMor_{-2}(2g)=\bigoplus_{i=1}^{2g}\Z y_i\oplus\Lambda^2\pa{\bigoplus_{i=1}^{2g}\Z x_i}\cong H\oplus\Lambda^2 H.
\]
\end{nota}
We observe that $\UMor_{-2}(2g)$ has a na\"ive $\Gamma_{g,1}$-action as the direct sum of the representations $H$ and $\Lambda^2H$. In light of Proposition \ref{prop:UMorissubrepandCHbistrivial}, it also inherits an action via $\UMor_{-2}(\theta\circ-)$.
For both actions of $\Gamma_{g,1}$ on $\UMor_{-2}(2g)$ we have a short exact sequence of $\Gamma_{g,1}$-representations
\[
 \Lambda^2H\to\UMor_{-2}(2g)\to H,
\]
and for the na\"ive action there is a $\Gamma_{g,1}$-equivariant splitting $H\to\UMor_{-2}(2g)$.
\begin{nota}
For $\phi\in \Gamma_{g,1}$ denote the na\"ive action by $\phi*_H-$ and the action via $\UMor_\bullet(\theta\circ-)$ by $\phi*-$. 
\end{nota}
\begin{defn}
    We define a function of sets $\xi:\Gamma_{g,1}\to \Hom(H,\Lambda^2H)$ by sending $\phi\in\Gamma_{g,1}$ to the homomorphism of abelian groups sending the basis element $[\gamma_i]\in H$ to the element $c_2(\phi(\gamma_i))\in \Lambda^2H$.
    Composing with the modulo $p$ reduction, we similarly define $\xi^p:\Gamma_{g,1}\to \Hom(H,\Lambda^2H)\otimes \Fp$.
    
    We let $\xi_{\tau}$ (resp. by $\xi^p_{\tau}$) be the restriction of $\xi$ (resp. $\xi^p$) on $\cT_{g,1}$ (resp. on $\cT_{g,1}(p)$).
\end{defn}
The function $\xi$ enjoys a special relationship with the Johnson homomorphism.
\begin{prop}\label{prop:xiequalsJohnson}
    $\xi_{\tau}=2\tau_{g,1}$.
\end{prop}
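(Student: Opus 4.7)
The plan is to translate both sides of the equality into computations involving $c_2$ on the commutator subgroup $[G,G]$, and exploit the uniqueness-type characterisation of $c_2$ provided by Lemma \ref{lem:uniquenessofcontent}. Throughout I abbreviate $C := c_2|_G$.

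First I would reduce the problem to computing $c_2$ on an element of $[G,G]$. For $\phi \in \cT_{g,1}$ we have $[\phi(\gamma_i)] = [\gamma_i]$ in $H$, so $w_i := \phi(\gamma_i)\gamma_i^{-1} \in [G,G]$. By the multiplicative property of $C$,
\[
c_2(\phi(\gamma_i)) = C(w_i \cdot \gamma_i) = C(w_i) + C(\gamma_i) + [w_i]\wedge [\gamma_i].
\]
Both $C(\gamma_i) = 0$ (by property (1) of Lemma \ref{lem:uniquenessofcontent}) and $[w_i] \wedge [\gamma_i] = 0$ (since $[w_i] = 0$ in $H$), so $\xi(\phi)([\gamma_i]) = C(w_i)$.

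Next I would study the restriction $C|_{[G,G]}$. For $u,v \in [G,G]$ the defining relation gives $C(uv) = C(u) + C(v) + [u] \wedge [v] = C(u) + C(v)$, so $C|_{[G,G]}\colon [G,G] \to \Lambda^2 H$ is a group homomorphism. Using $C(1) = 0$ and $[w]\wedge[w^{-1}] = 0$ one gets $C(w^{-1}) = -C(w)$ for all $w$. A direct expansion using property (2) of $C$ then yields the key commutator identity
\[
C([a,b]) = 2\,[a] \wedge [b] \quad\text{for all } a,b \in G,
\]
which is the source of the factor $2$. From this identity applied to $b \in [G,G]$ (so $[b] = 0$) it follows that $C$ vanishes on all generators of $[G,[G,G]]$, and since $C|_{[G,G]}$ is a homomorphism, $C$ factors through $[G,G]/[G,[G,G]]$ as the map $2j$, where $j$ is Johnson's isomorphism $[G,G]/[G,[G,G]] \xrightarrow{\cong} \Lambda^2 H$.

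Putting the pieces together,
\[
\xi(\phi)([\gamma_i]) = C(w_i) = 2\,j\bigl([w_i]_{[G,[G,G]]}\bigr) = 2\,\tau_{g,1}(\phi)([\gamma_i]),
\]
and since both $\xi(\phi)$ and $\tau_{g,1}(\phi)$ are additive in $[\gamma_i]$ and the $[\gamma_i]$ form a basis of $H$, we conclude $\xi_\tau(\phi) = 2\tau_{g,1}(\phi)$. The only slightly delicate step is the commutator computation giving the factor $2$, which is just a careful unwinding of property (2) of Lemma \ref{lem:uniquenessofcontent}; everything else is bookkeeping.
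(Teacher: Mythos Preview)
Your proof is correct and complete. The paper itself does not give an argument but simply cites \cite[Proposition 6.7]{Stavrou}; your self-contained derivation via the cocycle identity of Lemma~\ref{lem:uniquenessofcontent}, the commutator formula $C([a,b])=2[a]\wedge[b]$, and the resulting factorisation $C|_{[G,G]}=2j$ through $[G,G]/[G,[G,G]]$ is exactly the standard computation and is presumably what one finds in that reference.
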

\begin{proof}
    This is contained in the proof of \cite[Proposition 6.7]{Stavrou}. 
\end{proof}

\begin{prop}\label{prop:actiononUMorviaxi}
    Let $a\in H\subset\UMor_{-2}(2g)$ and $b\in \Lambda^2H\subset\UMor_{-2}(2g)$; then for $\phi\in\Gamma_{g,1}$ we have
    $\phi*(a\oplus b)=\phi*_H a\oplus (\phi*_H b + \xi(\phi)(a))$.
\end{prop}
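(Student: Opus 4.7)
The proposition is a direct unpacking of the two earlier descriptions of how $\Gamma_{g,1}$ acts on $\UMor_{-2}(2g)$. The plan is to split everything by linearity into the two summands in Notation \ref{nota:UMor2splitting} and then compute using Theorem \ref{thm:UMormaps}.

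First I would recall, via Proposition \ref{prop:UMorissubrepandCHbistrivial}, that the action $\phi * -$ of $\Gamma_{g,1}$ on $\UMor_\bullet(2g)$ coincides with the ring endomorphism $\UMor_\bullet(\phi)$, where $\phi$ denotes both the mapping class and the automorphism of $G=\pi_1(\Sigma_{g,1})\cong G_{2g}$ it induces on $\pi_1$. Since $\UMor_\bullet(\phi)$ is a ring homomorphism and both $a\in H$ and $b\in\Lambda^2 H$ lie in the weight $-2$ part $\UMor_{-2}(2g)$, linearity reduces the computation to the following: (i) the image of each generator $y_i$ (spanning the $H$-summand under the identification $[\gamma_i]\leftrightarrow y_i$), and (ii) the image of each product $x_i\wedge x_j$ (spanning the $\Lambda^2H$-summand under $[\gamma_i]\leftrightarrow x_i$).

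For $a=\sum_i a_i\,y_i\in H$, Theorem \ref{thm:UMormaps} gives
\[
\phi*a \;=\; \sum_i a_i\bigl([\phi(\gamma_i)]_y+[c_2(\phi(\gamma_i))]_x\bigr)
\;=\; \Bigl(\sum_i a_i[\phi(\gamma_i)]_y\Bigr)\,\oplus\,\Bigl(\sum_i a_i\,c_2(\phi(\gamma_i))\Bigr).
\]
Under the identification $[\gamma_i]\leftrightarrow y_i$, the $y$-component is exactly the linearisation $\phi_*(a)=\phi*_H a$, while the $\Lambda^2H$-component is, by definition, $\xi(\phi)(a)$. For $b=\sum_{i<j} b_{ij}\,x_i\wedge x_j\in \Lambda^2H$, Theorem \ref{thm:UMormaps} together with the fact that $\UMor_\bullet(\phi)$ is a ring map gives
\[
\phi*b \;=\; \sum_{i<j} b_{ij}\,[\phi(\gamma_i)]_x\wedge[\phi(\gamma_j)]_x,
\]
which lies entirely in $\Lambda^2H$ and, under the identification $[\gamma_i]\leftrightarrow x_i$, coincides with the naive action $\phi*_H b$. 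Adding the two contributions yields the claimed formula.

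There is essentially no obstacle: the only care needed is the bookkeeping between the abstract $H\oplus\Lambda^2H$ and its realisation inside $\UMor_{-2}(2g)$ via the isomorphisms $(-)_y$ and $(-)_x$, and the observation that the ``cross term'' $\sum a_i\,c_2(\phi(\gamma_i))$ produced by the action on each $y_i$ is, by the very definition of $\xi$, nothing but $\xi(\phi)$ evaluated at $a$. Everything else is linearity and the ring-homomorphism property already established.
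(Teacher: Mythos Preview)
Your proof is correct and takes essentially the same approach as the paper: the paper's proof simply says ``Applying Theorem \ref{thm:UMormaps} proves the equality on the basis of $\UMor_{-2}(2g)$ given by the elements $y_i$ and $x_j\wedge x_k$,'' and you have spelled out exactly this computation in detail, including the identification of the cross term with $\xi(\phi)(a)$ via its definition.
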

\begin{proof}
    Applying Theorem \ref{thm:UMormaps} proves the equality on the basis of $\UMor_{-2}(2g)$ given by the elements $y_i$ and $x_j\wedge x_k$, for $1\le i,j,k\le 2g$.
\end{proof}
\begin{prop}\label{prop:xionproducts}
    For $\phi,\psi\in \Gamma_{g,1}$ and $a\in H\subset\UMor_{-2}(2g)$, we have
    \[
    \xi(\phi\psi)(a)=\phi*_H\xi(\psi)(a)+\xi(\phi)(\psi*_H a).
    \]
\end{prop}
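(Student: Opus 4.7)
The proposition is essentially a cocycle identity for $\xi$ with respect to the pair of actions $*$ and $*_H$, and I would prove it by unpacking both sides using Proposition \ref{prop:actiononUMorviaxi} twice and comparing.

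First, I would invoke functoriality of $\UMor_\bullet$ together with Proposition \ref{prop:UMorissubrepandCHbistrivial}: the action of $\Gamma_{g,1}$ on $\UMor_\bullet(2g)$ factors through $\Aut(G)$, and since $\UMor_\bullet$ is functorial in group homomorphisms, the induced action satisfies $(\phi\psi)*x=\phi*(\psi*x)$ for every $x\in\UMor_\bullet(2g)$. The analogous identity $(\phi\psi)*_H a=\phi*_H(\psi*_H a)$ holds by definition of the na\"ive action on $H$.

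Next, starting from $a\in H\subset\UMor_{-2}(2g)$, I apply Proposition \ref{prop:actiononUMorviaxi} with $b=0$ to obtain
\[
\psi*a=(\psi*_H a)\oplus\xi(\psi)(a),
\]
viewed as an element of $H\oplus\Lambda^2H$. I then apply Proposition \ref{prop:actiononUMorviaxi} a second time, this time to the pair $\bigl(\psi*_H a,\xi(\psi)(a)\bigr)$, to get
\[
\phi*(\psi*a)=\bigl(\phi*_H(\psi*_H a)\bigr)\oplus\bigl(\phi*_H\xi(\psi)(a)+\xi(\phi)(\psi*_H a)\bigr).
\]
On the other hand, applying Proposition \ref{prop:actiononUMorviaxi} once directly to $(\phi\psi)*a$ gives
\[
(\phi\psi)*a=\bigl((\phi\psi)*_H a\bigr)\oplus\xi(\phi\psi)(a).
\]
Using the identity $(\phi\psi)*a=\phi*(\psi*a)$ from the first step and comparing the $\Lambda^2H$-components of the two expressions yields the desired formula.

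There is no substantial obstacle here; the entire argument is a two-step unwinding of Proposition \ref{prop:actiononUMorviaxi}. The only point requiring care is the bookkeeping of composition conventions---namely that the action $*$ really is a left action of $\Gamma_{g,1}$ on $\UMor_\bullet(2g)$, which rests on functoriality of $\UMor_\bullet$ and on matching the convention that composition in $\Gamma_{g,1}$ corresponds to composition of the induced automorphisms of $\pi_1(\Sigma_{g,1})\cong G$.
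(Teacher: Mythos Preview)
Your proposal is correct and is exactly the natural argument. The paper itself leaves this proof to the reader, so your two-step application of Proposition \ref{prop:actiononUMorviaxi} together with the observation that $*$ is a genuine left action is precisely what is intended.
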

\begin{proof} We leave the proof to the reader.
\end{proof}

\begin{cor}
\label{cor:xigrouphomomorphism}
The maps $\xi_{\tau}$ and $\xi^p_{\tau}$ are group homomorphisms.
\end{cor}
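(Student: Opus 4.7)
The plan is to derive both statements as essentially immediate consequences of Proposition \ref{prop:xionproducts}, by exploiting the defining property of $\cT_{g,1}$ and $\cT_{g,1}(p)$: elements of these groups act trivially on $H$ and on $H\otimes\Fp$ respectively, hence also trivially on $\Lambda^2 H$ (resp. on $\Lambda^2H\otimes\Fp$) via the induced action.

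First I would handle $\xi_\tau$. For $\phi,\psi\in\cT_{g,1}$ and $a\in H$, one has $\psi*_H a=a$ by the definition of the Torelli group, and $\phi*_H$ acts trivially on every element of $\Lambda^2 H$ (since the action on $\Lambda^2 H$ is the exterior square of the action on $H$, which is trivial), so in particular $\phi*_H\xi(\psi)(a)=\xi(\psi)(a)$. Substituting these two identities into Proposition \ref{prop:xionproducts} gives
\[
\xi_\tau(\phi\psi)(a)=\xi_\tau(\psi)(a)+\xi_\tau(\phi)(a)=\bigl(\xi_\tau(\phi)+\xi_\tau(\psi)\bigr)(a),
\]
which is the desired additivity.

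For $\xi^p_\tau$ I would run the same argument modulo $p$. Now $\phi,\psi\in\cT_{g,1}(p)$ act trivially on $H\otimes\Fp$, hence on $\Lambda^2 H\otimes\Fp$. For $a\in H$, the element $\psi*_H a-a$ lies in $pH$, so $\xi(\phi)(\psi*_H a-a)\in p\cdot\Lambda^2 H$ vanishes after reduction mod $p$; therefore $\xi^p(\phi)(\psi*_H a)=\xi^p(\phi)(a)$. Likewise $\phi*_H\xi^p(\psi)(a)=\xi^p(\psi)(a)$ in $\Lambda^2 H\otimes\Fp$. Reducing Proposition \ref{prop:xionproducts} modulo $p$ and substituting then yields $\xi^p_\tau(\phi\psi)=\xi^p_\tau(\phi)+\xi^p_\tau(\psi)$.

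There is essentially no obstacle here — the whole point is that Proposition \ref{prop:xionproducts} is set up so that the two failure-of-additivity terms become trivial precisely on the Torelli subgroup (integrally) and on the mod-$p$ Torelli subgroup (after reduction). The only thing to be careful with is confirming that $\phi\in\cT_{g,1}(p)$ acts trivially on $\Lambda^2H\otimes\Fp$, but this follows because the $\Fp$-linear action on $\Lambda^2(H\otimes\Fp)\cong\Lambda^2H\otimes\Fp$ is the exterior square of the trivial action on $H\otimes\Fp$.
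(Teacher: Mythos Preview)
Your proposal is correct and follows essentially the same approach as the paper: both derive the result directly from Proposition~\ref{prop:xionproducts} by observing that elements of $\cT_{g,1}$ (resp.\ $\cT_{g,1}(p)$) act trivially on $H$ and hence on $\Lambda^2 H$ (resp.\ trivially modulo $p$). Your write-up is slightly more explicit in the mod-$p$ case, spelling out that $\psi*_H a-a\in pH$ and that the induced action on $\Lambda^2 H\otimes\Fp$ is trivial, but this is exactly what the paper means by ``the same argument modulo $p$''.
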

\begin{proof}
    By definition of $\cT_{g,1}$, for any $\phi\in \cT_{g,1}$, the map $\phi*_H-$ is the identity map. Then, for $\phi,\psi\in \cT_{g,1}$, Proposition \ref{prop:xionproducts} yields
    \[
    \xi_{\tau}(\phi\psi)(a)=\xi_{\tau}(\phi)(a)+\xi_{\tau}(\psi)(a)\ \mbox{ for all }\ a\in H.
    \]
    Thus $\xi_{\tau}$ is a group homomorphism. 
    The same argument modulo $p$ works for $\xi_{\tau}^p$, using that the na\"ive action of $\cT_{g,1}(p)$ on $H\otimes\Fp$ is trivial.
\end{proof}

\begin{rem}
It can be shown that $\xi^p_{\tau}$ is equal to $2\tau^{Z}_1(p)$, where
\[
\tau^{Z}_1(p)\colon \cT_{g,1}(p)\to\Hom(H,\Lambda^2H)\otimes\Fp
\]
is the Johnson-Zassenhaus homomorphism from  \cite{Cooper, Zassenhaus}. However the definition of $\tau^{Z}_1(p)$ is a bit complicated, hence we prefer to avoid comparing $\xi^p_{\tau}$ to $\tau^{Z}_1(p)$ and directly prove the needed properties of $\xi^p_{\tau}$.
\end{rem}

The normal subgroup $\cT_{g,1}\subset \Gamma_{g,1}$ admits an action of $\Gamma_{g,1}$ by conjugation, and the abelian group $\Hom(H,\Lambda^2H)$ has a $\Gamma_{g,1}$-action defined by
\[
f\mapsto (\phi*_H-)\circ f\circ (\phi^{-1}*_H-)\ \mbox{ for }\ \phi\in \Gamma_{g,1}\ \mbox{ and }\ f\in \Hom(H,\Lambda^2H).
\]
Similarly, $\Gamma_{g,1}$ acts on $\cT_{g,1}(p)$ and $\Hom(H,\Lambda^2H)\otimes\Fp$.
\begin{lem}\label{lem:xigammag1equivariant}
    Under the above actions, $\xi_{\tau}$ and $\xi_{\tau}^p$ are $\Gamma_{g,1}$-equivariant.
\end{lem}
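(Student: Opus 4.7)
The plan is to deduce equivariance from the cocycle-like identity of Proposition \ref{prop:xionproducts} applied twice, exploiting the fact that $\xi$ vanishes at the identity. Concretely, fix $\phi\in\Gamma_{g,1}$ and $\psi\in\cT_{g,1}$; since $\cT_{g,1}\trianglelefteq\Gamma_{g,1}$, the conjugate $\phi\psi\phi^{-1}$ lies in $\cT_{g,1}$, so $\xi_{\tau}(\phi\psi\phi^{-1})=\xi(\phi\psi\phi^{-1})$. I would first apply Proposition \ref{prop:xionproducts} to the product $(\phi\psi)\cdot\phi^{-1}$ or, more conveniently, to $\phi\cdot(\psi\phi^{-1})$, obtaining
\[
\xi(\phi\psi\phi^{-1})(a)=\phi*_H \xi(\psi\phi^{-1})(a)+\xi(\phi)\bigl((\psi\phi^{-1})*_H a\bigr).
\]
A second application of Proposition \ref{prop:xionproducts} to $\psi\cdot\phi^{-1}$, combined with the fact that $\psi\in\cT_{g,1}$ acts as the identity on $H$, gives
\[
\xi(\psi\phi^{-1})(a)=\xi(\phi^{-1})(a)+\xi(\psi)(\phi^{-1}*_H a),
\]
and also simplifies $(\psi\phi^{-1})*_H a=\phi^{-1}*_H a$.

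Substituting, I obtain
\[
\xi(\phi\psi\phi^{-1})(a)=\phi*_H\xi(\psi)(\phi^{-1}*_H a)+\Bigl(\phi*_H\xi(\phi^{-1})(a)+\xi(\phi)(\phi^{-1}*_H a)\Bigr).
\]
The second bracketed term is precisely $\xi(\phi\cdot\phi^{-1})(a)=\xi(\id)(a)$ by Proposition \ref{prop:xionproducts} applied to $\phi\cdot\phi^{-1}$. Since $\xi(\id)$ sends each basis element $[\gamma_i]$ to $c_2(\gamma_i)=0$ (by Definition \ref{defn:content} or directly since $c_2$ vanishes on generators of $G$), we have $\xi(\id)=0$. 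Hence
\[
\xi_{\tau}(\phi\psi\phi^{-1})(a)=\phi*_H\xi_{\tau}(\psi)(\phi^{-1}*_H a),
\]
which is exactly the definition of $\Gamma_{g,1}$-equivariance of $\xi_{\tau}$ under the prescribed actions on $\cT_{g,1}$ (conjugation) and on $\Hom(H,\Lambda^2 H)$ (conjugation by $\phi*_H-$).

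For the mod-$p$ statement, the same computation applies verbatim after reducing modulo $p$, with $\psi\in\cT_{g,1}(p)$ acting trivially on $H\otimes\Fp$ (which is all that is used in the simplification above). I expect no significant obstacle: the only point that requires a little care is recognising the cancellation pattern, namely that two of the three terms produced by the double expansion assemble into $\xi(\id)$, so that Proposition \ref{prop:xionproducts} plays the role of a $1$-cocycle identity and the vanishing of $\xi$ at the identity of $\Gamma_{g,1}$ finishes the argument.
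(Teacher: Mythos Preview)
Your proof is correct and follows essentially the same approach as the paper: both apply Proposition~\ref{prop:xionproducts} twice to expand $\xi(\phi\psi\phi^{-1})$ and then recognise that two of the resulting terms assemble into $\xi(\phi\cdot\phi^{-1})=\xi(\id)=0$. The only cosmetic difference is the order of splitting (you use $\phi\cdot(\psi\phi^{-1})$ first, the paper uses $(\phi\psi)\cdot\phi^{-1}$), and in fact your version is slightly cleaner in distinguishing $\xi$ from $\xi_\tau$ where appropriate.
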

\begin{proof}
    
    Applying Proposition \ref{prop:xionproducts} twice, for $a\in H\subset\UMor_{-2}(2g)$ and $\phi,\psi\in\cT_{g,1}$, we get a chain of equalities
    \begin{align*}
        &\xi_{\tau}(\phi\psi\phi^{-1})(a)= (\phi\psi)*_H\xi_{\tau}(\phi^{-1})(a)+\xi_{\tau}(\phi\psi)(\phi^{-1}*_Ha)\\
        &=\phi*_H\psi*_H\xi_{\tau}(\phi^{-1})(a)+\pa{\phi*_H\xi_{\tau}(\psi)(\phi^{-1}*_Ha)+\xi_{\tau}(\phi)(\psi*_H\phi^{-1}*_Ha)}.
    \end{align*}
    But, as $\psi\in \cT_{g,1}$, the sum of the first and third summand vanishes:
    \begin{equation*}
        \phi*_H\xi_{\tau}(\phi^{-1})(a)+\xi_{\tau}(\phi)(\phi^{-1}*_Ha)     =\xi_{\tau}(\phi\phi^{-1})(a)
        = \xi_{\tau}(\id)(a)
        =0.
    \end{equation*}
    Again, we used Proposition \ref{prop:xionproducts}. The conclusion follows. The argument for $\xi_{\tau}^p$ is analogous, considering the above chains of equalities modulo $p$.
\end{proof}
\begin{rem}
    The definition of the maps of sets $\xi$ and $\xi^p$ really depends on a choice of generators for the free group $G$; Lemma \ref{lem:xigammag1equivariant} however proves that the restrictions $\xi_{\tau}$ and $\xi_{\tau}^p$ are independent of such choice.
\end{rem}
Recall from Theorem \ref{thm:septwistgenerateJ} that all separating Dehn twists lie in $\cT_{g,1}\subseteq\cT_{g,1}(p)$, and from Theorem \ref{thm:torellimoduloPgeneration} that all $p$\sth powers of Dehn twists lie in $\cT_{g,1}(p)$.
\begin{lem}\label{lem:pthpowersarekilled}
All $p$\sth powers of non-separating Dehn twists are in the kernel of $\xi^p_{\tau}$.
All separating Dehn twists are in the kernel of $\xi_{\tau}$.
\end{lem}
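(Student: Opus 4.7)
The plan splits along the two halves of the lemma. The separating part is essentially citational: any separating Dehn twist $T_s$ acts trivially on $H$, so it lies in $\cT_{g,1}$, and by Theorem \ref{thm:septwistgenerateJ} it belongs to the Johnson kernel $\cJ_{g,1}=\ker\tau_{g,1}$; since $\xi_\tau=2\tau_{g,1}$ by Proposition \ref{prop:xiequalsJohnson}, we immediately obtain $\xi_\tau(T_s)=0$.

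For the non-separating $p$-th power statement, the plan is to reduce to an explicit computation on a single model curve. First, $T_a^p\in\cT_{g,1}(p)$ by Theorem \ref{thm:torellimoduloPgeneration}, so $\xi^p_\tau(T_a^p)$ is defined. The change-of-coordinates principle puts any two non-separating simple closed curves in the same $\Gamma_{g,1}$-orbit, hence any two $p$-power Dehn twists on non-separating curves are $\Gamma_{g,1}$-conjugate. By the $\Gamma_{g,1}$-equivariance from Lemma \ref{lem:xigammag1equivariant}, it then suffices to verify the vanishing of $\xi^p_\tau$ on one conveniently chosen $T_a^p$.

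Concretely, I would choose $a$ so that its free homotopy class in $\sgone$ is represented by a single generator $\gamma_j\in G$ and so that $a$ meets exactly one other standard curve, $\gamma_k$, transversely once (the red curve in the figure is such an $a$). For such $a$, the action of $T_a$ on $\pi_1(\sgone,p_0)$ has the explicit form $T_a(\gamma_i)=\gamma_i$ for $i\neq k$ and $T_a(\gamma_k)=\gamma_j^{\pm 1}\gamma_k$, whence $T_a^p(\gamma_k)=\gamma_j^{\pm p}\gamma_k$. Since $c_2$ vanishes on all powers of a single generator (apply Lemma \ref{lem:uniquenessofcontent} and $[\gamma_j]\wedge[\gamma_j]=0$), one computes
$$
c_2\bigl(\gamma_j^{\pm p}\gamma_k\bigr)=c_2(\gamma_j^{\pm p})+c_2(\gamma_k)\pm p\,[\gamma_j]\wedge[\gamma_k]=\pm p\,[\gamma_j]\wedge[\gamma_k],
$$
which is $0$ modulo $p$. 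Together with $c_2(T_a^p(\gamma_i))=c_2(\gamma_i)=0$ for $i\neq k$, this yields $\xi^p(T_a^p)=0$, as required.

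The main obstacle will be pinning down $T_a$ at the level of $\pi_1$ in the specific cellular model of $\sgone$ from Section \ref{sec:confsurf}: orientations, basepoint-conjugating words, and signs all have to be tracked carefully. Fortunately, these ambiguities at worst flip the sign of the expression above, or replace $\gamma_j^{\pm p}\gamma_k$ by a conjugate of it, and none of these perturbations affects the conclusion that the result is $p$ times an element of $\Lambda^2 H$ and hence zero modulo $p$.
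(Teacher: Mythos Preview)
Your proof is correct and follows essentially the same route as the paper's: reduce the non-separating case to a single model curve via conjugacy and the $\Gamma_{g,1}$-equivariance of $\xi^p_\tau$, then compute $c_2(\gamma_j^{\pm p}\gamma_k)=\pm p\,[\gamma_j]\wedge[\gamma_k]$ directly; the separating case is dispatched via $\xi_\tau=2\tau_{g,1}$ and Johnson's identification $\cJ_{g,1}=\ker\tau_{g,1}$. The paper is slightly terser in justifying why conjugacy suffices (it does not cite Lemma~\ref{lem:xigammag1equivariant} explicitly), but your appeal to it is the right reason, since the conjugating element lives in $\Gamma_{g,1}$ rather than in $\cT_{g,1}(p)$.
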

\begin{proof}
All non-separating Dehn twists in $\Gamma_{g,1}$ are conjugate to each other, so it suffices to show that the $p$\sth power of a specific Dehn twist is in the kernel of $\xi^p$.
Let $a$ be a simple closed curve freely homotopic to $\gamma_{2g}$, as in Figure \ref{fig:SymplecticBasis}, and let $D_a\in\Gamma_{g,1}$ be the associated Dehn twist. Then $D_a(\gamma_{2g})=\gamma_{2g-1}\gamma_{2g}$, whereas $D_a(\gamma_i)=\gamma_i$ for all $1\le i\le2g-1$. 
It follows that $D_a^p$ sends $\gamma_{2g}\mapsto\gamma_{2g-1}^p\gamma_{2g}$ and fixes the other generators $\gamma_i$; therefore the group homomorphism
$\xi(D_a^p)\colon H\to\Lambda^2H$ sends 
\[
[\gamma_{2g}]\mapsto c_2(D_a^p(\gamma_{2g}))=\binom{p}{2}[\gamma_{2g-1}]\wedge[\gamma_{2g-1}]+ p[\gamma_{2g-1}]\wedge [\gamma_{2g}]=p[\gamma_{2g-1}]\wedge [\gamma_{2g}]
\]
and sends $\gamma_i\mapsto c_2(D_a^p(\gamma_i))=c_2(\gamma_i)=0$ for $i\neq 2g$.
Modulo $p$, we obtain the zero group homomorphism, i.e. $\xi^p(D^p_a)=0$.

To prove that separating Dehn twists lie in the kernel of $\xi_{\tau}$, we use the equality $\xi_{\tau}=2\tau_{g,1}$ together with the fact that by Theorem \ref{thm:septwistgenerateJ} the kernel of $\tau_{g,1}$ is precisely the subgroup of $\Gamma_{g,1}$ generated by all separating Dehn twists.
\end{proof}

\begin{prop}\label{prop:Fpimageofxi}
Let $p$ be an odd prime. Then the image of $\xi^p_{\tau}$ is isomorphic to $\Lambda^3H\otimes\Fp\subset H\otimes\Lambda^2H\otimes\Fp$, and the kernel of $\xi^p_{\tau}$ is generated by the kernel of $\xi_{\tau}$ and $p$\sth powers of Dehn twists.
\end{prop}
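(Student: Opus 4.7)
The plan is to address the image and kernel assertions separately, using the identity $\xi_\tau = 2\tau_{g,1}$ (Proposition \ref{prop:xiequalsJohnson}) and the Cooper--Perron generating set of $\cT_{g,1}(p)$ (Theorem \ref{thm:torellimoduloPgeneration}) throughout.

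For the image, I would split along the two families of generators of $\cT_{g,1}(p)$. On $\cT_{g,1}$ the map $\xi^p_\tau$ is the mod-$p$ reduction of $\xi_\tau = 2\tau_{g,1}$; Theorem \ref{thm:Johnsonimage} gives $\Imm(\xi_\tau) = 2\Lambda^3 H$, and because $p$ is odd this reduction lands surjectively onto $\Lambda^3 H \otimes \Fp$. On the $p$\sth powers of non-separating Dehn twists, Lemma \ref{lem:pthpowersarekilled} shows $\xi^p_\tau$ vanishes, so the total image is exactly $\Lambda^3 H \otimes \Fp$.

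For the kernel, write $K$ for the subgroup of $\cT_{g,1}(p)$ generated by $\ker(\xi_\tau)$ and $p$\sth powers of Dehn twists. The containment $K \subseteq \ker(\xi^p_\tau)$ is immediate from Lemma \ref{lem:pthpowersarekilled}, noting that separating Dehn twists lie in $\ker(\xi_\tau)$. To reverse it, I propose to show $\cT_{g,1}(p)/K \cong \Lambda^3 H \otimes \Fp$ via the map induced by $\xi^p_\tau$: by the second isomorphism theorem combined with Cooper--Perron, $\cT_{g,1}(p)/K \cong \cT_{g,1}/(\cT_{g,1}\cap K)$, and because $\ker(\xi_\tau) \subseteq K$ the homomorphism $\xi_\tau$ identifies this with $2\Lambda^3 H / \xi_\tau(\cT_{g,1} \cap K)$. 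The containment $\xi_\tau(\cT_{g,1} \cap K) \subseteq p\Hom(H,\Lambda^2 H) \cap 2\Lambda^3 H = 2p\Lambda^3 H$ (using $p$ odd) is automatic from $K \subseteq \ker(\xi^p_\tau)$, so the remaining task reduces to the reverse inclusion $\xi_\tau(\cT_{g,1} \cap K) \supseteq 2p\Lambda^3 H$.

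The main obstacle is exactly this last inclusion. My approach is to use bounding pair maps: for a bounding pair $(\alpha,\beta)$ the Dehn twists $T_\alpha, T_\beta$ commute, so $(T_\alpha T_\beta^{-1})^p = T_\alpha^p T_\beta^{-p}$ lies simultaneously in $\cT_{g,1}$ (as the $p$\sth power of a Torelli element) and in $K$ (as a product of $p$\sth powers of Dehn twists), and its $\xi_\tau$-image equals $2p\cdot\tau_{g,1}(T_\alpha T_\beta^{-1})$. Since Johnson's results supply enough bounding pair maps (together with separating Dehn twists) to generate $\cT_{g,1}$ and realise $\tau_{g,1}$ surjectively onto $\Lambda^3 H$, the resulting elements have $\xi_\tau$-images spanning all of $2p\Lambda^3 H$. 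To pass from individual generators to arbitrary products one uses that commutators in $\cT_{g,1}$ lie in $\ker(\xi_\tau) \subseteq K$, which allows $p$\sth powers and products of generators to be interchanged modulo $K$ by a short induction. The cases $g \le 1$ are vacuous since $\Lambda^3 H = 0$.
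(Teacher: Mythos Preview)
Your proposal is correct and follows essentially the same route as the paper: both arguments use Cooper--Perron to reduce to $\cT_{g,1}$, invoke $\xi_\tau=2\tau_{g,1}$ together with the splitness of $\Lambda^3H\subset H\otimes\Lambda^2H$, and finish the kernel computation with the bounding-pair identity $(D_\alpha D_\beta^{-1})^p=D_\alpha^p D_\beta^{-p}$. The paper organises the kernel step as an analysis of $\ker(\tau_{g,1}\otimes\Fp)$ via the short exact sequence $\ker(\tau_{g,1})\to\ker(\tau_{g,1}\otimes\Fp)\to p\Lambda^3H$, whereas you repackage the same content as a cardinality comparison $\cT_{g,1}(p)/K\cong\Lambda^3H\otimes\Fp$; these are equivalent. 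One small simplification: your final commutator/induction remark is unnecessary, since $\xi_\tau(\cT_{g,1}\cap K)$ is already a subgroup of $2\Lambda^3H$ (as the image of a subgroup under a homomorphism), so it automatically contains the subgroup generated by the elements $2p\,\tau_{g,1}(D_\alpha D_\beta^{-1})$, which is $2p\Lambda^3H$.
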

\begin{proof}
By Lemma \ref{lem:pthpowersarekilled} all $p$\sth powers of Dehn twists are in the kernel of $\xi^p_{\tau}$, and by Theorem \ref{thm:torellimoduloPgeneration} they generate $\cT_{g,1}(p)$ together with $\cT_{g,1}$; it follows that
$$\Imm(\xi^p_{\tau})=\Imm(\xi^p_{\tau}|_{\cT_{g,1}})=\Imm(\xi_{\tau}\otimes \Fp).$$
Using $\xi_{\tau}=2\tau_{g,1}$ and the assumption that $p$ is odd, we have that the maps
\[
\xi_{\tau}\otimes\Fp,\tau_{g,1}\otimes\Fp\colon\cT_{g,1}\to \Hom(H,\Lambda^2H)\otimes\Fp
\]
have the same image and the same kernel.

Recall from Theorem \ref{thm:Johnsonimage} that $\Imm(\tau_{g,1})\subset\Hom(H,\Lambda^2H)\cong H\otimes\Lambda^2H$ is the split subgroup $\Lambda^3H$; thus we can identify $\Imm(\xi_{\tau}\otimes\Fp)=\Imm(\tau_{g,1}\otimes\Fp)\subset H\otimes\Lambda^2H\otimes\Fp$ with $\Imm(\tau_{g,1})\otimes\Fp\cong\Lambda^3H\otimes\Fp$; this proves the first claim.

For the second claim, we first observe that $\ker(\xi_{\tau}^p)$ is generated by $p$\sth powers of Dehn twists together with $\ker(\xi_{\tau}\otimes\Fp)=\ker(\tau_{g,1}\otimes\Fp)\subset\cT_{g,1}$. From the first isomorphism theorem, there is a short sequence of groups 
$$\ker(\tau_{g,1})\to \ker(\tau_{g,1}\otimes\Fp)\to p\Hom(H,\Lambda^2H)\cap \Imm(\tau_{g,1});$$
the third term can be identified with $p\Lambda^3H=p\Imm(\tau_{g,1})$, using again that the inclusion $\Lambda^3H\subset H\otimes\Lambda^2H$ is split. 
In particular $\ker(\tau_{g,1}\otimes\Fp)$ can be generated by $\ker(\tau_{g,1})$ and lifts in $\cT_{g,1}$ of generators of the group $p\Lambda^3H$; for the latter we may take the $p$\sth powers of a generating set of $\cT_{g,1}$. We now observe that $\cT_{g,1}$ is generated by separating twists and \emph{bounding pairs} $D_aD_{a'}^{-1}$, i.e. products of a Dehn twist $D_a$ with the inverse of a Dehn twist $D_{a'}$, where $a,a'\subset\Sigma_{g,1}$ are disjoint simple closed curves cobounding a subsurface of $\Sigma_{g,1}$. We conclude that the following elements generate $\ker(\xi^p_{\tau})$:
\begin{enumerate}
 \item all $p$\sth powers of all Dehn twists;
 \item all separating Dehn twists;
 \item all $p$\sth powers of bounding pairs.
\end{enumerate}
We may drop the third kind of generators from the generating set: for a bounding pair $D_aD_{a'}^{-1}$, we have that the Dehn twists $D_a$ and $D_{a'}$ commute, hence $(D_aD_{a'}^{-1})^p=D_a^pD_{a'}^{-p}$, which is already an element in the subgroup of $\Gamma_{g,1}$ generated by $p$\sth powers of Dehn twists.
\end{proof}
Lemma \ref{lem:pthpowersarekilled} and
Proposition \ref{prop:Fpimageofxi} justify the following definition.
\begin{defn}
 For an odd prime $p$, the mod-$p$ Johnson kernel is  $\cJ_{g,1}(p):=\ker(\xi^p_{\tau})\subseteq\cT_{g,1}(p)$, i.e. the subgroup generated by separating Dehn twists and $p$\sth powers of Dehn twists.
\end{defn}

\subsection{The \texorpdfstring{$\Gamma_{g,1}$}{Gammag1}-action on integral homology}\label{subsec:Zhomologyaction}

\begin{prop}\label{prop:ZactiononUMor}
The kernel of the $\Gamma_{g,1}$-action on $\UMor_\bullet(2g)$ is the Johnson kernel $\cJ_{g,1}$.
\end{prop}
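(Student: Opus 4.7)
The plan is to reduce the triviality of the $\Gamma_{g,1}$-action on the whole ring $\UMor_\bullet(2g)$ to triviality on a set of ring generators, and then to read off the condition from the explicit formulas already established.

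Concretely, first I would invoke Proposition \ref{prop:UMoralgebra} to recall that $\UMor_\bullet(2g)$ is generated as a ring by the elements $x_i$ in weight $-1$ and $y_i$ in weight $-2$, for $i=1,\dots,2g$. Since $\Gamma_{g,1}$ acts by ring automorphisms (Proposition \ref{prop:UMorissubrepandCHbistrivial}), a mapping class $\phi$ fixes all of $\UMor_\bullet(2g)$ if and only if it fixes each $x_i$ and each $y_i$.

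Next, I would use Theorem \ref{thm:UMormaps} (combined with Proposition \ref{prop:UMorissubrepandCHbistrivial}, which identifies the action via the induced automorphism $\phi\in\Aut(G)$ on $\pi_1$) to compute these images explicitly. The image of $x_i$ is $[\phi(\gamma_i)]_x\in H\otimes\Z$, so the condition that $\phi$ fixes all $x_i$ is precisely that $\phi$ acts trivially on $H=H_1(\Sigma_{g,1})$, i.e. $\phi\in\cT_{g,1}$. Assuming $\phi\in\cT_{g,1}$, the formula for $y_i$ reduces, under the splitting $\UMor_{-2}(2g)\cong H\oplus\Lambda^2H$ of Notation \ref{nota:UMor2splitting}, to $y_i+[c_2(\phi(\gamma_i))]_x$, whose $\Lambda^2H$-component is exactly $\xi_\tau(\phi)([\gamma_i])$. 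Hence fixing all $y_i$ is equivalent to $\xi_\tau(\phi)=0$.

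Combining these two conditions, the kernel of the action equals $\ker(\xi_\tau)$. To identify this with $\cJ_{g,1}$, I would invoke Proposition \ref{prop:xiequalsJohnson}, which states $\xi_\tau=2\tau_{g,1}$, and the torsion-freeness of $\Hom(H,\Lambda^2H)$ to conclude $\ker(\xi_\tau)=\ker(\tau_{g,1})$, which by definition is $\cJ_{g,1}$. The argument is essentially a bookkeeping exercise once Theorem \ref{thm:UMormaps} and the identification $\xi_\tau=2\tau_{g,1}$ are available; there is no real obstacle, but the only subtle point worth flagging is the use that $\Hom(H,\Lambda^2H)$ has no $2$-torsion, so that the factor of $2$ in $\xi_\tau=2\tau_{g,1}$ does not enlarge the kernel.
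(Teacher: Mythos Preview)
Your argument is correct and follows essentially the same route as the paper: both directions hinge on Theorem \ref{thm:UMormaps}, the identification $\xi_\tau=2\tau_{g,1}$, and torsion-freeness. The only cosmetic difference is that you invoke the last sentence of Proposition \ref{prop:UMoralgebra} (ring maps to torsion-free targets are determined by the $x_i,y_i$) to pass from generators to the whole ring, whereas the paper phrases this step as tensoring with $\Q$ and using that $\UMor_\bullet(2g)\hookrightarrow\UMor_\bullet(2g)\otimes\Q$; these are the same argument in different clothing.
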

\begin{proof}
Let $\phi\in\Gamma_{g,1}$ act trivially on $\UMor_{-2}(2g)\cong H\oplus\Lambda^2H$, where we use the splitting from Notation \ref{nota:UMor2splitting}. Then $\phi$ acts trivially on the subrepresentation $\Lambda^2H\subset\UMor_{-2}(2g)$ and on the quotient $\UMor_{-2}(2g)/\Lambda^2H\cong H$, so that $\phi\in\cT_{g,1}$.
Furthermore, by Proposition \ref{prop:actiononUMorviaxi} $\phi\in \cT_{g,1}$ acts trivially on $H\subset\UMor_{-2}(2g)$ precisely if $\xi_{\tau}(\phi)=0$. Since $\xi_{\tau}=2\tau_{g,1}$ and the target of $\xi_{\tau}$ is torsion-free, it follows that $\tau_{g,1}(\phi)=0$ as well, and thus $\phi\in \cJ_{g,1}$.

Conversely, $\cJ_{g,1}\subseteq\cT_{g,1}$ is generated by separating Dehn twists; hence $\cJ_{g,1}$ acts trivially on $\UMor_{-1}(2g)$, and combining Lemma \ref{lem:pthpowersarekilled} and Proposition \ref{prop:actiononUMorviaxi} we obtain that $\cJ_{g,1}$ also acts trivially on $\UMor_{-2}(2g)$. The action of $\cJ_{g,1}$ on $\UMor_\bullet(2g)$ is by ring automorphisms, and the torsion-free ring $\UMor_\bullet(2g)$ is generated, at least after tensoring with $\Q$, by $\UMor_{-1}(2g)$ and $\UMor_{-2}(2g)$. This proves that $\cJ_{g,1}$ acts trivially on $\UMor_\bullet(2g)\otimes\Q$, and hence on $\UMor_\bullet(2g)$.
\end{proof}
The element $\Omega_2=2\omega\in\Lambda^2H\subset\UMor_{-2}(2g)$ from Definition \ref{defn:omega} is $\Gamma_{g,1}$-invariant, so the quotient $\UMor_{-2}(2g)/\Z\Omega_2$ is also a $\Gamma_{g,1}$-representation.

\begin{lem}\label{lem:ZkernelUMor2/2omega}
The kernel of the $\Gamma_{g,1}$-action on $\UMor_{-2}(2g)\otimes R/R\Omega_2$ is $\cJ_{g,1}$ for $R=\Z$ and $\Q$.
\end{lem}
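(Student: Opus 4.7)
My plan is to refine the argument of Proposition \ref{prop:ZactiononUMor}, localised to the bar-degree $-2$ component modulo $R\Omega_2$. One inclusion is immediate: since $\cJ_{g,1}$ acts trivially on all of $\UMor_\bullet(2g)$ by Proposition \ref{prop:ZactiononUMor}, it also acts trivially on the quotient $\UMor_{-2}(2g)\otimes R/R\Omega_2$.

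For the reverse inclusion, I would take $\phi\in\ggone$ acting trivially on the quotient and exploit the splitting $\UMor_{-2}(2g)\cong H\oplus\Lambda^2 H$ from Notation \ref{nota:UMor2splitting}, in which the quotient by $R\Omega_2$ does not affect the $H$-summand. Proposition \ref{prop:actiononUMorviaxi} then shows that triviality on $H\otimes R$ forces $\phi\in\cT_{g,1}$, and that for such $\phi$ the remaining condition on the $\Lambda^2 H$-summand modulo $R\Omega_2$ amounts to $\xi_\tau(\phi)(a)\in R\Omega_2$ for every $a\in H\otimes R$. Using $\xi_\tau=2\tau_{g,1}$ (Proposition \ref{prop:xiequalsJohnson}) and $\Omega_2=2\omega$, and noting that both $R=\Z$ and $R=\Q$ are $2$-torsion-free, this becomes $\tau_{g,1}(\phi)\in\Hom(H,R\omega)$.

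Next I would invoke Theorem \ref{thm:Johnsonimage}: under the symplectic identification $\Hom(H,\Lambda^2 H)\cong H\otimes \Lambda^2 H$, the Johnson image $\Lambda^3 H$ sits as a concrete submodule, and the hypothesis places $\tau_{g,1}(\phi)$ inside $\Lambda^3 H\cap (H\otimes R\omega)$. The crux of the lemma is thus the vanishing
\[
 (\Lambda^3 H\otimes\Q)\cap (H\otimes \Q\omega)=0\quad\text{inside}\quad H\otimes\Lambda^2 H\otimes \Q.
\]
From this, $\tau_{g,1}(\phi)\otimes\Q=0$, and then $\tau_{g,1}(\phi)=0$ integrally, using torsion-freeness of $\Lambda^3 H$. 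Once $\tau_{g,1}(\phi)=0$, Theorem \ref{thm:septwistgenerateJ} gives $\phi\in\cJ_{g,1}$, completing the proof.

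The main obstacle I expect is the intersection-vanishing above. I would verify it by a short coordinate calculation: writing $\omega=\sum_{i=1}^g x_{2i-1}\wedge x_{2i}$, projecting any $\eta\in\Lambda^3 H$ onto the direct summand $x_j\otimes\Lambda^2 H$ of $H\otimes\Lambda^2 H$ produces only wedges $x_l\wedge x_m$ with $j\notin\{l,m\}$, whereas the projection of $v\otimes\omega$ for $v=\sum c_k x_k$ is $c_j(x_1\wedge x_2+x_3\wedge x_4+\cdots)$, which always involves a wedge containing $x_j$. Comparing coefficients first forces $c_j=0$ for all $j$, and then all coordinates of $\eta$ to vanish. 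Equivalently, one can argue $\mathrm{Sp}$-equivariantly: the only common $\mathrm{Sp}$-subrepresentation of $\Lambda^3 H\cong [\Lambda^3 H]_0\oplus H$ and $H\otimes\Q\omega\cong H$ is zero, which is checked on the generator $v\wedge\omega$ of the $H$-summand of $\Lambda^3 H$.
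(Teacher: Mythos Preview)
Your proof is correct and follows the same route as the paper's. The only difference is in the verification that $\Lambda^3 H\cap(H\otimes R\omega)=0$: the paper embeds $H\otimes\Lambda^2H$ into $H^{\otimes 3}$ and uses invariance of antisymmetrised tensors under the cyclic permutation of factors, whereas you project onto the $x_j\otimes\Lambda^2H$ component directly---both calculations isolate the same vanishing coefficient.
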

\begin{proof}
We assume $R=\Z$; the case $R=\Q$ is identical. The fact that $\cJ_{g,1}$ acts trivially on $\UMor_{-2}(2g)/\Z\Omega_2$ is a consequence of Proposition \ref{prop:ZactiononUMor}.
Conversely, if a class $\phi\in\Gamma_{g,1}$ acts trivially on $\UMor_{-2}(2g)/\Z\Omega_2$, then it also acts trivially on $\UMor_{-2}(2g)/\Lambda^2H\cong H$, hence $\phi$ must lie in $\cT_{g,1}$. Furthermore, $\phi\in \cT_{g,1}$ acts trivially on $\UMor_{-2}(2g)/\Z\Omega_2$ if and only if $\xi_{\tau}(\phi)(a)\in \Z\Omega_2\subset\Lambda^2H$ for all $a\in H$.
Since $\xi_{\tau}=2\tau_{g,1}$ and $\Omega_2=2\omega$, and since multiplication by $2$ is injective on $\Lambda^2H$, the condition on $\phi$ can be rewritten as
$\tau_{g,1}(\phi)\in \Hom(H,\Z\omega)\subset \Hom(H,\Lambda^2H)$.
We will prove that
\[
\Imm(\tau_{g,1})\cap \Hom(H,\Z\Omega_2)=0\subset\Hom(H,\Lambda^2H);
\]
from this the assertion $\ker(\tau_{g,1})=\cJ_{g,1}$ will follow.
    
We use the identification
$\Hom(H,\Lambda^2H)\cong  H\otimes \Lambda^2H$
already considered in the discussion before Theorem \ref{thm:Johnsonimage}, and embed
$\Lambda^2H\hookrightarrow H\otimes H$
via $a\wedge b\mapsto a\otimes b-b\otimes a$; we thus obtain an embedding of $H\otimes\Lambda^2H\hookrightarrow H\otimes H\otimes H$.
Under this embedding, $\Hom(H,\Z\omega)$
is the subgroup of $H^{\otimes 3}$ consisting of the elements
$$
a\otimes \omega:=\sum_{i=1}^g a\otimes x_{2i-1}\otimes x_{2i}-a\otimes x_{2i}\otimes x_{2i-1},
$$
for varying $a\in H$, whereas $\Imm(\tau_{g,1})\cong\Lambda^3H$ is the subgroup of $H^{\otimes 3}$ generated by all elements of the form 
$$
\sum_{\sigma \in \mathfrak{S}_3}\sgn(\sigma) a_{\sigma(1)}\otimes a_{\sigma(2)}\otimes a_{\sigma(3)},
$$
for varying $a_1,a_2,a_3\in H$.
In particular, each element of $\Imm(\tau_{g,1})$ is  invariant under the cyclic permutation of tensors $C\colon H^{\otimes3}\to H^{\otimes3}$ acting as $a_1\otimes a_2\otimes a_3\mapsto a_2\otimes a_3\otimes a_1$.

Let now $a\in H$ be such that $a\otimes\omega\in\Imm(\tau_{g,1})$, and write $a\otimes\omega$ as a linear combination $\sum_{1\le j,k,l\le 2g}c_{j,k,l}x_j\otimes x_k\otimes x_l$ of the basis elements $x_j\otimes x_k\otimes x_l$ of $H^{\otimes 3}$, with $c_{j,k,l}\in\Z$. Using the definition of $a\otimes\omega$ we get $c_{j,k,l}=0$ for all $j,k,l$ such that $\set{k,l}$ is not a pair of the form $\set{2i+1,2i+2}$ for $i\in\{0,1,...,g-1\}$; using invariance of $a\otimes\omega$ under $C$, we obtain that $c_{j,k,l}=0$ for every $j,k,l$ such that at least one of the pairs $\set{j,k}$, $\set{k,l}$ and $\set{l,j}$ is not a pair of the form $\set{2i+1,2i+2}$; this clearly implies $c_{j,k,l}=0$ for all $j,k,l$, so that $a\otimes\omega=0$.
\end{proof}

\begin{prop}\label{thm:Zkernelhomology}
    For $n\ge 2$ and $R=\Z$ and $ \Q$, the kernel of the $\Gamma_{g,1}$-action on $H_{2n-2}(\conf[n]{\Sigma_{g,1}};R)$ is the Johnson kernel $\cJ_{g,1}$.
\end{prop}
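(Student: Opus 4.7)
The plan is to reduce the statement to Lemma \ref{lem:ZkernelUMor2/2omega} by identifying $\UMor_{-2}(2g)/R\Omega_2$ as an explicit $\Gamma_{g,1}$-subrepresentation of $H_{2n-2}(\conf[n]{\Sigma_{g,1}};R)$. The triviality of the $\cJ_{g,1}$-action on $H_{2n-2}$ is immediate: the tensor product decomposition of Proposition \ref{prop:decompositionintobarandUMor}, combined with Proposition \ref{prop:UMorissubrepandCHbistrivial} (trivial $\Gamma_{g,1}$-action on $\widetilde{\Ch}^{B}_*(\cM)$) and Proposition \ref{prop:ZactiononUMor} (trivial $\cJ_{g,1}$-action on $\UMor_\bullet(2g)$), shows that $\cJ_{g,1}$ acts trivially on the entire chain complex $\redchains{\conf[n]{\cM}}$.

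For the converse, I would introduce the element $\Phi := e_{(n-2,(1,\dots,1),\uzero)} \in \widetilde{\Ch}^{B}_*(\cM)$. By Proposition \ref{prop:differential}, $\Phi$ is a cycle: $\partial_B \Phi = 0$ because $\Ss(1,1) = 0$, and $\partial_M \Phi = 0$ because $\Omega_1 = 0$. Moreover, $\Phi$ is $\Gamma_{g,1}$-invariant by Proposition \ref{prop:UMorissubrepandCHbistrivial}. Multiplication by $\Phi$ therefore defines a $\Gamma_{g,1}$-equivariant map sending $\UMor_{-2}(2g)$ into the $(2n-2)$-cycles of $\redchains{\conf[n]{\cM}}$. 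Since $\partial e_{(1,(2),\uzero)} = -\Omega_2$ (again Proposition \ref{prop:differential}) and the Leibniz rule yields $\partial(\Phi \cdot e_{(1,(2),\uzero)}) = -\Phi \cdot \Omega_2$, the map descends to a $\Gamma_{g,1}$-equivariant homomorphism
\[
\bar{\Phi \cdot} \colon \UMor_{-2}(2g)/R\Omega_2 \longrightarrow H_{2n-2}(\conf[n]{\Sigma_{g,1}};R).
\]

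To prove injectivity of $\bar{\Phi\cdot}$, I would construct a chain-level splitting. Define $\pi$ on $(2n-2)$-chains by sending a generator $e_{(n-2,\uP,\fv)}$ to the class of $e_\fv$ in $\UMor_{-2}(2g)/R\Omega_2$ when $\uP = (1,\dots,1)$ (in which case $|\fv| = 2$ by the weight relation), and to zero otherwise; the factorisation $e_{(n-2,(1,\dots,1),\fv)} = \Phi \cdot e_\fv$ of Proposition \ref{prop:muiscellular} then makes $\pi \circ (\Phi\cdot -)$ tautologically the identity. The main obstacle is verifying that $\pi \circ \partial = 0$ on $(2n-1)$-chains: the $(2n-1)$-cells all have $b(\ft) = n-1$, and splitting according to whether $|\fv| = 1$ or $|\fv| = 0$, Proposition \ref{prop:differential} (together with $\Omega_1 = 0$ and $\Ss(1,1) = 0$) shows that only the cell $e_{(n-1,(1,\dots,1,2),\uzero)}$ yields a non-trivial $\pi$-contribution---its $\partial_B$ produces a cell with a ``$3$'' entry that $\pi$ annihilates, while its $\partial_M$ contributes $\pm \Phi \cdot \Omega_2$, which vanishes modulo $R\Omega_2$. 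Hence $\pi$ descends to $H_{2n-2}(\conf[n]{\Sigma_{g,1}};R)$ and splits $\bar{\Phi \cdot}$; its image is therefore a $\Gamma_{g,1}$-subrepresentation of $H_{2n-2}$ isomorphic to $\UMor_{-2}(2g)/R\Omega_2$, so any $\phi \in \Gamma_{g,1}$ acting trivially on $H_{2n-2}$ acts trivially on this subrepresentation, and Lemma \ref{lem:ZkernelUMor2/2omega} forces $\phi \in \cJ_{g,1}$.
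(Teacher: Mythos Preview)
Your proof is correct and follows essentially the same strategy as the paper: both embed $\UMor_{-2}(2g)\otimes R/R\Omega_2$ equivariantly into $H_{2n-2}(\conf[n]{\Sigma_{g,1}};R)$ via multiplication by the $\Gamma_{g,1}$-invariant cycle $e_{1^{n-2}}=\Phi$, and then invoke Lemma \ref{lem:ZkernelUMor2/2omega}. Your injectivity argument via the explicit chain-level retraction $\pi$ is a clean alternative to the paper's direct analysis of which boundaries can land in $\Z e_{1^{n-2}}\otimes\UMor_{-2}(2g)$.
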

\begin{proof}
    Propositions \ref{prop:UMorissubrepandCHbistrivial} and \ref{prop:ZactiononUMor} together imply that $\cJ_{g,1}$ acts trivially on the cellular chains $\redchains{\conf{\cM}}$, thus also on the homology $\widetilde{H}_*(\conf[n]{\cM};\Z)$ for all $n\ge 0$. 
    To prove that the kernel is precisely $\cJ_{g,1}$, it suffices by Lemma \ref{lem:ZkernelUMor2/2omega} to embed $\UMor(2)/\langle 2\omega\rangle $ as a $\Gamma_{g,1}$-subrepresentation of $H_{2n-2}(\conf[n]{\cM};\Z)$. 
    
For $l\ge0$ denote by $1^l$ the $l$-record $(l,(1,\dots,1),\uzero)$ 
and by $1^l2$ the $(l+2)$-record $(l,(1,\dots,1,2),\uzero)$.
By Proposition \ref{prop:differential}, the differential $\del(e_{1^l})$ vanishes for all $l\ge0$; in particular the differential vanishes on the abelian groups
\[
\begin{split}
&\Z e_{1^{n-2}}\otimes \UMor_{-2}(2g)\subset \redchains[n-2]{\conf[n]{\cM}},\\
&\Z e_{1^{n-1}}\otimes\UMor_{-1}(2g)\subset\redchains[n-1]{\conf[n]{\cM}}.
\end{split}
\]
The first vanishing implies that $\Z e_{\bar1}\otimes \UMor_{-2}(2g)$ only consists of
cycles. The second vanishing, together with the direct sum decomposition
\[
 \redchains[n-1]{\conf[n]{\cM}}\cong \Z e_{1^{n-1}}\otimes\UMor_{-1}(2g)\ \oplus\ \Z e_{1^{n-2}2}\otimes\UMor_0(2g)
\]
implies that the boundaries in $\redchains[n-2]{\conf[n]{\cM}}$ are generated by the image of the restriction of 
the differential on $\Z e_{1^{n-2}2}\otimes \UMor_0(2g)$,
which is generated by the element $e_{1^{n-2}2}=e_{1^{n-2}2}\otimes 1$, and again by Proposition \ref{prop:differential} we have
\[
\del(e_{1^{n-2}2})=e_{1^{n-2}}\otimes \Omega_2.
\]
Thus $e_{1^{n-2}}$ gives an injection of $\UMor_{-2}(2g)/\Z \Omega_2$ into $\widetilde{H}_{2n-2}(\conf[n]{\cM};\Z)$, which is $\Gamma_{g,1}$-equivariant.

\end{proof}

\subsection{The \texorpdfstring{$\Gamma_{g,1}$}{Gammag1}-action on \texorpdfstring{$\Fp$}{Fp}-homology}
Let $p$ be an odd prime.
The homology $\widetilde{H}_*(\conf{\cM};\Fp)$ is computed by the complex
$\redchains{\conf{\cM}}\otimes \Fp$ with the induced differential and $\Gamma_{g,1}$-action. The arguments of this subsection are analogous to the ones of Subsection \ref{subsec:Zhomologyaction}, so we emphasise only the differences.

\begin{prop}\label{prop:FpactiononUMor}
The kernel of the $\Gamma_{g,1}$-action on $\UMor_\bullet(2g)\otimes \Fp$ is $\cJ_{g,1}(p)$. 
\end{prop}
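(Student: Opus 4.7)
The plan follows the structure of Proposition \ref{prop:ZactiononUMor}, adapted to characteristic $p$. The key new ingredient is an argument using the divided-power (DP) structure on $\UMor_\bullet(2g)$ to propagate triviality of the action from low weights to all weights, since unlike in the $\Z$- or $\Q$-case the $\Fp$-algebra $\UMor_\bullet(2g)\otimes\Fp$ is \emph{not} generated by its weights $-1$ and $-2$ parts.

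First I would show that $\cJ_{g,1}(p)$ acts trivially on $\UMor_\bullet(2g)\otimes\Fp$. By definition $\cJ_{g,1}(p)$ is generated by separating Dehn twists (which lie in $\cJ_{g,1}$ and act trivially on the integral $\UMor_\bullet(2g)$ by Proposition \ref{prop:ZactiononUMor}) together with $p$-th powers of Dehn twists, so it suffices to treat an arbitrary $\phi\in\cJ_{g,1}(p)=\ker\xi^p_\tau$. Since $\phi\in\cT_{g,1}(p)$, the induced action on $H\otimes\Fp$ (and hence on the subring $\Lambda_\Z(x_1,\dots,x_{2g})\otimes\Fp$) is trivial, giving $\phi(x_j)\equiv x_j\pmod p$. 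Combining $\xi^p_\tau(\phi)=0$ with Proposition \ref{prop:actiononUMorviaxi} gives that $\phi$ acts trivially on $\UMor_{-2}(2g)\otimes\Fp$, so there exist $\alpha_i\in\UMor_{-2}(2g)$ with $\phi(y_i)=y_i+p\,\alpha_i$ in $\UMor_\bullet(2g)$.

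The crux is extending this via divided powers. The integral ring $\UMor_\bullet(2g)$ is torsion-free and carries a DP structure on its augmentation ideal (in which $y_i^{[n]}$ is the $n$-th divided power of $y_i$). I would argue that the ring automorphism $\phi$ is automatically a DP homomorphism: since $\phi(z)$ lies in the augmentation ideal whenever $z$ does, the identities $\phi(z^n)=n!\,\phi(z^{[n]})$ and $\phi(z)^n=n!\,\phi(z)^{[n]}$ together with torsion-freeness force $\phi(z^{[n]})=\phi(z)^{[n]}$. Applying this together with the DP binomial identity,
\[
\phi(y_i^{[n]})=\phi(y_i)^{[n]}=(y_i+p\,\alpha_i)^{[n]}=\sum_{j=0}^n p^j\,y_i^{[n-j]}\alpha_i^{[j]}\equiv y_i^{[n]}\pmod p.
\]
Since $\UMor_\bullet(2g)\otimes\Fp$ is generated as a ring by the $x_j$ and by the elements $y_i^{[p^k]}$ for $k\ge 0$ (recall the presentation $\Gamma_{\Fp}(y)\cong\Fp[\fy_0,\fy_1,\dots]/(\fy_k^p)$), and $\phi$ fixes all of these mod $p$, it acts trivially on $\UMor_\bullet(2g)\otimes\Fp$.

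Conversely, if $\phi\in\Gamma_{g,1}$ acts trivially on $\UMor_\bullet(2g)\otimes\Fp$, then restricting to $\UMor_{-1}(2g)\otimes\Fp=H\otimes\Fp$ forces $\phi\in\cT_{g,1}(p)$, and restricting to $\UMor_{-2}(2g)\otimes\Fp$ together with Proposition \ref{prop:actiononUMorviaxi} forces $\xi^p_\tau(\phi)=0$, i.e.\ $\phi\in\cJ_{g,1}(p)$. The main obstacle is precisely the DP-hom step above; once that is in place, the remainder is a clean translation of Proposition \ref{prop:ZactiononUMor}.
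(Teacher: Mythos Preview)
Your argument is essentially the same as the paper's: both directions hinge on the identity $\phi(y_i^{[n]})=(y_i+p\alpha_i)^n/n!\equiv y_i^{[n]}\pmod p$, obtained from torsion-freeness of $\UMor_\bullet(2g)$, together with the observation that the $x_j$ and the $y_i^{[n]}$ generate the $\Fp$-algebra.

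One caveat: your assertion that the \emph{full} augmentation ideal of $\UMor_\bullet(2g)$ carries a DP structure is false. For instance, if $\gamma_2(x_1)=\gamma_2(x_2)=0$ (forced by $2\gamma_2(x_i)=x_i^2=0$ and torsion-freeness), the binomial axiom would give $\gamma_2(x_1+x_2)=x_1x_2\neq 0$, contradicting $2\gamma_2(x_1+x_2)=(x_1+x_2)^2=0$. This does not damage your proof, since you only need divided powers of the even-weight elements $y_i$ and $\alpha_i\in\UMor_{-2}(2g)$, and for those $\alpha^j/j!$ does lie in $\UMor_\bullet(2g)$ (the $\Gamma$-part contributes the usual divided powers and the $\Lambda^2$-part contributes Pfaffian-type integers). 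The paper sidesteps this entirely by writing $\phi(y_i^{[m]})=\frac{(y_i+pa)^m}{m!}$ directly and observing that each term $\frac{p^j}{j!}\,y_i^{[m-j]}a^j$ has $p$-adic valuation at least $1$; you could do the same and drop the DP language.
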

\begin{proof}
The first part of the proof of Proposition \ref{prop:ZactiononUMor} can be adapted word by word modulo $p$ to show that $\phi\in\Gamma_{g,1}$ acts trivially on $\UMor_{-2}(2g)\otimes\Fp$ if and only if $\phi\in \ker(\xi^p_{\tau})$: we use that the short exact sequence of $\Gamma_{g,1}$-representations $\Lambda^2H\to\UMor_{-2}(2g)\to H$ is split as a sequence of abelian groups, hence after tensoring with $\Fp$ we obtain a short exact sequence $\Lambda^2H\otimes\Fp\to\UMor_{-2}(2g)\otimes\Fp\to H\otimes\Fp$. This shows that the kernel of the $\Gamma_{g,1}$-action on $\UMor_\bullet(2g)\otimes \Fp$ is contained in $\cJ_{g,1}(p)$.

By Proposition \ref{prop:UMoralgebra},
the ring $\UMor_\bullet(2g)$ is generated (as a $\Z$-algebra) by the elements $x_i$, for $1\le i\le 2g$, and by the divided powers $y_i^{[m]}$, for $1\le i\le2g$ and $m\ge1$; it follows that the elements $x_i\otimes 1$ and $y_i^{[m]}\otimes 1$ generate the $\Fp$-algebra $\UMor_\bullet(2g)\otimes\Fp$.
Therefore, to prove that $\cJ_{g,1}(p)$ acts trivially on $\UMor_\bullet(2g)\otimes\Fp$, it suffices to show that for $\phi \in \cJ_{g,1}(p)=\ker(\xi^p_{\tau})$ we have $\UMor(\phi)(y_i^{[m]})\equiv y_i^{[m]} \mod p$, i.e. the difference $\UMor(\phi)(y_i^{[m]})- y_i^{[m]}$ is a multiple of $p$ in the abelian group $\UMor_{2m}(2g)$. We know that $\UMor(\phi)(y_i)\equiv y_i\mod p$, i.e. there is $a\in \UMor_{-2}(2g)$ such that $\UMor(\phi)(y_i)=y_i+pa$; for $m\ge1$ we then have
\begin{align*}
\UMor(\phi)\left(y_i^{[m]}\right)&= \frac{(y_i+pa)^m}{m!}
\equiv y_i^{[m]} \mod p, 
\end{align*}
where the first equality follows from the fact that $\UMor(\phi)$ is a ring homomorphism and $\UMor_\bullet(2g)$ is torsion-free.
\end{proof}

\begin{lem}\label{lem:FpkernelUMor2/2omega}
    The kernel of the $\Gamma_{g,1}$-actions on $\UMor_{-2}(2g)\otimes \Fp/ \Fp \Omega_2$ is $\cJ_{g,1}(p)$.
\end{lem}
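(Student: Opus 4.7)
The plan is to mirror the proof of Lemma \ref{lem:ZkernelUMor2/2omega} throughout, substituting $\cT_{g,1}(p)$, $\cJ_{g,1}(p)$, and $\xi^p_\tau$ for their integral counterparts, and using Proposition \ref{prop:FpactiononUMor} in place of Proposition \ref{prop:ZactiononUMor}.

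For the inclusion $\cJ_{g,1}(p) \subseteq \ker$, Proposition \ref{prop:FpactiononUMor} gives that $\cJ_{g,1}(p)$ acts trivially on all of $\UMor_\bullet(2g)\otimes\Fp$, hence in particular trivially on the weight-$(-2)$ quotient by $\Fp\Omega_2$. For the reverse inclusion, suppose $\phi\in\Gamma_{g,1}$ acts trivially on $\UMor_{-2}(2g)\otimes\Fp/\Fp\Omega_2$. The short exact sequence $\Lambda^2 H\to\UMor_{-2}(2g)\to H$ is split as abelian groups, so after tensoring with $\Fp$ and quotienting by $\Fp\Omega_2\subset\Lambda^2 H\otimes\Fp$ we still have a quotient map onto $H\otimes\Fp$; the triviality of the action of $\phi$ therefore forces $\phi\in\cT_{g,1}(p)$. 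By Proposition \ref{prop:actiononUMorviaxi} read modulo $p$, the residual condition is $\xi^p_\tau(\phi)(a)\in\Fp\Omega_2$ for all $a\in H\otimes\Fp$, i.e. $\xi^p_\tau(\phi)\in\Hom(H\otimes\Fp,\Fp\Omega_2)$.

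It therefore suffices to prove the linear-algebraic vanishing
\[
\Imm(\xi^p_\tau)\,\cap\,\Hom(H\otimes\Fp,\Fp\Omega_2)\;=\;0\quad\text{inside}\quad\Hom(H\otimes\Fp,\Lambda^2 H\otimes\Fp).
\]
Since $p$ is odd, $2$ is invertible in $\Fp$, so $\Fp\Omega_2=\Fp\omega$, and by Proposition \ref{prop:Fpimageofxi} we may identify $\Imm(\xi^p_\tau)\cong\Lambda^3 H\otimes\Fp\subset H^{\otimes3}\otimes\Fp$. The argument from Lemma \ref{lem:ZkernelUMor2/2omega} now transposes verbatim: any element of the form $a\otimes\omega\in H^{\otimes3}\otimes\Fp$ that also lies in $\Lambda^3 H\otimes\Fp$ must be invariant under the cyclic permutation $C$ of tensor factors, and expanding in the basis $\set{x_j\otimes x_k\otimes x_l}$ of $H^{\otimes3}\otimes\Fp$ and tracking which coefficients $c_{j,k,l}\in\Fp$ can be nonzero forces $a\equiv0$ modulo $p$. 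This computation is purely combinatorial in the basis $\set{x_i}$ and does not care about the coefficient ring, so it survives reduction modulo $p$ without incident.

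The only step that requires any thought is the last one, which is why it is worth highlighting: one should verify that the coefficient-by-coefficient argument in the integral proof does not secretly use torsion-freeness (it does not, since the conclusion is that individual basis coefficients vanish, and $\set{x_j\otimes x_k\otimes x_l}$ remains an $\Fp$-basis of $H^{\otimes3}\otimes\Fp$). Beyond this, the proof is a routine mod-$p$ reprise of Lemma \ref{lem:ZkernelUMor2/2omega}.
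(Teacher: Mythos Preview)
Your proof is correct and follows essentially the same approach as the paper: both argue by mirroring Lemma~\ref{lem:ZkernelUMor2/2omega}, using Proposition~\ref{prop:FpactiononUMor} for one inclusion and reducing the other to the intersection $\Imm(\xi^p_\tau)\cap\Hom(H,\Fp\omega)=0$ via Proposition~\ref{prop:Fpimageofxi}, then repeating the cyclic-invariance coefficient argument over $\Fp$. Your explicit remark that the combinatorial step is basis-wise and hence insensitive to the coefficient ring is a welcome clarification that the paper leaves implicit.
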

\begin{proof}
The argument is the same as in the proof of Lemma \ref{lem:ZkernelUMor2/2omega}. By Proposition \ref{prop:FpactiononUMor} we know that $\cJ_{g,1}(p)$ acts trivially on $\UMor_{-2}(2g)\otimes \Fp$, hence on its quotient $\UMor_{-2}(2g)\otimes \Fp/ \Fp \Omega_2$.

Conversely, if $\phi\in\Gamma_{g,1}$ acts trivially on $\UMor_{-2}(2g)\otimes \Fp/ \Fp \Omega_2$ then it also acts trivially on $\UMor_{-2}(2g)\otimes \Fp/\Lambda^2H\otimes\Fp\cong H\otimes\Fp$, hence $\phi\in\cT_{g,1}(p)$; the condition that $\phi$ acts trivially on $\UMor_{-2}(2g)\otimes \Fp/ \Fp \Omega_2$ can then be rephrased as
\[
\xi^p_{\tau}(\phi)\in\Hom(H,\Z\Omega_2)\otimes\Fp=\Hom(H,\Z\omega)\otimes\Fp\subset\Hom(H,\Lambda^2H)\otimes\Fp,
\]
where in the last step we use that $p$ is odd, and that $\Z\omega\subset\Lambda^2H$ is a split subgroup.

We can now identify $\Hom(H,\Lambda^2H)\cong H\otimes\Lambda^2H\otimes\Fp$ with a subgroup of $H^{\otimes 3}\otimes\Fp$ as in the proof of Lemma \ref{lem:ZkernelUMor2/2omega}; by Proposition \ref{prop:Fpimageofxi}, the image of $\xi^p_{\tau}$ can then be identified with $\Lambda^3H\otimes\Fp$, and all we have to prove is that the subgroups $\Lambda^3H\otimes\Fp$ and $H\otimes\Z\omega\otimes\Fp$ of $H^{\otimes3}\otimes\Fp$ intersect trivially: this can be done by repeating the final argument used in the proof of Lemma \ref{lem:ZkernelUMor2/2omega}.
\end{proof}

\begin{prop}\label{thm:Fpkernelhomology}
For $n\ge 2$, the kernel of the $\Gamma_{g,1}$-action on the homology group $H_{2n-2}(C_n(\Sigma_{g,1})^{\infty};\Fp)$  is the mod-$p$ Johnson kernel $\cJ_{g,1}(p)$.
\end{prop}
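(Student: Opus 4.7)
The strategy is to mimic the integral proof of Proposition \ref{thm:Zkernelhomology}, replacing the integral Propositions \ref{prop:ZactiononUMor} and Lemma \ref{lem:ZkernelUMor2/2omega} by their mod-$p$ analogues (Proposition \ref{prop:FpactiononUMor} and Lemma \ref{lem:FpkernelUMor2/2omega}).

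First, I would establish that $\cJ_{g,1}(p)$ acts trivially on $H_{2n-2}(\conf[n]{\cM};\Fp)$. By Proposition \ref{prop:UMorissubrepandCHbistrivial}, the $\Gamma_{g,1}$-action on the cellular chain complex $\redchains{\conf{\cM}}\otimes\Fp$ factors through the action on the tensor factor $\UMor_\bullet(2g)\otimes\Fp$, which by Proposition \ref{prop:FpactiononUMor} has kernel exactly $\cJ_{g,1}(p)$. Hence $\cJ_{g,1}(p)$ acts trivially on the full chain complex and so on homology.

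For the converse inclusion, by Lemma \ref{lem:FpkernelUMor2/2omega} it suffices to exhibit $\UMor_{-2}(2g)\otimes\Fp\big/\Fp\Omega_2$ as a $\Gamma_{g,1}$-subrepresentation of $H_{2n-2}(\conf[n]{\cM};\Fp)$. Following the integral argument, I would use the cell $e_{1^{n-2}}$ of bar-type $(n-2,(1,\dots,1),\uzero)$. Using Proposition \ref{prop:differential} and that $\Ss(1,1)=0$ and $\Omega_1=0$, one checks that $\partial e_{1^{n-2}}=0$; combined with the vanishing of the differential on $\UMor_\bullet(2g)$ and the Leibniz rule (Proposition \ref{prop:muiscellular}), every chain in $\Fp\, e_{1^{n-2}}\otimes\UMor_{-2}(2g)\otimes\Fp$ is a cycle. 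Then I would analyse the possible boundary contributions landing in this subgroup: the relevant $(2n-1)$-chains in weight $-n$ split into $\Fp\, e_{1^{n-1}}\otimes\UMor_{-1}(2g)\otimes\Fp$ and chains $e_{(n-1,\uP,\uzero)}$ with $\uP$ having exactly one entry equal to $2$. The first summand consists of cycles; on the second summand, $\partial_B$ produces cells with $\fv=\uzero$ (hence outside our target subgroup), and $\partial_M$ gives a nonzero contribution in $\Fp\, e_{1^{n-2}}\otimes\UMor_{-2}\otimes\Fp$ only when the entry $2$ sits at the last position (since $\Omega_1=0$), yielding $\pm e_{1^{n-2}}\cdot\Omega_2$. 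Consequently the boundaries inside $\Fp\, e_{1^{n-2}}\otimes\UMor_{-2}(2g)\otimes\Fp$ are precisely $\Fp\, e_{1^{n-2}}\cdot\Omega_2$, producing the desired $\Gamma_{g,1}$-equivariant injection $\UMor_{-2}(2g)\otimes\Fp\big/\Fp\Omega_2\hookrightarrow H_{2n-2}(\conf[n]{\cM};\Fp)$.

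The main obstacle is essentially bookkeeping: carefully verifying that no additional boundary terms sneak into $\Fp\, e_{1^{n-2}}\otimes\UMor_{-2}(2g)\otimes\Fp$ from unexpected $(2n-1)$-chains. Once the cellular analysis from the integral case is ported over---which works verbatim since all coefficients $\Ss(1,1)=0$ and $\Omega_1=0$ are characteristic-independent---Lemma \ref{lem:FpkernelUMor2/2omega} immediately forces any element of $\Gamma_{g,1}$ acting trivially on $H_{2n-2}(\conf[n]{\cM};\Fp)$ to lie in $\cJ_{g,1}(p)$, completing the proof.
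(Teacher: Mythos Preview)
Your proposal is correct and follows exactly the paper's approach: reduce triviality of the $\cJ_{g,1}(p)$-action to Propositions \ref{prop:UMorissubrepandCHbistrivial} and \ref{prop:FpactiononUMor}, and for the converse embed $\UMor_{-2}(2g)\otimes\Fp/\Fp\Omega_2$ via $e_{1^{n-2}}$ and invoke Lemma \ref{lem:FpkernelUMor2/2omega}. Your cellular bookkeeping (splitting the $(2n-1)$-chains, using $\Ss(1,1)=0$, $\Omega_1=0$, and that $\partial_B$ lands in cells with $\fv=\uzero$) is in fact slightly more explicit than the paper's proof, which simply refers back to the integral argument of Proposition \ref{thm:Zkernelhomology} tensored with $\Fp$.
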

\begin{proof}
By Proposition \ref{prop:FpactiononUMor}, the group $\cJ_{g,1}(p)$ acts trivially on the $\Gamma_{g,1}$-equivariant chain complex $\redchains{\conf{\cM},\Fp}\cong\widetilde{\Ch}^{B}(\conf{\cM})\otimes(\UMor_\bullet(2g)\otimes\Fp)$, hence on its homology. To show that the kernel of the $\Gamma_{g,1}$-action on $H_{2n-2}(C_n(\Sigma_{g,1})^{\infty};\Fp)$ is no larger than $\cJ_{g,1}(p)$, for a fixed $n\ge2$, it suffices by Lemma \ref{lem:FpkernelUMor2/2omega} to embed a copy of $\UMor_{-2}(2g)\otimes\Fp/\Fp\Omega_2$
inside $H_{2n-2}(C_n(\Sigma_{g,1})^{\infty};\Fp)$, and this can be done by the same argument used in the proof of Proposition \ref{thm:Zkernelhomology}, tensoring with $\Fp$.
\end{proof}

\begin{proof}[Proof of Theorem \ref{thm:A}]
    Propositions \ref{prop:UMorissubrepandCHbistrivial} and \ref{prop:ZactiononUMor} together imply that $\cJ_{g,1}$ acts trivially on the cellular chains $\redchains{\conf{\cM}}$ which, after tensoring with $R$ and dualising, computes $\widetilde{H}^*(\conf{\Sigma_{g,1}};R)$. It follows that $\cJ_{g,1}$ acts trivially on this cohomology and, by Proposition \ref{prop:equivariantPD}, the same holds for $H_*(C_n(\Sigma_{g,1});R)$.

    In particular, we have an isomorphism $\widetilde{H}_{2n-2}(\conf[n]{\Sigma_{g,1}};\Q)\cong H^{2}(C_n(\Sigma_{g,1});\Q)$ of $\Gamma_{g,1}$-representations. It follows by Proposition \ref{thm:Zkernelhomology} that the kernel of the $\Gamma_{g,1}$-action on $H^{2}(C_n(\Sigma_{g,1});\Q)$, and thus also on its linear dual $H_{2}(C_n(\Sigma_{g,1});\Q)$, is $\cJ_{g,1}$ for $n\ge 2$. Since $H_{2}(C_n(\Sigma_{g,1});\Q)\cong H_{2}(C_n(\Sigma_{g,1});\Z) \otimes \Q$, then $H_{2}(C_n(\Sigma_{g,1});\Z)$ has kernel at least $\cJ_{g,1}$; by (1) the kernel is precisely $\cJ_{g,1}$, and (2) follows.
    
    The case $R=\Fp$ follows analogously using Propositions \ref{prop:FpactiononUMor} and \ref{thm:Fpkernelhomology}.
\end{proof}

\part{Homology computation up to an Ext problem}\label{part:2}
In this part of the article we prove Theorem \ref{thm:B} and its corollaries. We fix $g\ge0$ throughout the part and denote by $\cM$ the model of $\Sigma_{g,1}$ from Definition \ref{defn:cM}.
\section{Proof of Theorem \ref{thm:B} and its corollaries}
\subsection{\texorpdfstring{$\redchains{\conf{\cM}}$}{redchains(conf(M))} as a bar complex}
\label{sec:proofthmB}
The starting point towards the proof of Theorem \ref{thm:B} is a reinterpretation of $\redchains{\conf{\cM}}$. Recall from Definition \ref{defn:zetag} the element $\zeta\in\pi_1(V_{2g},*)\cong\Z^{*2g}$, and the corresponding pointed map $f_{\zeta_g}\colon(V_1,*)\to(V_{2g},*)$ from the proof of Proposition \ref{prop:differential}.
The ring homomorphism
\[
 \UMor_\bullet(f_{\zeta_g})\colon \UMor_\bullet(1)\to\UMor_\bullet(2g)
\]
makes $\UMor_\bullet(2g)$ into a weighted $\UMor_\bullet(1)$-left module. Following Definition \ref{defn:UMor}, observe that $\UMor_\bullet(2g)$ and $\UMor_\bullet(1)$ are concentrated in non-positive weights.

Recall from Proposition \ref{prop:UMoralgebra} the isomorphisms $\UMor_\bullet(1)\cong\Lambda_\Z(x)\otimes\Gamma_\Z(y)$ and
$\UMor_\bullet(2g)\cong \Lambda_\Z(x_1,\dots,x_{2g})\otimes\Gamma_\Z(y_1,\dots,y_{2g})$. By Theorem \ref{thm:UMormaps} we have that
$\UMor_\bullet(f_{\zeta_g})$ sends $x\mapsto0$ and $y^{[m]}\mapsto\Omega_{2m}$, for $m\ge1$ (see Definition \ref{defn:omega}).
We regard $\Z$ as a $\UMor_\bullet(1)$-$\UMor_\bullet(1)$-bimodule concentrated in weight $0$, via the ring homomorphism $\varepsilon\colon\UMor_\bullet(1)\to\Z$ sending $x,y^{[m]}\mapsto0$.
We can therefore consider the reduced bar complex
\[
 \reB_\star(\Z,\UMor_\bullet(1),\UMor_\bullet(2g)),
\]
i.e. the weighted chain complex having $\Z\otimes\UMor_\bullet(1)_+^{\otimes b}\otimes\UMor_\bullet(2g)$ in bar-degree $\star=b\ge0$, and vanishing in negative bar-degrees. Here $\UMor_\bullet(1)_+:=\ker(\varepsilon)$ denotes the augmentation ideal.
For $b\ge 1$ the differential
\[
\del\colon \reB_b(\Z,\UMor_\bullet(1),\UMor_\bullet(2g))\to\reB_{b-1}(\Z,\UMor_\bullet(1),\UMor_\bullet(2g)),
\]
is given by the alternating sum $\del=\sum_{i=0}^b(-1)^i\del_i$, with $\del_i=\id^{\otimes i}\otimes\mu\otimes\id^{\otimes b-i}$.
Here we denote for simplicity all multiplication maps
by $\mu$, and all identities by $\id$.

\begin{prop}
\label{prop:redchainsasbarcomplex}
There is an isomorphism of weighted chain complexes
\[
  \reB_\star(\Z,\UMor_\bullet(1),\UMor_\bullet(2g))\cong\redchains{\conf{\cM}}.
\]
\end{prop}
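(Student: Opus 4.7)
The plan is to construct an explicit isomorphism on natural $\Z$-bases and then verify that it intertwines the two differentials.

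First, I identify a basis of each complex. By Proposition \ref{prop:UMoralgebra}, $\UMor_\bullet(1) \cong \Lambda_\Z(x) \otimes \Gamma_\Z(y)$, so the augmentation ideal $\UMor_\bullet(1)_+$ is free abelian on the elements $e(1,n)$ for $n \ge 1$, with $e(1, 2m) = y^{[m]}$ and $e(1, 2m+1) = x \cdot y^{[m]}$. Similarly $\UMor_\bullet(2g)$ has $\Z$-basis $\{e_\fv : \fv \in (\Z_{\ge 0})^{\times 2g}\}$. Hence the bar-degree $b$ piece of $\reB_\star(\Z, \UMor_\bullet(1), \UMor_\bullet(2g))$ has $\Z$-basis given by the elementary tensors $1 \otimes e(1, P_1) \otimes \cdots \otimes e(1, P_b) \otimes e_\fv$, and these are in canonical bijection with the $n$-records $(b, \uP, \fv)$ of Definition \ref{defn:record}, hence with the cellular basis $\{e_\ft\}$ of $\redchains{\conf{\cM}}$ from Proposition \ref{prop:celldecompsurface}. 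Both the bar-degree (which is $b = b(\ft)$) and the weight (which is $-(P_1+\cdots+P_b) - (v_1+\cdots+v_{2g}) = -n(\ft)$) match, in agreement with Notation \ref{nota:bulletstarofft}.

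Next, I verify that the differentials agree under this correspondence. The reduced bar differential is $\sum_{i=1}^{b}(-1)^i \partial_i$, since the $\partial_0$ term vanishes on $\UMor_\bullet(1)_+ = \ker(\varepsilon)$. For $1 \le i \le b-1$, the map $\partial_i$ multiplies adjacent tensor factors in $\UMor_\bullet(1)$, and a direct case-by-case computation in $\Lambda_\Z(x) \otimes \Gamma_\Z(y)$ over the four parities of $(P_i, P_{i+1})$ yields
\[
e(1, P_i) \cdot e(1, P_{i+1}) = \Ss(P_i, P_{i+1}) \cdot e(1, P_i + P_{i+1}),
\]
reproducing the $\del_B$ summand of Proposition \ref{prop:differential}. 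For $i = b$, the map $\partial_b$ applies the left $\UMor_\bullet(1)$-module action on $\UMor_\bullet(2g)$ induced by $\UMor_\bullet(f_{\zeta_g})$; by Theorem \ref{thm:UMormaps} together with $[\zeta_g]_x = 0$ (since $\zeta_g$ is a commutator) and $c_2(\zeta_g) = 2\omega$ (Lemma \ref{lem:contentomega}), one computes $\UMor_\bullet(f_{\zeta_g})(y) = 2\omega$ and hence, using the torsion-freeness argument from the proof of Proposition \ref{prop:differential}, $\UMor_\bullet(f_{\zeta_g})(e(1, P_b)) = \Omega_{P_b}$ in both parities of $P_b$. Combined with the factorisation $e_{(b, \uP, \fv)} = e_{(b, \uP, \uzero)} \cdot e_{(0, 0, \fv)}$ from Proposition \ref{prop:muiscellular} and the vanishing of $\del$ on $\UMor_\bullet(2g)$, this reproduces the $\del_M$ summand.

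The main obstacle I anticipate is a careful sign check: the alternating signs $(-1)^i$ of the simplicial bar differential must line up with the signs $(-1)^i$ in $\del_B$ (which on the cellular side come from the orientations of the horizontal faces $\del_j^{\hor}\Delta^\ft$ analyzed in the proof of Proposition \ref{prop:differential}), and the $(-1)^b$ of the module-action term must match the $(-1)^b$ in $\del_M$. Both should trace back to the same simplicial sign rule, but since the cellular side expresses these signs through orientations of polysimplices, some bookkeeping is required to align the two conventions. Once this is settled, the bijection of bases together with the termwise matching of differentials yields the claimed isomorphism of weighted chain complexes.
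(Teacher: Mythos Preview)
Your proposal is correct and follows essentially the same approach as the paper: set up the bijection between records $(b,\uP,\fv)$ and elementary bar tensors $1\otimes e(1,P_1)\otimes\cdots\otimes e(1,P_b)\otimes e_\fv$, check bigradings, and then verify the differentials agree. The paper's proof is more compressed, simply invoking Proposition \ref{prop:differential} for the chain-map claim, whereas you unpack the ingredients (the product formula $e(1,m)\cdot e(1,n)=\Ss(m,n)\,e(1,m+n)$ from the proof of Proposition \ref{prop:UMoralgebra}, the identity $\UMor_\bullet(f_{\zeta_g})(e(1,P_b))=\Omega_{P_b}$, and the Leibniz rule with the factorisation from Proposition \ref{prop:muiscellular}); your sign concern is legitimate but, as you anticipate, the $(-1)^i$ and $(-1)^b$ signs are precisely those recorded in Proposition \ref{prop:differential}, so the bookkeeping does go through.
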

\begin{proof}
As bigraded abelian group $\redchains{\conf{\cM}}$ is freely generated by the elements $e_\ft$ for varying record $\ft$ of the form $(b,\uP,\fv)$, where $b\ge0$, $\uP=(P_1,\dots,P_b)$ with $P_i\ge1$, and $\fv=(v_1,\dots,v_{2g})$ with $v_j\ge0$.
Similarly, 
$\reB_\star(\Z,\UMor_\bullet(1),\UMor_\bullet(2g))$ is freely generated by the elements $1\otimes a_1\otimes\dots\otimes a_b\otimes c$, where $b\ge0$, each $a_i$ has the form $x^{\epsilon_i}y^{[m_i]}$ and is different from $1$, and $c$ has the form $\prod_{j=1}^{2g}x_j^{\epsilon'_j}y_j^{[m'_j]}$, for some $\epsilon_i,\epsilon'_j\in\set{0,1}$ and $m_i,m'_j\ge0$.

Given $\ft=(b,\uP,\fv)$, there are unique decompositions $P_i=2m_i+\epsilon_i$ and $v_j=2m'_j+\epsilon_j'$ with $\epsilon_i,\epsilon'_j,m_i,m'_j$ as above; setting $a_i=x^{\epsilon_i}y^{[m_i]}$ and  $c=\prod_{j=1}^{2g}x_j^{\epsilon'_j}y_j^{[m'_j]}$, we obtain a basis element $1\otimes a_1\otimes\dots\otimes a_b\otimes c$ of $\reB_\star(\Z,\UMor_\bullet(1),\UMor_\bullet(2g))$ in the same bigrading $(\bullet,\star)=(-\sum_{i=1}^b P_i-\sum_{j=1}^{2g} v_j,b)$ as $e_\ft$. Thus the assignment $e_\ft\mapsto 1\otimes a_1\otimes\dots\otimes a_b\otimes c$
gives a bigraded bijection of abelian groups
\[
\reB_\star(\Z,\UMor_\bullet(1),\UMor_\bullet(2g))\cong\redchains{\conf{\cM}}.
\]
Finally, Proposition \ref{prop:differential} ensures that the above is a chain map.
\end{proof}

The two chain complexes from Proposition \ref{prop:redchainsasbarcomplex} are concentrated in non-negative bar-degree and non-positive weight, and are of finite rank in each bidegree.

\begin{nota}
For a commutative ring $R$ we denote by $\UMor_\bullet^R(k)$ the weighted $R$-algebra $\UMor_\bullet(k)\otimes R$.
The maps $\UMor(f_{\zeta_g})\otimes R$ and $\epsilon\otimes R$ make
$\UMor_\bullet^R(2g)$ into a left $\UMor_\bullet^R(1)$-module,
and $R$ into a $\UMor_\bullet^R(1)$-$\UMor_\bullet^R(1)$-bimodule.

\end{nota}

\begin{prop}
\label{prop:isoextUMor}
Let $R$ be a commutative ring; then there is an isomorphism of bigraded $R$-modules
\[
  \Ext^\star_{\UMor^R_\bullet(1)}(\UMor^R_\bullet(2g);R)\cong H_{\bullet+\star}(C_\bullet(\sgone);R).
\]
\end{prop}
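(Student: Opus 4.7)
The plan is to chain together Proposition \ref{prop:redchainsasbarcomplex}, Poincar\'e--Lefschetz duality, and the fact that the two-sided bar complex is a free resolution. Set $A:=\UMor^R_\bullet(1)$ and $M:=\UMor^R_\bullet(2g)$.

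First, I would tensor the isomorphism of Proposition \ref{prop:redchainsasbarcomplex} with $R$ over $\Z$. Each bigraded piece of $\reB_\star(\Z,\UMor_\bullet(1),\UMor_\bullet(2g))$ is a finitely generated free abelian group, so base change yields an isomorphism of weighted chain complexes of free $R$-modules
\[
\redchains{\conf{\cM};R}\cong\reB_\star(R,A,M),
\]
placing the cell $e_\ft$ in bigrading $(-n(\ft),b(\ft))$. Dualizing over $R$ identifies the reduced cellular cochain complex $\redcochains{\conf{\cM};R}:=\Hom_R(\redchains{\conf{\cM};R},R)$ with $\Hom_R(\reB_\star(R,A,M),R)$.

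Next, I would invoke Poincar\'e--Lefschetz duality. Because $\cM\cong\sgone$ is a compact oriented surface of dimension $2d=2$ and $\conf[n]{\cM}$ is the one-point compactification of $C_n(\mathring\cM)$ (Notation \ref{nota:confnM}), Proposition \ref{prop:equivariantPD} gives $H_i(C_n(\sgone);R)\cong\tilde H^{n-i}(\conf[n]{\cM};R)$, which in the $(\bullet,\star)$ bigrading with $\bullet=n$ and $*=\bullet+\star$ reads
\[
H_{\bullet+\star}(C_\bullet(\sgone);R)\cong\tilde H^{-\star}(\conf[\bullet]{\cM};R).
\]
The right-hand side is the cohomology of $\Hom_R(\reB_\star(R,A,M),R)$ by the previous paragraph.

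Finally, I would identify this cohomology with $\Ext^\star_A(M,R)$. The reduced bar complex factors as $\reB_\star(R,A,M)=R\otimes_A B_\star(A,A,M)$, where the two-sided reduced bar construction $B_b(A,A,M):=A\otimes A_+^{\otimes b}\otimes M$ with the standard bar differential and augmentation $A\otimes M\twoheadrightarrow M$ is a resolution of $M$ by free weighted left $A$-modules: freeness follows because by Proposition \ref{prop:UMoralgebra} each of $A$, $A_+$ and $M$ is free as a weighted $R$-module in each weight, and acyclicity is witnessed by the standard simplicial contracting homotopy $m\mapsto 1\otimes m$. Tensor--hom adjunction then gives
\[
\Hom_R(\reB_\star(R,A,M),R)\cong\Hom_A(B_\star(A,A,M),R),
\]
whose cohomology is $\Ext^\star_A(M,R)$ under the convention of Remark \ref{rem:Extnegative} that places the $i$-th right-derived functor of $\Hom_A(-,R)$ in bar-degree $\star=-i$. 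Stringing the three isomorphisms together yields the desired identification.

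The main care needed is the bookkeeping of bigradings and sign conventions: one must check that Poincar\'e duality's flip $i\mapsto n-i$ corresponds exactly to the paper's convention of placing derived functors in non-positive bar-degree, and that the weight grading inherited from $\UMor^R_\bullet$ becomes the particle count $\bullet=n$ after the two dualization steps flip its sign. No new geometric input is required beyond the three ingredients above; the proof is essentially a tracking exercise.
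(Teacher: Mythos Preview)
Your proposal is correct and follows essentially the same route as the paper: dualize the bar-complex identification of Proposition~\ref{prop:redchainsasbarcomplex}, recognise the result as $\Hom_A$ of a free resolution (hence $\Ext$), and match with $H_*(C_\bullet(\sgone);R)$ via Poincar\'e--Lefschetz duality; the only cosmetic difference is that the paper dualizes over $\Z$ first and tensors with $R$ afterwards, while you tensor first. One small caveat on the bookkeeping you flag: the cellular cochain dual of $e_\ft$ lives in actual degree $d(\ft)=n+b$, so the cohomology of $\redcochains{\conf{\cM};R}$ sits in degree $\bullet-\star$ rather than $-\star$, and Poincar\'e duality for the $2n$-manifold $C_n(\mathring\cM)$ reads $H_i\cong\tilde H^{2n-i}$; with this correction the gradings line up exactly with Notation~\ref{nota:bulletstarofft}.
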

\begin{proof}
For $R=\Z$, taking the bigraded dual of the chain complexes from Proposition \ref{prop:redchainsasbarcomplex}, we obtain an isomorphism of cochain complexes
\begin{equation}\label{eq:cochainiso}
\Hom_{\Z}\pa{\reB_\star(\Z,\UMor_\bullet(1),\UMor_\bullet(2g));\Z}\cong\redcochains{\conf{\cM}},
\end{equation}
where $\redcochains{\conf{\cM}}$ is defined as $\Hom_\Z(\redchains{\conf{\cM}},\Z)$. Here $\Z$ is concentrated in bidegree $(\bullet,\star)=(0,0)$, and hence both of the previous cochain complexes are concentrated in non-positive bar-degree and non-negative weight.

On the one hand we identify the cohomology of the first cochain complex
\[
\begin{split}
\Hom_{\Z}\pa{\reB_\star(\Z,\UMor_\bullet(1),\UMor_\bullet(2g)),\Z}\cong\\
\Hom_{\UMor_\bullet(1)}\pa{\reB_\star(\UMor_\bullet(1),\UMor_\bullet(1),\UMor_\bullet(2g)),\Z}
\end{split}
\]
with $\Ext^\star_{\UMor_\bullet(1)}(\UMor_\bullet(2g)),\Z)$.
On the other hand, by Proposition \ref{prop:equivariantPD} the cohomology of $\redcochains{\conf{\cM}}$ is isomorphic to $H_{\bullet+\star}(C_\bullet(\sgone);\Z)$. The convention on bigradings from Notation \ref{nota:bulletstarofft} ensures that the isomorphism is bigraded.

For generic $R$, we tensor the isomorphism \eqref{eq:cochainiso} with $R$: the left hand side becomes a cochain complex computing  $\Ext^\star_{\UMor^R_\bullet(1)}(\UMor^R_\bullet(2g);R)$, and the cohomology of the right hand side is isomorphic by Proposition \ref{prop:equivariantPD} to $H_{\bullet+\star}(C_\bullet(\sgone);R)$.
\end{proof}

\subsection{Yoneda product}
In the case $g=0$, we have $\UMor^R_\bullet(2g)\cong R$, so that Proposition \ref{prop:redchainsasbarcomplex}, up to identifying $\Sigma_{0,1}$ with the disc $D:=[0,1]^2$, gives us an isomorphism of bigraded $R$-modules
\begin{equation}\label{eq:isoring}
\Ext^\star_{\UMor^R_\bullet(1)}(R,R)\cong H_{\bullet+\star}(C_\bullet(D);R).
\end{equation}
Both sides admit a structure of bigraded $R$-algebra: the left hand side has a Yoneda product, making it into an associative ring; the right hand side has a Pontryagin product, coming from the $E_2$-algebra structure on $C_\bullet(\mathring{D})$, and it is thus a graded-commutative ring. Moreover, for any $g\ge0$, each of the two terms in the isomorphism of Proposition \ref{prop:isoextUMor} is naturally a bigraded left module over one of these rings: $\Ext^\star_{\UMor^R_\bullet(1)}(\UMor^R_\bullet(2g),R)$ is a left module over $\Ext^\star_{\UMor^R_\bullet(1)}(R,R)$ via the Yoneda product, whereas the $E_1$-module structure of $C_\bullet(\mathring{\Sigma}_{g,1})$ over the $E_2$-algebra $C_\bullet(\mathring{D})$ makes $H_*(C_\bullet(\mathring{\Sigma}_{g,1});R)$ into a left module over $H_*(C_\bullet(\mathring{D});R)$. The main result of this subsection is the following proposition.
\begin{prop}
\label{prop:isoYonedamodule}
 Isomorphism \eqref{eq:isoring} is an isomorphism of bigraded rings; after identifying these two rings, the isomorphism from Proposition \ref{prop:isoextUMor} is an isomorphism of bigraded (left) modules.
\end{prop}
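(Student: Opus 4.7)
The plan is to exhibit both the Yoneda product on the Ext side and the Pontryagin/module action on the configuration-space side as arising from a single operation on $\redchains{\conf{\cM}}$, which under the isomorphism of Proposition \ref{prop:redchainsasbarcomplex} takes the form of bar concatenation. Throughout, I write $A = \UMor^R_\bullet(1)$ and $M = \UMor^R_\bullet(2g)$ (or $M = R$ when discussing the ring case).

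First I would recall the chain-level description of the Yoneda product. The reduced bar complex $\reB_\star(R, A, M)$ is an $A$-projective resolution of $M$, so $\Ext^\star_A(M, R)$ is computed by $\Hom_A(\reB_\star(A, A, M), R) \cong \Hom_R(\reB_\star(R, A, M)/R, R)$, and analogously for $\Ext^\star_A(R, R)$. Under this model the Yoneda pairing is induced, on the pre-dualized side, by the chain map
\[
c\colon \reB_p(R, A, R) \otimes_R \reB_q(R, A, M) \to \reB_{p+q}(R, A, M),
\]
\[
(1 \otimes a_1 \otimes \cdots \otimes a_p \otimes 1) \otimes (1 \otimes b_1 \otimes \cdots \otimes b_q \otimes m) \longmapsto 1 \otimes a_1 \otimes \cdots \otimes a_p \otimes b_1 \otimes \cdots \otimes b_q \otimes m
\]
(with the standard Koszul sign rule). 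This map is associative and $A$-linear in the right factor, so after dualizing it induces the Yoneda action on Ext. In the case $g = 0$, $M = R$, it makes $\Ext^\star_A(R, R)$ into a bigraded ring.

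Next I would translate $c$ through the identification of Proposition \ref{prop:redchainsasbarcomplex}. By inspection of the bijection $e_\ft \leftrightarrow 1 \otimes a_1 \otimes \cdots \otimes a_b \otimes c$, the map $c$ sends the pair of cells $(e_{(p, \uP, \uzero)}, e_{(q, \uP', \fv)})$ to $e_{(p+q, \uP * \uP', \fv)}$ with no shuffle signs. Geometrically, this is precisely the restriction of the cellular superposition product $\mu$ of Proposition \ref{prop:muiscellular} to pairs of cells in which \emph{all} bars of the first configuration lie strictly to the left of all bars of the second (compare Lemma \ref{lem:productChB}, whose shuffle signs do not appear because we forbid interleaving). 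Such configurations are realized, for example, if the first configuration is supported in a subdisc $D_L \subset \fp((0, \tfrac{1}{2}) \times (0,1))$ and the second in the complementary region.

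On the topological side, the $E_2$-algebra structure on $C_\bullet(\mathring{D})$ and the $E_1$-module structure of $C_\bullet(\mathring{\cM})$ over it can be realized concretely by embedding $D_L$ to the left and $\cM$ (via a self-homotopy equivalence) to the right, and then superposing configurations; this is a standard fact, and up to homotopy any two such rectilinear choices give the same Pontryagin/module structure on homology. Hence the Pontryagin and module structures on $H_*(C_\bullet(D); R)$ and $H_*(C_\bullet(\sgone); R)$ are computed by the same restricted $\mu$, hence by the chain map $c$. Composing with Proposition \ref{prop:isoextUMor} and the Poincaré–Lefschetz duality of Proposition \ref{prop:equivariantPD} (which is $\mu$-compatible because $\mu$ is cellular and dual to the product on cohomology), we obtain the asserted isomorphism of bigraded rings and modules.

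The main obstacle is bookkeeping of signs and degrees: the Yoneda concatenation uses Koszul signs with respect to bar-degree $\star$, while the cellular superposition $\mu$ on $\redchains{\conf{\cM}}$ is graded-commutative with respect to the total degree $* = \bullet + \star$ as in Proposition \ref{prop:muiscellular}; one must verify that these two sign conventions agree on the subspace of cells of the form $e_{(p, \uP, \uzero)}$ (where only $\star$ varies under multiplication on the left), which is straightforward but must be checked. A secondary point is to justify that the $E_2$-product and the particular ``left-half versus right-half'' embedded product agree on homology; this follows by an isotopy argument since both are rectilinear embeddings of $D \sqcup D$ (resp.\ $D \sqcup \cM$) into $D$ (resp.\ $\cM$), and the induced Pontryagin/module structures depend only on the isotopy class.
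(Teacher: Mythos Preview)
There is a genuine gap on the topological side. The $E_1$-module map $C_\bullet(\mathring{D})\times C_\bullet(\mathring{\cM})\hookrightarrow C_\bullet(\mathring{\cM})$ is an \emph{open embedding}, and passing to one-point compactifications reverses the arrow: it becomes a \emph{collapse} map $\conf[n]{\cM}\to\bigvee_m\conf[m]{D}\wedge\conf[n-m]{\cM}$. Under Poincar\'e--Lefschetz duality (Proposition~\ref{prop:equivariantPD}), the Pontryagin module product on $H_*(C_\bullet(\cM);R)\cong\widetilde{H}^*(\conf{\cM};R)$ is therefore the pullback of this collapse map on cohomology, i.e.\ the $R$-dual of a \emph{coproduct} on $\redchains{\conf{\cM}}$. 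The paper makes this precise by introducing a specific embedding $\ttau$ (Definition~\ref{defn:ttau}) and computing the induced map $\conf[n]{\ttau}_*$ on cellular chains (Lemma~\ref{lem:confntaucellular}): the answer is exactly bar deconcatenation, with explicit signs, and dualising matches the Yoneda cup formula on cochains directly.

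Your proposal instead tries to realise the Pontryagin product via the superposition $\mu$ on $\conf{\cM}$, interpreted as a restriction to ``left/right-supported'' cells. This cannot work at the cellular level: the cell $e_{(p,\uP,\uzero)}\subset\conf{\cM}$ parametrises configurations whose bars range over all of $(0,2)$, so the $\mu$-product of two such cells genuinely involves all shuffles (Lemma~\ref{lem:productChB}), and no cellular restriction produces bare concatenation. More fundamentally, $\mu$ is a product on $\redchains{\conf{\cM}}$ and hence dualises to a \emph{coproduct} on $\redcochains{\conf{\cM}}$, which is the wrong shape for a product on $\widetilde{H}^*(\conf{\cM})$. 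The superposition $\mu$ is simply not the map that Poincar\'e-dualises the $E_1$-module structure; the collapse map $\conf{\ttau}$ is, and identifying it with bar deconcatenation is the missing step.
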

\begin{defn}\label{defn:ttau}
Recall Definition \ref{defn:cM}. We let $\ttau\colon D\sqcup\cM\to\cM$ be the map sending
\[
 \begin{array}{cl}
(x,y)\mapsto\fp(x,y) &\mbox{for }(x,y)\in D,\\
\fp(x,y)\mapsto\fp(1+x/2,y)&\mbox{for }(x,y)\in\bfR.  
 \end{array}
\]

\end{defn}
The map $\ttau$ can be used to describe conveniently the $E_1$-algebra and module structures of $C_\bullet(\mathring{D})$ and $C_\bullet(\mathring{\cM})$, as we discuss in the following.
Since $\ttau$ restricts to an open embedding $\mathring{D}\sqcup\mathring{\cM}\hookrightarrow\mathring{\cM}$, it gives rise to a map
\[
 C_\bullet(\ttau)\colon C_\bullet(\mathring{D})\times C_\bullet(\mathring{\cM})\to C_{\bullet}(\mathring{\cM});
\]
the map $C_\bullet(\ttau)$ is an open embedding and for $n\ge0$ it restricts to an open embedding
\[
C_n(\ttau)\colon \coprod_{m=0}^n C_m(\mathring{D})\times C_{n-m}(\mathring{\cM})\hookrightarrow C_n(\mathring{\cM}).
\]
For $g=0$, the map $C_\bullet(\ttau)\colon C_\bullet(\mathring{D})\times C_\bullet(\mathring{D})\to C_\bullet(\mathring{D})$ is part of the $E_2$-algebra structure on $C_\bullet(\mathring{D})$, and
for $g\ge0$ the map $C_\bullet(\ttau)$ is part of the $E_1$-module structure of $C_{\bullet}(\mathring{\cM})$ over $C_\bullet(\mathring{D})$.
Taking $R$-homology, we obtain that $C_\bullet(\ttau)$ induces in the two cases the Pontryagin ring multiplication $H_*(C_\bullet(\mathring{D});R)$ and the Pontryagin module multiplication $H_*(C_\bullet(\mathring{D});R)\otimes_R H_*(C_\bullet(\mathring{\cM});R)\to H_*(C_\bullet(\mathring{\cM});R)$.

For $n\ge0$, the open embedding $C_n(\ttau)$ induces a collapse map
\[
 \conf[n]{\ttau}\colon \conf[n]{\cM}\to\bigvee_{m=0}^n \conf[m]{D}\wedge\conf[n-m]{\cM}.
\]
For $0\le\bar m\le n$ let
$\pi_{\bar m,n}\colon\bigvee_{m=0}^n \conf[m]{D}\wedge\conf[n-m]{\cM}\to \conf[{\bar m}]{D}\wedge\conf[{n-\bar m}]{\cM}$
be the map collapsing all wedge summands except the $\bar m$\sth.
We can now take reduced cohomology and obtain a description of the Pontryagin module multiplication as the vertical composite on the right of the following diagram: all homology and cohomology groups are with understood coefficients in $R$, tensor products are over $R$, and $-\times-$ denotes the cross product in $R$-homology and cohomology; the horizontal isomorphisms are given by Proposition \ref{prop:equivariantPD}:
\[
 \begin{tikzcd}[column sep=6pt]
  H_i(C_m(\mathring{D}))\otimes H_j(C_{n-m}(\mathring{\cM}))\ar[d,"-\times-"]\ar[r,phantom,"\cong"] &
  \tilde H^{2m-i}(\conf[m]{D})\otimes \tilde H^{2n-2m-j}(\conf[n-m]{\cM})\ar[d,"-\times-"]\\
  H_{i+j}(C_m(\mathring{D})\times C_{n-m}(\mathring{\cM}))\ar[r,phantom,"\cong"]\ar[d,"C_{n}(\ttau)_*"] &\tilde H^{2n-i-j}(\conf[m]{D}\wedge\conf[n-m]{\cM})\ar[d,"\pa{\pi_{m,n}\circ\conf[n]{\ttau}}^*"]\\
  H_{i+j}(C_n(\mathring{\cM}))\ar[r,phantom,"\cong"]
  &
  \tilde H^{2n-i-j}(\conf[n]{\cM}).
 \end{tikzcd}
\]
For $0\le m\le n$ we consider on $\conf[m]{D}\wedge\conf[n-m]{\cM}$ the smash product cell decomposition, and on $\bigvee_{m=0}^n \conf[m]{D}\wedge\conf[n-m]{\cM}$ the wedge cell decomposition.
\begin{lem}
\label{lem:confntaucellular}
Let $n\ge0$; then the map $\conf[n]{\ttau}$ is cellular. For an $n$-record $\ft=(b,\uP,\fv)$ as in Definition \ref{defn:record}, the induced map on reduced cellular chain complexes
\[
\conf[n]{\ttau}_*\colon\redchains{\conf[n]{\cM}}\to\bigoplus_{m=0}^n\redchains{\conf[m]{D}}\otimes\redchains{\conf[n-m]{\cM}}
\]
sends the generator $e_\ft$ to the sum $\sum_{i=0}^b \epsilon_i e_{\ft_i^l}\otimes e_{\ft_i^r}$, where $\ft_i^l=(i,(P_1,\dots,P_i))$, $\ft_i^r=(b-i,(P_{i+1},\dots,P_b),\fv)$, and $\epsilon_i=(-1)^{(b-i)\sum_{j=0}^iP_j}\in\set{\pm1}$.
\end{lem}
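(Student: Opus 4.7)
The plan is to stratify each cell $e_\ft\subset\conf[n]{\cM}$ according to how its bars are distributed on the two sides of the vertical line $\fp(\{1\}\times[0,1])$ separating the images of $\ttau|_D$ and $\ttau|_\cM$, and then read off both cellularity and the coefficients $\epsilon_i$ from this stratification. First I would identify the image of the open embedding $C_n(\ttau)$ as the open subset of $C_n(\mathring{\cM})$ consisting of configurations avoiding $\fp(\{1\}\times[0,1])$; by definition of the collapse map, $\conf[n]{\ttau}$ sends such a configuration to its unique $(s_D,s_\cM)$-decomposition (landing in the wedge summand indexed by the number of points of $s_D$), and sends all other configurations as well as $\infty$ to the basepoint.

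Next, I would carry out the cell-by-cell analysis. Fix a record $\ft=(b,\uP,\fv)$. For $0\le i\le b$, define the locally closed subset $e_\ft^i\subseteq e_\ft$ of configurations whose bar positions $0<x_1<\dots<x_b<2$ satisfy $x_i<1<x_{i+1}$ (with the conventions $x_0=0$, $x_{b+1}=2$). The $e_\ft^i$ are disjoint and their union exhausts $e_\ft$ except for the closed subset $\{x_j=1\text{ for some }j\}$, which has codimension $1$ in $\bar e_\ft$ and thus dimension strictly less than $d(\ft)$. A direct inspection of $\ttau$ and of the characteristic map $\Phi^\ft$ shows that $\conf[n]{\ttau}$ maps $e_\ft^i$ homeomorphically onto the interior of the product cell $e_{\ft_i^l}\otimes e_{\ft_i^r}$ in the wedge summand indexed by $n(\ft_i^l)=P_1+\dots+P_i$, while the codimension-$1$ locus and $\infty$ go to the basepoint. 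Since $d(\ft_i^l)+d(\ft_i^r)=d(\ft)$, the map $\conf[n]{\ttau}$ preserves skeleta and is cellular, and $\conf[n]{\ttau}_*(e_\ft)$ is the sum $\sum_i \epsilon_i\,e_{\ft_i^l}\otimes e_{\ft_i^r}$ where $\epsilon_i$ is the local degree of the homeomorphism $e_\ft^i\to\mathring{e_{\ft_i^l}\otimes e_{\ft_i^r}}$.

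The remaining step, which is the only place requiring care, is the sign computation. The natural splitting of the bar simplex $\Delta^b$ along $x_i<1/2<x_{i+1}$ into $\Delta^i\times\Delta^{b-i}$ is orientation-preserving after an affine rescaling of each factor, so the domain of $\Phi^\ft$ restricted to the preimage of $e_\ft^i$ identifies with
\[
\Delta^i\times\Delta^{b-i}\times\Delta^{P_1}\times\cdots\times\Delta^{P_b}\times\prod_{k=1}^{2g}\Delta^{v_k},
\]
whereas the domain $\Delta^{\ft_i^l}\times\Delta^{\ft_i^r}$ of the product characteristic map has factors in the order
\[
\Delta^i\times\Delta^{P_1}\times\cdots\times\Delta^{P_i}\times\Delta^{b-i}\times\Delta^{P_{i+1}}\times\cdots\times\Delta^{P_b}\times\prod_{k=1}^{2g}\Delta^{v_k}.
\]
Commuting the $(b-i)$-dimensional factor $\Delta^{b-i}$ past the $(P_1+\dots+P_i)$-dimensional block $\Delta^{P_1}\times\cdots\times\Delta^{P_i}$ produces exactly the sign $\epsilon_i=(-1)^{(b-i)(P_1+\dots+P_i)}$, matching the stated formula with the convention $P_0=0$. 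This orientation bookkeeping is a purely formal sign calculation of the same flavor as the ones appearing in the proof of Proposition~\ref{prop:differential}, so it is the main (but minor) technical point.
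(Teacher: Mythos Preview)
Your proposal is correct and follows essentially the same approach as the paper's proof: both stratify $e_\ft$ by the number $i$ of bars lying to the left of the dividing vertical, observe that the locus where a bar sits exactly on the divider collapses to $\infty$, and compute the sign $\epsilon_i$ by commuting the factor $\Delta^{b-i}$ past $\Delta^{P_1}\times\cdots\times\Delta^{P_i}$ in the polysimplex. The only cosmetic difference is that the paper phrases the sign analysis via the open embedding $C_n(\ttau)\colon e_{\ft_i^l}\times e_{\ft_i^r}\hookrightarrow e_\ft$ (writing it explicitly as a rearrangement followed by the orientation-preserving affine map $\psi_i\colon\mathring\Delta^i\times\mathring\Delta^{b-i}\hookrightarrow\mathring\Delta^b$), whereas you work with its inverse, the collapse map restricted to $e_\ft^i$; this is the same computation.
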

\begin{proof}
Observe first that $\conf[n]{\ttau}(s)=\infty$. Let $\infty\neq s\in\conf[n]{\cM}$ be a configuration; then $s$ is in the image of the map $\Phi^\ft\colon\mathring{\Delta}^\ft\hookrightarrow\conf[n]{\cM}$, for an $n$-record $\ft$ as above. Let $(x_1,\dots,x_b)\in\mathring\Delta^b$ be the image of $(\Phi^\ft)^{-1}(s)\in\mathring{\Delta}^\ft$ along the projection $\Delta^\ft\to\Delta^b$. If $x_i=\frac 12$ for some $1\le i\le b$, then $\conf[n]{\ttau}(s)=\infty$. Otherwise there is $0\le i\le b$ such that $x_i<\frac 12$ and either $i=b$ or $x_{i+1}>\frac 12$; then $\conf[n]{\ttau}(s)$ lies in the cell $e_{\ft_i^l}\times e_{\ft_i^r}$, whose dimension is $d(\ft_i^l)+d(\ft_i^r)=d(\ft)$. Hence $\conf[n]{\ttau}$ is cellular.

The open embedding $C_n(\ttau)$ restricts to an open embedding of $d(\ft)$-dimensional manifolds $\coprod_{i=0}^b e_{\ft_i^l}\times e_{\ft_i^r}\hookrightarrow e_{\ft}$, so for $0\le i\le b$ the degree $[\conf[n]{\ttau}_*e_{\ft}:e_{\ft_i^l}\times e_{\ft_i^r}]$ is $\pm1$.
To determine the sign, we parametrise the cells $e_{\ft_i^l}$, $e_{\ft_i^r}$ and $e_{\ft}$ by the open polysimplices $\mathring{\Delta}^{\ft_i^l}$, $\mathring{\Delta}^{\ft_i^r}$ and $\mathring{\Delta}^\ft$: then we can identify the restricted embedding $C_n(\ttau)\colon e_{\ft_i^l}\times e_{\ft_i^r}\to e_{\ft}$ with the composition
\[
\begin{split}
  \mathring{\Delta}^{\ft_i^l}\times\mathring{\Delta}^{\ft_i^r}&=\mathring{\Delta}^{i}\times\prod_{j=1}^i\mathring{\Delta}^{P_j}\times\mathring{\Delta}^{n-i}\times\prod_{j=i+1}^b\mathring{\Delta}^{P_j}\times\prod_{j=1}^{2g}\mathring{\Delta}^{v_j}\\
  &\cong \mathring{\Delta}^{i}\times\mathring{\Delta}^{b-i}\times\prod_{j=1}^b\mathring{\Delta}^{P_j}\times\prod_{j=1}^{2g}\mathring{\Delta}^{v_j}\\
  &\overset{\psi_i\times\id}{\longrightarrow}\mathring{\Delta}^{b}\times\prod_{j=1}^b\mathring{\Delta}^{P_j}\times\prod_{j=1}^{2g}\mathring{\Delta}^{v_j}=\mathring{\Delta}^\ft,
\end{split}
\]
where the first homeomorphism comes from rearranging the factors and has degree $\epsilon_i$, and the map $\psi_i\colon\mathring{\Delta}^i\times\mathring{\Delta}^{b-i}\to\mathring{\Delta}^b$, defined by
\[
((x_1,\dots,x_i),(x_{i+1},\dots,x_b))\mapsto(\frac 12 x_1,\dots,\frac 12 x_i,\frac{1+x_{i+1}}2,\dots,\frac{1+x_b}2),
\]
is an orientation-preserving open embedding.
\end{proof}
By Lemma \ref{lem:confntaucellular} the map induced by $\conf{\ttau}$ on reduced cellular cochains is the following: given records $\ft_1=(i,(P_1,\dots,P_i))$ and $\ft_2=(b-i,(P_{i+1},\dots,P_b),\fv)$ representing cells $e_{\ft_1}\subset\conf[m]{D}$ and $e_{\ft_2}\subset\conf[n-m]{\cM}$, we can consider the dual generators
\[
e_{\ft_1}^\vee\in\redcochains{\conf[m]{D}}\ \mbox{ and }\ e_{\ft_2}^\vee\in\redcochains{\conf[n-m]{\cM}}
\]
and their tensor product
\[
e_{\ft_1}^\vee\otimes e_{\ft_2}^\vee:=(e_{\ft_1}\times e_{\ft_2})^\vee\in\redcochains{\conf[m]{D}\wedge\conf[n-m]{\cM}};
\]
then $\pa{\conf{\ttau}}^*$ sends $e_{\ft_1}^\vee\otimes e_{\ft_2}^\vee\mapsto\epsilon e_{\ft}^\vee$, where
\[
\ft=(b,(P_1,\dots,P_b),\fv)\ \mbox{ and }\ \epsilon=(-1)^{(b-i)\sum_{j=1}^iP_j}.
\]

\begin{proof}[Proof of Proposition \ref{prop:isoYonedamodule}]
 The isomorphism given in the proof of Proposition \ref{prop:redchainsasbarcomplex} can be dualised to an isomorphism of cochain complexes
 \[
  \Hom_R\pa{\reB_\star(R,\UMor^R_\bullet(1),\UMor^R_\bullet(2g),R}\cong\redcochains{\conf{\cM};R};
 \]
under this isomorphism, for a record $\ft=(b,\uP,\fv)$, the dual generator $e_{\ft}^\vee$ on the right-hand side corresponds to the dual generator $(1\otimes a_1\otimes\dots\otimes a_b\otimes c)^\vee$, where the monomial $a_i\in\UMor^R_\bullet(1)$ depends on $P_i$, and the monomial $c\in \UMor^R_\bullet(2g)$ depends on $\fv$.
The Yoneda product is induced by the cochain map
\[
\begin{tikzcd}
  \Hom_R\pa{\reB_\star(R,\UMor^R_\bullet(1),R),R}\otimes \Hom_R\pa{\reB_\star(R,\UMor^R_\bullet(1),\UMor^R_\bullet(2g)),R}\ar[d,"Y"]\\
  \Hom_R\pa{\reB_\star(R,\UMor^R_\bullet(1),\UMor^R_\bullet(2g)),R},\\
(1\!\otimes a_1\otimes\!\dots\!\otimes a_i\otimes 1)^\vee\!\otimes\! (1\!\otimes a_{i+1}\otimes\!\dots\!\otimes a_b\otimes c)^\vee\!\mapsto
(-1)^\epsilon(1\!\otimes a_1\otimes\!\dots\!\otimes a_b\otimes c)^\vee,
\end{tikzcd}
\]
where $\epsilon=(-1)^{(b-i)\sum_{j=1}^i P_j}$ is a sign that makes $Y$ into a cochain map: here it is important to recall that Koszul signs are computed with respect to the \emph{homological degree}, which is the sum of the weight and the bar-degree. Combining this observation with the discussion after Lemma \ref{lem:confntaucellular}, we obtain the desired statement.
\end{proof}

\subsection{The K\"unneth isomorphism}
By Proposition \ref{prop:UMoralgebra}, the weighted $R$-algebra $\UMor_\bullet^R(1)$ is isomorphic to the tensor product over $R$ of the algebras $\Lambda_R(x)$ and $\Gamma_R(y)$; we can also consider $R$ as an $R$-algebra concentrated in weight 0, and decompose $\UMor_\bullet^R(1)$ as the tensor product of the algebras $\Lambda_R(x)$, $\Gamma_R(y)$, and $R$.
Similarly, the $\UMor_\bullet^R(1)$-module $R$ is isomorphic to the tensor product of the $\Lambda_R(x)$-module $R$, the $\Gamma_R(y)$-module $R$, and the $R$-module $R$. 
Finally, also the $\UMor_\bullet^R(1)$-module $\UMor_\bullet^R(2g)$ is isomorphic to the tensor product of the $\Lambda_R(x)$-module $R$, the $\Gamma_R(y)$-module $\Lambda_R(x_1,\dots,x_{2g})$, and the $R$-module $\Gamma_R(y_1,\dots,y_{2g})$: indeed by Theorem \ref{thm:UMormaps} the ring homomorphism $\UMor_\bullet(f_{\zeta_g})$ sends $x\mapsto0$ and sends $y^{[m]}\mapsto\Omega_{2m}\in\Lambda_R(x_1,\dots,x_{2g})$ (see Definition \ref{defn:omega}).

\begin{defn}
 \label{defn:bMg}
 For $g\ge0$ we denote by $\bM_g^R$ the $\Gamma_R(y)$-module $\Lambda_R(x_1,\dots,x_{2g})$, with module structure induced by the homomorphism of $R$-algebras $\Gamma_R(y)\to\Lambda_R(x_1,\dots,x_{2g})$ sending $y^{[m]}\mapsto\Omega_{2m}$.
\end{defn}

Since all algebras and relative modules are free and of finite type when considered as weighted $R$-modules, (i.e. in each weight they coincide with a finitely generated free $R$-module), and since the module $R\cong R\otimes_RR\otimes_RR$ is supported in a single weight, we obtain a K\"unneth decomposition of $\Ext^\star_{\UMor^R_\bullet(1)}(\UMor^R_\bullet(2g),R)$ as
\[
\Ext^\star_{\Lambda_R(x)}(R,R)\otimes_R\Ext^\star_{\Gamma_R(y)}(\bM_g^R,R)\otimes_R\Ext^\star_R(\Gamma_R(y_1,\dots,y_{2g}),R).
\]

\begin{proof}[Proof of Theorem \ref{thm:B}]
The K\"unneth isomorphism is compatible with the Yoneda product; in particular the $\Ext^\star_{\UMor^R_\bullet(1)}(R,R)$-module $\Ext^\star_{\UMor^R_\bullet(1)}(\UMor^R_\bullet(2g),R)$ is the tensor product of $\Ext^\star_{\Lambda_R(x)}(R,R)$ as a module over itself, the $\Ext^\star_{\Gamma_R(y)}(R,R)$-module
$\Ext^\star_{\Gamma_R(y)}(\bM_g^R,R)$, and the $R$-module $\Ext^\star_R(\Gamma_R(y_1,\dots,y_{2g}),R)$.

We compute explicitly all factors in the previous decomposition and use the grading convention from Remark \ref{rem:Extnegative}. The ring isomorphism $\Ext^\star_{\Lambda_R(x)}(R,R)\cong R[\epsilon]$ is standard, so we just comment on bigradings. The class $\epsilon\in\Ext^{-1}_{\Lambda_R(x)}(R,R)$, represented by $(1\otimes x\otimes 1)^\vee$, has bar-degree $\star=-1$. Since $x$ has weight $-1$, $\epsilon$ has weight $\bullet=-(-1)=1$. Hence the homological degree of $\epsilon$ is $*=\bullet+\star=1-1=0$.

The second factor $\Ext^\star_{\Gamma_R(y)}(\bM_g^R,R)$ is the same presented in the statement of Theorem \ref{thm:B}, and we will analyse it more closely later; for the moment, notice that for $g=0$ we have $\bM_g^R=\Lambda_R()\cong R$, as in (1) of Theorem \ref{thm:B}.

The third factor $\Ext^\star_R(\Gamma_R(y_1,\dots,y_{2g}),R)$ is concentrated in bar-degree $\star=0$, as $\Gamma_R(y_1,\dots,y_{2g})$ is free, hence projective over $R$. We are thus interested in computing
\[
\Ext^0_R(\Gamma_R(y_1,\dots,y_{2g}),R)=\Hom_R(\Gamma_R(y_1,\dots,y_{2g}),R).
\]
Let $\cH_g^R$ denote the $R$-linear dual of $\bigoplus_{i=1}^{2g}Ry_i$; since each $y_i$ has weight $-2$, $\cH_g^R$ is concentrated in weight $\bullet=2$; we put it in bar-degree $\star=0$. The free \emph{divided power $R$-algebra} $\Gamma_R(y_1,\dots,y_g)$ has a natural coproduct given on generators by $\Delta(y_i^{[m]})=\sum_{j=1}^my_i^{[j]}\otimes y_i^{[m-j]}$, and making $\Gamma_R(y_1,\dots,y_g)$ into a $R$-Hopf algebra. The $R$-linear dual
$\Hom_R(\Gamma_R(y_1,\dots,y_g),R)$ is also an $R$-Hopf algebra, and as an algebra it agrees with the free \emph{symmetric $R$-algebra} on $\cH_g^R$. We thus identify $\Ext^\star_R(\Gamma_R(y_1,\dots,y_{2g}),R)$ with $R[\cH_g]$ as $R$-modules.
\end{proof}

\begin{rem}
 The fact that the $R$-module $R[\cH_g]$ has a natural algebra structure suggests that there should be an action of $R[\cH_g]$ on $H_*(C_\bullet(\sgone);R)$. Such an action
 is sketched in \cite[Subsection 1.1]{KMT}.
\end{rem}

\subsection{Direct sum splitting of \texorpdfstring{$\bM_g^R$}{bMgR}}
In this subsection we decompose the $\Gamma_R(y)$-module $\bM_g^R$ as a direct sum of smaller modules, thus reducing the computation of the term $\Ext^\star_{\Gamma_R(y)}(\bM_g^R,R)$ from Theorem \ref{thm:B} to smaller computations.
\begin{defn}\label{defn:twoB}
Let $u\ge0$. We consider $B^{\Z}_u:=\Z[z_1,\dots,z_u]/(z_1^2,\dots,z_u^2)$ as a weighted ring, by putting each $z_i$ in weight $\bullet=-2$. We let $\phi_1\colon \Gamma_\Z(y)\to B^{\Z}_u$ be the unique ring homomorphism sending
$y\mapsto\sum_{i=1}^u z_i$; we have $\phi_1\colon y^{[m]}\mapsto\sum_{1\le i_1<\dots<i_m\le u}z_{i_1}\dots z_{i_m}$.
This makes $B^\Z_u$ into a $\Gamma_\Z(y)$-algebra, and in particular into a $\Gamma_\Z(y)$-module.

We will also consider the unique ring homomorphism $\phi_2\colon\Gamma_\Z(y)\to B^\Z_u$, sending $\phi_2\colon y\mapsto 2\sum_{i=1}^u z_i$; we have $\phi_2(y^{[m]})= 2^m\phi_1(y^{[m]})$. We denote by $\twoB^\Z_u$ the corresponding $\Gamma_\Z(y)$-module structure on the weighted abelian group $B^\Z_u$, to distinguish it from the module structure induced by $\phi_1$.

For a generic ring $R$ we let $B^R_u$ and $\twoB^R_u$ be the corresponding $\Gamma_R(y)$-modules, obtained from the previous by tensoring with $R$.
\end{defn}

\begin{nota}\label{nota:K}
 Let $\bK=\set{\bd,\bu,\be}$ be the set with three elements $\bd,\bu,\be$; for a function $\xi\colon \set{1,\dots,g}\to\bK$ we denote by $\xi^{-1}(\be)\subseteq\set{1,\dots,g}$ the preimage of $\be$ and we let $0\le|\xi|\le g$ be the cardinality of $\xi^{-1}(\be)$.
\end{nota}

\begin{prop}
\label{prop:bMgsplitting}
 For $g\ge0$, we have an isomorphism of weighted $\Gamma_R(y)$-modules
 \[
  \bM^R_g\cong\bigoplus_{\mathclap{\xi\colon\set{1,\dots,g}\to\bK}}\twoB^R_{|\xi|}\ [|\xi|-g],
 \]
 where $\twoB^R_r[-m]$ is the weighted $\Gamma_R(y)$-module $\twoB^R_r$ shifted down by $m$.
\end{prop}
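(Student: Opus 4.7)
The plan is to decompose $\bM_g^R$ as a direct sum of $\Gamma_R(y)$-submodules indexed by the functions $\xi\colon\{1,\dots,g\}\to\bK$, and to identify each summand with an appropriate weighted shift of $\twoB^R_{|\xi|}$. First, recall that $\bM_g^R=\Lambda_R(x_1,\dots,x_{2g})$ has a standard $R$-module basis $\{v_\eta\}$ indexed by functions $\eta\colon\{1,\dots,g\}\to\{\emptyset,L,R,P\}$, where the factor at position $i$ is $1$, $x_{2i-1}$, $x_{2i}$, or $x_{2i-1}x_{2i}$ respectively (fix any convention for ordering these products). To each $\eta$ I associate $\xi(\eta)\colon\{1,\dots,g\}\to\bK$ by $L\mapsto\bd$, $R\mapsto\bu$, and $\emptyset,P\mapsto\be$. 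For each $\xi$ let $M_\xi\subseteq\bM_g^R$ be the $R$-submodule spanned by those $v_\eta$ with $\xi(\eta)=\xi$, so that $\bM_g^R=\bigoplus_\xi M_\xi$ as $R$-modules.

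Next I would verify that each $M_\xi$ is a $\Gamma_R(y)$-submodule. The action of $y^{[m]}$ is multiplication by $\Omega_{2m}=2^m\sum_{i_1<\dots<i_m}w_{i_1}\cdots w_{i_m}$, where $w_i:=x_{2i-1}x_{2i}$ has even degree and hence is central in $\Lambda_R(x_1,\dots,x_{2g})$. For any basis element $v_\eta$, the product $w_i\cdot v_\eta$ vanishes unless $\eta(i)=\emptyset$ (otherwise some $x_{2i-1}^2$ or $x_{2i}^2$ appears), in which case it equals $\pm v_{\eta'}$ where $\eta'$ agrees with $\eta$ except $\eta'(i)=P$. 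Since $\emptyset$ and $P$ both project to $\be$, the profile $\xi$ is preserved, proving that $M_\xi$ is $\Gamma_R(y)$-stable.

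For a fixed $\xi$ with $\xi^{-1}(\be)=\{j_1<\dots<j_u\}$ (so $u=|\xi|$), write $v^\xi_\emptyset\in M_\xi$ for the basis element whose profile is given by $\xi$ on $\bd$- and $\bu$-positions and by $\emptyset$ on $\be$-positions. I would define the $R$-linear map $\phi_\xi\colon\twoB^R_u[u-g]\to M_\xi$ by $\prod_{k\in S}z_k\mapsto \bigl(\prod_{k\in S}w_{j_k}\bigr)\cdot v^\xi_\emptyset$ for each subset $S\subseteq\{1,\dots,u\}$. This is an $R$-module isomorphism because it sends the standard basis of $\twoB^R_u$ bijectively onto the $R$-basis of $M_\xi$ indexed by choices of $\emptyset$ versus $P$ on $\be$-positions. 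For the weight-shift, note $v^\xi_\emptyset$ has weight $-(g-|\xi|)=|\xi|-g$ while $1\in\twoB^R_u$ has weight $0$; the shift $[|\xi|-g]$ places this $1$ precisely in weight $|\xi|-g$. A general summand $\prod_{k\in S}z_k$ has weight $-2|S|+(|\xi|-g)$, matching the weight of $\bigl(\prod_{k\in S}w_{j_k}\bigr)v^\xi_\emptyset$.

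Finally, I would check that $\phi_\xi$ is $\Gamma_R(y)$-equivariant. Since the $w_i$ are central, we compute for any $S$:
\[
y^{[m]}\cdot\phi_\xi\Bigl(\prod_{k\in S}z_k\Bigr)=\Omega_{2m}\Bigl(\prod_{k\in S}w_{j_k}\Bigr)v^\xi_\emptyset=2^m\!\!\!\sum_{\substack{T\subseteq\{1,\dots,u\}\\|T|=m,\,T\cap S=\emptyset}}\!\!\Bigl(\prod_{k\in S\cup T}w_{j_k}\Bigr)v^\xi_\emptyset,
\]
where the contributions from indices $i\notin\xi^{-1}(\be)$ or $i\in\{j_k:k\in S\}$ vanish because of a repeated factor. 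By Definition \ref{defn:twoB} this is exactly $\phi_\xi\bigl(\phi_2(y^{[m]})\cdot\prod_{k\in S}z_k\bigr)$, which is the action of $y^{[m]}$ on $\twoB^R_u$. Assembling the $\phi_\xi$ for all $\xi$ yields the desired isomorphism. The main obstacle is purely bookkeeping—setting up the $\eta\mapsto\xi(\eta)$ profile so that the $R$-module basis decomposition is manifestly $\Gamma_R(y)$-stable; once this is in place, centrality of the $w_i$ and the nilpotency relation $x_j^2=0$ make the verification of the module structure immediate.
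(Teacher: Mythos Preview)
Your proof is correct and follows essentially the same decomposition as the paper: the paper's generators $x_\xi$ are your $v^\xi_\emptyset$, and the paper's cyclic summand $B^R_g\cdot x_\xi$ is your $M_\xi$. The only cosmetic difference is that the paper first factors the $\Gamma_R(y)$-action through the ring map $B^R_g\to\Lambda_R(x_1,\dots,x_{2g})$ sending $z_i\mapsto x_{2i-1}\wedge x_{2i}$, which packages your centrality and nilpotency observations about the $w_i$ into a single $B^R_g$-module structure.
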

\begin{proof}
The ring homomorphism $\Gamma_R(y)\to\Lambda_R(x_1,\dots,x_{2g})$ sending $y^{[m]}\mapsto\Omega_{2m}$ (and making
$\Lambda_R(x_1,\dots,x_{2g})$ into a $\Gamma_R(y)$-module) can be written as the composite of the ring homomorphism $\phi_2\colon\Gamma_R(y)\to B^R_g$ (see Definition \ref{defn:twoB}) and the ring homomorphism $B^R_g\to\Lambda_R(x_1,\dots,x_{2g})$ sending $z_i\mapsto x_{2i-1}\wedge x_{2i}$. Thus we can lift the $\Gamma_R(y)$-module structure on $\bM^R_g$ to a $B^R_g$-module structure.

For $\xi\colon \set{1,\dots,g}\to\bK$ and $1\le i\le g$ let $x_{\xi,i}\in\Lambda_R(x_1,\dots,x_{2g})$ be:
\[
x_{\xi,i}:=\left\{
\begin{array}{cl}
 x_{2i-1} & \mbox{ if } \xi(i)=\bd;\\[.2em]
 x_{2i} & \mbox{ if } \xi(i)=\bu;\\[.2em]
 1 & \mbox{ if }\xi(i)=\be.
\end{array}
\right.
\]

Let $x_\xi=x_{\xi,1}\wedge\dots\wedge x_{\xi,g}\in\Lambda^{g-|\xi|}(x_1,\dots,x_{2g})$, and note that $x_\xi$ has weight $|\xi|-g$. For $\emptyset\subseteq S\subseteq\set{1,\dots,g}$ let $z_S=\prod_{i\in S}z_i\in B^R_g$. Note that $z_S\cdot x_\xi=0$ if $S\nsubseteq \xi^{-1}(\be)$. Instead, the elements $z_S\cdot x_\xi$, for varying $\xi\colon \set{1,\dots,g}\to\bK$ and $\emptyset\subseteq S\subseteq\xi^{-1}(\be)$, give a basis of $\Lambda_R(x_1,\dots,x_{2g})$ as a free $R$-module. Hence $\bM^R_g$ decomposes as a $B^R_g$-module (and hence as a $\Gamma_R(y)$-module) as a direct sum
\[
 \bM^R_g\cong\bigoplus_{\mathclap{\xi\colon\set{1,\dots,g}\to\bK}} B^R_g\cdot x_\xi.
\]
The summand $B^R_g\cdot x_\xi$ is isomorphic, as a $B^R_g$-module, to the shifted quotient $B^R_g/(z_i\ |\ i\notin\xi^{-1}(\be))\ [|\xi|-g]$, hence it is isomorphic over $\Gamma_R(y)$ to $\twoB^R_{|\xi|}\ [|\xi|-g]$.
\end{proof}
As a consequence of Proposition \ref{prop:bMgsplitting}, we have an isomorphism
 \[
  \Ext^\star_{\Gamma_R(y)}(\bM_g,R)\cong \bigoplus_{\mathclap{\xi\colon\set{1,\dots,g}\to\bK}} \Ext^\star_{\Gamma_R(y)}( \twoB^R_{|\xi|},R)\ [g-|\xi|],
 \]
where again $[m]$ shifts up the weight $\bullet$ by $m$, without changing the bar-degree $\star$.

\subsection{Proof of the corollaries of Theorem \ref{thm:B}}
\begin{proof}[Proof of Corollary \ref{cor:splitstabextremalstab}]
For point (1), split injectivity of $\epsilon\cdot-$ follows from the presence of the tensor factor $R[\epsilon]$ in part (2) of Theorem \ref{thm:B}; homological stability follows from the following facts:
\begin{itemize}
 \item $R[\cH_g^R]$ is supported in weights $\bullet\ge0$ and bar-degree $\star=0$;
 \item $\bM_g$ and $\Gamma_R(y)$ are supported in non-positive weights, $\bM_g$ is free over $R$ and $\Gamma_R(y)$ is $R$ in weight $0$ and is otherwise concentrated in weights at most $-2$; as a consequence $\Ext^\star_{\Gamma_R(y)}(\bM_g,R)$ is supported in bidegrees $(\bullet,\star)$ satisfying $\bullet\ge-2\star\ge0$, i.e. $2(\bullet+\star)\ge\bullet$.
\end{itemize}

Point (2) follows from part (3) of Theorem \ref{thm:B} after noticing the following. First, we have $\Gamma_{\Q}(y)\cong\Q[y]$; since $\bM_g$ is finite-dimensional over $\Q$, it splits over $\Q[y]$ as a finite direct sum of torsion $\Q[y]$-modules; it follows that 
$\Ext^\star_{\Gamma_\Q(y)}(\bM_g,\Q)\cong \Ext^\star_{\Q[y]}(\bM_g,\Q)$
is finite dimensional. Second, $\epsilon$ has bigrading $(-1,1)$, so no power of $\epsilon$ higher than $\epsilon^i$ can occur in a class in $H_{k-i}(C_k(\sgone);\Q)$: therefore the weighted $\Q[\cH_g^\Q]$-module $H_{\bullet-i}(C_\bullet(\sgone);\Q)$ is finitely generated. We apply the theory of the Hilbert function for finitely generated weighted modules over a weighted polynomial ring.
\end{proof}
\begin{proof}[Proof of Corollary \ref{cor:filledgenus}]
The filled genus stabilisation corresponds in the light of Theorem \ref{thm:B} to the tensor product over $R$ of the following split injective maps of $R$-modules (whence it is itself split injective):
\begin{itemize}
 \item the identity of $R[\epsilon]$;
 \item the $R$-split inclusion $R[\cH_g^R]\hookrightarrow R[\cH_{g+1}^R]$ given by the $R$-split inclusion $\cH_g^R\hookrightarrow \cH_{g+1}^R$, which is the $R$-dual of the $R$-split surjection
 $\bigoplus_{i=1}^{2g+2}Ry_i\twoheadrightarrow\bigoplus_{i=1}^{2g}Ry_i$ killing $y_{2g+1}$ and $y_{2g+2}$;
 \item the $R$-split inclusion on Ext groups induced by the $\Gamma_R(y)$-split surjective map of modules $\bM_{g+1}^R\to\bM_g\otimes_R\pa{Rx_{2g+1}\oplus Rx_{2g+2}}$ which, in the light of Proposition \ref{prop:bMgsplitting}, is the $\Gamma_R(y)$-split surjective map
 \[
  \bigoplus_{\mathclap{\xi\colon\set{1,\dots,g+1}\to\bK}} \hspace{6pt} \twoB^R_{|\xi|}\ [|\xi|-g]\longrightarrow  \hspace{6pt}\bigoplus_{\mathclap{\substack{\xi\colon\set{1,\dots,g+1}\to\bK,\\ |\xi^{-1}(\be)\cap\set{2g+1,2g+2}|=1}}} \hspace{6pt}\twoB^R_{|\xi|}\ [|\xi|-g]
 \]
killing all summands corresponding to maps $\xi\colon\set{1,\dots,g+1}\to\bK$ such that either both or neither of $2g+1,2g+2$ is sent to $\be$.
\end{itemize}
\end{proof}

\part{Solution of the Ext problem}\label{part:3}
In this part of the article we conclude the computation of $H_*(C_\bullet(\sgone);\Fp)$ as a module over $H_*(C_\bullet(D);\Fp)$, for $p$ an odd prime. We observe that Theorem \ref{thm:B} and Proposition \ref{prop:bMgsplitting} reduce this to computing $\Ext^\star_{\Gamma_{\Fp}(y)}(\twoB^{\Fp}_u,\Fp)$ as a module over $\Ext^\star_{\Gamma_{\Fp}(y)}(\Fp,\Fp)$. We observe that for $p$ an odd prime we have an isomorphism of $\Gamma_{\Fp}(y)$-modules $\twoB^{\Fp}_u\cong B^{\Fp}_u$, so we might compute $\Ext^\star_{\Gamma_{\Fp}(y)}(B^{\Fp}_u,\Fp)$ as well.

Recall that for any prime $p$ (also for $p=2$) we have an identification of rings
\[
 \Ext^\star_{\Gamma_{\Fp}(y)}(\Fp,\Fp)\cong \bigotimes_{i=0}^\infty \Ext^\star_{\Fp[\fy_i]/(\fy_i^p)}(\Fp,\Fp).
\]
\addtocounter{athm}{-1}
\begin{athm}\label{thm:C}
Let $p$ be a prime, let $u\ge0$, and let $h\ge0$ be such that
\[
p^h-1\le u\le p^{h+1}-2.
\]
Then for every $0\le i\le h$ there is a finitely generated, weighted $\Fp[\fy_i]/(\fy_i^p)$-module $N_{u,i}$, such that 
$\Ext^\star_{\Gamma_{\Fp}(y)}(B^{\Fp}_u,\Fp)$ is isomorphic, as a bigraded $\Ext^\star_{\Gamma_{\Fp}(y)}(\Fp,\Fp)$-module, to the direct sum
\[
 \bigoplus_{i=0}^h \pa{\Fp\otimes_{\Fp}\Ext^\star_{\Fp[\fy_i]/(\fy_i^p)}(N_{u,i},\Fp)\otimes_{\Fp}\bigotimes_{j=i+1}^\infty \Ext^\star_{\Fp[\fy_j]/(\fy_j^p)}(\Fp,\Fp)}.
\]
Here the $i$\textsuperscript{th} summand is the tensor product of the following modules:
\begin{itemize}
 \item $\Fp$ as the augmentation module over the ring $\bigotimes_{j=0}^{i-1} \Ext^\star_{\Fp[\fy_j]/(\fy_j^p)}(\Fp,\Fp)$;
 \item $\Ext^\star_{\Fp[\fy_i]/(\fy_i^p)}(N_{u,i},\Fp)$ as a module over $\Ext^\star_{\Fp[\fy_i]/(\fy_i^p)}(\Fp,\Fp)$;
 \item the ring $\bigotimes_{j=i+1}^{\infty}\Ext^\star_{\Fp[\fy_j]/(\fy_j^p)}(\Fp,\Fp)$ as a module over itself.
\end{itemize}
\end{athm}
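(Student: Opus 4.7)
The plan is to produce a K\"unneth-type decomposition of $\Ext^\star_{\Gamma_{\Fp}(y)}(B_u^{\Fp}, \Fp)$ by exhibiting $B_u^{\Fp}$ as a direct sum of K\"unneth-factorizable $\Gamma_{\Fp}(y)$-modules along the tensor decomposition $\Gamma_{\Fp}(y) \cong \bigotimes_{j \ge 0} \Fp[\fy_j]/(\fy_j^p)$. Concretely, I aim for a decomposition $B_u^{\Fp} \cong \bigoplus_{i=0}^h M_{u,i}$ as weighted $\Gamma_{\Fp}(y)$-modules in which each $M_{u,i}$ splits as a tensor product: the free module $\Fp[\fy_j]/(\fy_j^p)$ over itself for $j<i$, a specific module $N_{u,i}$ for $j=i$, and the trivial (augmentation) module $\Fp$ for $j>i$. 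The K\"unneth formula for $\Ext$---valid since each $\Fp[\fy_j]/(\fy_j^p)$ is a finite-dimensional local $\Fp$-algebra---then computes $\Ext^\star_{\Gamma_{\Fp}(y)}(M_{u,i}, \Fp)$ as the $i$-th summand of the theorem statement, and the modules $N_{u,i}$ are read off as the middle factor of the K\"unneth decomposition of $M_{u,i}$; it is allowed that some $N_{u,i}$ be zero.

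Two structural observations on the $\Gamma_{\Fp}(y)$-action on $B_u^{\Fp} = \Lambda_{\Fp}(z_1,\dots,z_u)$ constrain the search. First, $\fy_i = y^{[p^i]}$ acts as the elementary symmetric polynomial $e_{p^i}(z_1,\dots,z_u)$, which vanishes when $p^i > u$; hence only tensor factors indexed by $i \le h$ can act non-trivially. Second, for $p$ odd each $e_{p^i}$ is a form of odd exterior degree, so $\fy_i^2 = 0$ on $B_u^{\Fp}$ and the action factors through the exterior algebra $\Lambda_{\Fp}(\fy_0,\dots,\fy_h)$; this forces every $N_{u,i}$ to be annihilated by $\fy_i^2$. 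Combined with the restricted list of shapes allowed by Corollary \ref{acor:qualitative}, each $N_{u,i}$ is expected to be a direct sum of copies of $\Fp$ (whose Ext over $\Fp[\fy_i]/(\fy_i^p)$ is $\Fp[\alpha_i,\beta_i]$) and of $\Fp[\fy_i]/(\fy_i^{p-1})$ (whose Ext is $\Fp[\beta_i]$, accounting for the $\alpha_i$-quotient).

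I would attempt the construction by induction on $u$, using the $\Gamma_{\Fp}(y)$-equivariant short exact sequence
\[
0 \to z_u \cdot B_{u-1}^{\Fp} \to B_u^{\Fp} \to B_{u-1}^{\Fp} \to 0
\]
coming from the $\Fp$-module splitting $B_u^{\Fp} = B_{u-1}^{\Fp} \oplus z_u B_{u-1}^{\Fp}$: the submodule is $\Gamma_{\Fp}(y)$-stable because $z_u^2 = 0$ implies $e_{p^i}(z_1,\dots,z_u)\cdot z_u b = z_u \cdot e_{p^i}(z_1,\dots,z_{u-1})\, b$, and both submodule and quotient are canonically isomorphic to $B_{u-1}^{\Fp}$ as $\Gamma_{\Fp}(y)$-modules. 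The main obstacle is controlling the connecting morphism of the induced long exact $\Ext$-sequence, since this sequence is generically non-split: its behaviour encodes the transition between the decomposition at stage $u-1$ and stage $u$. The critical transitions happen when $u$ crosses the threshold $p^{h+1}-1$ (so that $h$ increments and a new index becomes permissible), while at all other steps the inductive decomposition should transport up to a controlled change of multiplicities. Verifying that the connecting morphism produces exactly the permitted shapes of $N_{u,i}$---and in particular does not mix different $i$-indices---is the combinatorial heart of the proof, after which the K\"unneth formula delivers the stated isomorphism.
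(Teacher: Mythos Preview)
Your proposal contains two genuine errors that block the argument.

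First, the claim that $\fy_i^2$ acts trivially on $B_u^{\Fp}$ is false for odd $p$. The ring $B_u^{\Fp}=\Fp[z_1,\dots,z_u]/(z_j^2)$ is \emph{commutative}, not an exterior algebra, so odd-degree elements need not square to zero. Concretely, $e_m^2=\binom{2m}{m}e_{2m}$ in $B_u$, and for $m=p^i$ Lucas gives $\binom{2p^i}{p^i}\equiv 2\pmod p$; hence $\fy_i^2$ acts as $2e_{2p^i}$, which is nonzero whenever $2p^i\le u$.

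Second, and more seriously, the direct-sum decomposition $B_u^{\Fp}\cong\bigoplus_i M_{u,i}$ with each $M_{u,i}$ a tensor product over the $\Fp[\fy_j]/(\fy_j^p)$ simply does not exist in general. Take $p=3$, $u=3$. The unique weight-$0$ element $1\in B_3$ must lie in some $M_{3,i}$. If $i=0$ then $\fy_1$ acts trivially on $M_{3,0}$, but $\fy_1\cdot 1=e_3=z_1z_2z_3\neq 0$. If $i\ge 1$ and $1=1_{\Fp[\fy_0]/(\fy_0^3)}\otimes v$, then $\fy_0^2\fy_1\cdot 1=\fy_0^2\otimes\fy_1 v$ must vanish (it would have weight $-10$ in the $8$-dimensional $B_3$), forcing $\fy_1 v=0$ and hence $\fy_1\cdot 1=0$ in $M_{3,i}$, again contradicting $\fy_1\cdot 1=e_3$. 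So no such tensor-product summand can contain $1$.

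The paper's proof avoids this obstruction by working at the level of $\Ext$, not of $B_u$ itself. It introduces the notion of a \emph{tame} module and proves a free--narrow decomposition (Proposition~\ref{prop:freenarrow}): any tame $\cA$-module splits as $F\oplus N$ with $F$ free over $\cR=\Fp[\fy_0]/(\fy_0^p)$ and $N$ supported in a narrow range of weights (so the higher $\fy_j$ act trivially on $N$). The summand $N=N_{u,0}$ is a genuine tensor-product module and its $\Ext$ is computed by K\"unneth. For the free part $F$, the paper does \emph{not} claim $F$ is a tensor product; instead it invokes the K\"unneth \emph{spectral sequence}, which collapses on the row $t=0$ precisely because $F$ is $\cR$-free, yielding
\[
\Ext^\star_{\cA}(F,\Fp)\cong \Fp\otimes\Ext^\star_{\cA'}(F/\fy_0 F,\Fp)
\]
as an $\Ext^\star_{\cA}(\Fp,\Fp)$-module. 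Proposition~\ref{prop:propagation} then shows $F/\fy_0 F$ is again tame over $\cA'$, and one recurses. The point is that the tensor decomposition the theorem asserts lives only on the $\Ext$ side; on the module side one gets an iterated free--narrow splitting, with a quotient by $\fy_i$ at each step, and it is the collapse of the spectral sequence---not a K\"unneth isomorphism for $B_u$---that produces the $\Fp$ factor killing $\alpha_j,\beta_j$ for $j<i$.
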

A characterisation of $N_{u,i}$ as a finite direct sum of cyclic $\Fp[\fy_i]/(\fy_i^p)$-modules will be given in Lemma \ref{lem:weighteddim}. For a cyclic $\Fp[\fy_i]/(\fy_i^p)$-module $N$ we consider the explicit computation of $\Ext^\star_{\Fp[\fy_i]/(\fy_i^p)}(N,\Fp)$ as a module over $\Ext^\star_{\Fp[\fy_i]/(\fy_i^p)}(\Fp,\Fp)$ to be a standard exercise in homological algebra, which we leave to the reader.

In Section \ref{sec:Kaehler} we will analyse more closely the $\Gamma_R(y)$-module $B^R_u$, for a generic commutative ring $R$. For $R=\Fp$, these results will be used in Section \ref{sec:tamemodules} to check that $B^{\Fp}_u$ is a \emph{tame} module over $\Gamma_{\Fp}(y)\cong\Fp[\fy_0,\fy_1,\dots]/(\fy_0^p,\fy_1^p,\dots)$ (see Definition \ref{defn:tame}).
We will then prove a structure result for tame modules, allowing in particular a computation of $\Ext^\star_{\Gamma_{\Fp}(y)}(B^{\Fp}_u,\Fp)$ as a $\Ext^\star_{\Gamma_{\Fp}(y)}(\Fp,\Fp)$-module.

\section{Analysis of \texorpdfstring{$B^R_u$}{BRu}}
\label{sec:Kaehler}
In this section we fix $u\ge0$ and a commutative ring $R$, and work in the category $\mathrm{Mod}_R^\Z$ of weighted $R$-modules.
Abstractly, $B^R_u$ is isomorphic to the cohomology ring of $(\C P^1)^u$ with coefficients in $R$.
The results that we obtain in the following correspond, in the case $R=\Q$, to (almost all of) the ``K\"ahler package'' of the rational cohomology of $(\C P^1)^u$, considered as a projective subvariety of $\C P^{2^u-1}$ via the standard Segre embedding (so that $y\in B^\Q_u$ corresponds to the class of a hyperplane in $\C P^{2^u-1}$). We shall use a corresponding terminology.
\begin{nota}
 For $S\subseteq\set{1,\dots,u}$ we denote by $z_S$ the element $\prod_{i\in S}z_i\in B^R_u$.
\end{nota}
Recall Definition \ref{defn:twoB}, and note that the ring homomorphism $\phi_1\colon \Gamma_R(y)\to B^R_u$ sends $y^{[k]}=y^k/k!\mapsto \sum_{S\subseteq\set{1,\dots,u},|S|=k}z_S$, for all $k\ge0$. In particular, for $S'\subset\set{1,\dots,u}$, we have $y^{[k]}\cdot z_{S'}=\sum_{S'\subseteq S\subseteq\set{1,\dots,u},|S|=|S'|+k}z_S$.

\begin{nota}\label{nota:vee}
Let $R$ be a ring (concentrated in weight 0) and let $M$ be a weighted $R$-module. We denote by $M^\vee$ the weighted $R$-module $\Hom_R(M,R)$; if $M(n)$ is the part of $M$ of weight $n\in\Z$, then $M^\vee$ has
$\Hom_R(M(n),R)$ in weight $-n$. We leave $R$ implicit in the notation.
For a weighted $R$-algebra $A$ and a weighted $A$-module $M$, we can consider $M$ as an $A$-$R$-bimodule, and thus consider $M^\vee$ as an $A$-module.
\end{nota}

\begin{lem}[Poincar\'e duality]
\label{lem:poincareduality}
 There is an isomorphism of $\Gamma_R(y)$-modules
 \[
(B^R_u)^\vee\cong B^R_u [2u].
 \]
\end{lem}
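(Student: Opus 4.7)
The plan is to exhibit an explicit isomorphism coming from the Frobenius algebra structure on $B^R_u$; both sides are free $R$-modules of the same rank in each weight, so everything reduces to a careful weight and equivariance bookkeeping.

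First I would observe that $B^R_u$ is a Frobenius $R$-algebra: it is a free $R$-module with basis $\{z_S\}_{S\subseteq\{1,\dots,u\}}$, the element $z_{\{1,\dots,u\}}$ of weight $-2u$ is a ``top class'', and the symmetric $R$-bilinear pairing
\[
\langle -,-\rangle\colon B^R_u\otimes_R B^R_u\to R,\qquad\langle a,b\rangle = \text{coefficient of } z_{\{1,\dots,u\}} \text{ in } ab,
\]
is nondegenerate with $\langle z_S,z_T\rangle=\delta_{T,S^c}$, where $S^c=\{1,\dots,u\}\setminus S$. Weightwise, $\langle a,b\rangle$ can only be nonzero when $\mathrm{wt}(a)+\mathrm{wt}(b)=-2u$, so the adjoint $R$-linear map
\[
\Psi\colon B^R_u\longrightarrow (B^R_u)^\vee,\qquad b\longmapsto \langle-,b\rangle,
\]
increases the weight by $2u$; equivalently it is a weight-preserving isomorphism $B^R_u[2u]\xrightarrow{\ \cong\ }(B^R_u)^\vee$. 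Concretely, $\Psi(z_S)=z_{S^c}^\vee$ up to sign, which verifies both bijectivity and the claimed shift.

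Next I would verify $\Gamma_R(y)$-equivariance of $\Psi$. By the definition of the $\Gamma_R(y)$-action on $(B^R_u)^\vee$ (Notation \ref{nota:vee}), it suffices to show that multiplication by $\phi_1(y^{[k]})\in B^R_u$ is self-adjoint with respect to the pairing. This is immediate from commutativity and associativity in the algebra $B^R_u$:
\[
\langle y^{[k]}\cdot a,b\rangle = \langle \phi_1(y^{[k]})a,b\rangle = \langle a,\phi_1(y^{[k]})b\rangle=\langle a,y^{[k]}\cdot b\rangle,
\]
where each equality just records the coefficient of $z_{\{1,\dots,u\}}$ in the product $\phi_1(y^{[k]})\,a\,b$. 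Consequently $\Psi(y^{[k]}\cdot b)=y^{[k]}\cdot\Psi(b)$, and $\Psi$ is $\Gamma_R(y)$-linear.

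As a sanity check one can match this with the explicit formulas: the action computed in the text gives $y^{[k]}\cdot z_{S'}=\sum_{S'\subseteq S,\,|S|=|S'|+k}z_S$, and dualising yields $y^{[k]}\cdot z_S^\vee=\sum_{T\subseteq S,\,|T|=|S|-k}z_T^\vee$. Under $z_S^\vee\leftrightarrow z_{S^c}$ the substitution $U=T^c$ converts the second formula into the first, confirming equivariance. There is no real obstacle; the only subtlety to keep track of is the convention for the weight shift $[2u]$ and to check that the pairing really is perfect over an arbitrary commutative ring $R$ (which it is, since it is perfect on the $R$-basis $\{z_S\}$).
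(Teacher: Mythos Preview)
Your proof is correct and is essentially the same as the paper's: the paper also defines the map $B^R_u[2u]\to (B^R_u)^\vee$ by $z_S[2u]\mapsto z_{\bar S}^\vee$ and asserts it is a $\Gamma_R(y)$-linear isomorphism, which is exactly your $\Psi$. Your Frobenius-algebra framing nicely explains \emph{why} this map is $\Gamma_R(y)$-linear (self-adjointness of multiplication), and you supply more detail than the paper does; the one minor remark is that the phrase ``up to sign'' is unnecessary, since $B^R_u$ is genuinely commutative and $z_S z_{S^c}=z_{\{1,\dots,u\}}$ on the nose.
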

\begin{proof}
 A weighted $R$-linear basis for $B^R_u$ is given by the monomials $z_S$. Denote by $\bar S$ the complement of $S$, and by $z_S^\vee\in(B^R_u)^\vee=\Hom_R(B^R_u,R)$ the map sending $z_S\mapsto 1$ and  $z_{S'}\mapsto 0$ for $S'\neq S$. Note that $z_S^\vee\in(B^R_u)^\vee$ has weight $2|S|$. The $R$-linear map $B^R_u [2u]\to (B^R_u)^\vee$ sending $z_S[2u]\mapsto z^\vee_{\bar S}$ is a $\Gamma_R(y)$-linear isomorphism.
\end{proof}
\begin{nota}\label{nota:Mlek}
Let $A$ be a weighted algebra and $M$ a weighted $A$-module. For $k\in\Z$ we denote by $M_{\ge k}\subseteq M$ the sub-$A$-module by the elements of weight at least $k$, and by $M_{\le k}$ the sub-$A$-module generated by the elements of weight at most $k$.
\end{nota}
Note that if $A$ is concentrated in non-negative weights, then $M_{\le k}$ is just the additive span of all elements of $M$ of weight at most $k$.
\begin{ex}
For $k\ge0$, we have that $B^R_{u,\ge -2k}\subset B^R_u$ is the sub-$\Gamma_R(y)$-module generated by all elements $z_S$ with $|S|\le k$ (i.e. $z_S$ has weight at least $-2k$), whereas $B^R_{u,\le -2k}\subset B^R_u$ is the sub-$\Gamma_R(y)$-module generated by all elements $z_S$ with $|S|\ge k$ (i.e. $z_S$ has weight at most $-2k$).
Note that $B^R_{u,\le -2k}$ coincides with the $R$-linear span of the elements $z_S$ with $|S|\ge k$.
For instance, $B^R_{u,\le -2u}$ is spanned by the single monomial $z_1\dots z_u$, and $B^R_{u,\ge 0}$ is spanned by the elements $\sum_{|S|=k}z_S$ for varying $0\le k\le u$. On the other hand, we have
$B^R_{u,\le0}=B^R_{u,\ge -2u}=B^R_u$. 
\end{ex}

\begin{lem}[Hard Lefschetz]
\label{lem:hardlefschetz}
Let $0\le k\le u$. Then $B^R_{u,\ge -u+k}$ contains $B^R_{u,\le -u-k}$.\end{lem}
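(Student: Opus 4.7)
The plan is to produce an explicit formula expressing each monomial $z_T \in B^R_u$ with $|T| \ge (u+k)/2$ as a $\Gamma_R(y)$-combination of the monomials $z_S$ with $2|S| \le u-k$. This will realise every such $z_T$ as an element of $B^R_{u,\ge -u+k}$, and since $B^R_{u,\le -u-k}$ is spanned over $R$ by these $z_T$'s, the desired containment will follow.

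First I would unwind what the two submodules look like concretely. Since $\Gamma_R(y)$ is concentrated in non-positive weights, $B^R_{u,\le -u-k}$ is simply the $R$-linear span of the monomials $z_T$ with $2|T| \ge u+k$. On the other hand, $B^R_{u,\ge -u+k}$ is the full $\Gamma_R(y)$-submodule generated by the monomials $z_S$ with $2|S| \le u-k$, which contains elements of all lower weights reachable via multiplication by the divided powers $y^{[m]}$.

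Next I would write down and verify the identity
\[
z_T \;=\; \sum_{S \subseteq \bar T} (-1)^{|S|}\, y^{[|T|-|S|]}\, z_S, \qquad \bar T := \{1,\dots,u\} \setminus T.
\]
To verify it, expand each $y^{[|T|-|S|]} z_S$ via the formula $y^{[k]} z_{S'} = \sum_{S' \subseteq S,\, |S| = |S'|+k} z_S$ recorded earlier in this section, and collect the coefficient of an arbitrary monomial $z_U$ with $|U| = |T|$ on the right-hand side: this coefficient equals $\sum_{S \subseteq \bar T \cap U} (-1)^{|S|}$, which vanishes by inclusion--exclusion unless $\bar T \cap U = \emptyset$, i.e. $U = T$, in which case it equals $1$.

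With the identity in hand, the hypothesis $|T| \ge (u+k)/2$ gives $|\bar T| \le (u-k)/2$, so every $S \subseteq \bar T$ in the sum satisfies $2|S| \le u-k$ and hence $z_S$ already lies in the generating set for $B^R_{u,\ge -u+k}$; thus each summand $y^{[|T|-|S|]} z_S$, and therefore $z_T$ itself, lies in $B^R_{u,\ge -u+k}$. The argument is uniform over any commutative ring $R$, so no characteristic-$p$ subtlety arises; the main obstacle is simply guessing the correct inclusion--exclusion formula, which once written down essentially proves itself.
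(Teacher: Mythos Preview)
Your proof is correct and follows essentially the same approach as the paper: both arguments express $z_T$ via the inclusion--exclusion identity $z_T = \sum_{S \subseteq \bar T} (-1)^{|S|} y^{[|T|-|S|]} z_S$ and observe that each $z_S$ appearing on the right lies in the desired generating set. Your write-up is in fact slightly more detailed, since you spell out the verification of the identity by computing the coefficient of each $z_U$.
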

\begin{proof}
Let $T\subseteq\set{1,\dots,u}$ have cardinality at least $(u+k)/2$, so that $z_T$ is a generator of  $B^R_{u,\le -u-k}$, and let $\bar T$ be the complement of $T$. Then the inclusion-exclusion principle implies that $z_T$ can be expressed as
 \[
  z_T=\sum_{S\subseteq \bar T}(-1)^{|S|}\ y^{[|T|-|S|]}\cdot z_S,
 \]
 and in particular $z_T\in B^R_{u,\ge -u+k}$.
\end{proof}
\begin{defn}
 A finite subset $S\subset\Z_{\ge1}$ is \emph{sparse} if for all $i\in\Z_{\ge1}$ we have $|S\cap\set{1,\dots, i}|\le|S\setminus\set{1,\dots, i}|$, or equivalently $|S\cap\set{1,\dots, i}|\le \frac{i}{2}$.
 We let
 \[
 \Theta_u\colon\bigoplus_{\mathclap{\substack{S\subseteq\set{1,\dots,u}\\[1px] S\ \mathrm{sparse}}}} \Gamma_R(y) \sca{X_S} \to B^R_u
 \]
 be the map of weighted $\Gamma_R(y)$-modules sending $X_S\mapsto z_S$, where $X_S$ is a free generator in weight $-2|S|$.
\end{defn}
We note that if $S\subseteq\set{1,\dots,u}$ is sparse, then in particular $|S|\le\floor{u/2}$.
\begin{nota}\label{nota:ellui}
 For $u\ge0$ and $0\le k\le \floor{u/2}$, we denote $\ell_{u,k}=\binom{u}{k}-\binom{u}{k-1}>0$, where we set $\binom{u}{-1}=0$.
\end{nota}
\begin{lem}\label{lem:l_u,k-cardinality}
 For $u\ge0$ and $0\le k\le \floor{u/2}$ there are precisely $\ell_{u,k}$ sparse subsets $|S|\subseteq\set{1,\dots,u}$ of cardinality $k$.
\end{lem}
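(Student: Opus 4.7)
The plan is to encode sparse subsets of size $k$ as nonnegative lattice paths and then invoke the classical reflection principle.

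First, I would set up the encoding. Given $S\subseteq\set{1,\dots,u}$ of size $k$, associate to it the lattice path of length $u$ from $(0,0)$ to $(u,u-2k)$ whose $i$\sth step goes $(1,+1)$ if $i\notin S$ and $(1,-1)$ if $i\in S$. The height of the path after $i$ steps is $i-2|S\cap\set{1,\dots,i}|$, so the sparsity condition $|S\cap\set{1,\dots,i}|\le i/2$ becomes exactly the condition that the path never descends below height $0$. Thus sparse $k$-subsets of $\set{1,\dots,u}$ are in bijection with lattice paths from $(0,0)$ to $(u,u-2k)$ using steps $(1,\pm 1)$ that remain at nonnegative height throughout.

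Next I would count these paths by subtracting the number of ``bad'' paths (those touching height $-1$) from the total $\binom{u}{k}$ of all such paths. Note that $k\le\floor{u/2}$ implies $u-2k\ge 0$, so the endpoint lies at nonnegative height and the subtraction is meaningful. For a bad path, let $t$ be the first time it reaches height $-1$, and reflect the initial segment of the path from time $0$ to time $t$ across the horizontal line $y=-1$. This swaps up- and down-steps before time $t$, and turns the original path (from $(0,0)$ to $(u,u-2k)$) into a new path from $(0,-2)$ to $(u,u-2k)$. The new path has net vertical displacement $u-2k+2$, so it has $k-1$ down-steps and $u-k+1$ up-steps; conversely, every such path (from $(0,-2)$ to $(u,u-2k)$) must cross height $-1$ at least once and reflecting back recovers a bad path. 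Hence the number of bad paths equals $\binom{u}{k-1}$.

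Combining the two steps gives the number of sparse $k$-subsets as $\binom{u}{k}-\binom{u}{k-1}=\ell_{u,k}$. There is no real obstacle here; the only points requiring care are that the ambient range of $i$ in the sparsity condition need only be checked for $i\le u$ (since for $i>u$ both sides of the inequality stabilise), and that the reflection is manifestly an involution between bad paths and paths from $(0,-2)$ to $(u,u-2k)$, so the count is exact rather than an inequality.
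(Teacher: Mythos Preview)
Your proof is correct. The paper itself does not supply an argument for this lemma, writing only ``We leave the proof to the reader''; your encoding of sparse $k$-subsets as nonnegative $\pm 1$ lattice paths followed by the reflection count is exactly the standard way to fill in this exercise, and it goes through without issue.
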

\begin{proof}
We leave the proof to the reader.
\end{proof}

\begin{lem}
\label{lem:sparsegenerate}
The map $\Theta_u$ is surjective.
\end{lem}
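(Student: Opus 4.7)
The plan is to show by strong induction on $|T|$ that every basis element $z_T\in B^R_u$ (with $T\subseteq\{1,\dots,u\}$) lies in $\Imm(\Theta_u)$; since the $z_T$ form an $R$-basis of $B^R_u$, this will establish surjectivity. The base case $|T|=0$ will follow at once from $\Theta_u(X_\emptyset)=z_\emptyset=1$.

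For the inductive step I will fix $T$ with $|T|=k\ge 1$ and assume the conclusion for every subset of size strictly less than $k$. When $|T|>u/2$, I will invoke the inclusion-exclusion identity used in the proof of Lemma \ref{lem:hardlefschetz}, namely
\[
z_T \;=\; \sum_{S\subseteq\bar T}(-1)^{|S|}\,y^{[|T|-|S|]}\,z_S,
\]
in which every index satisfies $|S|\le|\bar T|=u-|T|<|T|$; each $z_S$ will therefore lie in $\Imm(\Theta_u)$ by the primary induction hypothesis, and since $\Imm(\Theta_u)$ is a $\Gamma_R(y)$-submodule, this will place $z_T$ in the image as well. If $|T|\le u/2$ and $T$ is sparse, then $z_T=\Theta_u(X_T)$ is immediate by definition.

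The remaining and most delicate case is $|T|=k\le u/2$ with $T$ non-sparse. Here I will observe that in weight $-2k$ the source $\bigoplus_{|S|\le k,\,S\text{ sparse}}R\cdot y^{[k-|S|]}X_S$ and the target $B^R_{u,-2k}$ are free $R$-modules of the same rank $\binom{u}{k}$, by Lemma \ref{lem:l_u,k-cardinality} combined with the telescoping identity $\sum_{j=0}^{k}\ell_{u,j}=\binom{u}{k}$. Surjectivity of $\Theta_u$ in this weight, over an arbitrary commutative ring $R$, will then be equivalent to invertibility over $\Z$ of the containment matrix $M_{T,S}=[S\subseteq T]$, indexed by $k$-subsets $T$ and sparse subsets $S$ of size at most $k$. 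To show $\det(M)=\pm 1$, I will use a classical ballot-type reflection bijection $\phi$ between sparse subsets of $[u]$ of cardinality $\le k$ and all $k$-subsets of $[u]$, constructed by iterating the reflection $T\mapsto([i_0]\setminus T)\cup(T\setminus[i_0])$ at the first violation index $i_0$ until the outcome is sparse; in a total order compatible with $\phi$, the matrix $M$ will become upper triangular with unit diagonal.

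The hard part will be the combinatorial verification of the triangularity: one must pin down an ordering on the two index sets so that, whenever a sparse subset $S'\neq\phi^{-1}(T)$ is contained in $T$, it appears strictly earlier in the order than $\phi^{-1}(T)$. This is standard for ballot-type bijections, but choosing the right ordering (for instance a lexicographic refinement of $|S|$, combined with the height profile of the reflected path) is subtle; once it is made, the proof will be complete.
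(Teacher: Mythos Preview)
Your strategy is genuinely different from the paper's, and the crucial combinatorial step does not go through as written.

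The paper avoids the incidence matrix entirely and instead inducts on $u$: it uses the short exact sequence $z_uB^R_u\hookrightarrow B^R_u\twoheadrightarrow B^R_u/z_u$, identifies both ends with $B^R_{u-1}$ (up to shift), and by induction obtains that $B^R_u$ is generated over $\Gamma_R(y)$ by the elements $z_S$ with $S\subseteq\{1,\dots,u-1\}$ sparse together with the elements $z_{S\cup\{u\}}$ with $S\subseteq\{1,\dots,u-1\}$ sparse. Whenever $S\cup\{u\}$ is \emph{not} sparse one necessarily has $|S\cup\{u\}|>\lfloor u/2\rfloor$, and Hard Lefschetz then shows that $z_{S\cup\{u\}}$ is redundant. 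This is short and requires no determinant computation.

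Your proposed route is to show directly that the $\binom{u}{k}\times\binom{u}{k}$ containment matrix $M_{T,S}=[S\subseteq T]$ is unimodular, via a triangularisation coming from the iterated reflection bijection $\phi$. For this you implicitly need $\phi(T)\subseteq T$ so that the diagonal entries $M_{T,\phi(T)}$ are all equal to $1$. But this fails: take $u=8$, $k=3$, and $T=\{3,4,5\}$. The first violation is at $i_0=5$, and the reflection gives $\{1,2\}$; its first violation is at $i_0=1$, and the second reflection gives $\{2\}$. Thus $\phi(T)=\{2\}\not\subseteq T$, and the corresponding ``diagonal'' entry is $0$. So no ordering compatible with this particular $\phi$ can make $M$ unit upper triangular.

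The underlying fact you are aiming for --- that $\Theta_u$ is an isomorphism in weight $-2k$ for $k\le\lfloor u/2\rfloor$, hence $\det M=\pm1$ over $\Z$ --- is true, but it is a \emph{consequence} of the paper's surjectivity lemma plus the rank count (this is exactly how Proposition~\ref{prop:Psiukinj} is proved), not something one gets from your bijection. If you want to salvage a direct matrix argument you would need either a different bijection $\phi$ with $\phi(T)\subseteq T$ (and it is not clear one exists), or an entirely different unimodularity argument. Absent that, the third case of your induction has a genuine gap, and I would recommend replacing it with the paper's induction on $u$.
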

\begin{proof}
We induct on $u\ge0$. For $r=0$ the map $\Theta_0\colon \Gamma_R(y)\sca{X_\emptyset}\to B^R_0\cong R$ sends $X_\emptyset\mapsto z_\emptyset=1$, so it is surjective.
For the inductive step, we assume $u\ge1$ and note that there is a short exact sequence of $\Gamma_R(y)$-modules
\begin{equation}\label{eq:SESBuR}
    z_u B^R_u\hookrightarrow B^R_u\twoheadrightarrow B^R_u/z_u,
\end{equation}
and there are $\Gamma_R(y)$-isomorphisms $B^R_{u-1}[-2]\cong z_u B^R_u$, sending $z_S[-2]\mapsto z_uz_S$, for $S\subseteq\set{1,\dots,r-1}$, and $B^R_{u-1}\cong B^R_u/z_u$, sending $z_S\mapsto [z_S]_{/z_u}$.

By the inductive hypothesis and \eqref{eq:SESBuR}, the following elements suffice, together, to generate the entire $B^R_u$ over $\Gamma_R(y)$:
\begin{itemize}
 \item the elements $z_S$, lifting $[z_S]_{/z_u}$, for $S\subseteq\set{1,\dots,u-1}$ sparse;
 \item the elements $z_{S\cup\set{u}}=z_uz_S$, for $S\subseteq\set{1,\dots,u-1}$ sparse.
\end{itemize}
Suppose now that $S\subseteq\set{1,\dots,u-1}$ is sparse but $S\cup\set{u}$ is not sparse. Then the failure of $S\cup\set{u}$ from being sparse must be witnessed by the inequality $|S|>|\set{1,\dots,u}\setminus S|$, that is $|S|>\floor{u/2}$; by Lemma \ref{lem:hardlefschetz} we can safely remove the element $z_{S\cup\set{u}}=z_uz_S$ from the above list and still be left with a list of generators of $B^R_u$.
\end{proof}
\begin{prop}\label{prop:Psiukinj}
 The map $\Theta_u$ is injective in weights at least $-2(u-\floor{u/2})$. More generally, for all $0\le k\le\floor{u/2}$, the map $\Theta_u$ restricts to a map
 \[
 \Theta_{u,k}\colon\bigoplus_{\mathclap{\substack{S\subseteq\set{1,\dots,u}\\[1px]|S|\le k,\  S\ \mathrm{sparse}}}} \Gamma_R(y) \sca{X_S} \to B^R_u
 \]
that is split injective (as a map of $R$-modules) in every weight $-2m\ge -2(u-k)$. In particular $B^R_{u,\ge -2k}$, that is the image of $\Theta_{u,k}$, agrees with a free $\Gamma_R(y)$-module in weights at least $-2(u-k)$.
\end{prop}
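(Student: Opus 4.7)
The plan is to prove the second, more general statement, from which the first follows by setting $k=\lfloor u/2\rfloor$. The argument proceeds in three stages: identifying the image, verifying the rank count, and establishing split injectivity.

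First, I would show that $\mathrm{Im}(\Theta_{u,k})=B^R_{u,\ge -2k}$. The inclusion $\subseteq$ is immediate since sparse generators $z_S$ with $|S|\le k$ have weight $\ge -2k$. For the reverse inclusion, one adapts the inductive argument of Lemma \ref{lem:sparsegenerate}: every $z_T$ with $|T|\le k$ can be written as a $\Gamma_R(y)$-combination of $z_S$ with $S$ sparse and $|S|\le k$, because the ``non-sparse'' generators removed in that argument via Lemma \ref{lem:hardlefschetz} all have cardinality $>\lfloor u/2\rfloor\ge k$, hence never interfere with the restricted sub-module. Next, I would perform the rank count: in weight $-2m$ the source has rank $\sum_{j=0}^{\min(k,m)}\ell_{u,j}$, which by Lemma \ref{lem:l_u,k-cardinality} telescopes to $\binom{u}{m}$ for $m\le k$, and equals $\binom{u}{k}\le\binom{u}{m}$ for $k<m\le u-k$ by unimodality of binomial coefficients.

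The crux of the proof is split injectivity, which I would establish by induction on $u$ via the short exact sequence $z_uB^R_u\hookrightarrow B^R_u\twoheadrightarrow B^R_u/(z_u)\cong B^R_{u-1}$ of weighted $\Gamma_R(y)$-modules from the proof of Lemma \ref{lem:sparsegenerate}. This sequence splits as weighted $R$-modules, giving $B^R_u\cong B^R_{u-1}\oplus z_uB^R_{u-1}$. Sparse subsets of $\set{1,\ldots,u}$ partition accordingly: those with $u\notin S$ are sparse in $\set{1,\ldots,u-1}$, while those with $u\in S$ have the form $S'\cup\set{u}$ with $S'$ sparse in $\set{1,\ldots,u-1}$ satisfying a cardinality bound. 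Using the coproduct identity $y^{[j]}\cdot z_S=y^{[j]}_{(u-1)}z_S+z_u\cdot y^{[j-1]}_{(u-1)}z_S$ (already implicit in the proof of Proposition \ref{prop:differential}), one checks that $\Theta_{u,k}|_{-2m}$ becomes block-triangular as a map of $R$-modules, with diagonal blocks equal to $\Theta_{u-1,k}|_{-2m}$ and a sub-map of $\Theta_{u-1,k-1}|_{-2(m-1)}$.

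For weights $m<u-k$, the inductive hypothesis applies to both diagonal blocks, yielding split injectivity of the whole matrix by triangularity. The main obstacle will be the boundary weight $m=u-k$: here the ``top'' diagonal block $\Theta_{u-1,k}|_{-2(u-k)}$ can have source rank $\binom{u-1}{k}$ exceeding its target rank $\binom{u-1}{k-1}$ when $k<u/2$, and hence is not even injective, so the block-triangular argument does not close. One must exploit the off-diagonal entries coming from the coproduct. I anticipate handling this either by a direct unimodular-submatrix construction using a canonical completion of each sparse $S$ to a size-$m$ subset of $\set{1,\ldots,u}$ containing $S$ together with the parenthesis-matching partner of $S$, or more structurally by invoking Poincar\'e duality (Lemma \ref{lem:poincareduality}) to trade the top weight for a lower one where the inductive hypothesis applies cleanly.
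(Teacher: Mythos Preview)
Your approach differs from the paper's in how the intermediate weights $k<m<u-k$ are handled. The paper reduces to $R=\Z$, treats $m\le k$ and $m=u-k$ by a direct rank-plus-surjectivity argument (source and target both free of rank $\binom{u}{m}$, with surjectivity coming from Lemmas~\ref{lem:hardlefschetz} and~\ref{lem:sparsegenerate}, hence an isomorphism), and for $k<m<u-k$ composes with the ring quotient $\mathfrak q_{u,m+k}\colon B^{\Z}_u\twoheadrightarrow B^{\Z}_{m+k}$ killing $z_{m+k+1},\dots,z_u$ to reduce to the boundary case for the smaller value $u'=m+k$. Your induction on $u$ via the short exact sequence $z_uB^R_u\hookrightarrow B^R_u\twoheadrightarrow B^R_{u-1}$ is a genuinely different mechanism for that intermediate range; your block-triangular decomposition is correct, and for $m<u-k$ both diagonal blocks $\Theta_{u-1,k}|_{-2m}$ and $\Theta_{u-1,k-1}|_{-2(m-1)}$ do fall under the inductive hypothesis (note that the intermediate range being non-empty forces $k<\lfloor u/2\rfloor$, so $k\le\lfloor(u-1)/2\rfloor$ and the hypothesis is available).

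Your only real gap is the boundary weight $m=u-k$, and you are making it harder than necessary. You already have the ingredients: Step~1 gives $\mathrm{Im}(\Theta_{u,k})=B^R_{u,\ge -2k}$, and Lemma~\ref{lem:hardlefschetz} (applied with the lemma's parameter equal to $u-2k$) shows this contains $B^R_{u,\le -2(u-k)}$, so $\Theta_{u,k}$ is \emph{surjective} onto all of $(B^R_u)_{-2(u-k)}$. Combined with your Step~2 rank count (both sides have rank $\binom{u}{k}=\binom{u}{u-k}$), over $\Z$ a surjection between free modules of the same finite rank is an isomorphism, and basechange handles general $R$. This is precisely the paper's treatment of the boundary, and it removes any need for an ad~hoc unimodular submatrix or a separate Poincar\'e-duality argument. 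With the boundary handled this way and the range $m<u-k$ by your induction, the proof is complete.
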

\begin{proof}
It is enough to prove the result for the case $R=\Z$: for general $R$, we notice that all modules and maps are basechanged from $\Z$, and basechange preserves split injectivity.

Let $k\le\floor{u/2}$, and let $m$ be either at most $k$ or equal to $u-k$. By Lemma \ref{lem:l_u,k-cardinality}, in weight $-2m$ both the source and the target of $\Theta_{u,k}$ are free $\Z$-modules of same rank $\binom{r}{m}$. Moreover, by Lemmas \ref{lem:hardlefschetz} and \ref{lem:sparsegenerate}, the map 
$\Theta_{u,k}$ is surjective in weight $-2m$. It follows that $\Theta_{u,k}$ is an isomorphism in this weight, and in particular it is split injective.

Consider now a weight $-2m$ for $k<m<u-k$. We can consider the surjective $\Gamma_{\Z}(y)$-map $\mathfrak{q}_{u,m+k}\colon B^R_u\twoheadrightarrow B^{\Z}_{m+k}$ obtained by quotienting the variables $z_{m+k+1},\dots,z_u$ in the ring $B^{\Z}_u$. The composition $\mathfrak{q}_{u,m+k}\circ \Theta_{u,k}$ coincides with $\Theta_{m+k,k}$, and in particular it restricts to an isomorphism, and in particular a split injection, in weight $-2m$ by the previous argument. It follows that $\Theta_{u,k}$ is split injective in weight $-2m$.
\end{proof}
\begin{rem}
\label{rem:2andQ}
 Putting together the results of this section, and taking $R=\Q$, one obtains that $B^\Q_u$ is isomorphic, as a module over the ring $\Gamma_\Q(y)\cong\Q[t]$, to the direct sum
 \[
\bigoplus_{k=0}^{\floor{u/2}}\pa{\Q[y]/(y^{u+1-2k})[-2k]}^{\bigoplus \ell_{u,k}}.
\]
In particular, we obtain the analogue of Theorem \ref{thm:C} over $\Q$, and together with Theorem \ref{thm:B}, we obtain a computation of $H_*(C_\bullet(\sgone);\Q)$ as a $H_*(C_\bullet(D);\Q)$-module.
\end{rem}

\section{Tame modules}
\label{sec:tamemodules}
In the entire section we fix a field $\bF$ and work in the category $\mathrm{Mod}_{\bF}^\Z$ of weighted $\bF$-vector spaces; all tensor products are assumed to be over $\bF$.
\begin{nota}\label{nota:intervals}
For $m,m'\in\Z\cup\set{\pm\infty}$ with $m<m'$, we denote by ${]m,m'[}\subseteq\Z$ the set of all $i\in\Z$ with $m<i<m'$. Similarly, if $m,m'\in\Z$, we let $[m,m']=\set{i\in\Z\,|\, m\le i\le m'}$, and we define analogously $[m,m'[$ and $]m,m']$.
\end{nota}

\subsection{Definition of tame modules and examples}
\begin{defn}
 \label{defn:cAud}
Let $\ud=(d_0,d_1,d_2,\dots)$ be a sequence of positive integers with $d_0\ge1$ and $d_i\ge2$ for $i\ge1$.
We denote by $\cA(\ud)$ the truncated polynomial algebra
\[
\cA(\ud)=\bF[\fy_0,\fy_1,\fy_2,\dots]/(\fy_0^{d_1},\fy_1^{d_2},\fy_2^{d_3},\dots)=\bigotimes_{i=0}^\infty\bF[\fy_i]/(\fy_i^{d_{i+1}}),
\]
where $\fy_i$ is a variable of weight $-2\prod_{j=0}^id_j$.
\end{defn}
\begin{nota}\label{nota:cAcR}
We fix the sequence $\ud$ throughout the section, and abbreviate $\cA=\cA(\ud)$. We also set $D_i=\prod_{j=0}^id_j$ for all $i\ge-1$, so that $\fy_i$ has weight $-2D_i$ for all $i\ge0$; in particular $D_0=d_0$ and $D_{-1}=1$. We denote by $\cR=\bF[\fy_0]/(\fy_0^{d_1})$ the subalgebra of $\cA$ generated by the first variable $\fy_0$, and by $\cA'$ the ``complementary'' subalgebra $\bigotimes_{i=1}^\infty\bF[\fy_i]/(\fy_i^{d_{i+1}})$. Note that $\cA'=\cA(d_1,d_2,d_3,\dots)$.
\end{nota}
Note that $\cA$ is a weighted $\bF$-algebra concentrated in non-positive weights: more precisely, $\cA$ has dimension 1 over $\bF$ in each non-positive weight multiple of $2d_0$, and dimension 0 in every different weight. Moreover $\cA\cong\cR\otimes\cA'$.

\begin{ex}\label{ex:cAGammay}
If $p$ is a prime number, we can set $\bF=\Fp$, $d_0=1$ and $d_i=p$ for all $i\ge1$: then $\cA=\cA(\ud)$ is isomorphic to $\Gamma_{\Fp}(y)$, with $\fy_i$ corresponding to $y^{[i]}$.
\end{ex}

\begin{defn}
 \label{defn:tame}
 Let $A$ be a weighted algebra concentrated in non-positive weights. A weighted $A$-module $M$ is \emph{tame} if it satisfies the following conditions:
 \begin{itemize}
  \item $M$ is finite dimensional as an $\bF$-vector space;
  \item $M\cong M^\vee$ as $A$-modules;
  \item for all $k\ge0$, $M_{\ge k}$ (see Notation \ref{nota:Mlek}) agrees in weights at least $-k$ with a free $A$-module. In other words, there exists a free $A$-module $\cF_k$, generated in weights at least $k$, and an $A$-module homomorphism $\psi_k\colon\cF_k\to M$ which is bijective in weights $[k,\infty[$ and injective in weights $[-k, k]$.
 \end{itemize}
For $u\in\Z$, $M$ is \emph{$u$-tame} over $A$ if the shifted module $M[u]$ is tame over $A$.
\end{defn}
We note that if $M$ is tame over $A$ and $A'\subset A$ is a sub-$\bF$-algebra such that $A$ is free as an $A'$-module, then $M$ is also tame over $A'$. 
\begin{ex}\label{ex:augmentation}
If $A$ is concentrated in non-positive weights and is $\bF$ in weight 0 (generated by the unit), then the augmentation module $\bF$, i.e. the quotient of $A$ by its elements of negative weight, is a tame $A$-module.
\end{ex}
\begin{nota}
In the setting of Example \ref{ex:augmentation}, we denote by $A_+\subset A$ the augmentation ideal of $A$, spanned by all elements of negative weight.
\end{nota}

\begin{ex}
 For all $i\ge 0$, the shifted quotient $\cA/(\fy_i,\fy_{i+1},\dots)[D_i-d_0]$ is a tame $\cA$-module: it is generated by the single element $1$ (in weight $D_i-d_0$), and it coincides with the quotient of $\cA[D_i-d_0]$ by all elements of weight at most $-D_i+d_0$.
\end{ex}
\begin{ex}\label{ex:Bputame}
 Let $p$ be a prime number, let $\bF=\Fp$, and let $u\ge0$. Then Lemma \ref{lem:poincareduality} and Proposition \ref{prop:Psiukinj} imply that $B^p_u[u]$ is tame as a module over the algebra $\cA\cong\Gamma_{\Fp}(y)$ from Example \ref{ex:cAGammay}; in other words, $B^p_u$ is $u$-tame.
\end{ex}

\begin{rem}
\label{rem:choicecF}
Let $M$ be a tame $A$-module, let $k\ge0$, and let $\cF_k$ be as in Definition \ref{defn:tame}. Then $\cF_k$ must have in each weight $m\ge k$ as many free $A$-generators as the dimension of $M/A_+M$ in weight $m$: this characterises $\cF_k$ up to isomorphism.
Moreover, let $\psi_k,\tilde\psi_k\colon\cF_k\to M$ be two $A$-linear maps inducing a bijection in weights at least $ k$; then both $\psi_k$ and $\tilde\psi_k$ induce a bijection $\cF_k/A_+\cF_k\overset{\cong}{\to} M/A_+M$ in weights at least $k$, and the composite bijection $\cF_k/A_+\cF_k\overset{\cong}{\to} M/A_+M\overset{\cong}{\leftarrow}\cF_k/A_+\cF_k$ can be lifted to an $A$-linear automorphism of $\cF_k$ intertwining $\psi_k$ and $\tilde\psi_k$. In particular $\psi_k$ is injective in weights in $[-k,k]$ if and only if $\tilde \psi_k$  is injective in this range of weights.

\end{rem}

\subsection{Barcodes}
Recall Notation \ref{nota:cAcR}. If $M$ is a $\cA$-module, it is in particular a $\cR$-module; since $\cA$ is free over $\cR$, if $M$ is tame over $\cA$ then it is also tame over $\cR$. The ring $\cR$ is a quotient of the PID $\bF[\fy_0]$, and a finitely generated, weighted $\cR$-module $M$ is isomorphic to a finite direct sum of shifts of cyclic modules, of the form $\bF[\fy_0]/(\fy_0^c)[m]$ for some $1\le c\le d_1$ and $m\in\Z$.

\begin{defn}
The $\fy_0$-\emph{barcode} of a finitely generated, weighted $\cR$-module $M$ is the finite multiset $\cB_{\fy_0}(M)$ of pairs $(m,c)\in\Z\times\set{1,\dots,d_1}$ indexing the direct sum $\bigoplus_{(m,c)}\cR/\fy_0^c[m]$ classifying the isomorphism type of $M$ as an $\cR$-module.

Each pair $(m,c)$ is a $\fy_0$-\emph{bar}; the number $c$ is the \emph{size} of the bar, and it coincides with the dimension over $\bF$ of $\cR/(\fy_0^c)[m]$;
the support of $(m,c)$ is the finite subset $\set{m-2id_0\,|\,0\le i\le c-1}\subset\Z$. The \emph{barycentre} of $(m,c)$ is the integer $m-(c-1)d_0$.
\end{defn}
\begin{lem}\label{lem:barycentre}
Let $u\in\Z$ and let $M$ be a $u$-tame $\cR$-module;
then the barcode $\cB_{\fy_0}(M)$
satisfies the following:
\begin{enumerate}
 \item for each $i\in\Z$, and each $1\le c\le d_1$, there are as many bars of size $c$ with barycentre $-u-i$ as there are with barycentre $-u+i$;
 \item each bar $(m,c)$ of size $1\le c<d_1$ has barycentre
inside $]{-u-d_0},{-u+d_0}[$.
\end{enumerate}
\end{lem}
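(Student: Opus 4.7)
The plan is to reduce both claims to the case $u = 0$ by passing to the shifted module $M[u]$, which is tame over $\cR$ by definition. In those coordinates, property~(1) becomes the symmetry of the $\fy_0$-barcode of $M[u]$ under $b \mapsto -b$ (size by size), and property~(2) becomes the assertion that every bar of size $c' < d_1$ has barycentre in $]-d_0, d_0[$; shifting back by $-u$ then yields the statements as written.

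For property~(1), I would first compute the $\bF$-linear dual of a cyclic $\cR$-module by a direct dual-basis calculation, obtaining an isomorphism
\[
\pa{\cR/(\fy_0^c)[m]}^\vee \;\cong\; \cR/(\fy_0^c)[-m + 2(c-1)d_0].
\]
On the level of bars, this is the involution $(m,c)\mapsto(-m+2(c-1)d_0,c)$, which preserves the size and negates the barycentre $b = m-(c-1)d_0$. Applying this involution to the (unique) bar decomposition of $M[u]$, the self-duality $M[u]\cong(M[u])^\vee$ forces the multiset of barycentres of bars of each fixed size to be invariant under $b \mapsto -b$, which is exactly the symmetry required.

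For property~(2), I would fix a bar of $M[u]$ of size $c' < d_1$ and top weight $m'$; using property~(1) I may assume $m' \ge 0$ (the case $m' < 0$ follows by applying the same argument to the paired bar). Apply the tame approximation at threshold $k = m'$: this provides a free $\cR$-module $\cF_{m'}$ generated in weights $\ge m'$ and a map $\psi_{m'}\colon\cF_{m'}\to M[u]$ bijective in weights $\ge m'$ and injective in weights $[-m',m']$. Let $v\in M[u]$ be the top of the given bar (weight $m'$) and let $g\in\cF_{m'}(m')$ be its unique preimage. The key observation is that $v\notin \fy_0\,M[u]_{\ge m'}$ by the direct sum decomposition of $M[u]_{\ge m'}$ into bars, and since $\psi_{m'}$ is $\cR$-linear this forces $g\notin \fy_0\,\cF_{m'}$; hence $g$ is a free $\cR$-generator, so $\fy_0^{c'}g$ is a \emph{nonzero} element of $\cF_{m'}$ in weight $m'-2c'd_0$. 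Its image $\psi_{m'}(\fy_0^{c'}g) = \fy_0^{c'}v = 0$ shows $\fy_0^{c'}g\in\ker\psi_{m'}$, and the injectivity of $\psi_{m'}$ on $[-m',m']$ forces $m' - 2c'd_0 < -m'$, i.e. $m' < c'd_0$, whence the barycentre $m'-(c'-1)d_0 < d_0$.

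The main obstacle, and the reason the tameness hypothesis is exactly what is needed, is the step converting the global injectivity statement for $\psi_{m'}$ into a sharp, bar-by-bar inequality; the crucial ingredient there is the implication $v\notin\fy_0 M[u]_{\ge m'} \Rightarrow g\notin\fy_0\cF_{m'}$ coming from $\cR$-linearity, which ensures that the specific element $g$ we are probing is a free generator (and hence that $\fy_0^{c'}g$ is nonzero in $\cF_{m'}$ when $c' < d_1$). Property~(1), by contrast, is essentially formal once the duality computation on cyclic modules is in hand.
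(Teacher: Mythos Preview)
Your proposal is correct and follows essentially the same route as the paper. Both reduce to $u=0$, deduce (1) from self-duality (you spell out the cyclic-module dual computation, the paper just cites $M\cong M^\vee$), and for (2) use the free approximation $\psi_{m'}$ to see that a generator $v$ of a short bar lifts to a free generator $g\in\cF_{m'}$, so $\fy_0^{c'}g\neq 0$ must land outside the injectivity range $[-m',m']$, forcing $m'<c'd_0$; the paper phrases this contrapositively (assume $m'\ge c d_0$ and derive that $\nu$ is divisible by $\fy_0$ in $M$, contradicting that it generates a summand), but the content is identical. One small wording point: your justification ``by the direct sum decomposition of $M[u]_{\ge m'}$ into bars'' should really refer to the bar decomposition of $M[u]$ itself---the point is simply that $v\notin\fy_0 M[u]$ because $v$ generates a cyclic summand of $M[u]$, and then a fortiori $v\notin\fy_0 M[u]_{\ge m'}$.
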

\begin{proof}
We only consider the case $u=0$.
Part (1) is a direct consequence of the $\cR$-linear isomorphism $M\cong M^\vee$.
For part (2), let $(m,c)$ be a bar in $\cB_{\fy_0}(M)$ with $1\le c<d_1$;
by (1) we can assume
$m\ge (c-1)d_0$ that is, 
$(m,c)$ has barycentre at least $0$. We now assume
$m\ge cd_0$, that is, the barycentre is at least $d_0$,
and look for a contradiction.
Decompose of $M$ as a direct sum of cyclic $\cR$-modules, and let $\nu\in M$ be a generator of a summand corresponding to the bar $(m,c)$;
then $\nu\in M_{\ge m}$, which agrees with a free $\cR$-module in weights at least $-m$; since $\fy_0^c\nu=0$ is an equality in $M_{\ge m}$ in weight $m-2cd_0\ge -m$, we must have that $\nu$ is divisible by $\fy_0^{d_1-c}$, hence by $\fy_0$, in $M_{\ge m}$ and hence also in $M$. But $\nu$ generates a cyclic summand of $M$.
\end{proof}

\subsection{The free-narrow decomposition}
\begin{defn}
 A graded vector space is \emph{narrow} if it is concentrated in weights in $]{-D_1+d_0-u},{D_1-d_0-u}[$.
\end{defn}

\begin{prop}\label{prop:freenarrow}
Let $u\in\Z$ and let $M$ be a $u$-tame $\cA$-module (see Notation \ref{nota:cAcR}).
Then $M$ is isomorphic, as an $\cA$-module, to the direct sum $F\oplus N$ of two $u$-tame $\cA$-modules $F$ and $N$, such that $F$ is free over the subring $\cR\subset\cA$, and $N$ is narrow.
The isomorphism types of $F$ and $N$ as $\cA$-modules are uniquely determined by $M$.
\end{prop}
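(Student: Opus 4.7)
The plan is to identify the summands $F$ and $N$ via the $\fy_0$-barcode of $M$ viewed as an $\cR$-module. Call a bar $(m,c)$ \emph{short} if $c<d_1$ and \emph{full} if $c=d_1$. Since $M$ is $u$-tame over $\cA$ it is in particular $u$-tame over $\cR$, so Lemma \ref{lem:barycentre} applied to $M[u]$ and translated back yields that every short bar of $M$ has barycentre in $]-u-d_0,-u+d_0[$. Using $(c-1)d_0\le(d_1-2)d_0=D_1-2d_0$, the support of such a bar is then contained in the narrow range $]-D_1+d_0-u,D_1-d_0-u[$. Thus the short bars assemble into an $\cR$-submodule of $M$ supported in the narrow range, while the full bars contribute an $\cR$-free complement.

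The key observation is that any narrow $\cA$-submodule of $M$ is automatically annihilated by $\cA'$. Indeed, for $i\ge1$ and $x$ of narrow weight $w$, the weight of $\fy_i x$ is $w-2D_i\le w-2D_1$, which lies strictly below the narrow range; combined with $\cA$-stability this forces $\fy_i x=0$. Conversely, a narrow $\cR$-submodule annihilated by $\cA'$ is automatically $\cA$-stable. Hence I must realize $N$ as a narrow $\cR$-submodule of $M$ killed by every $\fy_i$ with $i\ge1$.

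To construct $N$, I first fix any $\cR$-submodule $N_0\subseteq M$ realizing the direct sum of short bars, with $\cR$-complement $F_0$ realizing the direct sum of full bars. For each short-bar top generator $\nu\in N_0$ of weight $m$ and each $i\ge1$, the element $\fy_i\nu$ has weight $m-2D_i$ strictly below the narrow range, hence lies in $F_0$. I then perturb $\nu$ to $\tilde\nu=\nu+\delta$ with $\delta$ of weight $m$ in $F_0$, chosen so that $\fy_i\tilde\nu=0$ simultaneously for all $i\ge1$. This amounts to realizing each $\fy_i\nu$ as the image, under $\fy_i$, of a suitable element of the weight-$m$ component of $F_0$, compatibly across $i$'s. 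I expect this to follow from the tameness of $M$ applied at $k=m$ (which lifts $\nu$ to a free $\cA$-generator of $\cF_m$ and controls the obstruction $\fy_i\nu$), together with the self-duality $M\cong M^\vee[-2u]$, which, applied to the full bars, provides the necessary freedom in $F_0$; compatibility across different $i$ is ensured by the commutativity of $\cA$. This perturbation step is the main obstacle, and is essentially the vanishing of an $\Ext^1_{\cA}$-type obstruction controlling the splitting.

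Once $N$ is an $\cA$-submodule, the quotient $F:=M/N$ is an $\cA$-module free over $\cR$, and splitting the short exact sequence $0\to N\to M\to F\to 0$ of $\cA$-modules as $\cA$-modules is a second, dual splitting problem of the same flavour, handled by the same perturbation technique; alternatively, freeness of $F$ over $\cR$ reduces it to choosing $\cA'$-equivariant lifts of the generators of $F$. Both $F$ and $N$ are then easily seen to be $u$-tame. Uniqueness of the isomorphism types follows from the fact that the $\cR$-barcode of $M$ canonically separates into full and short bars, which determine the isomorphism types of $F$ and $N$ respectively via a Remark \ref{rem:choicecF}-style argument.
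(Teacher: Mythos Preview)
Your overall strategy---separate the $\cR$-barcode into full and short bars, then perturb the short-bar generators so that the resulting narrow $\cR$-summand becomes $\cA$-stable---is exactly the paper's approach. However, the perturbation step, which you correctly identify as the main obstacle, is not carried out, and the sketch you give does not work as stated.

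Concretely: you want $\delta\in F_0$ of weight $m$ with $\fy_i\delta=-\fy_i\nu$ for all $i\ge1$. Invoking tameness at $k=m$ gives a map $\psi_m\colon\cF_m\to M$ that is injective only in weights $\ge -m$; but for a short-bar generator $\nu$ one has $m<D_1-d_0$, so $\fy_i\nu$ lives in weight $m-2D_i<-m$, outside the controlled range. Hence lifting $\nu$ to a free generator of $\cF_m$ tells you nothing about $\fy_i\nu$. The paper resolves this by an \emph{inductive} construction (Lemma \ref{lem:perturbation}): one builds $w_h$ with $\fy_iw_h=\fy_i\nu$ for $1\le i\le h$ by adding at each stage a correction term divisible by a large monomial $\fy_0^{d_1-c}\fy_1^{d_2-1}\cdots\fy_{h-1}^{d_h-1}$. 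The existence of this correction relies on a separate divisibility lemma (Lemma \ref{lem:kerinImm}): an element in the intersection of kernels $\bigcap_j\ker(\fy_j^{d_j-i_j}\cdot-)$, in sufficiently low weight, lies in the image of the monomial $\prod_j\fy_j^{i_j}$. This lemma is where self-duality and the free-in-a-range condition are genuinely combined, via a rank-nullity argument applied to $M^\vee$. Your appeal to ``self-duality\ldots provides the necessary freedom in $F_0$'' is in the right spirit but is not a proof; you need this lemma or an equivalent.

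For the second splitting problem, the paper avoids repeating the perturbation argument: it defines $F$ as the kernel of $M\cong M^\vee\twoheadrightarrow\bar N^\vee$, then checks that the composite $F\hookrightarrow M\cong M^\vee\twoheadrightarrow F^\vee$ is an isomorphism by passing to $(-)/(-)\{\fy_0^{d_1-1}\}$, where $\bar N$ and $\bar N^\vee$ disappear. This is cleaner than either alternative you suggest. Finally, note that the paper's correction $\bar w$ lies in $\mathrm{Im}(\fy_0\cdot-)$ rather than in $F_0$; this weaker condition is what the inductive argument actually produces, and it suffices because a $\fy_0$-multiple does not disturb the $\cR$-direct-sum decomposition.
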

The rest of the subsection is devoted to the proof of Proposition \ref{prop:freenarrow}; we only consider the case $u=0$, and fix a tame $\cA$-module $M$ throughout the subsection.

By Lemma \ref{lem:barycentre}, $M$ decomposes as an $\cR$-module as a direct sum $\hat F\oplus \hat N$, where $\hat F$  and $\hat N$ satisfy the requirements \emph{as modules over $\cR$}; however it is not clear a priori whether one can achieve that $\hat F$ and $\hat N$ are tame sub-$\cA$-modules of $M$.
For $\hat N$, which is narrow, being a sub-$\cA$-module of $M$ is equivalent to the requirement that for all $i\ge1$, the map $\fy_i\cdot-\colon M\to M$ restricts to the zero map on $\hat N$.

We fix a decomposition $M\cong \hat F\oplus \hat N$ over $\cR$ as above throughout the subsection.

\begin{lem}\label{lem:kerinImm}
 Let $\ua=\fy_0^{i_0}\fy_1^{i_1}\dots \fy_h^{i_h}$ be a monomial in $\cA$, with $h\ge0$ and $0\le i_j< d_j$ for $0\le j\le h$, and denote $k=\sum_{j=0}^h D_ji_j$, so that $\ua$ has weight $-2k$.
 Let $\nu\in M$ be an element of weight at most $-k$, and assume that $\fy_j^{d_j-i_j}\nu=0$ for all $0\le j\le h$. Then $\nu$ is in the image of the map $\ua\cdot-\colon M\to M$.
\end{lem}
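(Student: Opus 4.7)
The lemma asserts that in a tame $\cA$-module, the annihilator conditions $\fy_j^{d_j-i_j}\nu=0$ together with the weight condition $\operatorname{wt}(\nu)\le -k$ force $\nu$ to lie in the image of multiplication by $\ua$. The plan is to reduce the assertion to the case where $M$ is free over $\cA$, in which case it becomes a routine computation in the truncated polynomial algebra, and to effect this reduction using the free approximation afforded by the tameness hypothesis.

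First I would handle the free case. If $M=\cA\langle g\rangle$ with $\nu=ag$ for some $a\in\cA$, then the hypothesis $\fy_j^{d_j-i_j}\nu=0$ translates to $\fy_j^{d_j-i_j}a=0$ in $\cA$. Since $\bF[\fy_j]/(\fy_j^{d_{j+1}})$ is a zero-dimensional local Gorenstein ring, the annihilator of $\fy_j^{d_j-i_j}$ is precisely the principal ideal $(\fy_j^{d_{j+1}-d_j+i_j})$. As the $\fy_j$'s are coprime variables in the tensor product $\cA$, intersecting these constraints shows $a$ is divisible by $\prod_j\fy_j^{d_{j+1}-d_j+i_j}$, and hence by $\ua=\prod_j\fy_j^{i_j}$ (since $d_{j+1}\ge d_j$ in the setting of interest). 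Thus $\nu=\ua\mu$ for some $\mu$, and the same argument extends directly to a direct sum of cyclic free $\cA$-modules.

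For general tame $M$, set $w=\operatorname{wt}(\nu)\le-k$ and invoke the tameness hypothesis to obtain a free $\cA$-module $\cF_{|w|}$ together with an $\cA$-linear map $\psi_{|w|}\colon\cF_{|w|}\to M$ that is bijective in weights $\ge|w|$ and injective in weights $[w,|w|]$. Note that the target weight $w+2k\le k\le|w|$ of the hypothetical $\mu$ falls in the bijective range. The strategy is then: lift $\nu$ to some $\tilde\nu\in\cF_{|w|}$ in weight $w$ (using injectivity of $\psi_{|w|}$ there), observe that the annihilator conditions on $\tilde\nu$ follow from those on $\nu$ because $\fy_j^{d_j-i_j}\tilde\nu$ lives in a weight range where $\psi_{|w|}$ is injective and maps to $\fy_j^{d_j-i_j}\nu=0$, apply the free case to $\cF_{|w|}$ to produce $\tilde\mu\in\cF_{|w|}$ with $\ua\tilde\mu=\tilde\nu$, and finally take $\mu:=\psi_{|w|}(\tilde\mu)$, so that $\ua\mu=\psi_{|w|}(\ua\tilde\mu)=\psi_{|w|}(\tilde\nu)=\nu$.

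The main obstacle is establishing that the lift exists, i.e.\ that $\nu\in\operatorname{Im}(\psi_{|w|})=M_{\ge|w|}$. I expect to handle this via the self-duality $M\cong M^{\vee}$: under this identification, an element of weight $w$ lies in $M_{\ge|w|}$ precisely when it is killed by a natural annihilator ideal in the dual, and the hypotheses $\fy_j^{d_j-i_j}\nu=0$ are exactly what is needed to place $\nu$ in this annihilator. An alternative route is induction on $h$: one peels off the outermost variable $\fy_h^{i_h}$ first, reducing the problem to a divisibility question for the subring $\cA(d_0,\dots,d_{h-1})$ after verifying that the quotient and kernel modules remain tame; the base case $h=0$ is a direct verification on cyclic $\cR$-modules using the barcode structure and Lemma~\ref{lem:barycentre}.
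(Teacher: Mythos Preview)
Your direct lifting strategy has two gaps. The first is the one you flag: there is no reason for $\nu$, of weight $w\le -k$, to lie in $M_{\ge |w|}=\Imm\psi_{|w|}$. Your fix (a) misreads what self-duality gives: membership in the submodule $M_{\ge|w|}$ is not an annihilator condition for an ideal of $\cA$, and in any case the hypotheses $\fy_j^{d_j-i_j}\nu=0$ involve only $k$, not $|w|$, so they cannot force $\nu$ into a submodule that depends on $|w|$. Fix (b) would require checking that certain kernel and quotient modules remain tame, which is not addressed and not obvious. The second gap is one you do not notice: even granting a lift $\tilde\nu\in\cF_{|w|}$, the element $\fy_j^{d_j-i_j}\tilde\nu$ has weight $w-2D_j(d_j-i_j)<w$, \emph{below} the injectivity interval $[w,|w|]$ of $\psi_{|w|}$, so the annihilator conditions do not transfer from $\nu$ to $\tilde\nu$ and the free case cannot be invoked.

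The paper does use self-duality, but not to lift $\nu$: it dualises the entire assertion. By rank--nullity, the inclusion $\bigcap_j\ker(\fy_j^{d_j-i_j}\cdot-)\subseteq\Imm(\ua\cdot-)$ in weights $\le -k$ on $M$ is equivalent to $\ker(\ua\cdot-)\subseteq\sum_j\Imm(\fy_j^{d_j-i_j}\cdot-)$ in weights $\ge k$ on $M^\vee\cong M$. Now a class $z\in\ker(\ua\cdot-)$ of weight $\ge k$ automatically lies in $M^\vee_{\ge k}$, so it lifts to $\tilde z\in\cF_k$; and $\ua\tilde z$ has weight $\ge -k$, inside the injectivity range, so $\ua\tilde z=0$ in $\cF_k$. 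The free-module identity $\ker(\ua\cdot-)=\sum_j\Imm(\fy_j^{d_j-i_j}\cdot-)$ then finishes. Dualising swaps ``weight $\le -k$'' for ``weight $\ge k$'', which is exactly what makes the lift exist and puts the relevant relation in the injective range; your attempt works on the wrong side of this symmetry.
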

\begin{proof}
 The statement is a dualisation of the third condition for $M$ to be tame from Definition \ref{defn:tame}, and we spell out the details.
 
 By the definition of $\cA$, we have an inclusion of weighted sub-$\bF$-vector spaces of $M$ of the form $\Imm(\ua\cdot-)\subseteq\ker (\fy_j^{d_j-i_j}\cdot-)$, for every $0\le j\le h$. The statement of the lemma is equivalent to this inclusion being an equality in weights  at most $-k$. By dualising and a rank-nullity argument, it suffices to prove the inclusion $\ker(a\cdot-)\subseteq\sum_{j=0}^h\Imm(\fy_j^{d_j-i_j}\cdot-)$ sub-$\bF$-vector spaces of $M^{\vee}$ in weights at least $k$.
  
 Since $M^\vee$ is isomorphic to $M$, it is also tame, and in particular $M^\vee_{\ge k}$ agrees with a free $\cA$-module in weights at least $-k$. Thus, if $z\in\ker(\ua\cdot-)\subseteq M^\vee$ has weight at least $k$, then in particular $z\in M^\vee_{\ge k}$; lifting $z$ to a (unique) element $\tilde z\in\cF_k$ along a map $\psi_k\colon\cF_k\to M^\vee$ as in Definition \ref{defn:tame}, we have that $\ua\tilde z=0\in\cF_k$, and since $\cF_k$ is free, this implies that $\tilde z$ is in $\sum_{j=0}^h\Imm(\fy_j^{d_j-i_j}\cdot-)\subseteq\cF_k$; projecting along $\psi_k$ to $M^\vee$ we obtain $z\in \sum_{j=0}^h\Imm(\fy_j^{d_j-i_j}\cdot-)\subseteq M^\vee$.
\end{proof}

\begin{nota}\label{nota:hatNdecomposition}
We fix a decomposition of $\hat N$ as a direct sum of cyclic modules $\bigoplus_{j=1}^r\pa{\bF[\fy_0]/(\fy_0^{c_j})}[m_j]$, where $r=|\cB_{\fy_0}(\hat N)|$. We denote by $\nu_j\in \hat N$ the generator of the $j$\sth cyclic summand; by Lemma \ref{lem:barycentre}, $\nu_j$ has weight $m_j>-d_0$.
\end{nota}

\begin{lem}
\label{lem:perturbation}
 Let $\nu\in \hat N$ be an element of weight $m\in]{-d_0},{D_1-d_0}[$ generating a $\fy_0$-bar of $\hat N$ of size $1\le c<d_1$ (i.e. $\nu=\nu_j$ for some $j$). Then there exists an element $\bar w\in M$ of weight $m$, such that $\bar w$ is in the image of $\fy_0\cdot-\colon M\to M$, and moreover $\fy_0^c\bar w=0$ and $\fy_i\bar w=\fy_i\nu$ for all $i\ge1$.
\end{lem}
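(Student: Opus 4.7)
The plan is to construct $\bar w$ in the form $\fy_0 z$ for a carefully chosen $z\in M$ of weight $m+2d_0>d_0$; then $\bar w\in\fy_0 M$ is automatic, and the conditions $\fy_0^c\bar w=0$ and $\fy_i\bar w=\fy_i\nu$ (for $i\ge 1$) become
\[
\fy_0^{c+1}z=0\qquad\text{and}\qquad \fy_0\fy_i z=\fy_i\nu\quad \text{for all }i\ge 1,
\]
where only finitely many conditions of the second kind are non-trivial because $M$ is finite-dimensional.

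First I would verify that each $\fy_i\nu$ lies in $\fy_0 M$, so that the componentwise lifting problem is non-empty. This is a direct application of Lemma \ref{lem:kerinImm} with $\ua=\fy_0$: the element $\fy_i\nu$ has weight $m-2D_i\le -d_0$ (since $m<D_1-d_0\le D_i$), and $\fy_0^{d_1-1}(\fy_i\nu)=\fy_i(\fy_0^{d_1-1}\nu)=0$ because $c\le d_1-1$ forces $\fy_0^{d_1-1}\nu=0$.

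The heart of the argument is then to produce a single $z$ that realises all these lifts simultaneously. Observing that the set of $\bar w\in M_m$ satisfying the last two desired conditions is the affine translate $\nu+V$ with $V:=\ker\pa{\cA'_+\cdot -}_m\cap\ker\pa{\fy_0^c\cdot -}_m$, the existence of a valid $z$ is equivalent to the inclusion $\nu\in V+\fy_0 M$ inside $M_m$. By the tameness isomorphism $M\cong M^\vee$, this translates into an orthogonality statement: every $\tilde\phi\in M_{-m}$ satisfying $\fy_0\tilde\phi=0$ and $\tilde\phi\in\cA'_+ M+\fy_0^c M$ must pair to zero with $\nu$ under the induced non-degenerate $\cA$-invariant pairing $\langle -,-\rangle$. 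Expanding $\tilde\phi=\sum_{i\ge 1}\fy_i x_i+\fy_0^c y$ and using $\cA$-invariance together with $\fy_0^c\nu=0$, this reduces to showing
\[
\sum_{i\ge 1}\langle x_i,\fy_i\nu\rangle=0\quad\text{whenever}\quad \sum_{i\ge 1}\fy_i(\fy_0 x_i)+\fy_0^{c+1}y=0.
\]

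This Koszul-type vanishing will be the main obstacle. I plan to tackle it by choosing preimages $\zeta_i\in M$ with $\fy_0\zeta_i=\fy_i\nu$ (available from the first step), rewriting $\sum_i\langle x_i,\fy_i\nu\rangle=\sum_i\langle\fy_0 x_i,\zeta_i\rangle$, and then feeding in the displayed $M$-relation together with the Poincar\'e-dual bar structure of $\hat N$. The narrow-weight hypothesis $m\in\ ]{-d_0},{D_1-d_0}[$ and the size bound $c<d_1$ should be used decisively: by Lemma \ref{lem:barycentre} the bars of $\hat N$ through $\nu$ and through any Poincar\'e-dual partner have barycentres in $]-d_0,d_0[$, so all relevant elements sit in a weight range where tameness yields clean free-module behaviour, and the requisite symmetry forces the pairing sum to collapse.
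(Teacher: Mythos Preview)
Your dual reformulation is correct up to the point where you isolate the ``Koszul-type vanishing'': you need $\sum_{i\ge 1}\langle x_i,\fy_i\nu\rangle=0$ whenever $\sum_{i\ge 1}\fy_i(\fy_0 x_i)+\fy_0^{c+1}y=0$. But this is exactly where the content of the lemma lives, and your sketch does not prove it. Rewriting $\langle x_i,\fy_i\nu\rangle=\langle\fy_0 x_i,\zeta_i\rangle$ with individually chosen $\zeta_i$ does not help: the displayed relation is a single equation among the $\fy_i(\fy_0 x_i)$ and $\fy_0^{c+1}y$, whereas the pairing $\sum_i\langle\fy_0 x_i,\zeta_i\rangle$ involves the $\zeta_i$, which are only determined modulo $\ker(\fy_0)$ and carry no mutual compatibility (there is no reason $\fy_j\zeta_i-\fy_i\zeta_j$ vanishes, nor any common preimage $\zeta$ with $\fy_i\zeta=\zeta_i$). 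The appeal to ``the Poincar\'e-dual bar structure of $\hat N$'' is also unjustified: the decomposition $M=\hat F\oplus\hat N$ is only $\cR$-linear, so the $\cA$-invariant pairing need not respect it. You have restated the problem, not solved it.

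The paper takes a constructive route instead: it builds $\bar w$ by induction on the index $h$ of the highest variable handled, starting from $w_0=0$. At each step the single remaining discrepancy $z=\fy_h(\nu-w_{h-1})$ lies in the kernel of \emph{all} of $\fy_0^c,\fy_1,\dots,\fy_{h-1},\fy_h^{d_{h+1}-1}$; Lemma~\ref{lem:kerinImm}, applied with the long monomial $\ua=\fy_0^{d_1-c}\fy_1^{d_2-1}\cdots\fy_{h-1}^{d_h-1}\fy_h$ and the weight bound $m<cd_0$ coming from Lemma~\ref{lem:barycentre}, then writes $z=\ua z'$. The correction $\fy_0^{d_1-c}\fy_1^{d_2-1}\cdots\fy_{h-1}^{d_h-1}z'$ fixes the $\fy_h$-condition without disturbing the earlier ones (each $\fy_j$ with $1\le j\le h-1$ kills it, and $\fy_0^c$ kills it since $d_1-c+c=d_1$), and lies in $\fy_0 M$ because $d_1-c\ge 1$. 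The single application of Lemma~\ref{lem:kerinImm} you make, with $\ua=\fy_0$, is far too weak to substitute for this iterated use with progressively longer monomials.
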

\begin{proof}
By Lemma \ref{lem:barycentre} we have $m\in ]{(c-2)d_0},{cd_0}[$.
First, we prove by induction on $h\ge0$ that there exists an element $w_h\in\Imm(\fy_0\cdot-)$ of weight $m$ and such that $\fy_0^cw_h=0$ and $\fy_iw_h=\fy_i\nu$ for all $1\le i\le h$. For $h=0$ we just take $w_0=0$. Let now $h>0$ and suppose that we have proved the existence of $w_{h-1}$.

Let $z=\fy_h(\nu-w_{h-1})$; we observe that $z$ has weight $m-2D_h$ and is in the kernel of any of the maps $\fy_0^c\cdot-$, $\fy_1\cdot-$, $\dots$, $\fy_{h-1}\cdot-$ and $\fy_h^{d_{h+1}-1}\cdot-$ (the last one because $z$ is a multiple of $\fy_h$ in $M$); we want to argue that $z$ has the form $\ua z'$ for some $z'\in M$, where $\ua$ is the monomial $\fy_0^{d_1-c}\fy_1^{d_2-1}\dots \fy_{h-1}^{d_h-1}\fy_h$, of weight
\[
-2(D_1-cd_0+\sum_{j=1}^{h-1}(D_{i+1}-D_i)+D_h)=-2(2D_h-cd_0).
\]
For this we apply Lemma \ref{lem:kerinImm}, checking the inequality $m-2D_h\le cd_0-2D_h$, which holds (even strictly) since $m<cd_0$.
We now set
$w_h=w_{h-1}+\fy_0^{d_1-c}\fy_1^{d_2-1}\dots \fy_{h-1}^{d_h-1}z'$, and readily check that it satisfies the required properties.

Since $M$ is tame, it is supported in $]{-D_h},D_h[$ for some $h$: we then set $\bar w=w_{h-1}$, which just as $\nu$ is killed by all variables $\fy_i$ with $i\ge h$.
\end{proof}

\begin{nota}
 \label{nota:torsion}
 Let $A$ be an algebra and let $a\in A$. For an $A$-module $M$ we denote by $M\set{a}\subset M$ the submodule of $a$-torsion elements. Both assignments $M\mapsto M\set{a}$ and $M\mapsto M/M\set{a}$ extend naturally to functors $\mathrm{Mod}_A\to\mathrm{Mod}_A$.
\end{nota}

\begin{proof}[Proof of Proposition \ref{prop:freenarrow}]
Recall Notation \ref{nota:hatNdecomposition}. For each $1\le j\le r$ fix an element $\bar w_j\in M$ as in Lemma \ref{lem:perturbation}, corresponding to the element $\nu_j\in \hat N$.
Let $\bar N$ be the sub-$\cR$-module of $M$ generated by the elements $\nu_j-\bar w_j$ for $1\le j\le r$. Then, for every $i\ge1$, the map $\fy_i\cdot-\colon M\to M$ vanishes identically on $\bar N$, as it is a map of $\cR$-modules and it vanishes on the $\cR$-generators of $\bar N$. It follows that $\bar N$ is a sub-$\cA$-module of $M$. Moreover, since by the assertion of Lemma \ref{lem:perturbation} each generator of $\bar N$ differs from the corresponding generator of $\hat N$ by a $\fy_0$-multiple, we have that $M$ splits also as the $\cR$-module direct sum $\hat F\oplus \bar N$.

The above reasoning also implies that $\bar N$ and $\hat N$ are isomorphic $\cR$-modules; as $\hat N$ is $\cR$-self-dual, then so is $\bar N$. Indeed, the $\fy_0$-barcodes of $\bar N$ and $\bar N^\vee$ are obtained from the $\fy_0$-barcodes of $M$ and $M^\vee$, respectively, by forgetting all bars of size $d_1$. Since for all $i\ge1$ the element $\fy_i\in\cA$ acts trivially on both $\bar N$ and $\bar N^\vee$ (both modules are ``narrow'') we also have that $\bar N\cong \bar N^\vee$ as $\cA$-modules.

The inclusion of $\cA$-modules $\bar N\hookrightarrow M$ dualises to a surjection of $\cA$-modules $M^\vee\twoheadrightarrow \bar N^\vee$; we precompose this with the $\cA$-linear isomorphism $M\cong M^\vee$ and define $F\subset M$ as the kernel of the composition: by construction $F$ is a sub-$\cA$-module of $M$. We notice that $F$ is isomorphic to $(\hat F)^\vee$, and hence to $\hat F$, as an $\cR$-module; in particular $F$ is free over $\cR$.

We have obtained a short exact sequence of $\cA$-modules $F\hookrightarrow M\twoheadrightarrow \bar N^\vee$, and we will prove it is split by checking that $M$ retracts $\cA$-linearly onto $F$. This will follow from proving that the $\cA$-linear composition $\iota\colon F\hookrightarrow M\cong M^\vee \twoheadrightarrow F^\vee$ is an isomorphism of $\bF$-vector spaces; by finite-dimensionality, it suffices to prove that $\iota$ is a surjection. Furthermore, since $\iota$ is $\cR$-linear, it suffices to prove that $\iota$ induces a surjection $F/\fy_0 F\twoheadrightarrow F^\vee/\fy_0F^\vee$. Now notice that, since both $F$ and $F^\vee$ are free over $\cR$, an element of either $F$ or $F^\vee$ is a multiple of $\fy_0$ if and only if it is $\fy_0^{d_1-1}$-torsion; thus it suffices to check that $\iota/\iota\set{\fy_0^{d_1-1}}$ is surjective, but the latter is the composition
\[
F/F\set{\fy_0^{d_1-1}}\to M/M\set{\fy_0^{d_1-1}}\to M^\vee/M^\vee\set{\fy_0^{d_1-1}}\to F^\vee/F^\vee\set{\fy_0^{d_1-1}},
\]
where all maps are isomorphisms because $\bar N$ and $\bar N^\vee$ are $\fy_0^{d_1-1}$-torsion. Thus we conclude that $\iota$ is an $\cA$-isomorphism, proving that $F$ is a split $\cA$-submodule of $M$. Now let $N$ be a direct complement of $F$, e.g. the image of a section of the $\cA$-linear split surjection $M\twoheadrightarrow\bar N^\vee$.

It remains to check that $F$ and $N$ are tame over $\cA$. We already have checked that $N\cong N^\vee$ and $F\cong F^\vee$ as $\cA$-modules.
Let $k\ge0$ and let $\psi_k\colon\cF_k\to M$ be an $\cA$-linear map as in Definition \ref{defn:tame}, with $\cF_k$ free over $\cA$. 
By Remark \ref{rem:choicecF}, we can assume that $\cF_k$ splits as a direct sum of free $\cA$-modules $\cF_k^F$ and $\cF_k^N$, so that $\psi_k$ restricts to maps $\cF_k^F\to F$ and $\cF_k^N\to N$: here we use that the weighted $\bF$-vector space $M/\cA_+M$ is the direct sum of the weighted $\cF$-vector spaces $F/\cA_+F$ and $N/\cA_+N$. If $\psi_k$ is injective in weights at least $-k$, so must be its restrictions to $\cF_k^F$ and $\cF_k^N$. This proves that $F$ and $N$ are tame over $\cA$.
\end{proof}

\subsection{Propagation of tameness}
Let $F$ be an $\cA$-module. Considering both $F$ and $\bF$ as $\cR$-modules, we compute
$F/\fy_0F\cong \Tor^{\cR}_0(F,\bF)$.
Recall Notation \ref{nota:cAcR}: we have $\cA\cong\cR\otimes\cA'$, and in particular 
$F/\fy_0F$ inherits a $\cA'$-module structure.
\begin{prop}\label{prop:propagation}
Let $u\in\Z$ and let $F$ be a $u$-tame $\cA$-module which is free over the ring $\cR$. Then the module $F/\fy_0F$ is $(u-D_1+d_0)$-tame over $\cA'$.
\end{prop}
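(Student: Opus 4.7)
The plan is to verify the three conditions of Definition \ref{defn:tame} for the $\cA'$-module $H[-D_1+d_0]$, where I set $G:=F[u]$ (tame over $\cA$ and $\cR$-free by hypothesis) and $H:=G/\fy_0 G$; the identity $(F/\fy_0 F)[u-D_1+d_0]=H[-D_1+d_0]$ reduces Proposition \ref{prop:propagation} to this form. Finite-dimensionality over $\bF$ is immediate as $H$ is a quotient of the finite-dimensional $G$.

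For self-duality, I would identify $H^\vee\cong H[-2(D_1-d_0)]$ as $\cA'$-modules, from which $(H[-D_1+d_0])^\vee=H^\vee[D_1-d_0]\cong H[-D_1+d_0]$ follows formally. This proceeds in four linked steps: $H^\vee=\Hom_{\bF}(G/\fy_0 G,\bF)$ is canonically the submodule $G^\vee\{\fy_0\}$ of $\fy_0$-torsion elements of $G^\vee$; using the hypothesis $G\cong G^\vee$, this becomes $G\{\fy_0\}$; the fact that $G$ is $\cR$-free with $\cR=\bF[\fy_0]/(\fy_0^{d_1})$ gives $G\{\fy_0\}=\fy_0^{d_1-1}G$; and multiplication by $\fy_0^{d_1-1}$ descends to an $\cA'$-linear isomorphism $H\overset{\cong}{\to}\fy_0^{d_1-1}G$ of weight shift $-2(D_1-d_0)$.

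For the third tameness condition, I would build the required free cover of $H[-D_1+d_0]$ by reducing modulo $\fy_0$ the corresponding cover of $G$. Fix $k\ge0$, set $k':=k+D_1-d_0$, and let $\psi_{k'}\colon\cF_{k'}\to G$ be as in the tameness of $G$ at level $k'$: so $\cF_{k'}$ is free over $\cA$ with generators in weights $\ge k'$, $\psi_{k'}$ is bijective in weights $\ge k'$ and injective in weights $\ge -k'$. Define $\bar\psi_{k'}\colon\cF_{k'}/\fy_0\cF_{k'}\to H$ by passing to the quotient. The source is free over $\cA'=\cA/\fy_0$ with the same set of generators (in weights $\ge k'$), and $\bar\psi_{k'}$ is surjective in weights $\ge k'$ since $\psi_{k'}$ is; after the shift by $-D_1+d_0$, these translate to the required generators in weights $\ge k$ and bijectivity in weights $\ge k$ for $H[-D_1+d_0]$.

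The main obstacle, and heart of the argument, is showing injectivity of $\bar\psi_{k'}$ in weights $\ge 2(D_1-d_0)-k'$, which after the shift is precisely the condition ``injective in $[-k,k]$'' for $H[-D_1+d_0]$. The trick is the following. Suppose $f\in\cF_{k'}$ has weight $w\ge 2(D_1-d_0)-k'$ and $\psi_{k'}(f)\in\fy_0 G$; then $\psi_{k'}(\fy_0^{d_1-1}f)=\fy_0^{d_1-1}\psi_{k'}(f)\in\fy_0^{d_1}G=0$, and since $\fy_0^{d_1-1}f$ has weight $w-2(D_1-d_0)\ge -k'$, injectivity of $\psi_{k'}$ forces $\fy_0^{d_1-1}f=0$ in $\cF_{k'}$. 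Finally, $\cF_{k'}$ is $\cR$-free (because $\cA$ is free over $\cR$ via the decomposition $\cA\cong\cR\otimes\cA'$), so the annihilator of $\fy_0^{d_1-1}$ in $\cF_{k'}$ equals $\fy_0\cF_{k'}$, yielding $\bar f=0$. The delicate point I would need to bookkeep carefully is that the three weight ranges---generators, bijectivity, and injectivity---all transform correctly under the shift $[-D_1+d_0]$ to match the definition, and that the argument works uniformly for all $k'\ge D_1-d_0$.
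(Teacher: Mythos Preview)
Your proof is correct and follows essentially the same approach as the paper's: reduce to $u=0$, establish self-duality via the chain $H^\vee\cong G\{\fy_0\}\cong\fy_0^{d_1-1}G\cong H[-2(D_1-d_0)]$, and then obtain the free-cover condition by reducing $\psi_{k'}$ modulo $\fy_0$ and using injectivity of $\psi_{k'}$ in weight $w-2(D_1-d_0)\ge -k'$ to kill $\fy_0^{d_1-1}f$. Your injectivity step is in fact slightly cleaner than the paper's phrasing, since you pass directly from $\psi_{k'}(f)\in\fy_0 G$ to $\fy_0^{d_1-1}\psi_{k'}(f)\in\fy_0^{d_1}G=0$ without the intermediate (and somewhat imprecise) claim that $f$ itself is divisible by $\fy_0$ before invoking injectivity.
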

\begin{proof}
It suffices to consider the case $u=0$, i.e. $F$ is tame over $\cA$, and prove that the shifted module $(F/\fy_0F)[-D_1+d_0]$ is tame over $\cA'$.

Since $F$ is free over $\cR$, we have that the map $\fy_0^{d_1-1}\cdot-$ induces a weight-preserving isomorphism of $\cA$-modules $F/\fy_0F\cong F\set{\fy_0}[2(D_1-d_0)]$. On the other hand, it is easy to see that for \emph{any} $\cA$-module $M$ we have an isomorphism of $\cA$-modules $M\set{\fy_0}^\vee\cong M^\vee/\fy_0M^\vee$; using that $F$ is tame over $\cA$, we obtain a chain of $\cA$-linear isomorphisms
\[
\begin{split}
 (F/\fy_0F[-D_1+d_0])^\vee & \cong (F\set{\fy_0}[D_1-d_0])^\vee\cong F\set{\fy_0}^\vee[-D_1+d_0]\\
 &\cong (F^\vee/\fy_0 F^\vee)[-D_1+d_0]\cong (F/\fy_0F)[-D_1+d_0].
\end{split}
\]
Next, let $k\ge D_1-d_0$, and let $\psi_k\colon \cF_k\to F$ be as in Definition \ref{defn:tame}, with $\cF_k$ free over $\cA$ and generated in weights at least $k$. Then $\cF'_k:=\cF_k/\fy_0\cF_k$ is free over $\cA'$ and is generated in weights at least $k$, and $\psi'_k:=\psi_k/\fy_0\psi_k\colon\cF'_k \to F/\fy_0F$ is a map of $\cA'$-modules and is surjective in weights at least $k$. We claim that $\psi'_k$ is injective in weights at least $-k+2(D_1-d_0)$.

To prove this, let $\nu_1,\dots,\nu_r$ be generators of $\cF_k$ over $\cA$, and let $\bar \nu_1,\dots,\bar \nu_r$ be their projections on $\cF'_k$, which are generators of $\cF'_k$ over $\cA'$. Let $m\ge -k+2(D_1-d_0)$, and suppose that there are elements $\lambda_1,\dots,\lambda_r\in\cA'$, not all zero, such that the sum $\sum_{j=1}^r\lambda_j\psi'_k(\bar \nu_j)$ is a homogeneous sum of weight $m$ and vanishes in $F/\fy_0F$. Then the element $\sum_{j=1}^r\lambda_j \nu_j\in \cF_k$ is sent along $\psi_k$ to an element divisible by $\fy_0$, hence this element must itself be divisible by $\fy_0$ in $\cF_k$. It follows that $\sum_{j=1}^r\lambda_j \nu_j$ lies in the kernel of the map $\fy_0^{d_1-1}\cdot-\colon \cF_k\to \cF_k$, i.e. $\sum_{j=1}^r\fy_0^{d_1-1}\lambda_j \nu_j$ is a homogeneous sum of weight $m-2(D_1-d_0)\ge -k$, and it vanishes: since at least one coefficient $\fy_0^{d_1-1}\lambda_j\in\cA=\cR\otimes\cA'$ is non-zero, we get a contradiction.

The maps $\psi'_k[-D_1+d_0]$ witness that $F/\fy_0F[-D_1+d_0]$ is tame over $\cA'$.
\end{proof}

\section{Proof of Theorem \ref{thm:C} and its corollaries}
\label{sec:proofthmC}
In this section we prove Theorem \ref{thm:C}, together with its corollaries.
Throughout the section we let $\cA=\cA(1,p,p,p,\dots)\cong\Gamma_{\Fp}(y)$ as in Example \ref{ex:cAGammay}.

\begin{defn}[Modules \texorpdfstring{$N_{u,i}$}{Nui}]
Fix $u\ge0$. We define recursively, for $i\ge0$, an $\Fp[\fy_i]/(\fy_i^p)$-module $N_{u,i}$. 

Recall from Example \ref{ex:Bputame} that the $\cA$-module $B^{\Fp}_u$ is $u$-tame. By Proposition \ref{prop:freenarrow}, we can decompose $B^{\Fp}_u$ as a direct sum of $\cA$-modules $F_{u,0}\oplus N_{u,0}$, with both modules being $u$-tame, $F_{u,0}$ being free over $\Fp[\fy_0]/(\fy_0^p)$ and $N_{u,0}$ being narrow. We consider $N_{u,0}$ as an $\Fp[\fy_0]/(\fy_0^p)$-module, and recall that all variables $\fy_1,\fy_2,\dots$ act trivially on $N_{u,0}$. This is the base case.

Recursively, we assume to have defined, for some $i\ge0$, a module $F_{u,i}$ which is $(u-p^i+1)$-tame over the algebra 
\[
\cA(p^i,p,p,\dots)=\Fp[\fy_i,\fy_{i+1},\dots]/(\fy_i^p,\fy_{i+1}^p,\dots),
\]
and such that $F_{u,i}$ is free as a module over $\Fp[\fy_i]/(\fy_i^p)$; by Proposition \ref{prop:propagation} the module $F_{u,i} /\fy_iF_{u,i}$ is $(u-p^{i+1}+1)$-tame over the algebra
\[
\cA(p^{i+1},p,p,\dots)=\Fp[\fy_{i+1},\fy_{i+2}\dots]/(\fy_{i+1}^p,\fy_{i+2}^p,\dots),
\]
and by Proposition \ref{prop:freenarrow} we have a decomposition of the module $F_{u,i} /\fy_iF_{u,i}$ as a direct sum $F_{u,i+1}\oplus N_{u,i+1}$, with both modules being $(u-p^{i+1}+1)$-tame over $\cA(p^{i+1},p,p,\dots)$, $F_{u,i+1}$ being free over $\Fp[\fy_{i+1}]/(\fy_{i+1}^p)$ and $N_{u,i+1}$ being \emph{narrow}: in particular all variables $\fy_{i+2},\fy_{i+3},\dots$ act trivially on $N_{u,i+1}$. We consider $N_{u,i+1}$ as an $\Fp[\fy_{i+1}]$-module.
\end{defn}

We remark that the isomorphism type of each $\Fp[\fy_i]/(\fy_i^p)$-module $N_{u,i}$ is unique, by Proposition \ref{prop:freenarrow}. Note also that the module $N_{u,i}$ is concentrated in even weights contained in the interval $]-u-p^{i+1}+2p^i-1,-u+p^{i+1}-1[$.
\begin{defn}
 For a finite-dimensional weighted $\Fp$-vector space $M$, we denote by $P_M(s)\in\Z[s^{\pm1}]$ the Poincar\'e polynomial of $M$, i.e. the Laurent polynomial
 \[
  P_M(s)=\sum_{j\in\Z}(\dim_{\Fp}M(j))s^j
 \]
\end{defn}
We notice that, for all $i\ge0$, the Laurent polynomial $P_{N_{u,i}}(s)$ has vanishing coefficients of all monomials $s^j$ for $j\notin]-u-p^{i+1}+2p^i-1,-u+p^{i+1}-1[$. This, together with the following Lemma, characterises the polynomials $P_{N_{u,i}}(s)$. 

\begin{lem}
 \label{lem:weighteddim}
 For $u\ge0$, the Poincar\'e polynomials of the modules $N_{u,i}$
 satisfy the following equality:
 \[
  (1+s^{-2})^u=\sum_{i\ge0}P_{N_{u,i}}(s)\cdot\frac{s^{-2p^i}-1}{s^{-2}-1}=\sum_{i\ge0}P_{N_{u,i}}(s)\cdot(1+s^{-2}+s^{-4}+\dots+s^{-2p^i+2}).
 \]
\end{lem}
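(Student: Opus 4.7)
The plan is a direct calculation using the recursive definition of the modules $N_{u,i}$ and $F_{u,i}$, together with the fact that $P_{B^{\Fp}_u}(s)$ is easy to write down.

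First, from the $\Fp$-basis $\{z_S\}_{S\subseteq\{1,\ldots,u\}}$ of $B^{\Fp}_u$, where $z_S$ has weight $-2|S|$, we read off
\[
P_{B^{\Fp}_u}(s)=(1+s^{-2})^u.
\]
So the claim is equivalent to showing that $P_{B^{\Fp}_u}(s)$ equals the right hand side of the asserted identity. To prove this, I would iterate the two splittings from the definition of the $N_{u,i}$. For each $i\ge 0$ put
\[
\pi_i(s):=1+s^{-2p^i}+s^{-4p^i}+\cdots+s^{-2(p-1)p^i}=\frac{1-s^{-2p^{i+1}}}{1-s^{-2p^i}};
\]
since $F_{u,i}$ is free as a module over $\Fp[\fy_i]/(\fy_i^p)$, and $\fy_i$ has weight $-2p^i$, the Poincar\'e polynomial factors as
\[
P_{F_{u,i}}(s)=\pi_i(s)\cdot P_{F_{u,i}/\fy_i F_{u,i}}(s).
\]
Combining this with the decompositions $B^{\Fp}_u=F_{u,0}\oplus N_{u,0}$ and $F_{u,i}/\fy_iF_{u,i}=F_{u,i+1}\oplus N_{u,i+1}$ and iterating, one obtains for every $I\ge 0$ the identity
\[
P_{B^{\Fp}_u}(s)=\sum_{i=0}^{I}\Bigl(\prod_{j=0}^{i-1}\pi_j(s)\Bigr)\cdot P_{N_{u,i}}(s)+\Bigl(\prod_{j=0}^{I}\pi_j(s)\Bigr)\cdot P_{F_{u,I+1}}(s),
\]
with the convention that the empty product equals $1$.

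Next, since $B^{\Fp}_u$ is finite dimensional over $\Fp$ and is supported in the finite interval $[-2u,0]$, while $F_{u,i+1}$ is a submodule of $F_{u,i}/\fy_i F_{u,i}$, the module $F_{u,I+1}$ vanishes as soon as $I$ is sufficiently large (for instance as soon as $p^{I+1}>2u+1$, since then no non-zero element of $F_{u,I+1}$ can sit in the weight range where $F_{u,I+1}$ is required to be tame). For such an $I$ the last term drops out. Finally, the product $\prod_{j=0}^{i-1}\pi_j(s)$ telescopes:
\[
\prod_{j=0}^{i-1}\pi_j(s)=\prod_{j=0}^{i-1}\frac{1-s^{-2p^{j+1}}}{1-s^{-2p^{j}}}=\frac{1-s^{-2p^i}}{1-s^{-2}}=\frac{s^{-2p^i}-1}{s^{-2}-1}.
\]
Substituting gives the formula of the lemma. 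The main (very mild) obstacle is just checking termination of the recursion, which is immediate from finite-dimensionality; everything else is bookkeeping of Poincar\'e polynomials.
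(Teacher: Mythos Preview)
Your argument is correct and is precisely the computation the paper has in mind (the paper leaves this proof to the reader). One cosmetic slip: in your displayed recursion the remainder term should be $\bigl(\prod_{j=0}^{I-1}\pi_j(s)\bigr)P_{F_{u,I}}(s)$ rather than $\bigl(\prod_{j=0}^{I}\pi_j(s)\bigr)P_{F_{u,I+1}}(s)$, so as written the sum is missing the $(I{+}1)$-st summand; this is harmless because you then take $I$ large enough that $N_{u,I+1}=F_{u,I+1}=0$, so the final identity is unaffected.
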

\begin{proof}
We leave the proof to the reader.
\end{proof}

\begin{rem}\label{rem:NuiEventuallyVanishes}
The sum in the right-hand side of the formula of Lemma \ref{lem:weighteddim} is a sum of Laurent polynomials with non-negative coefficients. Since the left hand side is a polynomial supported on monomials $s^j$ for $j\in[-2u,0]$, which is an interval of width $2u$, there can be no summand in the right hand side contributing to the coefficients of two monomials $s^j$ and $s^{j'}$ with $|j-j'|>2u$. In particular, if $h$ is such that $p^h-1\le u\le p^{h+1}-2$, then $N_{u,i}=0$ for all $i> h$.
\end{rem}

\begin{proof}[Proof of Theorem \ref{thm:C}]
We keep denoting $\cA=\Gamma_{\Fp}(y)$.
The aim is to compute $\Ext^\star_{\cA}(B^{\Fp}_u,\Fp)$ as a module over $\Ext^\star_{\cA}(\Fp,\Fp)$, and we do so by a recursion on increasingly deep submodules and the number of variables $\fy_i$ involved.

We start by decomposing $B^{\Fp}_u$ to $F_{u,0}\oplus N_{u,0}$ as an $\cA$-module. We have an isomorphism of
$\Ext^\star_{\cA}(\Fp,\Fp)$-modules
\[
 \Ext^\star_{\cA}(B^{\Fp}_u,\Fp)\cong \Ext^\star_{\cA}(F_{u,0},\Fp)\oplus \Ext^\star_{\cA}(N_{u,0},\Fp),
\]
so we focus on the computation of the two summands.

We compute $\Ext^\star_{\cA}(N_{u,0},\Fp)$ using the K\"unneth isomorphism. The algebra $\cA$ is the tensor product of the algebras $\Fp[\fy_0]/(\fy_0^p)$ and $\Fp[\fy_1,\dots]/(\fy_1^p,\dots)$. Correspondingly, the $\cA$-module $N_{u,0}$ (respectively, $\Fp$) is the tensor product of the $\Fp[\fy_0]/(\fy_0^p)$-module $N_{u,0}$ (respectively, $\Fp$) and the $\Fp[\fy_1,\dots]/(\fy_1^p,\dots)$-module $\Fp$. We thus factor $\Ext^{\star}_{\cA}(\Fp,\Fp)$ as the tensor product of the rings
$\Ext^\star_{\Fp[\fy_0]/(\fy_0^p)}(\Fp,\Fp)$ and $\Ext^\star_{\Fp[\fy_1,\dots]/(\fy_1^p,\dots)}(\Fp,\Fp)$ and, correspondingly, $\Ext^{\star}_{\cA}(N_{u,0},\Fp)$ as the tensor product of the modules 
$\Ext^\star_{\Fp[\fy_0]/(\fy_0^p)}(N_{u,0},\Fp)$ and $\Ext^\star_{\Fp[\fy_1,\dots]/(\fy_1^p,\dots)}(\Fp,\Fp)$. This accounts for the $0$\sth direct summand in the statement of Theorem \ref{thm:C}.

Turning to the other summand $\Ext^\star_{\cA}(F_{u,0},\Fp)$, it follows from a reason similar to the previous paragraph that $\cA$ and the second module $\Fp$ factor are tensor products of two rings and two modules, respectively. We may thus invoke the K\"unneth spectral sequence, which is supported on the quadrant $s,t\le0$ and reads:
\begin{equation}\label{eq:KunnethSSFu0}
     E^2_{s,t}=\Ext^{s}_{\Fp[\fy_1,\dots]/(\fy_1^p,\dots)}(\Tor_{-t}^{\Fp[\fy_0]/(\fy_0^p)}(F_{u,0},\Fp),\Fp)\Rightarrow \Ext^{s+t}_{\cA}(F_{u,0},\Fp).
\end{equation}
Since $F_{u,0}$ is a free $\Fp[\fy_0]/(\fy_0^p)$-module, only the row $E^2_{s,0}$ contains non-trivial entries, and in particular the spectral sequence collapses on the second page. The spectral sequence \eqref{eq:KunnethSSFu0} is a module over the K\"unneth spectral sequence for $\Ext^\star_{\cA}(\Fp,\Fp)$, which also collapses on its second page.
By identifying $\Tor_0^{\Fp[\fy_0]/(\fy_0^p)}(F_{u,0},\Fp)$ with $F_{u,0} /\fy_0F_{u,0}$ as $\Fp[\fy_1,\dots]/(\fy_1^p,\dots)$-modules, and using the ring decomposition 
\[\Ext^\star_{\cA}(\Fp,\Fp)\cong \Ext^\star_{\Fp[\fy_0]/(\fy_0^p)}(\Fp,\Fp)\otimes \Ext^\star_{\Fp[\fy_1,\dots]/(\fy_1^p,\dots)}(\Fp,\Fp),\] 
we obtain an identification of $\Ext^\star_{\cA}(\Fp,\Fp)$-modules
\[
\Ext^\star_{\cA}(F_{u,0},\Fp)\cong \Fp\otimes \Ext^\star_{\Fp[\fy_1,\dots]/(\fy_1^p,\dots)}(F_{u,0} /\fy_0F_{u,0},\Fp).
\]
In particular, all classes in $\Ext^\star_{\Fp[\fy_0]/(\fy_0^p)}(\Fp,\Fp)$ for $\star$ strictly negative act trivially on $\Ext^\star_{\cA}(F_{u,0},\Fp)$.

We then continue recursively as follows. Suppose that, for some $i\ge1$, we have reduced ourselves to computing $\Ext^\star_{\Fp[\fy_i,\dots]/(\fy_i^p,\dots)}(F_{u,i-1} /\fy_{i-1}F_{u,i-1},\Fp)$ as an $\Ext^\star_{\Fp[\fy_i,\dots]/(\fy_i^p,\dots)}(\Fp,\Fp)$-module. We first decompose $F_{u,i-1} /\fy_{i-1}F_{u,i-1}$ as a direct sum of $\Fp[\fy_i,\dots]/(\fy_i^p,\dots)$-modules $F_{u,i}\oplus N_{u,i}$. By K\"unneth, we obtain  
an isomorphism of $\Ext^\star_{\Fp[\fy_i]/(\fy_i^p)}(\Fp,\Fp)\otimes \Ext^\star_{\Fp[\fy_{i+1},\dots]/(\fy_{i+1}^p,\dots)}(\Fp,\Fp)$-modules
\[
\begin{split}
 &\Ext^\star_{\Fp[\fy_i,\dots]/(\fy_i^p,\dots)}(N_{u,i},\Fp)\cong\\
 &\Ext^\star_{\Fp[\fy_i]/(\fy_i^p)}(N_{u,i},\Fp)\otimes \Ext^\star_{\Fp[\fy_{i+1},\dots]/(\fy_{i+1}^p,\dots)}(\Fp,\Fp),
\end{split}
\]
which accounts for the $i$\sth direct summand in the statement of Theorem \ref{thm:C}. The remaining summand is $\Ext^\star_{\Fp[\fy_i,\dots]/(\fy_i^p,\dots)}(F_{u,i},\Fp)$, to which we apply the K\"unneth spectral sequence; this is concentrated on the $0$\sth row and thus collapses, so we identify
\[
 \Ext^\star_{\Fp[\fy_i,\dots]/(\fy_i^p,\dots)}(F_{u,i},\Fp)\cong\Fp\otimes
 \Ext^\star_{\Fp[\fy_{i+1},\dots]/(\fy_{i+1}^p,\dots)}(F_{u,i} /\fy_iF_{u,i},\Fp).
\]
The problem is reduced to computing $\Ext^\star_{\Fp[\fy_{i+1},\dots]/(\fy_{i+1}^p,\dots)}(F_{u,i} /\fy_iF_{u,i},\Fp)$ as an $\Ext^\star_{\Fp[\fy_{i+1},\dots]/(\fy_{i+1}^p,\dots)}(\Fp,\Fp)$-module, which is left to the recursion.

The process eventually terminates by Remark \ref{rem:NuiEventuallyVanishes}, completing the proof.
\end{proof}

In the rest of the section, we prove the Corollaries of Theorem \ref{thm:C}.
\begin{proof}[Proof of Corollary \ref{acor:qualitative}]
 The statement is a direct consequence of Theorems \ref{thm:B}, Proposition \ref{prop:bMgsplitting}, the isomorphism $\twoB^{\Fp}_u\cong B^{\Fp}_u$ and the fact that, for any finitely generated $\Fp[\fy_i]/\fy_i^p$-module $N$, $\Ext^\star_{\Fp[\fy_i]/\fy_i^p}(N,\Fp)$ is finitely generated and free as an $\Fp[\beta_i]$-module.
\end{proof}

\begin{proof}[Proof of Corollary \ref{acor:neartop}]
By a standard analysis from homological algebra, for $i\ge0$ and for each finitely generated $\Fp[\fy_i]/(\fy_i^p)$-module $M$, the $\Ext^\star_{\Fp[\fy_i]/(\fy_i^p)}(\Fp,\Fp)$-mo\-du\-le $\Ext^\star_{\Fp[\fy_i]/(\fy_i^p)}(M,\Fp)$ is generated by the vector space
\[
\Ext^{-1\le\star\le 0}_{\Fp[\fy_i]/(\fy_i^p)}(M,\Fp)=\Ext^0_{\Fp[\fy_i]/(\fy_i^p)}(M,\Fp)\oplus\Ext^{-1}_{\Fp[\fy_i]/(\fy_i^p)}(M,\Fp).
\]
We tensor both the ring $\Ext^\star_{\Fp[\fy_i]/(\fy_i^p)}(\Fp,\Fp)$ and the module $\Ext^\star_{\Fp[\fy_i]/(\fy_i^p)}(M,\Fp)$ with the ring $\Ext^\star_{\Fp[\fy_{i+1},\dots]/(\fy_{i+1}^p,\dots)}(\Fp,\Fp)$, and obtain that $\Ext^{-1\le\star\le0}_{\Fp[\fy_i]/(\fy_i^p)}(M,\Fp)$ generates the $\Ext^\star_{\Fp[\fy_i,\dots]/(\fy_i^p,\dots)}(\Fp,\Fp)$-mo\-du\-le $\Ext^\star_{\Fp[\fy_i,\dots]/(\fy_i^p,\dots)}(M,\Fp)$.
We can even consider $\Ext^\star_{\Fp[\fy_i,\dots]/(\fy_i^p,\dots)}(M,\Fp)$ as an $\Ext^\star_\cA(\Fp,\Fp)$-module by letting the ideal $\Ext^{\star<0}_{\Fp[\fy_0,\dots,\fy_{i-1}]/(\fy_0^p,\dots,\fy_{i-1}^p)}(\Fp,\Fp)$ act trivially.
This holds in particular for $M=N_{u,i}$:
therefore, for all $u\ge0$ and $i\ge0$, the vector space $\Ext^{-1\le\star\le0}_{\Fp[\fy_i]/(\fy_i^p)}(N_{u,i},\Fp)$ generates $\Ext^\star_{\Fp[\fy_i,\dots]/(\fy_i^p,\dots)}(N_{u,i},\Fp)$ over the ring $\Ext^\star_{\cA}(\Fp,\Fp)$.

It follows now from Theorem \ref{thm:C} that the vector space $\Ext^{-1\le\star\le0}_{\cA}(B^{\Fp}_u,\Fp)$ generates $\Ext^\star_{\cA}(B^{\Fp}_u,\Fp)$ over $\Ext^\star_{\cA}(\Fp,\Fp)$. For all $g\ge0$ we can now recall Definition \ref{defn:bMg}, and by taking a suitable direct sum we obtain that the vector space $\Ext^{-1\le\star\le0}_{\cA}(\bM_g,\Fp)$ generates $\Ext^\star_{\cA}(\bM_g,\Fp)$ over $\Ext^\star_{\cA}(\Fp,\Fp)$.
Basechanging to $\Fp[\epsilon]$ over $\Fp$, we obtain that $\Ext^{-1\le\star\le0}_{\cA}(\bM_g,\Fp)$ generates $\Fp[\epsilon]\otimes\Ext^\star_{\cA}(\bM_g,\Fp)$ over $\Fp[\epsilon]\otimes\Ext^\star_{\cA}(\Fp,\Fp)$.

Finally, tensoring both the modules and the generating vector space with the vector space $\Fp[\cH_g]=\Fp[H_1(\sgone)]$ over $\Fp$, we obtain that 
$\Ext^{-1\le\star\le0}_{\cA}(\bM_g,\Fp)\otimes\Fp[\cH_g]$ generates $\Fp[\epsilon]\otimes\Ext^\star_{\cA}(\bM_g,\Fp)\otimes\Fp[\cH_g]$ over the ring $\Fp[\epsilon]\otimes\Ext^\star_{\cA}(\Fp,\Fp)$, which by Theorem \ref{thm:B} is identified with $H_*(C_\bullet(D);\Fp)$. We now observe that, according to the decomposition of Theorem \ref{thm:B}, the vector space 
$\Ext^{-1\le\star\le0}_{\cA}(\bM_g,\Fp)\otimes\Fp[\cH_g]$ is concentrated in bigradings $(\bullet,\star)$ satisfying $-1\le\star\le0$; it follows that
\[
 \Ext^{-1\le\star\le0}_{\cA}(\bM_g,\Fp)\otimes\Fp[\cH_g]\subseteq
\bigoplus_{n\ge0}H_{n-1}(C_n(\sgone);\Fp)\oplus\bigoplus_{n\ge0}H_{n}(C_n(\sgone);\Fp).
\]
The statement of the corollary is now a direct consequence of Theorem \ref{thm:B}.
\end{proof}

\begin{proof}[Proof of Corollary \ref{acor:genusvsprime}]
Theorem \ref{thm:B} ensures that $H_*(C_\bullet(\sgone);\Fp)$ is a basechange to $\Fp[\epsilon]\otimes\Fp[\cH_g]$ over $\Fp$ of the $\Ext^\star_{\cA}(\Fp,\Fp)$-module $\Ext^\star_{\cA}(\bM_g,\Fp)$, so it suffices to study the latter. By Definition \ref{defn:bMg}, $\bM_g$ splits as a direct sum of shifted copies of $\cA$-modules of the form $B^{\Fp}_u$, for values of $u$ that are at most $g$.
We then have two cases.
\begin{itemize}
 \item If $g<p^\lambda$, then for all $u\le g$ we have that $N_{u,i}=0$ for $i\ge\lambda+1$, whereas $N_{u,\lambda}$ is a $\cA$-module with trivial action of $\fy_\lambda$, or in other words, $\cB_{\fy_\lambda}(N_{u,\lambda})$ consists solely of bars of size 1: this is because $N_{u,\lambda}$ is supported in weights in the interval $[-2u,0]$, which is strictly narrower than $2p^\lambda$. It follows that $\Ext^\star_{\Fp[\fy_\lambda]/(\fy_\lambda^p)}(N_{u,\lambda},\Fp)$ is a free $\Ext^\star_{\Fp[\fy_\lambda]/(\fy_\lambda^p)}(\Fp,\Fp)$-module, and the claim now follows from Theorem \ref{thm:C}.
 \item If $g<(p-1)p^{\lambda-1}$, then for all $u\le g$ we have that $N_{u,i}=0$ for $i\ge\lambda$, and $N_{u,\lambda-1}$ is an $\Fp[\fy_{\lambda-1}]/(\fy_{\lambda-1}^p)$-module with no free summand, or in other words, $\cB_{\fy_{\lambda-1}}(N_{u,\lambda-1})$ consists solely of bars of size strictly less than $p$: this is because $N_{u,\lambda-1}$ is supported in weights in the interval $[-2u,0]$, which is strictly narrower than $2(p-1)p^{\lambda-1}$. We now use the general fact that for a finitely generated $\Fp[\fy_{\lambda-1}]/(\fy_{\lambda-1}^p)$-module $M$ with no free summand, the module $\Ext^\star_{\Fp[\fy_{\lambda-1}]/(\fy_{\lambda-1}^p)}(M,\Fp)$ is free over the polynomial subalgebra $\Fp[\beta_{\lambda-1}]$ of
 $\Ext^\star_{\Fp[\fy_{\lambda-1}]/(\fy_{\lambda-1}^p)}(\Fp,\Fp)$.
 The claim follows again from Theorem \ref{thm:C}.
\end{itemize}
\end{proof}

\begin{proof}[Proof of Corollary \ref{acor:topminusone}]
By Theorem \ref{thm:B}, $H_*(C_\bullet(\sgone))=H_{\bullet+\star}(C_\bullet(\sgone))$ is the tensor product of $\Ext^\star_{\Gamma_\Z(y)}(\bM_g,\Z)$ with a bigraded abelian group that is free abelian in each
bigrading, namely $\Z[\epsilon]\otimes\Z[\cH_g]$. Both bigraded abelian groups are concentrated in non-negative bar-degrees $\star\ge0$. The part of
$H_*(C_\bullet(\sgone))$ in bar-degree $\star=0$ is therefore a direct sum of shifted copies of $\Ext^0_{\Gamma_\Z(y)}(\bM_g,\Z)$. Similarly, the part of $H_*(C_\bullet(\sgone))$ in bar-degree $\star=-1$ is a direct sum of shifted copies of $\Ext^0_{\Gamma_\Z(y)}(\bM_g,\Z)$ and $\Ext^{-1}_{\Gamma_\Z(y)}(\bM_g,\Z)$. We will therefore prove that $\Ext^0_{\Gamma_\Z(y)}(\bM_g,\Z)$ is a free abelian group, and that $\Ext^{-1}_{\Gamma_\Z(y)}(\bM_g,\Z)$ only has 2-power torsion. 

For the first statement, we note that $\Ext^0_{\Gamma_\Z(y)}(\bM_g,\Z)$ can be computed using the cobar complex
\[
\Hom_{\Gamma_\Z(y)}(\Gamma_\Z(y)^{\otimes\star+1}\otimes\bM_g,\Z),
\]
and in particular $\Ext^0_{\Gamma_\Z(y)}(\bM_g,\Z)$ can be identified with the kernel of the differential
\[
\delta^0\colon\Hom_{\Gamma_\Z(y)}(\Gamma_\Z(y)\otimes\bM_g,\Z)\to\Hom_{\Gamma_\Z(y)}(\Gamma_\Z(y)^{\otimes 2}\otimes\bM_g,\Z),
\]
whose source is a free abelian group.

For the second statement, by Definition \ref{defn:bMg} it suffices to prove that the abelian group $\Ext^{-1}_{\Gamma_\Z(y)}(\twoB^\Z_u,\Z)$ has only $2$-power torsion for $u\ge0$; after inverting 2, we can replace $\twoB^{\Z[\frac12]}_u$ by $B^{\Z[\frac12]}_u$ and aim to prove that $\Ext^{-1}_{\Gamma_{\Z[\frac12]}(y)}(B^{\Z[\frac12]}_u,\Z[\frac12])$ is a free $\Z[\frac12]$-module. We will directly prove that $\Ext^{-1}_{\Gamma_\Z(y)}(B^\Z_u,\Z)$ is a free abelian group.

For this, we use again that $\Ext^\star_{\Gamma_\Z(y)}(B^\Z_u,\Z)$ can be computed as the cohomology of the cobar complex $\Hom_{\Gamma_\Z(y)}(\Gamma_\Z(y)^{\otimes \star+1}\otimes B^\Z_u,\Z)$. We want to argue that the differential
\[
\delta^0\colon\Hom_{\Gamma_\Z(y)}(\Gamma_\Z(y)\otimes B^\Z_u,\Z)\to
 \Hom_{\Gamma_\Z(y)}(\Gamma_\Z(y)^{\otimes 2}\otimes B^\Z_u,\Z)
 \]
has a split image, i.e. $\Imm(\delta^0)$ is a direct summand of the target; once this is proved, we can find a direct complement $C\subset \Hom_{\Gamma_\Z(y)}(\Gamma_\Z(y)^{\otimes 2}\otimes B^\Z_u,\Z)$ to $\Imm(\delta^0)$, and identify $\Ext^{-1}_{\Gamma_\Z(y)}(B^\Z_u,\Z)$ with the kernel of the restriction to $C$ of the differential
\[
\delta^{-1}\colon \Hom_{\Gamma_\Z(y)}(\Gamma_\Z(y)^{\otimes 2}\otimes B^\Z_u,\Z)\to \Hom_{\Gamma_\Z(y)}(\Gamma_\Z(y)^{\otimes 3}\otimes B^\Z_u,\Z),
\]
thus proving that it is a free abelian group.

To prove that $\Imm(\delta^0)$ is a summand, we can consider the $\Z$-dual chain complex, i.e. the bar complex $\Z\otimes_{\Gamma_\Z(y)}\Gamma_\Z(y)^{\otimes\star+1}\otimes B^\Z_u$, and in particular the differential
\[
 d_1\colon\Z\otimes_{\Gamma_\Z(y)}\Gamma_\Z(y)^{\otimes 2}\otimes B^\Z_u\to
 \Z\otimes_{\Gamma_\Z(y)}\Gamma_\Z(y)\otimes B^\Z_u;
\]
it suffices to prove that $\Imm(d_1)$ is a summand. Since the source and the target are free abelian groups, this is equivalent to proving that $\Tor_0^{\Gamma_\Z(y)}(B^\Z_u,\Z)$ is a free abelian group; this last statement follows from Proposition \ref{prop:Psiukinj}, identifying the latter with the free abelian group spanned by all elements $z_S$ for $S\subset\set{1,\dots,u}$ sparse.
This completes the proof of the second statement.

The equality $\dim_\Q(H_n(C_n(\sgone);\Q)) = 
  \dim_{\Fp}(H_n(C_n(\sgone);\Fp))$ is now a direct consequence of the universal coefficient theorem for homology.
\end{proof}

\begin{proof}[Proof of Corollary \ref{acor:ppowertorsion}]
For $n\ge0$, the homology $H_*(C_n(\sgone);\Z)$ can be computed as the homology of a finitely generated chain complex of free abelian groups; such a chain complex can be decomposed as a direct sum of shifted copies of atomic chain complexes $0\to\Z\to0$
and $0\to\Z\overset{\cdot m}{\to}\Z\to0$ with $m\ge1$.
A standard analysis of the behaviour of each type of atomic chain complex after tensoring with the short exact sequence $\Z\overset{\cdot p}{\to}\Z\twoheadrightarrow\Fp$ leads to the equivalence of the following two statements:
\begin{enumerate}
 \item for $n\ge0$, all $p$-power torsion classes in $H_*(C_n(\sgone);\Z)$ are $p$-torsion;
 \item for $n\ge0$, all $p$-power torsion classes in $H_*(C_n(\sgone);\Z)$ are in the image of the Bockstein homomorphism $H_{*+1}(C_n(\sgone);\Fp)\to H_*(C_n(\sgone);\Z)$.
\end{enumerate}
Thus, in order to prove Corollary \ref{acor:ppowertorsion}, it suffices to show (1). By Theorem \ref{thm:B} (1) is equivalent to the following statement (restricted to $0\le u\le g$):
\begin{enumerate}
\setcounter{enumi}{2}
 \item for $u\ge0$, all $p$-power torsion classes in $\Ext^\star_{\Gamma_\Z(y)}(B^\Z_u,\Z)$ are $p$-torsion.
\end{enumerate}
We next prove that (3) follows from the following statement:
\begin{enumerate}
\setcounter{enumi}{3}
 \item for $u\ge0$, all summands in $\Ext^\star_{\Gamma_{\Z/p^2}(y)}(B^{\Z/p^2}_u,\Z/p^2)$ of the form $\Z/p^2$ are in bar-degrees $-1\le\star\le0$.
\end{enumerate}
We use as in the proof of Corollary \ref{acor:topminusone} that $\Ext^\star_{\Gamma_\Z(y)}(B^\Z_u,\Z)$ is the cohomology of the cochain complex of free abelian groups $\Hom_{\Gamma_\Z(y)}(\Gamma_\Z(y)^{\otimes \star+1}\otimes B^\Z_u,\Z)$. Tensoring this cochain complex with $\Z/p^2$, we obtain the analogue cochain complex computing $\Ext^\star_{\Gamma_{\Z/p^2}(y)}(B^{\Z/p^2}_u,\Z/p^2)$. A standard analysis involving the universal coefficients theorem implies that for each of the following triples $(A,B,C)$, a summand of type $A$ in $\Ext^i_{\Gamma_\Z(y)}(B^\Z_u,\Z)$ gives rise to a summand of type $B$ in $\Ext^i_{\Gamma_{\Z/p^2}(y)}(B^{\Z/p^2}_u,\Z/p^2)$ and a summand of type $C$ in $\Ext^{i+1}_{\Gamma_{\Z/p^2}(y)}(B^{\Z/p^2}_u,\Z/p^2)$:
\[
(A,B,C) =\quad (\Z,\Z/p^2,0),\quad (\Z/p,\Z/p,\Z/p),\quad (\Z/p^l,\Z/p^2,\Z/p^2)\mbox{ with }l\ge2.
\]
Combining with the observation that 
$\Ext^{-1\le\star\le0}_{\Gamma_\Z(y)}(B^\Z_u,\Z)$ contains no $p$-power torsion, we deduce that (3) follows from (4).

We next take the Bockstein long exact sequence obtained by tensoring the cochain complex $\Hom_{\Gamma_\Z(y)}(\Gamma_\Z(y)^{\otimes \star+1}\otimes B^\Z_u,\Z)$ with the short exact sequence $\Fp\to\Z/p^2\to\Fp$. Let $\fb\colon\Ext^\star_{\Gamma_{\Fp}(y)}(B^{\Fp}_u,\Fp)\to\Ext^{\star+1}_{\Gamma_{\Fp}(y)}(B^{\Fp}_u,\Fp)$ denote the Bockstein homomorphism; we regard $\fb$ a differential on $\Ext^\star_{\Gamma_{\Fp}(y)}(B^{\Fp}_u,\Fp)$ and consider the $\fb$-cohomology of the latter.
As before, we can decompose the cochain complex
$\Hom_{\Gamma_{\Z/p^2}(y)}(\Gamma_{\Z/p^2}(y)^{\otimes \star+1}\otimes_{\Z/p^2} B^{\Z/p^2}_u,\Z/p^2)$ as a direct sum of shifts of the atomic complexes $0\to\Z/p^2\to0$, $0\to\Z/p^2\overset{\cdot p}{\to}\Z/p^2\to0$ and $0\to\Z/p^2\overset{\cdot 1}{\to}\Z/p^2\to0$. By a standard analysis, (4) is equivalent to the following statement:
\begin{enumerate}
\setcounter{enumi}{4}
 \item for $u\ge0$, the $\fb$-cohomology of $\Ext^\star_{\Gamma_{\Fp}(y)}(B^{\Fp}_u,\Fp)$ is concentrated in bar-degrees $-1\le\star\le0$.
\end{enumerate}

Denote by $-\cdot-$ the Yoneda product: then for all $\gamma\in \Ext^i_{\Gamma_{\Fp}(y)}(\Fp,\Fp)$ and $\eta\in\Ext^j_{\Gamma_{\Fp}(y)}(\Fp,\Fp)$ we have the equality $\fb(\gamma\cdot\eta)=\fb(\gamma)\cdot\eta+(-1)^i\gamma\cdot \fb(\eta)$, which we call the ``Leibniz rule'' for the Bockstein homomorphism; a similar rule holds when $\eta\in\Ext^j_{\Gamma_{\Fp}(y)}(B^{\Fp}_u,\Fp)$.
A standard computation gives $\fb(\alpha_{i+1})=\beta_i$ and $\fb(\beta_i)=0$
for all $i\ge0$; by the Leibniz rule, we get that the Bockstein homomorphism on $\Ext^\star_{\Gamma_{\Fp}(y)}(B^{\Fp}_u,\Fp)$ is a map of $\Fp[\beta_0,\beta_1,\dots]$-modules.

Let $S_i$ be the $i$\sth summand of $\Ext^\star_{\Gamma_{\Fp}(y)}(B^{\Fp}_u;\Fp)$ as in Theorem \ref{thm:C}:
\[ 
S_i=\Fp\otimes_{\Fp}\Ext^\star_{\Fp[\fy_i]/(\fy_i^p)}(N_{u,i},\Fp)\otimes_{\Fp}\bigotimes_{j=i+1}^\infty \Ext^\star_{\Fp[\fy_j]/(\fy_j^p)}(\Fp,\Fp).
\]
Then $\beta_j$ annihilates $S_i$ for all $0\le j\le i-1$, whereas $\beta_j$ acts injectively on $S_i$ for $i\le j\le \infty$. It follows that the Bockstein homomorphism $\fb$ on $\Ext^\star_{\Gamma_{\Fp}(y)}(B^{\Fp}_u,\Fp)$ sends $S_i$ to the direct sum $\bigoplus_{j=i}^\infty S_j$. Therefore the direct sums $\bigoplus_{j=i}^\infty S_j$, for varying $i\ge0$, give a descending filtration of the Bockstein cochain complex $\Ext^\star_{\Gamma_{\Fp}(y)}(B^{\Fp}_u,\Fp)$.
We denote by $\bar S_i=\pa{\bigoplus_{j=i}^\infty S_j}/\pa{\bigoplus_{j=i+1}^\infty S_j}$ the $i$\sth filtration quotient, and by $\bar\fb$ its differential. Then (5) is implied by the following statement:
\begin{enumerate}
\setcounter{enumi}{5}
 \item for $u\ge0$ and $i\ge0$, the $\fb$-cohomology of $\bar S_i$ is concentrated in bar-degrees $-1\le\star\le0$.
\end{enumerate}
We can regard $\bar S_i$ as a differential bigraded module over the differential bigraded algebra $\Fp[\beta_i,\alpha_{i+1},\beta_{i+1},\dots]$, which we endow with the differential $\bar\fb$ defined by $\bar\fb(\alpha_{j+1})=\beta_j$ and $\bar\fb(\beta_j)=0$ for $j\ge i$. Note that $\Fp[\beta_i,\alpha_{i+1},\beta_{i+1},\dots]$ is $\Fp$ in bigrading (0,0), it is concentrated in bigradings $\bullet\ge0$ and $\star\le 0$, and its module $\bar S_i$ is freely generated by the bigraded vector space $\Ext^{-1\le\star\le0}_{\Fp[\fy_i]/(\fy_i^p)}(N_{u,i},\Fp)$.

We define a monomial filtration on $\bar S_i$: for all $j\ge0$, we put in filtration at least $j$ all elements that can be written as the product of a class in $\Ext^{-1\le\star\le0}_{\Fp[\fy_i]/(\fy_i^p)}(N_{u,i},\Fp)$ and a monomial of at least $j$ variables (counted with repetition) chosen among $\beta_i,\alpha_{i+1},\beta_{i+1},\dots$. For all $j\ge0$, the part of $\bar S_i$ of filtration at least $j$ is a sub-cochain complex, and (6) is implied by the following statement
\begin{enumerate}
\setcounter{enumi}{6}
 \item for $u\ge0$, $i\ge0$, the associated graded of $(\bar S_i,\bar\fb)$ with respect to the monomial filtration has cohomology only in bar-degrees $-1\le\star\le0$.
\end{enumerate}
For the last statement, we notice that the associated graded of $\bar S_i$ with respect to the monomial filtration is isomorphic, as a bigraded cochain complex, to the tensor product of the vector space $\Ext^{-1\le\star\le0}_{\Fp[\fy_i]/(\fy_i^p)}(N_{u,i},\Fp)$, with trivial differential, and the algebra $\Fp[\beta_i,\alpha_{i+1},\beta_{i+1},\dots]$, with  differential $\bar\fb$. The $\bar\fb$-cohomology of $\Fp[\beta_i,\alpha_{i+1},\beta_{i+1},\dots]$ is $\Fp$, sitting in bar-degree 0, and (7) follows.
\end{proof}

\begin{proof}[Proof of Corollary \ref{acor:basechange}]
By Theorem \ref{thm:B} it suffices to prove that for any $u\le p-2$ there
is an isomorphism of bigraded $\Fp$-vector spaces
\[
\Ext^\star_{\Gamma_{\Fp}(y)}(B^{\Fp}_u,\Fp)\cong\pa{\Ext^\star_{\Gamma_\Q(y)}(B^\Q_u,\Q)}^\Q_{\Fp}\otimes_{\Fp}\Fp[\beta_0,\alpha_1,\beta_1,\alpha_2,\beta_2\dots];
\]
applying part (2) of Corollary \ref{acor:genusvsprime} with $\lambda=1$, it suffices to prove an isomorphism
\[
\Ext^\star_{\Fp[\fy_0]/(\fy_0^p)}(B^{\Fp}_u,\Fp)\cong\pa{\Ext^\star_{\Gamma_\Q(y)}(B^\Q_u,\Q)}^\Q_{\Fp}\otimes_{\Fp}\Fp[\beta_0].
\]
The barcode of $B^\Q_u$ as a module over $\Gamma_\Q(y)\cong\Q[y]$ has the following description: for all $1\le k\le \floor{u/2}$ there are $\ell_{u,k}$ bars of the form $(-2k,u+1-2k)$. The same description holds for the barcode of $B^{\Fp}_u$ over $\Fp[\fy_0]/(\fy_0^p)$: since $u\le p-2$, the free-narrow decomposition of $B^{\Fp}_u$ has only a narrow part.
It is then standard to prove, for $1\le c\le p-1$, that there is an isomorphism of bigraded vector spaces
\[
 \Ext^\star_{\Fp[\fy_0]/(\fy_0^p)}(\Fp[\fy_0]/(\fy_0^c),\Fp)\cong\pa{\Ext^\star_{\Q[y]}(\Q[y]/(y^c),\Q)}^\Q_{\Fp}\otimes_{\Fp}\Fp[\beta_0].
\]
\end{proof}

\begin{proof}[Proof of Corollary \ref{acor:lowdimbetti}]
Using Theorem \ref{thm:B} and Proposition \ref{prop:bMgsplitting}, the claim is implied by the following statement: $\Ext^\star_{\Gamma_\Z(y)}(B^\Z_u,\Z)$ has no $p$-torsion in homological degree $*=\bullet+\star<\max\set{2p-2,u+p}$.
As in the proof of Corollary \ref{acor:ppowertorsion}, the $p$-torsion in $\Ext^\star_{\Gamma_\Z(y)}(B^\Z_u,\Z)$ coincides with the image of the Bockstein homomorphism $\fb^\Z\colon \Ext^{\star+1}_{\Gamma_\Z(y)}(B^\Z_u,\Fp)\to\Ext^{\star}_{\Gamma_\Z(y)}(B^\Z_u,\Z)$, and the natural map
$\Ext^\star_{\Gamma_\Z(y)}(B^\Z_u,\Z)\to \Ext^\star_{\Gamma_\Z(y)}(B^\Z_u,\Fp)$ is injective when restricted to the image of $\fb^\Z$. Denoting by
$$\fb\colon \Ext^{\star+1}_{\Gamma_\Z(y)}(B^\Z_u,\Fp)\to\Ext^{\star}_{\Gamma_\Z(y)}(B^\Z_u,\Fp)$$ 
the composite of $\fb^\Z$ and the aforementioned natural map (i.e., the Bockstein homomorphism from the proof of Corollary \ref{acor:ppowertorsion}), we have to prove that the image of $\fb$ is concentrated in homological degrees $*=\bullet+\star\ge\max\set{2p-2,u+p}$.

We observe that $\Ext^{\star}_{\Gamma_\Z(y)}(B^\Z_u,\Fp)$ is canonically isomorphic to $\Ext^{\star}_{\Gamma_{\Fp}(y)}(B^{\Fp}_u,\Fp)$ and so we replace the former by the latter.
By Theorem \ref{thm:C}, the vector space $\Ext^{-1}_{\Gamma_{\Fp}(y)}(B^{\Fp}_u,\Fp)$ can be identified with the direct sum $E\oplus E'$, where
\[
\begin{split}
 E=&\bigoplus_{i=0}^h\Ext^{-1}_{\Fp[\fy_i]/(\fy_i^p)}(N_{u,i},\Fp);\\
 E'=&\bigoplus_{i=0}^h\Ext^0_{\Fp[\fy_i]/(\fy_i^p)}(N_{u,i},\Fp)\otimes_{\Fp}\pa{\bigoplus_{j=i}^{\infty}\Fp\cdot\alpha_{j+1}}.
\end{split}
\]
According to Corollary \ref{acor:neartop}, $\Ext^{\star}_{\Gamma_{\Fp}(y)}(B^{\Fp}_u,\Fp)$ is generated by the direct sum $\Ext^{0}_{\Gamma_{\Fp}(y)}(B^{\Fp}_u,\Fp)\oplus E\subset\Ext^{-1\le\star\le0}_{\Gamma_{\Fp}(y)}(B^{\Fp}_u,\Fp)$ as a module over $\Ext^{\star}_{\Gamma_{\Fp}(y)}(\Fp,\Fp)$.

We proceed to analyse the behaviour of $\fb$ on $\Ext^{-1\le \star \le 0}_{\Gamma_{\Fp}(y)}(B^{\Fp}_u,\Fp)$. By Corollary \ref{acor:topminusone}, $\fb$ vanishes on $\Ext^0_{\Gamma_{\Fp}(y)}(B^{\Fp}_u,\Fp)$, since the natural map $\Ext^0_{\Gamma_\Z(y)}(B^\Z_u,\Z)\to\Ext^0_{\Gamma_{\Fp}(y)}(B^{\Fp}_u,\Fp)$ is surjective. Instead $\fb$ is injective on $E'$: since $\fb(\alpha_{j+1})=\beta_j$, we may identify the restriction of $\fb$ on $E'$ with the injective composition of maps
\[
\begin{split}
 E'&=\bigoplus_{i=1}^h\Ext^0_{\Fp[\fy_i]/(\fy_i^p)}(N_{u,i},\Fp)\otimes_{\Fp}\pa{\bigoplus_{j=i}^{\infty}\Fp\cdot\alpha_{j+1}}\cong\\
 &\cong\bigoplus_{i=1}^h\Ext^0_{\Fp[\fy_i]/(\fy_i^p)}(N_{u,i},\Fp)\otimes_{\Fp}\pa{\bigoplus_{j=i}^{\infty}\Fp\cdot\beta_{j}}\subseteq \Ext^{-2}_{\Gamma_{\Fp}(y)}(B^{\Fp}_u,\Fp).
\end{split}
\]
Finally, let $\fb_{-1}$ denote the map $\fb\colon\Ext^{-1}_{\Gamma_{\Fp}(y)}(B^{\Fp}_u,\Fp)\to\Ext^{-2}_{\Gamma_{\Fp}(y)}(B^{\Fp}_u,\Fp)$; then $\ker(\fb_{-1})$ has the same total dimension as $E$. For this, first note that
\[
\dim_{\Fp}E=\dim_{\Fp}\Ext^0_{\Gamma_{\Fp}(y)}(B^{\Fp}_u,\Fp)=\binom{u}{\floor{u/2}},
\]
as we have ungraded isomorphisms
\[
E=\bigoplus_{i=0}^h\Ext^{-1}_{\Fp[\fy_i]/(\fy_i^p)}(N_{u,i},\Fp)
\cong \bigoplus_{i=0}^h\Ext^0_{\Fp[\fy_i]/(\fy_i^p)}(N_{u,i},\Fp)\cong \Ext^0_{\Gamma_{\Fp}(y)}(B^{\Fp}_u,\Fp).
\]
Similarly, the total rank of $\Ext^{-1}_{\Gamma_\Z(y)}(B^\Z_u,\Z)$ is both equal to
\[
\dim_\Q\Ext^{-1}_{\Gamma_\Q(y)}(B^\Q_u,\Q)=\dim_\Q\Ext^0_{\Gamma_\Q(y)}(B^\Q_u,\Q)=\binom{u}{\floor{u/2}},
\]
and to $\dim_{\Fp}\ker(\fb_{-1})$, by Corollary \ref{acor:topminusone}).

Combining the previous remarks, we conclude that $\Ext^{-1}_{\Gamma_{\Fp}(y)}(B^{\Fp}_u,\Fp)$ also decomposes as the direct sum $E'\oplus\ker(\fb_{-1})$. In particular, every element in $E$ can be written as a sum of an element in $E'$ and an element in $\ker(\fb)$. 

We will now estimate the homological degrees in which a non-zero element in the image of $\fb\colon\Ext^{\star}_{\Gamma_{\Fp}(y)}(B^{\Fp}_u,\Fp)\to \Ext^{\star-1}_{\Gamma_{\Fp}(y)}(B^{\Fp}_u,\Fp)$ may occur. We will consider two types of additive generators of $\Ext^{\star}_{\Gamma_{\Fp}(y)}(B^{\Fp}_u,\Fp)$: the first type comes from $\Ext^{0}_{\Gamma_{\Fp}(y)}(B^{\Fp}_u,\Fp)$, and the second type from $E$.

Fix $0\le i\le h$, let $x\in\Ext^0_{\Fp[\fy_i]/(\fy_i^p)}(N_{u,i},\Fp)$ and let $s$ be a bihomogeneous element of bidegree $(\bullet_s,\star_s)$ in $\Fp[\beta_i,\alpha_{i+1},\beta_{i+1},\dots]\subset\Ext^\star_{\Gamma_{\Fp}(y)}(\Fp,\Fp)$; then $s\cdot x$ is a generator of first type of $\Ext^\star_{\Gamma_{\Fp}(y)}(B^{\Fp}_u,\Fp)$, and
$\fb(s\cdot x)=\fb(s)\cdot x$. Assuming that $\fb(s)$ is non-zero, we have that $s$ belongs to the ideal generated by the variables $\alpha_j$ for $j\ge i+1$; in particular $\star_s\le-1$ and $\bullet_s\ge p^{i+1}(-\star_s+1)$. Using that $N_{u,i}$ is supported in weights less than $-u+p^{i+1}-1$, we have that $x$ has weight $\bullet_x>u-p^{i+1}+1$. Thus $\fb(s)\cdot x$ has bar-degree $\star_s-1$ and weight $\bullet_x+\bullet_s> u-p^{i+1}+1+p^{i+1}(-\star_s+1)$. It follows that the homological degree of $\fb(s)\cdot x$ is strictly larger than 
$u+p-1$. Replacing the estimate  $\bullet_x>u-p^{i+1}+1$ with the estimate  $\bullet_x>-1$, we obtain that the homological degree of $\fb(s)\cdot x$ is also strictly larger than $2p-3$.

Fix now $0\le i\le h$, let $x\in\Ext^{-1}_{\Fp[\fy_i]/(\fy_i^p)}(N_{u,i},\Fp)$ and let $s$ be as above; then $s\cdot x$ is a generator of second type of $\Ext^\star_{\Gamma_{\Fp}(y)}(B^{\Fp}_u,\Fp)$. We can decompose $x$ inside $\Ext^{-1}_{\Gamma_{\Fp}(y)}(B^{\Fp}_u,\Fp)$ as a sum $x'+x''$, with $x'$ and $x''$ having the same weight as $x$, with $x'\in\ker(\fb)$, and with $x''\in E'$, in particular $x''$ lies in the image of the multiplication map
\[
\Ext^{-1}_{\Gamma_{\Fp}(y)}(\Fp,\Fp)\otimes_{\Fp}\Ext^0_{\Gamma_{\Fp}(y)}(B^{\Fp}_u,\Fp)\to\Ext^{-1}_{\Gamma_{\Fp}(y)}(B^{\Fp}_u,\Fp).
\]
We get a splitting $\fb(s\cdot x)=\fb(s)\cdot x'+\fb(s\cdot x'')$. The second term vanishes unless it lives in homological degree at least $\max\set{u+p,2p-2}$, by the previous argument. The first term can also be treated as in the previous argument: we now have $\star_x=-1$ instead of 0, but we also have the better estimate $\bullet_x>\max\set{u-p^{i+1}+1,-1}+2p^i$.
\end{proof}
\begin{rem}
The fact that $\Ext^{-1}_{\Gamma_{\Fp}(y)}(B^{\Fp}_u,\Fp)$ can be decomposed as the direct sum of $E'$ and $\ker(\fb)$ gives a slight improvement of the statement of Corollary \ref{acor:neartop}: $H_*(C_\bullet(\sgone);\Fp)$ is generated over $H_*(C_\bullet(D);\Fp)$ by the direct sum of 
\[
\bigoplus_{n\ge0}H_n(C_n(\sgone);\Fp)\ \mbox{ and }\ \ker(\fb)\subset \bigoplus_{n\ge0} H_{n-1}(C_n(\sgone);\Fp).
\]
\end{rem}

\begin{proof}[Proof of Corollary \ref{acor:extremalquasistab}]
We fix $g\ge1$ and $i\ge0$ throughout the proof. A direct consequence of Theorems \ref{thm:B} and \ref{thm:C} is that $H_{n-i}(C_n(\sgone);\Fp)\neq0$ as soon as $n\ge i$; hence it suffices to find constants $0<\bfc_{g,i}<\bfC_{g,i}$ that work for $n$ large enough.

We start by proving the existence of $\bfc_{g,i}$. Let $n=2m+\delta$ with $\delta\in\set{0,1}$, and let $\xi\colon \set{1,\dots,g}\to\bK$ (recall Notation \ref{nota:K}) be the map defined by
\[
\xi(j)=\left\{
\begin{array}{cl}
\be & \mbox{ if } j\le g-1 \mbox{ or }\delta=0;\\
\bu & \mbox{ else}.
\end{array}
\right.
\]

By Theorem \ref{thm:B} and Definition \ref{defn:bMg}, together with the isomorphism $B^p_{|\xi|}\cong\twoB^p_{|\xi|}$, we have that
$H_*(C_\bullet(\sgone);\Fp)$ contains a copy of $\Ext^\star_{\Gamma_{\Fp}(y)}(B^p_{g-\delta},\Fp)[\delta]\otimes_{\Fp}\Fp[\cH_g]$; we will estimate the dimension of the latter in bigrading $(\bullet,\star)=(n,-i)$; setting $u=g-\delta$, and using $\delta=n-2m$, we get that $\dim_{\Fp} H_{n-i}(C_n(\sgone);\Fp)$ is at least the dimension in bigrading $(2m,-i)$ of $\Ext^\star_{\Gamma_{\Fp}(y)}(B^{\Fp}_u,\Fp)\otimes_{\Fp}\Fp[\cH_g]$.

Let now $l:=\floor{\log_p(m/(i+1))}\asymp \log_p(n)$,
and let $h:=\floor{\log_p(u)}$.
Fix a non-trivial class $\gamma\in\Ext^0_{\Gamma_{\Fp}(y)}(B^{\Fp}_u,\Fp)$. For every subset $S\subset\set{h+1,\dots,l}$ of cardinality $i$ we can construct a non-trivial class $(\prod_{s\in S}\alpha_s)\cdot\gamma\in \Ext^{-i}_{\Gamma_{\Fp}(y)}(B^{\Fp}_u,\Fp)$ of weight $w(S):=\sum_{s\in S}2p^s\le 2ip^l\le 2m/(i+1)$. We can ``complete'' this class by a monomial in $\Fp[\cH_g]$ of weight $2m-w(S)\ge m/(i+1)$, to get a class in $\Ext^\star_{\Gamma_{\Fp}(y)}(B^{\Fp}_u,\Fp)\otimes_{\Fp}\Fp[\cH_g]$ of bigrading $(2m,-i)$: such monomial can be chosen in $\binom{m-w(S)/2+2g-1}{2g-1}\asymp n^{2g-1}$ ways.
Since there are $\binom{l-h}{i}\asymp \log_p(n)^i$ ways to choose the subset $S$, we have produced $\asymp n^{2g-1}\log_p(n)^i$ linearly independent classes in $H_{n-i}(C_n(\sgone);\Fp)$. This proves the existence of $\bfc_{g,i}$.

We next prove the existence of $\bfC_{g,i}$. By Theorem \ref{thm:B}, the weighted vector space $H_{\bullet-i}(C_\bullet(\sgone);\Fp)$ can be expressed as a finite direct sum of shifted copies of $\Ext^{-j}_{\Gamma_{\Fp}(y)}(B^{\Fp}_u,\Fp)\otimes_{\Fp}\Fp[\cH_g]$ for values of $j$ and $u$ satisfying $0\le j\le i$ and $u \le g$: multiplication by $\epsilon^{i-j}$ puts such a copy in bar-degree $-i$. The number of these summands is bounded above by $2^g\cdot i$, so we can focus on the growth of the weighted dimension of $\Ext^{-j}_{\Gamma_{\Fp}(y)}(B^{\Fp}_u,\Fp)\otimes_{\Fp}\Fp[\cH_g]$, for fixed $0\le j\le i$ and $u\ge0$. 

By Theorem \ref{thm:C}, $\Ext^{-j}_{\Gamma_{\Fp}(y)}(B^{\Fp}_u,\Fp)\otimes_{\Fp}\Fp[\cH_g]$ further decomposes as a finite direct sum of suitable bigraded shifts of quotients of $\Fp[\alpha_0,\beta_0,\alpha_1,\beta_1,\dots]\otimes_{\Fp}\Fp[\cH_g]$, so we can fix $j\le i$ and study the growth of the weighted dimension of the part of $\Fp[\alpha_0,\beta_0,\dots]\otimes_{\Fp}\Fp[\cH_g]$ of bar-degree $-j$. Since $\Fp[\alpha_0,\beta_0,\dots]\otimes_{\Fp}\Fp[\cH_g]$ is concentrated in even weights, and we are looking for an upper bound on the dimension of the part of bigrading $(n,-j)$, for fixed $j$ and varying $n$, of the form $\bfC_{g,i}\log_p(n)^in^{2g-1}$, we can focus on the even case $n=2m$.

In order to form a monomial of $\Fp[\alpha_0,\beta_0,\dots]\otimes_{\Fp}\Fp[\cH_g]$ of bigrading $(2m,-j)$, we need to
choose a monomial in $\Fp[\alpha_0,\beta_0,\dots]$ of some weight $2w\le 2m$ and of bar-degree $-j$, and complete it by a suitable monomial in $\Fp[\cH_g]$ of weight $2m-2w$ and bar-degree $0$. Letting $l=\floor{\log_p(m)}$, only the variables $\alpha_0,\beta_0,\dots,\alpha_l$ can be used for the first choice; an upper bound to the number of monomials is thus the number of \emph{ordered} sequences of $j$ letters in the alphabet $\alpha_0,\beta_0,\dots,\alpha_l$, which is $(2l+1)^j\asymp\log_p(n)^j=O(\log_p(n)^i)$. For each choice of a monomial of weight $2w\le 2m$, we can then choose a monomial of weight $2m-2w\le 2m$ in $\Fp[\cH_g]$ in at most $\binom{m+2g-1}{2g-1}\asymp n^{2g-1}$ ways. Thus the weighted dimension of $\Fp[\alpha_0,\beta_0,\dots]\otimes_{\Fp}\Fp[\cH_g]$ in bar-degree $-j$ is $O(n^{2g-1}\log_p (n)^i)$.
This proves the existence of $\bfC_{g,i}$.
\end{proof}

\bibliography{bibliography.bib}{}
\bibliographystyle{plain}

\end{document}